\numberwithin{equation}{section}
\theoremstyle{plain}
\newtheorem{thm}{Theorem}[section]
\newtheorem*{thm*}{Theorem}
\newtheorem{prop}[thm]{Proposition}
\newtheorem{lem}[thm]{Lemma}
\newtheorem{cor}[thm]{Corollary}
\newtheorem{lem-defn}[thm]{Lemma-Definition}
\newtheorem{claim}[thm]{Claim}
\newtheorem*{claim*}{Claim}
\theoremstyle{definition}
\newtheorem{defn}[thm]{Definition}
\newtheorem{example}[thm]{Example}
\theoremstyle{remark}
\newtheorem{rem}[thm]{Remark}
\DeclareMathOperator{\val}{val}
\DeclareMathOperator{\Frac}{Frac}
\DeclareMathOperator{\Tot}{Tot}
\DeclareMathOperator{\Ker}{Ker}
\DeclareMathOperator{\Image}{Im}
\DeclareMathOperator{\Hom}{Hom}
\DeclareMathOperator{\End}{End}
\DeclareMathOperator{\GL}{GL}
\DeclareMathOperator{\id}{id}
\DeclareMathOperator{\tr}{tr}
\DeclareMathOperator{\rank}{rank}
\DeclareMathOperator{\Fil}{Fil}
\DeclareMathOperator{\gr}{gr}
\DeclareMathOperator{\Gal}{Gal}
\DeclareMathOperator{\Spec}{Spec}
\DeclareMathOperator{\Spa}{Spa}
\newcommand{\ra}{\rightarrow}
\newcommand{\lra}{\longrightarrow}
\newcommand{\lmt}{\longmapsto}
\newcommand{\hra}{\hookrightarrow}
\newcommand{\geom}{\mathrm{geom}}
\newcommand{\dR}{\mathrm{dR}}
\newcommand{\HT}{\mathrm{HT}}
\newcommand{\Higgs}{\mathrm{Higgs}}
\newcommand{\et}{\mathrm{\acute{e}t}}
\newcommand{\proet}{\mathrm{pro\acute{e}t}}
\newcommand{\fin}{\mathrm{fin}}
\newcommand{\hatO}{\hat{\mathcal{O}}}
\newcommand{\OC}{\mathcal{O}\mathbb{C}}
\newcommand{\OB}{\mathcal{O}\mathbb{B}}
\newcommand{\RH}{\mathcal{RH}}
\newcommand{\Res}{\mathrm{Res}}
\newcommand{\N}{\mathbb{N}}
\newcommand{\Z}{\mathbb{Z}}
\newcommand{\Q}{\mathbb{Q}}
\newcommand{\C}{\mathbb{C}}
\newcommand{\A}{\mathbb{A}}
\newcommand{\fkp}{\mathfrak{p}}
\newcommand{\fkP}{{\mathfrak P}}
\newcommand{\bB}{\mathbb{B}}
\newcommand{\bL}{\mathbb{L}}
\newcommand{\bT}{\mathbb{T}}
\newcommand{\calA}{\mathcal{A}}
\newcommand{\calB}{\mathcal{B}}
\newcommand{\calE}{\mathcal{E}}
\newcommand{\calF}{\mathcal{F}}
\newcommand{\calG}{\mathcal{G}}
\newcommand{\calH}{\mathcal{H}}
\newcommand{\calM}{\mathcal{M}}
\newcommand{\calO}{\mathcal{O}}
\newcommand{\calU}{\mathcal{U}}
\newcommand{\calX}{\mathcal{X}}
\begin{document}
\title{
Constancy of generalized Hodge-Tate weights of a local system
}
\date{\today}
\author{Koji Shimizu}
\address{Harvard University, Department of Mathematics, Cambridge, MA 02138}
\email{shimizu@math.harvard.edu}

\begin{abstract}
Sen attached to each $p$-adic Galois representation of a $p$-adic field
a multiset of numbers called generalized Hodge-Tate weights.
In this paper, we discuss a rigidity of these numbers in a geometric family.
More precisely, we consider a $p$-adic local system on a rigid analytic variety over a $p$-adic field
and show that the multiset of generalized Hodge-Tate weights of the local system is constant.
The proof uses the $p$-adic Riemann-Hilbert correspondence by Liu and Zhu, a Sen-Fontaine decompletion theory in the relative setting, and the theory of formal connections.
We also discuss basic properties of Hodge-Tate sheaves on a rigid analytic variety.
\end{abstract}

\maketitle

\section{Introduction}

In the celebrated paper \cite{Tate}, Tate studied the Galois cohomology of $p$-adic fields and obtained the so-called Hodge-Tate decomposition
of the Tate module of a $p$-divisible group with good reduction.
The paper has been influential in the developments of $p$-adic Hodge theory,
and one of the earliest progresses was done by Sen.
In \cite{Sen-cont}, he attached to each $p$-adic Galois representation of a $p$-adic field $k$ a multiset of numbers that are algebraic over $k$. These numbers are called generalized Hodge-Tate weights, and
they serve as one of the basic invariants in $p$-adic Hodge theory,
especially for the study of Galois representations that may not be Hodge-Tate (e.g.~Galois representations attached to finite slope overconvergent modular forms).

In this paper, we study how generalized Hodge-Tate weights vary in a geometric family. 
To be precise, we consider an \'etale $\Q_p$-local system on a rigid analytic varieties over $k$ and regard it as a family of Galois representations of residue fields of its classical points.
Here is one of the main theorems of this paper.

\begin{thm}[Corollary~\ref{cor:constancy of generalized HT weights}]
\label{intro:constancy}
Let $X$ be a geometrically connected smooth rigid  analytic variety over $k$
and let $\bL$ be a $\Q_p$-local system on $X$.
Then the generalized Hodge-Tate weights
of the $p$-adic Galois representations $\bL_{\overline{x}}$ of  $k(x)$
are constant on the set of classical points $x$ of $X$.
\end{thm}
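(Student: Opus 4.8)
The plan is to reduce the statement to a global assertion about a coherent sheaf with connection attached to $\bL$ via the $p$-adic Riemann–Hilbert correspondence of Liu–Zhu, and then to show that the characteristic polynomial of the residue of this connection (equivalently, the Sen operator) has constant coefficients as a function on $X$.

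\medskip

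\textbf{Step 1: From local systems to a ``Sen sheaf'' on $X$.}
Given the $\Q_p$-local system $\bL$ on $X$, I would first produce, in the relative setting over the whole of $X$, a sheaf-theoretic incarnation of Sen theory: a coherent $\calO_X$-module $\calS(\bL)$ equipped with a connection-like endomorphism (the \emph{Sen operator}) $\Theta_{\bL}\in\End_{\calO_X}(\calS(\bL))$, whose fibre at each classical point $x$ recovers the classical Sen operator of the Galois representation $\bL_{\overline{x}}$ of $k(x)$. Concretely this comes from applying a relative Sen–Fontaine decompletion theory to the $\hatO_X$-module $\bL\otimes\hatO_X$ on $X_{\proet}$: the decompletion descends the $\hatO_X$-module to a finite projective module over a smaller period sheaf carrying an action of the Lie algebra of $\Gal$, whose infinitesimal generator is $\Theta_{\bL}$. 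Because the Liu–Zhu $p$-adic Riemann–Hilbert functor attaches to $\bL$ a vector bundle with integrable connection $(\calH(\bL),\nabla)$ on $X$ together with a Hodge/Sen filtration, I expect $\Theta_{\bL}$ to be identified with (a twist of) the residue-type operator coming from $\nabla$ along the ``arithmetic'' direction; in particular $\Theta_{\bL}$ is $\calO_X$-linear and its characteristic polynomial $P_{\bL}(T)\in\calO_X(X)[T]$ is a globally defined object.

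\medskip

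\textbf{Step 2: Specialization at classical points.}
Next I would check the compatibility of this global construction with fibres: for a classical point $x\in X$ with residue field $k(x)$, pulling back $(\calS(\bL),\Theta_{\bL})$ along $x$ yields the Sen module and Sen operator of $\bL_{\overline{x}}$ in the sense of \cite{Sen-cont}. Granting this, the generalized Hodge–Tate weights of $\bL_{\overline{x}}$ are exactly the roots (with multiplicity) of $P_{\bL}(T)(x)$, the specialization of the characteristic polynomial. Hence the theorem is equivalent to: \emph{the polynomial $P_{\bL}(T)\in\calO_X(X)[T]$ has constant coefficients}, i.e.\ each coefficient lies in $k\subseteq\calO_X(X)$, provided $X$ is (geometrically) connected.

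\medskip

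\textbf{Step 3: Constancy via formal connections / an algebraic differential equation.}
Finally I would show the coefficients of $P_{\bL}(T)$ are locally constant. The mechanism is that the Sen operator $\Theta_{\bL}$ is not just any endomorphism but is horizontal, in a suitable sense, for the integrable connection $\nabla$ on $\calH(\bL)$: roughly, $\nabla\Theta_{\bL}=[\text{something nilpotent or zero},\Theta_{\bL}]$, so that $\nabla$ induces a connection on the characteristic-polynomial bundle under which $P_{\bL}$ is a flat section. Since a flat section of $(\calO_X^{\oplus m},d)$ on a connected rigid analytic variety over $k$ is constant — here is where the theory of formal connections enters, to control the behaviour near boundary points and to pass from ``flat'' to ``locally constant'' in the rigid-analytic (non-archimedean) setting where naive power-series arguments can fail — we conclude that $P_{\bL}(T)$ has coefficients in $k$, and geometric connectedness upgrades ``locally constant'' to ``constant''. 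Combined with Steps 1–2 this gives the constancy of generalized Hodge–Tate weights on classical points.

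\medskip

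\textbf{Main obstacle.} I expect the crux to be Step 3, specifically establishing the precise commutation relation between the Sen operator $\Theta_{\bL}$ and the Liu–Zhu connection $\nabla$ — i.e.\ that $\Theta_{\bL}$ really does vary horizontally — and then making rigorous the passage from this relation to genuine constancy over a non-archimedean base, where convergence issues near the boundary of $X$ force one to invoke formal connections rather than argue naively. Step 1 (setting up relative Sen–Fontaine decompletion compatibly with $p$-adic Riemann–Hilbert) is technically substantial but should be a matter of carefully globalizing known decompletion arguments; Step 2 is a compatibility/base-change check.
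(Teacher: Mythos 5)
Your overall strategy --- realize the Sen operator as the residue, in the arithmetic direction, of a connection produced by the Liu--Zhu Riemann--Hilbert functor together with a relative Sen--Fontaine decompletion, check fibre compatibility at classical points, and deduce constancy from a commutation relation plus a theory of formal connections --- is indeed the route the paper takes (your Steps 1--2 correspond to the construction of $\calH(\bL)$ and $\phi_{\bL}$, which live over $X_K$ rather than $X$, and their compatibility with pullback to points). However, Step 3 as you formulate it has a genuine gap. First, the integrable connection $\nabla_{\bL}$ does \emph{not} live on $\calH(\bL)$: it lives on $\RH(\bL)$, a module over $\calO_X\hat{\otimes}L_{\dR}$, and the only structure it induces on $\calH(\bL)=\gr^0\RH(\bL)$ is the $\calO_{X_K}$-linear Higgs field $\vartheta_{\bL}$. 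The commutation relation actually available on $X_K$ is $[\phi_{\bL},\vartheta_i]=\vartheta_i$ (Proposition~\ref{prop:commutativity of Higgs and Sen}), and since a Higgs field differentiates nothing, no flatness of the characteristic polynomial of $\phi_{\bL}$ can be extracted from it. Second, even after lifting $\phi_{\bL}$ to the de Rham level --- i.e.\ passing to $\RH(\bL)(Y_K)_{\fin}$ over $B_\infty((t))$ with the Fontaine-type operator $\phi_{\dR,\bL}$, which does commute with $\nabla_{\bL}$ (Proposition~\ref{prop:commutativity of de Rham Sen and connection}) --- the lattice whose residue computes $\phi_{\bL}$ is $(\Fil^0\RH(\bL)(Y_K))_{\fin}$, and by Griffiths transversality this lattice is \emph{not} stable under the geometric derivations. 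So the inference ``the Sen operator varies horizontally, hence its characteristic polynomial is a flat section of $(\calO_X^{\oplus m},d)$'' is exactly the step that fails if taken literally.

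The missing idea is the regular-singular-point input of Theorem~\ref{thm:diff eq} and Lemma~\ref{lem:ker lattice is D-stable}: after extending scalars to an algebraic closure of $\Frac B_\infty$, there is a canonical $tD_0$-stable lattice built out of the generalized eigenspaces of $tD_0$; this particular lattice \emph{is} stable under the base derivations $D_i$, for it a standard commutator/trace computation shows that the eigenvalues of the residue are killed by $d_1,\ldots,d_n$, and one then transfers the conclusion to the lattice coming from $\Fil^0$ via the comparison statement that the residues of any two $tD_0$-stable lattices have eigenvalues differing by integers (which are of course killed by the $d_i$). Relatedly, the role you assign to the theory of formal connections --- controlling convergence near the boundary of $X$ and passing from flat to locally constant --- is not where it is needed: in the paper that theory is purely algebraic in the period variable $t$ (regular singularity at $t=0$ and lattice comparison), while constancy along the base comes from the elementary fact, after choosing a toric chart, that an element of an algebraic closure of $\Frac B_\infty$ annihilated by all $\partial/\partial T_i$ lies in $\overline{k}$; no non-archimedean convergence analysis on $X$ enters. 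With this correction --- replacing the ``flat characteristic polynomial'' mechanism by the lattice-comparison argument for formal connections with regular singularities --- your outline becomes the paper's proof.
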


The theorem gives one instance of the rigidity of a geometric family of Galois representations.
It is worth noting that arithmetic families of Galois representations do not have such rigidity; 
consider a representation of the absolute Galois group of $k$
 with coefficients in some $\Q_p$-affinoid algebra. One can associate to each maximal ideal a Galois representation of $k$.
In such a situation, the generalized Hodge-Tate weights vary over the maximal ideals.

To explain ideas of the proof of Theorem~\ref{intro:constancy}
as well as other results of this paper, 
let us recall the work of Sen mentioned above.
For each $p$-adic Galois representation $V$ of $k$, 
we set 
\[
 \calH(V):=(V\otimes_{\Q_p}\C_p)^{\Gal(\overline{k}/k_\infty)},
\]
where $\C_p$ is the $p$-adic completion of $\overline{k}$
and $k_\infty:=k(\mu_{p^\infty})$ is the cyclotomic extension of $k$. 
This is a vector space over the $p$-adic completion $K$
of $k_\infty$ equipped with a continuous semilinear action of $\Gal(k_\infty/k)$ and satisfies $\dim_K \calH(V)=\dim_{\Q_p}V$.
Sen developed a theory of decompletion;
he found a natural $k_\infty$-vector subspace 
$\calH(V)_{\fin}\subset \calH(V)$ 
that is stable under $\Gal(k_\infty/k)$-action
and satisfies $\calH(V)_{\fin}\otimes_{k_\infty}K = \calH(V)$.
He then defined a $k_\infty$-endomorphism $\phi_V$ on $\calH(V)_{\fin}$,
called the Sen endomorphism of $V$,
by considering the infinitesimal action of $\Gal(k_\infty/k)$.
The generalized Hodge-Tate weights are defined to be eigenvalues of $\phi_V$.

Therefore, the first step toward Theorem~\ref{intro:constancy}
is to define generalizations of $\calH(V)$ and $\phi_V$
for each $\Q_p$-local system.
For this, we use the $p$-adic Simpson correspondence by Liu and Zhu
\cite{LZ};
based on recent developments in relative $p$-adic Hodge theory
by Kedlaya-Liu and Scholze, 
Liu and Zhu associated to each $\Q_p$-local system $\bL$ on $X$
a vector bundle $\calH(\bL)$ of the same rank on $X_K$
equipped with a $\Gal(k_\infty/k)$-action
and a Higgs field, where $X_K$ is the base change of $X$ to $K$.
When $X$ is a point and $\bL$ corresponds to $V$, this agrees 
with $\calH(V)$ as the notation suggests.
Following Sen, we will define 
the arithmetic Sen endomorphism $\phi_{\bL}$ of $\bL$
by decompleting $\calH(\bL)$ and considering
the infinitesimal action of $\Gal(k_\infty/k)$.
Then Theorem~\ref{intro:constancy} is reduced to the following:

\begin{thm}[Theorem~\ref{thm:constancy of eigenvalues of arithmetic Sen operator}]\label{intro:constancy of Sen endom}
The eigenvalues of $\phi_{\bL,x}$ for $x\in X_K$ are algebraic over $k$ 
and constant on $X_K$.
\end{thm}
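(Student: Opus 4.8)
The plan is to show that the arithmetic Sen endomorphism $\phi_{\bL}$, viewed as an endomorphism of a vector bundle on $X_K$, has a characteristic polynomial whose coefficients are \emph{global} functions on $X_K$ that are moreover locally constant, and then to conclude by connectedness. The starting point is the construction of $\phi_{\bL}$ via decompletion of $\calH(\bL)$: on suitable affinoid opens of $X_K$ one obtains a finite projective module $\calH(\bL)_{\fin}$ over the decompleted ring, stable under $\Gal(k_\infty/k)$, with $\phi_{\bL}$ arising from the infinitesimal action. The first task is to check that these local pictures glue to give a well-defined $\calO_{X_K}$-linear (not merely $K$-linear) endomorphism of a coherent sheaf on $X_K$, so that $P(T):=\det(T-\phi_{\bL})$ is a well-defined polynomial with coefficients in $\calO_{X_K}(X_K)$. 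Specializing at $x\in X_K$ recovers $\phi_{\bL,x}$, so the eigenvalues at $x$ are the roots of $P_x(T)$.

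Next I would prove the \emph{algebraicity} statement. At a classical point, $\phi_{\bL,x}$ is, by the compatibility built into the Liu-Zhu construction, the classical Sen operator of the Galois representation $\bL_{\overline{x}}$, whose eigenvalues Sen showed are algebraic over $k(x)$, hence over $k$ once one knows they are independent of $x$; but to bootstrap this I would instead argue directly that the coefficients of $P(T)$ lie in (the image of) $k$. Because $X$ is geometrically connected and smooth, $\calO_X(X)$ may be large, but a global function on $X_K$ that is locally constant is determined by its value at a single point, which will be algebraic over $k$. So the crux is local constancy.

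For local constancy, the key input is the theory of formal connections together with the Higgs field on $\calH(\bL)$ coming from the $p$-adic Simpson correspondence. The idea is that $\phi_{\bL}$ does not merely exist pointwise: the Sen-Fontaine decompletion in the relative setting equips $\calH(\bL)_{\fin}$ with an integrable connection $\nabla$ in the geometric directions, and the commutation relation between $\nabla$ and $\phi_{\bL}$ forces the characteristic polynomial of $\phi_{\bL}$ to be horizontal for $\nabla$. Concretely, one shows $\nabla \circ \phi_{\bL} = \phi_{\bL}\circ\nabla$ (up to a term that vanishes after passing to the trace-type symmetric functions), so each coefficient $c_i$ of $P(T)$ satisfies $\nabla c_i = 0$. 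A horizontal section of the structure sheaf for an integrable connection on a connected rigid variety is constant, giving the result. I would carry this out first on a connected affinoid (or even on a small polydisc, where formal connections are most transparent), then globalize using that $X_K$ is connected since $X$ is geometrically connected.

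The main obstacle I anticipate is establishing the commutation relation $[\nabla,\phi_{\bL}]=0$ with the correct normalization: the relative Sen theory and the relative Hodge-Tate/Simpson theory each produce operators (the Sen endomorphism, the Higgs field, and the geometric connection on the decompleted module) whose mutual compatibilities must be pinned down carefully, and there may be a twist by the cyclotomic character that contributes a scalar and must be tracked. Handling the decompletion uniformly in families — ensuring $\calH(\bL)_{\fin}$ is a well-behaved sheaf and that $\phi_{\bL}$ is genuinely $\calO_{X_K}$-linear rather than just $K$-linear and continuous — is the technical heart; once the operator and its compatibility with $\nabla$ are in place, the passage from "horizontal" to "constant" and then to "algebraic over $k$" is formal.
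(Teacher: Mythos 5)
Your overall strategy -- realize $\phi_{\bL}$ in a setting where it interacts with a connection in the geometric directions and deduce that the coefficients of its characteristic polynomial are horizontal, hence locally constant -- is in the right spirit, but as written there is a genuine gap at the crucial step. The relative Sen--Fontaine decompletion of $\calH(\bL)$ does \emph{not} equip $\calH(\bL)_{\fin}$ with an integrable connection in the geometric directions: the only geometric structure on $\calH(\bL)$ is the Higgs field $\vartheta_{\bL}$, which is $\calO$-linear and therefore cannot differentiate the coefficients of $\det(T-\phi_{\bL})$; moreover the actual compatibility is $[\phi_{\bL},\vartheta_i]=\vartheta_i$, not commutation, and in any case it yields no information about the variation of the eigenvalues. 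A genuine connection only appears after passing to the de Rham level: one must use Liu--Zhu's $\RH(\bL)$ over $\calO_X\hat{\otimes}B_{\dR}(K)$, whose Fontaine-type decompletion $\RH(\bL)_{\fin}$ carries both the geometric connection $\nabla_{\bL}$ and an arithmetic operator $\phi_{\dR,\bL}$ satisfying the Leibniz rule with respect to $t\partial_t$ and commuting with $\nabla_{\bL}$ (by Galois-equivariance of $\nabla_{\bL}$), and $\phi_{\bL}$ is recovered only as the \emph{residue} of $\phi_{\dR,\bL}$ along the lattice $(\Fil^0\RH(\bL))_{\fin}$.

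Even granting this, your inference ``commutation forces the characteristic polynomial to be horizontal'' does not go through directly, because $\phi_{\dR,\bL}$ is not $\calO$-linear (so it has no characteristic polynomial) and the lattice $\Fil^0$ along which one takes the residue is \emph{not} stable under $\nabla_{\bL}$ -- Griffiths transversality only gives $\nabla(\Fil^0)\subset\Fil^{-1}\otimes\Omega^1$. The missing ingredient is the theory of formal regular singular connections: over an algebraically closed base field one has a canonical $tD_0$-stable lattice built from the generalized eigenspaces $\Ker(tD_0-\alpha)^r$, one proves this lattice \emph{is} stable under the geometric derivations, the matrix identity $[\overline{C}_0,\overline{C}_i]=d_i\overline{C}_0$ at $t=0$ then gives $d_i\bigl(\tr(\overline{C}_0^j)\bigr)=0$, and finally one transfers the conclusion to the lattice $\Fil^0$ using the fact that residues along different $tD_0$-stable lattices differ by integers. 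Without this lattice-comparison step your argument would fail at exactly the point where horizontality of the residue is asserted. (For algebraicity over $k$, note also that local constancy alone gives values in $K$; one must identify the simultaneous kernel of $\partial/\partial T_1,\dots,\partial/\partial T_n$ in the algebraic closure of the relevant fraction field with $\overline{k}$, or else argue via a classical point as you suggest, which requires constancy to have been established first.)
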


Before discussing ideas of the proof,
let us mention consequences of Theorem~\ref{intro:constancy of Sen endom}.
Sen proved that a $p$-adic Galois representation $V$ is Hodge-Tate
if and only if $\phi_V$ is semisimple with integer eigenvalues.
In the same way, we use $\phi_{\bL}$ to study Hodge-Tate sheaves.
We define a sheaf $D_{\HT}(\bL)$ on the \'etale site $X_{\et}$
by
\[
 D_{\HT}(\bL):=\nu_\ast(\bL\otimes_{\Q_p}\OB_{\HT}),
\]
where $\OB_{\HT}$ is the Hodge-Tate period sheaf
on the pro-\'etale site $X_{\proet}$ and $\nu\colon X_{\proet}\ra X_{\et}$
is the projection (see Section~\ref{section:applications}).
A $\Q_p$-local system $\bL$ is called \emph{Hodge-Tate}
if $D_{\HT}(\bL)$ is a vector bundle on $X$ of rank equal to $\rank\bL$.

\begin{thm}[Theorem~\ref{thm:def of Hodge-Tate sheaves}]
\label{thm:Intro defo of Hodge-Tate sheaves}
 The following conditions are equivalent for a $\Q_p$-local system $\bL$ on $X$:
\begin{enumerate}
 \item $\bL$ is Hodge-Tate.
 \item $\phi_{\bL}$ is semisimple with integer eigenvalues.
\end{enumerate}
\end{thm}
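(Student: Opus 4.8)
The plan is to reduce the equivalence to Sen's classical criterion --- a $p$-adic Galois representation $V$ of $k$ is Hodge--Tate if and only if $\phi_V$ is semisimple with integer eigenvalues --- by first establishing a dictionary between the \'etale sheaf $D_{\HT}(\bL)$ and the pair $(\calH(\bL),\phi_{\bL})$. Since the assertion is local on $X$, I would begin by reducing to the case that $X$ is affinoid and carries the auxiliary data used to construct $\calH(\bL)$ and $\phi_{\bL}$. The first step is then to unwind $D_{\HT}(\bL)$: the period sheaf $\OB_{\HT}$ is graded, each graded piece being a Tate twist of $\OC$, so that $\bL\otimes_{\Q_p}\OB_{\HT}\cong\bigoplus_{n\in\Z}\bigl(\bL(n)\otimes_{\Q_p}\OC\bigr)$ and $D_{\HT}(\bL)=\bigoplus_{n\in\Z}\nu_\ast\bigl(\bL(n)\otimes_{\Q_p}\OC\bigr)$. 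The Liu--Zhu construction identifies $\nu_\ast(\bL\otimes_{\Q_p}\OC)$ --- after the appropriate base change --- with $\calH(\bL)$ and its Higgs field and $\Gal(k_\infty/k)$-action, and similarly for the twists $\bL(n)$; feeding this through the relative Sen--Fontaine decompletion theory of the previous sections then identifies $D_{\HT}(\bL)$, over $X_K$ and up to the descent along $\Gal(k_\infty/k)$, with $\bigoplus_{n\in\Z}\ker(\phi_{\bL}-n)\subset\calH(\bL)_{\fin}$. Along the way one also records the formal facts that $D_{\HT}(\bL)$ is a coherent $\calO_X$-module, that the natural map $D_{\HT}(\bL)\otimes_{\calO_X}\OB_{\HT}\to\bL\otimes_{\Q_p}\OB_{\HT}$ is injective, and hence that $\rank_{\calO_X}D_{\HT}(\bL)\le\rank\bL$ with equality exactly when this comparison map is an isomorphism.

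Granting the dictionary, condition (i) --- $D_{\HT}(\bL)$ locally free of rank $\rank\bL$ --- is equivalent to the comparison map being an isomorphism, which after decompletion and descent says precisely that $\bigoplus_{n\in\Z}\ker(\phi_{\bL}-n)$ fills up $\calH(\bL)_{\fin}$, i.e.\ that $\phi_{\bL}$ is semisimple with integer eigenvalues; this is (ii). For the direction (ii)$\Rightarrow$(i) the essential input is Theorem~\ref{intro:constancy of Sen endom}: once $\phi_{\bL}$ is semisimple with integer eigenvalues, the theorem guarantees that there are only finitely many eigenvalues $n_1,\dots,n_r\in\Z$ and that their multiplicities are constant on $X_K$, so that $\prod_i(\phi_{\bL}-n_i)=0$, each $\ker(\phi_{\bL}-n_i)$ is a direct summand of $\calH(\bL)_{\fin}$ of locally constant rank, and the sum of these subbundles is all of $\calH(\bL)_{\fin}$; descending and reassembling the graded pieces then exhibits $D_{\HT}(\bL)$ as a vector bundle of the correct rank. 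For (i)$\Rightarrow$(ii) one reads the dictionary in the reverse direction; as an independent check, one can instead pull back along the classical points $x$ of $X$, where compatibility of $D_{\HT}$ and of the Sen endomorphism with specialization identifies $\phi_{\bL,x}$ with the classical Sen endomorphism of $\bL_{\overline{x}}$, so that Sen's theorem forces semisimplicity and integer eigenvalues at every classical point, and this propagates to $\phi_{\bL}$ itself since the classical points are Zariski dense in the reduced space $X_K$ (using Theorem~\ref{intro:constancy of Sen endom} for the integrality).

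The step I expect to be the main obstacle is the first one --- making the dictionary precise in the relative setting. Here the relative decompletion theory and the Liu--Zhu computation must be combined with careful bookkeeping of the grading of $\OB_{\HT}$, the Tate twists, the Higgs field, and the $\Gal(k_\infty/k)$-action. Two points in particular require care: first, that the Higgs field on $\calH(\bL)$, although it persists as extra structure on $D_{\HT}(\bL)$, is irrelevant to the rank count --- intuitively because, after the Tate twist built into its target, it preserves the $\phi_{\bL}$-eigenspace decomposition --- so that it need not enter the statement; and second, that the relevant $\nu_\ast$'s are computed by honest invariants and not by generalized eigenspaces, which is precisely why \emph{semisimplicity} of $\phi_{\bL}$, and not merely integrality of its eigenvalues, is the right condition. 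Once the dictionary is in hand, the rest is the linear algebra above, with Theorem~\ref{intro:constancy of Sen endom} supplying the constancy of eigenvalue multiplicities needed to turn pointwise eigenspace decompositions into decompositions by subbundles.
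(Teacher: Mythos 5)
Your plan follows essentially the same route as the paper: the dictionary you describe is exactly Proposition~\ref{prop:DHT and kernel of Sen} (together with the coherence statement preceding it), which identifies $D_{\HT}(\bL)|_{X_K}$ with $\bigoplus_{j\in\Z}\calH(\bL)^{\phi_{\bL}=j}$ via the relative Sen--Fontaine decompletion, the $\Gal(k_\infty/k)$-descent being handled by reducing to $\phi=0$ and finite Galois descent \`a la Berger--Colmez, after which the equivalence is the linear algebra you outline, with Theorem~\ref{thm:constancy of eigenvalues of arithmetic Sen operator} supplying the finitely many integer eigenvalues. The only real discrepancy is that the ``formal facts'' you invoke (injectivity of the comparison map and the rank-equality-iff-isomorphism criterion) are not formal in the relative setting and the paper never uses them, obtaining the rank bound instead from the injection $(V(j)_{\fin})^{\Gamma_k}\otimes_B B_{\infty}\hra V_{\fin}^{\phi_V=-j}$ inside the decompletion (Lemma~\ref{lem:Gamma-inv is inj}) --- which sits precisely in the part of the argument you correctly flag as the main obstacle.
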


The study of the Sen endomorphism for a geometric family
was initiated by Brinon as a generalization of Sen's theory
to the case of non-perfect residue fields (\cite{Brinon}).
Tsuji obtained Theorem~\ref{thm:Intro defo of Hodge-Tate sheaves} in the case of schemes with semistable reduction (\cite{Tsuji}).

Using this characterization, we prove the following basic property
of Hodge-Tate sheaves:

\begin{thm}[Theorem~\ref{thm:pushforward of Hodge-Tate sheaves}]
\label{intro:pushforward}
Let $f \colon X\ra Y$ be a smooth proper morphism 
between smooth rigid analytic varieties over $k$
and let $\bL$ be a $\Z_p$-local system on $X_{\et}$.
Assume that $R^if_\ast \bL$ is a $\Z_p$-local system on $Y_{\et}$ for each $i$.
Then if $\bL$ is a Hodge-Tate sheaf on $X_{\et}$, 
$R^if_\ast \bL$ is a Hodge-Tate sheaf on $Y_{\et}$.
\end{thm}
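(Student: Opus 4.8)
The plan is to reduce the statement, via the characterization of Hodge-Tate sheaves in terms of the arithmetic Sen endomorphism (Theorem~\ref{thm:def of Hodge-Tate sheaves}), to a pointwise assertion about $\phi_{R^if_\ast\bL}$, and then to verify that assertion at the classical points of $Y$ using smooth proper base change together with the constancy of generalized Hodge-Tate weights.

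\emph{Reductions.} Being a Hodge-Tate sheaf is local on $Y$ and is unaffected by replacing $k$ by a finite extension (which changes $\calH(-)$ and the arithmetic Sen endomorphism only through an extension of scalars, preserving semisimplicity and the multiset of eigenvalues), so we may assume $Y$ is geometrically connected. Fix $i$, write $\calM:=R^if_\ast\bL$, which by hypothesis is a $\Z_p$-local system, and set $\phi:=\phi_{\calM}$. By Theorem~\ref{thm:def of Hodge-Tate sheaves} it suffices to show that $\phi$ is semisimple with integer eigenvalues. By Theorem~\ref{thm:constancy of eigenvalues of arithmetic Sen operator}, the eigenvalues of $\phi_x$, counted with multiplicity, form a multiset $S$ of numbers algebraic over $k$ that does not depend on $x\in Y_K$; let $n_1,\dots,n_r$ be its distinct elements and put $h(T)=\prod_{j=1}^{r}(T-n_j)$. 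Then $\phi_x$ is semisimple precisely when $h(\phi)$ vanishes at $x$, and $h(\phi)$ is a global section of the endomorphism bundle of $\calH(\calM)$ over the reduced rigid space $Y_K$. Since classical points are dense in $Y_K$, it therefore suffices to prove: for every classical point $y$ of $Y$, the specialization of $\phi$ at $y$ is semisimple with integer eigenvalues.

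\emph{Analysis at a classical point.} Let $y$ be a classical point of $Y$, with a geometric point $\bar y$ above it, so that $k(y)/k$ is finite. Smooth proper base change for \'etale cohomology of rigid analytic varieties identifies $\calM_{\bar y}$, as a continuous representation of $\Gal(\overline{k(y)}/k(y))$, with $H^i_{\et}(X_{\bar y},\bL)$, and the compatibility of the arithmetic Sen endomorphism with specialization at classical points (the same compatibility underlying the deduction of Theorem~\ref{cor:constancy of generalized HT weights} from Theorem~\ref{thm:constancy of eigenvalues of arithmetic Sen operator}) identifies the Sen endomorphism of this representation with the specialization of $\phi$ at $y$. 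Now, because $\bL$ is a Hodge-Tate sheaf on $X$, its restriction $\bL|_{X_y}$ along the closed immersion $X_y\hra X$ is a Hodge-Tate local system on the proper smooth rigid variety $X_y$ over $k(y)$: indeed $\phi_{\bL}$ restricts to $\phi_{\bL|_{X_y}}$, and restricting a semisimple endomorphism with integer eigenvalues preserves these properties, so one concludes by Theorem~\ref{thm:def of Hodge-Tate sheaves} applied over $X_y$. By the Hodge-Tate decomposition for the \'etale cohomology of a Hodge-Tate local system on a proper smooth rigid analytic variety, which follows from the $p$-adic Simpson/Riemann-Hilbert correspondence of \cite{LZ}, the representation $H^i_{\et}(X_{\bar y},\bL)$ of $k(y)$ is Hodge-Tate; hence by Sen's theorem its Sen endomorphism, namely the specialization of $\phi$ at $y$, is semisimple with integer eigenvalues.

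Combining the two parts: the constant multiset $S$ equals the (integral) multiset of Hodge-Tate weights of $H^i_{\et}(X_{\bar y},\bL)$, so $\phi$ has integer eigenvalues, and $h(\phi)$ vanishes at every classical point of $Y_K$, hence identically, so $\phi$ is semisimple; thus $\calM=R^if_\ast\bL$ is a Hodge-Tate sheaf on $Y$ by Theorem~\ref{thm:def of Hodge-Tate sheaves}. I expect the main obstacle to be the last ingredient of the third paragraph, namely passing from the Hodge-Tate hypothesis on $\bL$ to the Hodge-Tate property of the Galois cohomology of each fiber: this requires both the compatibility of Hodge-Tate sheaves (and of $\phi$) with restriction to fibers and a Hodge-Tate decomposition for cohomology with coefficients in a Hodge-Tate --- not merely de Rham --- local system on a proper smooth variety, and one must check throughout that every comparison involved is equivariant for the Galois action and hence compatible with the Sen endomorphism. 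A secondary point requiring care is the density of classical points in the reduced rigid space $Y_K$, which is what lets one upgrade pointwise semisimplicity to the global identity $h(\phi)=0$.
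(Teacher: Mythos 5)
Your argument has a genuine gap at its central step. The assertion you need at each classical point $y$ --- that $H^i_{\et}(X_{\overline{y}},\bL|_{X_{\overline{y}}})$ is a Hodge--Tate representation of $k(y)$ whenever $\bL|_{X_y}$ is a Hodge--Tate local system on the proper smooth fiber $X_y$ --- is exactly the case ``$Y=$ point'' of the theorem you are proving, and it does \emph{not} follow from the $p$-adic Simpson/Riemann--Hilbert correspondence of \cite{LZ} as you claim. What \cite[Theorem 2.1(v)]{LZ} gives is an identification of $\calH\bigl(H^i(X_{\overline{y}},\bL)\bigr)$ with the hypercohomology of the Higgs complex $\calH(\bL)\otimes\Omega^\bullet(-\bullet)$; to deduce Hodge--Tate-ness of the Galois representation one must control the $\Gamma_{k(y)}$-action on that hypercohomology, i.e.\ show that its Sen endomorphism is semisimple with integer eigenvalues. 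That is precisely the content of the paper's Theorem~\ref{thm:pushforward of Sen endomorphism} (compatibility of $\phi$ with proper pushforward, proved by a \v{C}ech-to-derived-functor argument), which your proposal never invokes; without it the proof is circular at the point you yourself flag as ``the main obstacle.'' A secondary unproved input is the proper base change identification $(R^if_\ast\bL)_{\overline{y}}\cong H^i_{\et}(X_{\overline{y}},\bL)$ for $\Z_p$-local systems on rigid analytic varieties, which the paper never needs and which is not a formality in this setting.

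For comparison, the paper avoids fibers and classical points altogether: since $\bL$ is Hodge--Tate, criterion (iv) of Theorem~\ref{thm:def of Hodge-Tate sheaves} gives a polynomial $F$ with distinct integer roots killing $\phi_{\bL}$; then a single polynomial $G$ with distinct integer roots kills every twist $\phi_{\bL}\otimes\id-j(\id\otimes\id)$ on $\calH(\bL)\otimes\Omega^j_{X/Y}(-j)$ for $0\le j\le m$, and by Theorem~\ref{thm:pushforward of Sen endomorphism} together with the \v{C}ech description of $R^i\Gamma$, the endomorphism $\phi_{R^if_\ast\bL}$ is induced on the cohomology of a complex annihilated by $G$, whence $G(\phi_{R^if_\ast\bL})=0$ and criterion (iv) applies to $R^if_\ast\bL$. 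Your reduction to a Zariski-dense set of classical points is in the spirit of the paper's Example~5.5 and could be made to work (with some care: you need vanishing of $h(\phi)$ on the points of $Y_K$ lying over classical points of $Y$, a much smaller set than all classical points of $Y_K$, so the density claim needs an argument, e.g.\ by descending $\phi$ to a finite level $Y_{k_m}$), but it does not shorten the proof, because the fiberwise Hodge--Tate-ness you feed into it is the whole difficulty and must in any case be established by the pushforward compatibility of the Sen endomorphism.
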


Hyodo introduced the notion of Hodge-Tate sheaves
and proved Theorem~\ref{intro:pushforward}
in the case of schemes (\cite{Hyodo}).
Links between
Hodge-Tate sheaves and the $p$-adic Simpson correspondence
can be seen in his work and were also studied by Abbes-Gros-Tsuji
(\cite{AGT}) and Tsuji (\cite{Tsuji-Simpson}). In fact, they undertook a systematic development of the $p$-adic Simpson correspondence started by Faltings  \cite{Faltings-Simpson}
and their focus is much broader than ours.
Andreatta and Brinon also studied Higgs modules and Sen endomorphisms
in a different setting (\cite{AB}). 
In these works, one is restricted to working with schemes or log schemes,
whereas we work with rigid analytic varieties.

We now turn to the proof of Theorem~\ref{intro:constancy of Sen endom}.
The key idea to obtain such constancy
is to describe $\phi_{\bL}$ as the residue of a certain formal integrable connection.
Such an idea occurs in the work \cite{AB} of Andreatta and Brinon.
Roughly speaking, they associated to $\bL$ 
a formal connection over some pro-\'etale cover
of $X_K$ when $X$ is an affine scheme admitting invertible coordinates. 
In our case, we want to work over $X_K$, and thus
we use the geometric $p$-adic Riemann-Hilbert correspondence
by Liu and Zhu \cite{LZ} and Fontaine's decompletion theory for 
the de Rham period ring $B_{\dR}(K)$ in the relative setting.

Liu and Zhu associated to each $\Q_p$-local system $\bL$ on $X$
a locally free $\calO_X\hat{\otimes}B_{\dR}(K)$-module $\RH(\bL)$
equipped with an integrable connection 
\[
 \nabla\colon \RH(\bL)\ra \RH(\bL)\otimes \Omega^1_X
\]
and a $\Gal(k_\infty/k)$-action
(see Subsection~\ref{subsection:review of gemetric Riemann-Hilbert} for the 
notation).
To regard $\phi_{\bL}$ as a residue, we also need a connection in the arithmetic direction $B_{\dR}(K)$.
For this we use Fontaine's decompletion theory \cite{Fontaine};
recall the natural inclusion $k_\infty((t))\subset B_{\dR}(K)$
where $t$ is the $p$-adic analogue of the complex period $2\pi i$.
Fontaine extended the work of Sen and developed a decompletion theory for 
$B_{\dR}(K)$-representations of $\Gal(k_\infty/k)$.
We generalize Fontaine's decompletion theory to the relative setting, i.e., 
that for $\calO_X\hat{\otimes}B_{\dR}(K)$-modules (Theorem~\ref{thm:Fontaine decompletion} and Proposition~\ref{prop:Fontaine connection}),
which yields an endomorphism $\phi_{\dR,\bL}$ 
on $\RH(\bL)_{\fin}$
satisfying
\[
 \phi_{\dR,\bL}(t^n v)=nt^nv+t^n\phi_{\dR,\bL}(v)
\]
and $\gr^0 \phi_{\dR,\bL}=\phi_{\bL}$.
Informally, this means that we have
an integrable connection 
\[
 \nabla+ \frac{\phi_{\dR,\bL}}{t}\otimes dt
\colon \RH(\bL)\ra \RH(\bL)\otimes 
\bigl((\calO_X\hat{\otimes}B_{\dR}(K))\otimes\Omega^1_X 
+(\calO_X\hat{\otimes}B_{\dR}(K))\otimes dt \bigr)
\]
over $X\hat{\otimes}B_{\dR}(K)$
whose residue along $t=0$ coincides with 
the arithmetic Sen endomorphism $\phi_{\bL}$.
We develop a theory of formal connections to analyze our connection
and prove Theorem~\ref{intro:constancy of Sen endom}.

Finally, let us mention two more results in this paper.
The first result is a rigidity of Hodge-Tate local systems
of rank at most two.

\begin{thm}[Theorem~\ref{thm:principle B for rank 2}]
Let $X$ be a geometrically connected smooth rigid analytic variety over $k$
and let $\bL$ be a $\Q_p$-local system on $X_{\et}$.
Assume that $\rank \bL$ is at most two.
If $\bL_{\overline{x}}$ is a Hodge-Tate representation at a classical point $x\in X$, then $\bL$ is a Hodge-Tate sheaf.
In particular, $\bL_{\overline{y}}$ is a Hodge-Tate representation at every classical point $y\in X$
\end{thm}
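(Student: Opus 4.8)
The plan is to translate the Hodge-Tate property into a statement about the arithmetic Sen endomorphism via Theorem~\ref{thm:def of Hodge-Tate sheaves}, and then to combine the constancy of generalized Hodge-Tate weights (Theorem~\ref{thm:constancy of eigenvalues of arithmetic Sen operator}) with linear algebra that is cheap in rank at most two. First I would record that by Theorem~\ref{thm:def of Hodge-Tate sheaves} it is enough to prove that $\phi_{\bL}$ is semisimple with integer eigenvalues. The hypothesis that $\bL_{\overline x}$ is Hodge-Tate means, by Sen's criterion, that the Sen endomorphism of $\bL_{\overline x}$ --- which by construction is the fiber $\phi_{\bL,x}$ --- is semisimple with integer eigenvalues; and Theorem~\ref{thm:constancy of eigenvalues of arithmetic Sen operator} then gives that, for every $y\in X_K$, the eigenvalues of $\phi_{\bL,y}$ are the same integers $n_1,n_2$. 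In particular the characteristic polynomial of $\phi_{\bL}$ is $(T-n_1)(T-n_2)$ with constant coefficients, so only the semisimplicity of $\phi_{\bL}$ remains to be checked; the case $\rank\bL\le 1$ is trivial, so from now on assume $\rank\bL=2$.

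If $n_1\ne n_2$, then Cayley--Hamilton gives $(\phi_{\bL}-n_1\,\id)(\phi_{\bL}-n_2\,\id)=0$, so the operators $e_i:=(\phi_{\bL}-n_j\,\id)/(n_i-n_j)$ (with $\{i,j\}=\{1,2\}$) are complementary idempotents and $\phi_{\bL}=n_1e_1+n_2e_2$ is semisimple; hence $\bL$ is a Hodge-Tate sheaf. The remaining case $n_1=n_2=:n$ is the heart of the matter. Here $\phi_{\bL,x}$ is semisimple with a single eigenvalue, so $\phi_{\bL,x}=n\,\id$, and what must be shown is that $\phi_{\bL}=n\,\id$ on all of $X_K$, which is connected because $X$ is geometrically connected. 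For this I would use the formal-connection description of $\phi_{\bL}$ underlying Theorem~\ref{thm:constancy of eigenvalues of arithmetic Sen operator}: $\phi_{\bL}$ is the residue along $t=0$ of the integrable connection
\[
 \nabla+\frac{\phi_{\dR,\bL}}{t}\otimes dt
\]
on $\RH(\bL)$ over $X\hat{\otimes}B_{\dR}(K)$. Since the two Hodge-Tate weights coincide, every difference of eigenvalues of this residue equals $0$ and in particular is not a nonzero integer, i.e.\ the residue is non-resonant; in this non-resonant situation I expect the formal-connection rigidity behind Theorem~\ref{thm:constancy of eigenvalues of arithmetic Sen operator} to yield constancy not merely of the eigenvalues but of the entire conjugacy class of $\phi_{\bL,y}$ along the connected space $X_K$. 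As that conjugacy class is scalar ($=n\,\id$) at $x$, it is scalar everywhere, so $\phi_{\bL}=n\,\id$ is semisimple and $\bL$ is a Hodge-Tate sheaf. The last assertion of the theorem is then immediate: for any classical point $y$, the fiber $\phi_{\bL,y}$ is a restriction of the semisimple operator $\phi_{\bL}$, hence semisimple with integer eigenvalues, and so $\bL_{\overline y}$ is Hodge-Tate by Sen's criterion.

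The hard part is the equal-weights case, namely upgrading the vanishing of $\phi_{\bL}-n\,\id$ at the single classical point $x$ to its vanishing on all of $X_K$. This is exactly where the hypothesis $\rank\bL\le 2$ enters: it forces a repeated Hodge-Tate weight to create no resonance, which is what makes the rigidity of the formal connection applicable. For $\rank\bL\ge 3$ one can in principle have, say, constant weights $0,0,1$ with a genuine resonance, and then the Jordan type of the residue on the weight-$0$ subspace need not be locally constant, so the argument would break down. I would therefore concentrate the work on formulating, in the relative setting over $X_K$, the statement that a non-resonant regular-singular formal connection has locally constant residue up to conjugacy, and on verifying that the connection displayed above is of this type.
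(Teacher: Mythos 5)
Your reductions are sound and, up to the equal-weights case, coincide with the paper: by Theorem~\ref{thm:def of Hodge-Tate sheaves} together with Theorem~\ref{thm:constancy of eigenvalues of arithmetic Sen operator}, the rank-one case and the rank-two case with two distinct integer weights follow as you say (Cayley--Hamilton gives condition (iv)). The genuine gap is in the case of a repeated weight $n$. There your argument rests on the assertion that non-resonance forces the \emph{conjugacy class}, not merely the eigenvalues, of $\phi_{\bL,y}$ to be locally constant on $X_K$; you do not prove this, and nothing in the paper's formal-connection machinery supplies it. The constancy results (Proposition~\ref{prop:constancy of residue}, via Theorem~\ref{thm:diff eq}) are obtained only after replacing the lattice $(\Fil^0\RH(\bL)(Y_K))_{\fin}$ by the auxiliary lattice $\Lambda_{\calA}$, which is stable under the geometric derivations (Lemma~\ref{lem:ker lattice is D-stable}); the conclusion extracted there is a trace identity controlling eigenvalues, and eigenvalues are only well defined modulo $\Z$ when the lattice changes. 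The lattice $(\Fil^0\RH(\bL))_{\fin}$ whose residue is $\phi_{\bL}$ is \emph{not} known to be stable under $\nabla_{\bL}$ --- Griffiths transversality only gives $\nabla(\Fil^0)\subset\Fil^{-1}\otimes\Omega^1$ --- so its residue is not horizontal, and to get constancy of the Jordan type you would have to develop a relative theory of canonical (Deligne-type) extensions for non-resonant regular-singular formal connections over rings like $B_\infty$. That is a substantial new input which your proposal only announces as future work, so as written the equal-weights case is not proved.

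The paper sidesteps this entirely: if the two weights coincide, $\bL_{\overline{x}}$ is a Hodge-Tate representation with a single weight, hence by Sen's theorem (Theorem~\ref{thm:Sen}) it is potentially unramified after a Tate twist and therefore de Rham; Liu and Zhu's rigidity (``Principle B'') for de Rham local systems \cite[Theorem~1.3]{LZ} then shows $\bL$ is a de Rham sheaf, in particular Hodge-Tate. This is exactly where the hypothesis $\rank\bL\le 2$ (more generally: a single weight, or pairwise distinct weights) is used. Either prove your non-resonant rigidity statement in the relative formal setting, or replace that step by the Sen-plus-Principle-B argument.
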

\noindent
Liu and Zhu proved such a rigidity for de Rham local systems
(\cite[Theorem~1.3]{LZ}).
We do not know whether a similar statement holds for
Hodge-Tate local systems of higher rank.

The second result concerns the relative $p$-adic monodromy conjecture
for de Rham local systems; the conjecture states that
a de Rham local system on $X$
becomes semistable at every classical point
after a finite \'etale extension of $X$
(cf.~\cite[\S 0.8]{KL-I}, \cite[Remark 1.4]{LZ}).
This is a relative version of the $p$-adic monodromy theorem proved by Berger
(\cite{Berger}), and it is a major open problem in relative $p$-adic Hodge theory.
We work on the case of de Rham local systems with a single Hodge-Tate weight,
in which case the result follows from a theorem of Sen (Theorem~\ref{thm:Sen}).
\begin{thm}[Theorem~\ref{thm:weight zero p-adic monodromy}]
\label{thm:intro p-adic monodromy}
Let $X$ be a smooth rigid analytic variety over $k$
and let $\bL$ be a $\Z_p$-local system on $X_{\et}$.
Assume that $\bL$ is a Hodge-Tate sheaf with a single Hodge-Tate weight.
Then there exists a finite \'etale cover $f\colon Y\ra X$ such that
$(f^\ast \bL)_{\overline{y}}$ is semistable at every classical point 
$y$ of $Y$.
 \end{thm}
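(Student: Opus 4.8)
The plan is to reduce the statement, fibrewise, to Sen's theorem on $\C_p$-admissible representations (Theorem~\ref{thm:Sen}), and then to manufacture a single finite \'etale cover by passing to a torsion-free open subgroup of $\GL_n(\Z_p)$.

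\emph{Step 1 (the fibres become $\C_p$-admissible after a twist).} Since $\bL$ is a Hodge-Tate sheaf with a single Hodge-Tate weight, Theorem~\ref{thm:Intro defo of Hodge-Tate sheaves} shows that $\phi_{\bL}$ is semisimple with integer eigenvalues; having a single eigenvalue, it is a scalar $m\cdot\id$ with $m\in\Z$. Replacing $\bL$ by an appropriate Tate twist — which does not affect whether a given finite \'etale cover satisfies the conclusion of the theorem, since a Tate twist $\Q_p(j)$ is crystalline and so tensoring by it preserves semistability of Galois representations — we may assume $\phi_{\bL}=0$. The formation of the arithmetic Sen endomorphism is compatible with specialization at classical points (this is the compatibility underlying Theorem~\ref{intro:constancy}), so for every classical point $x\in X$ the Sen operator of the $p$-adic representation $\bL_{\overline{x}}$ of $k(x)$ vanishes; equivalently $\bL_{\overline{x}}$ is $\C_p$-admissible. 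By Sen's theorem the inertia subgroup of $\Gal(\overline{k(x)}/k(x))$ then acts on $\bL_{\overline{x}}$ through a finite quotient.

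\emph{Step 2 (a cover that works at all classical points).} Treating each connected component of $X$ separately, the $\Z_p$-local system $\bL$ corresponds to a continuous representation $\rho\colon\pi_1^{\et}(X,\overline{x})\ra\GL_n(\Z_p)$, and for a classical point $x$ the fibre $\bL_{\overline{x}}$ is $\rho$ composed with the map $\Gal(\overline{k(x)}/k(x))\ra\pi_1^{\et}(X,\overline{x})$ determined by $x$. Fix a torsion-free open subgroup $U\subseteq\GL_n(\Z_p)$ (for instance the principal congruence subgroup $1+p^aM_n(\Z_p)$ with $a\geq 2$), and let $f\colon Y\ra X$ be the finite \'etale cover attached to the open finite-index subgroup $\rho^{-1}(U)$, formed as a disjoint union over the components of $X$. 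If $y$ is a classical point of $Y$ lying over $x\in X$, then the image of $\Gal(\overline{k(y)}/k(y))$ in $\pi_1^{\et}(X,\overline{x})$ lies in $\rho^{-1}(U)$, so $(f^{\ast}\bL)_{\overline{y}}$ has image contained in $U$; since moreover $(f^{\ast}\bL)_{\overline{y}}=\bL_{\overline{x}}|_{\Gal(\overline{k(y)}/k(y))}$ and the inertia of $\Gal(\overline{k(y)}/k(y))$ sits inside that of $\Gal(\overline{k(x)}/k(x))$, Step~1 shows that this inertia subgroup has finite, hence trivial, image in $U$. Therefore $(f^{\ast}\bL)_{\overline{y}}$ is unramified, in particular crystalline, in particular semistable, which proves the theorem.

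\emph{Main obstacle.} The delicate point is Step 1: one needs that the arithmetic Sen endomorphism $\phi_{\bL}$ genuinely specializes to the Sen operator of the fibre $\bL_{\overline{x}}$ at every classical point $x$ (whose residue field is finite over $k$, not over $K$), so that the scalar form of $\phi_{\bL}$ forces each fibre to be $\C_p$-admissible. Here the Hodge-Tate sheaf hypothesis enters essentially, through the semisimplicity of $\phi_{\bL}$: a nilpotent but nonzero Sen operator would not yield $\C_p$-admissibility, and Sen's theorem would be unavailable. Once fibrewise $\C_p$-admissibility is in hand, producing one cover that works at every classical point is formal, relying only on the existence of torsion-free open subgroups of $\GL_n(\Z_p)$ and on the dictionary between finite \'etale covers of $X$ and open subgroups of $\pi_1^{\et}(X)$.
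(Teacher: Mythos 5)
Your proposal is correct and follows essentially the same route as the paper: reduce by a Tate twist to the case of weight zero, use Sen's theorem (Theorem~\ref{thm:Sen}) to see that inertia acts with finite image at every classical point, and then kill this finite image by passing to a finite \'etale cover on which the image of the representation lands in a torsion-free principal congruence subgroup of $\GL_n(\Z_p)$ (the paper phrases this as trivializing $\bL/p^2\bL$, which is the same cover as your $\rho^{-1}(1+p^2M_n(\Z_p))$).
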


This is the simplest case of the relative $p$-adic monodromy conjecture.
In \cite{Colmez}, Colmez gave a proof of the $p$-adic monodromy theorem 
for de Rham Galois representations
using Sen's theorem mentioned above.
It is an interesting question
whether one can adapt Colmez's strategy to
the relative setting using Theorem~\ref{thm:intro p-adic monodromy}.

The organization of the paper is as follows:
Section~\ref{section:decompletion}
presents Sen-Fontaine's decompletion theory in the relative setting.
In Section~\ref{section:arithmetic Sen endomorphism},
we review the $p$-adic Simpson correspondence by Liu and Zhu,
and define the arithmetic Sen endomorphism $\phi_{\bL}$.
Section~\ref{section:constancy}
discusses a Fontaine-type decompletion for 
the geometric $p$-adic Riemann-Hilbert correspondence by Liu and Zhu,
and develops a theory of formal connections.
Combining them together we prove Theorem~\ref{intro:constancy}.
Section~\ref{section:applications} presents
applications of the study of the arithmetic Sen endomorphism
including basic properties of Hodge-Tate sheaves, a rigidity of Hodge-Tate sheaves, and the relative $p$-adic monodromy conjecture.

\noindent {\bf Conventions.}
We will use Huber's adic spaces as our language for non-archimedean analytic geometry. In particular, a rigid analytic variety over $\Q_p$
will refer to a quasi-separated adic space that is locally of finite type over
$\Spa (\Q_p,\Z_p)$ (\cite[\S 4]{Huber-gen}, \cite[1.11.1]{Huber-etale}).

We will use Scholze's theory of perfectoid spaces and pro-\'etale site.
For the pro-\'etale site, we will use the one introduced in \cite{Scholze, Scholze-errata}.

\medskip

\noindent {\bf Acknowledgment.} 
The author thanks Mark Kisin for his constant encouragement and useful feedbacks.

\section{Sen-Fontaine's decompletion theory for an arithmetic family}
\label{section:decompletion}
\subsection{Set-up}\label{subsection:setup}

 Let $k$ be a finite field extension of $\Q_p$.
We set $k_m:=k(\mu_{p^m})$ and $k_\infty:=\varinjlim_m k_m$.
Let $K$ denote the $p$-adic completion of $k_{\infty}$.
We set $\Gamma_k:=\Gal(k_\infty/k)$. 
Then $\Gamma_k$ is identified with an open subgroup of $\Z_p^\times$
via the cyclotomic character $\chi\colon \Gamma_k\ra \Z_p^\times$
and it acts continuously on $K$.

Let $L_{\dR}^+$ (resp.~ $L_{\dR}$) denote the de Rham period ring $B_{\dR}^+(K)$
(resp.~ $B_{\dR}(K)$) introduced by Fontaine.
We fix a compatible sequence of $p$-power roots of unity $(\zeta_{p^n})$ and set $t:=\log [\varepsilon]$ where $\varepsilon=(1,\zeta_p,\zeta_{p^2},\ldots)\in \calO_{K^\flat}$.
Then $\Gamma_k$ acts on $t$ via the cyclotomic character and
the $\Z_p$-submodule $\Z_pt\subset L_{\dR}^+$ does not depend on the choice of $(\zeta_{p^n})$. Note that $L_{\dR}$ is a discrete valuation ring
with the residue field $K$, the fraction field $L_{\dR}$, and a uniformizer $t$, and that
$k_{\infty}[[t]]$ is embedded into $L_{\dR}^+$.

We now recall the Sen-Fontaine's decompletion theory 
(\cite[Theorem 3]{Sen-cont}, \cite[Th\'eor\`eme 3.6]{Fontaine}).

\begin{thm}
\hfill
\begin{enumerate}
 \item \emph{(Sen)}
Let $V$ be a $K$-representation of $\Gamma_k$.
Denote by $V_{\fin}$ the union of
finite-dimensional $k$-vector subspaces of $V$
that are stable under the action of $\Gamma_k$.
Then the natural map
\[
 V_{\fin}\otimes_{k_\infty}K\ra V
\]
is an isomorphism.
 \item \emph{(Fontaine)}
Let $V$ be an $L_{dR}^+$-representation of $\Gamma_k$
and set 
\[
 V_{\fin}:=\varprojlim_n (V/t^nV)_{\fin},
\]
where $(V/t^nV)_{\fin}$
is defined to be the union of 
finite-dimensional $k$-vector subspaces of $V/t^nV$
that are stable under the action of $\Gamma$.
Then the natural map
\[
 V_{\fin}\otimes_{k_\infty[[t]]}L_{\dR}^+
\]
is an isomorphism.
\end{enumerate} 
\end{thm}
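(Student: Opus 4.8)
Both parts are instances of a single decompletion mechanism of Tate and Sen: the extension $K/k_\infty$ is \emph{almost \'etale} in a precise quantitative sense, so that a continuous semilinear representation over $K$ may be conjugated, by a matrix close to $1$, into one defined over a finite layer $k_m$. I would prove (i) by this mechanism directly, and (ii) by running the same mechanism with coefficients in $L_{\dR}^+$, exploiting that $L_{\dR}^+$ is $t$-adically complete with $\gr^n L_{\dR}^+ = Kt^n$ a $\chi^n$-twist of $K$ and with $k_\infty[[t]]$ as the corresponding decompleted subring. For notational simplicity one may assume throughout that $\Gamma_k\cong\Z_p$, the general case being entirely analogous (or handled by descent along the finite quotient $\Gamma_k/\Gamma_{k_m}$).

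\textbf{Part (i).} I would first record Tate's estimates: for each $m$ a continuous $k_m$-linear $\Gamma_k$-equivariant projection $R_m\colon K\to k_m$ extending the normalized traces, with $\Gamma_k$-stable complement $X_m=\ker R_m$; a constant independent of $m$ bounding the valuation-defect of $R_m$ and, for a topological generator $\gamma$ of $\Gamma_{k_m}$, that of $(\gamma-1)^{-1}$ on $X_m$; the convergence $R_m\to\id$ on $K$; and Sen's lemma (a vector of $K$ whose $\Gamma_k$-orbit spans a finite-dimensional $k$-space already lies in $k_\infty$). Given $V$ finite-dimensional over $K$ with continuous semilinear action, I would fix a basis, form the cocycle $\sigma\mapsto U_\sigma\in\GL_d(K)$, observe $U_\sigma\approx 1$ for $\sigma\in\Gamma_{k_m}$ with $m\gg0$, and run the successive approximation: build $M=\lim_j M_j\in\GL_d(K)$ with $M_{j+1}=M_j(1+N_j)$, each $N_j\in M_d(X_{m_j})$ chosen via bijectivity of $\gamma-1$ on $X_{m_j}$ to cancel the $X_{m_j}$-part of the transformed cocycle to ever higher order, so that in the new basis $\Gamma_k$ acts through $\GL_d(k_m)$. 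Summing over $\Gamma_k/\Gamma_{k_m}$ and extending scalars to $k_\infty$ produces a finite-dimensional $\Gamma_k$-stable $k_\infty$-subspace $W\subseteq V$ with $W\otimes_{k_\infty}K\to V$ surjective; the standard Hilbert~90 type fact (a consequence of Sen's lemma) that a finite-dimensional $\Gamma_k$-stable $k_\infty$-subspace of $V$ stays linearly independent over $K$ makes this map injective and forces $\dim_{k_\infty}V_{\fin}\le\dim_K V$, whence $W=V_{\fin}$ and the map is an isomorphism.

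\textbf{Part (ii).} I would transport this argument to $L_{\dR}^+$. Applying $R_m$ on each graded piece $Kt^n$ and invoking $t$-adic completeness, Tate's estimates propagate to $L_{\dR}^+$ — equivalently to each $L_{\dR}^+/t^N$, compatibly — with decompleted subring $k_\infty[[t]]$. Hence for $V$ finite free over $L_{\dR}^+$ with continuous semilinear $\Gamma_k$-action, the argument of (i), carried out modulo $t^N$ and passed to the limit, conjugates the defining cocycle in $\GL_d(L_{\dR}^+)$ into $\GL_d(k_m[[t]])$, yielding a $\Gamma_k$-stable finite free rank-$d$ $k_\infty[[t]]$-submodule $W\subseteq V$ with $W\otimes_{k_\infty[[t]]}L_{\dR}^+\xrightarrow{\ \sim\ }V$. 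Finally I would match $W$ with $V_{\fin}=\varprojlim_n(V/t^nV)_{\fin}$: for each $n$, $W/t^nW$ is $\Gamma_k$-stable and finite-dimensional over $k_m$, hence over $k$ (as $[k_m:k]<\infty$), so $W/t^nW\subseteq(V/t^nV)_{\fin}$; conversely, filtering $V/t^nV$ by powers of $t$ and applying part (i) to each graded piece $t^jV/t^{j+1}V\cong(V/tV)(j)$ shows every graded layer of $(V/t^nV)_{\fin}$ lies inside that of $W/t^nW$, so the two agree. Passing to $\varprojlim_n$, and using that $W$ (finite free over $k_\infty[[t]]$) is $t$-adically complete, gives $V_{\fin}=W$ and the asserted isomorphism.

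\textbf{Main difficulty.} The work is concentrated in Tate's quantitative estimates — the uniform-in-$m$ control of the normalized traces and of $(\gamma-1)^{-1}$ on their kernels, i.e.\ the almost \'etale nature of $K/k_\infty$ — together with the convergence of the successive approximation. In part (ii) one must in addition verify that these bounds are uniform across the $t$-graded pieces, so that the approximation still converges over $L_{\dR}^+$, and that the finite-level identity $(V/t^nV)_{\fin}=W/t^nW$ genuinely holds. These inputs are classical, which is presumably why the theorem is quoted here rather than reproved.
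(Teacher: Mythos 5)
Your outline is correct, but note that the paper does not prove this statement at all: it is quoted from Sen and Fontaine, and what the paper actually proves by the very mechanism you describe is the relative generalization (Theorem~\ref{thm:Sen decompletion}, Proposition~\ref{prop:Fontaine decompletion for torsion}, Theorem~\ref{thm:Fontaine decompletion}). Measured against that proof, your part (i) is the same route: normalized traces, successive approximation pushing the cocycle into $\GL_r(k_m)$ (done in the relative case via \cite[Corollaire 3.2.4]{Berger-Colmez}), and Sen's lemma (the analogue of Proposition~\ref{prop:finite Gamma orbit lies in A_infty}) to identify the resulting lattice with $V_{\fin}$; your dimension count via injectivity of $V_{\fin}\otimes_{k_\infty}K\to V$ is a harmless variant of the paper's direct coordinate argument. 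In part (ii) you organize things differently: you propose rerunning the full Tate approximation over $L_{\dR}^+/t^N$, which is why you end up worrying about estimates being uniform across the $t$-graded pieces. Fontaine, and the paper in Proposition~\ref{prop:Fontaine decompletion for torsion}, instead induct on $N$: a basis decompleted modulo $t^{N-1}$ is lifted and corrected by a matrix $1+t^{N-1}M$ with $M$ over $K$, found by inverting the single $\chi^{N-1}$-twisted operator $M\mapsto M-\chi(\gamma)^{N-1}U\gamma(M)U^{-1}$ on the trace complement (Lemma~\ref{lem:matrix equation}); each step sees only one graded piece, the bound coming from $\val\bigl(1-\chi(\gamma)^{N-1}\bigr)\geq m(\gamma)$, so the uniformity issue you flag never arises. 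This d\'evissage is the main thing the quoted sources (and the paper's relative argument) buy over your formulation. Finally, your matching $(V/t^nV)_{\fin}=W/t^nW$ by filtering and applying part (i) to graded pieces is sound provided you run it as an induction on $n$ (an element of $(V/t^nV)_{\fin}$ agrees modulo $t$ with an element of $W$, and the difference lies in $(tV/t^nV)_{\fin}\cong (V/t^{n-1}V)(1)_{\fin}$), which parallels the paper's use of the exact sequence $0\to V'_{\fin}\to V_{\fin}\to V''_{\fin}$ together with faithful flatness of $k_\infty[t]/(t^n)\to L_{\dR}^+/(t^n)$; after that, $t$-adic completeness of the finite free $k_\infty[[t]]$-module $W$ gives the limit statement, as you say.
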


Using this theorem, Sen defined the so-called Sen endomorphism $\phi_V$
on $V_\infty$ for a $K$-representation $V$ of $\Gamma_k$ (cf.~\cite[Theorem 4]{Sen-cont}), 
and Fontaine defined a formal connection on $V_{\fin}$
for a $L_{\dR}^+$-representation $V$ of $\Gamma_k$ 
(cf.~\cite[Proposition 3.7]{Fontaine}).

We now turn to the relative setting.
Let $A$ be a Tate $k$-algebra that is reduced and topologically of finite type over $k$.
It is equipped with the supremum norm and we use this norm when we regard $A$ as a Banach $k$-algebra.
We further assume that $(A,A^\circ)$ is smooth over $(k,\calO_k)$. 
We set
\[
 A_{k_m}:=A\hat{\otimes}_kk_m,\qquad
A_\infty:=\varinjlim_m A_{k_m},\qquad \text{and}\qquad 
A_K:=A\hat{\otimes}_k K.
\]

Here we use a slightly heavy notation $A_{k_m}$
to reserve $A_m$ for a different ring in a later section.
Since $A$, $k_m$ and $K$ are all complete Tate $k$-algebras, the completed tensor product is well-defined (or one can use Banach $k$-algebra structures).
Note that $A_{k_m}$ (resp. $A_K$) is a complete Tate $k_m$-algebra (resp. $K$-algebra),
that $A_\infty$ is a Tate $k_\infty$-algebra and
that $A_K$ is the completion of $A_\infty$.

We introduce the relative versions of $k_\infty[[t]]$, $L^+_{\dR}$ and $L_{\dR}$
over $A$.
We set
\[
 A_\infty[[t]]:=\varprojlim_n A_\infty[t]/(t^n),
\]
and equip $A_{\infty}[[t]]$ with the inverse limit topology 
of Tate $k_{\infty}$-algebras $A_\infty[t]/(t^n)$.
We also set
\[
 A\hat{\otimes}L_{\dR}^+:=\varprojlim_n A\hat{\otimes}_k L_{\dR}^+/(t^n),
\]
and equip $A\hat{\otimes}L_{\dR}^+$ with
the inverse limit topology.
We finally set
\[
 A\hat{\otimes}L_{\dR}=(A\hat{\otimes}L_{\dR}^+)[t^{-1}]
\]
and equip $A\hat{\otimes}L_{\dR}$ with the inductive limit topology.
Note that $\Gamma_k$ acts continuously on these rings
(cf.~\cite[Appendix]{Bellovin}).

\begin{defn}\label{defn:representation}
In this paper, an \textit{$A\hat{\otimes}L_{\dR}^+$-representation} of $\Gamma_k$ 
is an $A\hat{\otimes}L_{\dR}^+$-module $V$ 
that is isomorphic to either $(A\hat{\otimes}L_{\dR}^+)^r$
or $(A\hat{\otimes}L_{\dR}^+/(t^n))^r$ for some $r$ and $n$,
equipped with a continuous
$A\hat{\otimes}L_{\dR}^+$-semilinear action of $\Gamma_k$.
 We denote the category of 
$A\hat{\otimes}L_{\dR}^+$-representations of $\Gamma_k$ 
by $\mathrm{Rep}_{\Gamma_k}(A\hat{\otimes}L_{\dR}^+)$.
An $A\hat{\otimes}L_{\dR}^+$-representation of $\Gamma_k$ that is annihilated by $t$ is also called an \textit{$A_K$-representation of} $\Gamma_k$.
\end{defn}

If $V$ is isomorphic to either $(A\hat{\otimes}L_{\dR}^+)^r$
or $(A\hat{\otimes}L_{\dR}^+/(t^n))^r$ then
$V$ admits a topology by taking a basis and the topology is independent of the choice of the basis. Thus the continuity condition of the action of $\Gamma_k$ makes sense.
Note that if $V$ is an $A\hat{\otimes}L_{\dR}^+$-representation of $\Gamma_k$, then so are $t^nV$ and $V/t^nV$.

We are going to discuss the relative version of Sen-Fontaine's theory.
Namely, we will work on $A_K$-representations of $\Gamma_k$
and $A\hat{\otimes}L_{\dR}^+$-representations of $\Gamma_k$.
Note that Sen's theory in the relative setting is established by Sen himself
(\cite{Sen-variation, Sen-infinite-dim})
and that Fontaine's decompletion theory in the relative setting
is established by Berger-Colmez and Bellovin for representations which 
come from $A$-representations of $\Gal(\overline{k}/k)$
via the theory of $(\varphi,\Gamma)$-modules (\cite{Berger-Colmez, Bellovin}).
Since we need a Fontaine-type decompletion theory for arbitrary
$A\hat{\otimes}L_{\dR}^+$-representations of $\Gamma_k$, 
we give detailed arguments;
we will discuss the decompletion theory in the next subsection,
and define Sen's endomorphism and Fontaine's connection in 
Subsection \ref{subsection:Sen's operator and Fontaine's connection}.

We end this subsection with establishing basic properties of 
the rings we have introduced.

\begin{prop}\hfill
\begin{enumerate}
 \item 
For each $m\geq 1$, $A\hat{\otimes}_k L_{\dR}^+/(t^n)$ is Noetherian
and faithfully flat over $A_{\infty}[t]/(t^n)$.
 \item $A\hat{\otimes}L_{\dR}^+$ is a $t$-adically complete flat $L_{\dR}^+$-algebra
with $(A\hat{\otimes}L_{\dR}^+)/(t^n)=A\hat{\otimes}_k L_{\dR}^+/(t^n)$.
\end{enumerate}
 \end{prop}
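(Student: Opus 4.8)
The plan is to prove the two assertions essentially by reducing to statements about the finite levels $A\hat{\otimes}_k L_{\dR}^+/(t^n)$ and then passing to the limit.

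\medskip

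\noindent\textbf{Part (1).} First I would unwind the definition: $L_{\dR}^+/(t^n) = B_{\dR}^+(K)/(t^n)$ is a $K$-algebra which sits in a tower with successive quotients $t^iK \cong K$, so it is a finite-length local $K$-algebra, and as a $K$-vector space $L_{\dR}^+/(t^n)\cong K^n$. Hence $A\hat{\otimes}_k L_{\dR}^+/(t^n)$ is, as a topological $A$-algebra, identified with $A_K^{\oplus n} = (A\hat{\otimes}_k K)^{\oplus n}$ with an $A_K$-algebra structure. The key input is that $A$ is topologically of finite type and smooth over $k$, so $A_K = A\hat{\otimes}_k K$ is a Noetherian Tate $K$-algebra (an affinoid algebra over $K$, using that $K$ is a complete nonarchimedean field). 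A finite free module over a Noetherian ring, given a compatible ring structure making it finite, is Noetherian; so $A\hat{\otimes}_k L_{\dR}^+/(t^n)$ is Noetherian. For faithful flatness over $A_\infty[t]/(t^n)$, I would use the $t$-adic filtration on both sides: $\gr^0 = A_K$ over $\gr^0 = A_\infty$, and the map $A_\infty \to A_K$ is faithfully flat because $A_K$ is the completion of the Noetherian ring $A_\infty$ along... more carefully, $A_\infty = \varinjlim_m A_{k_m}$ with each $A_{k_m}$ an affinoid $k_m$-algebra, $A_K$ is the $p$-adic completion, and faithful flatness of an affinoid algebra's scalar extension along a complete field extension is standard (each $A\hat{\otimes}_k k_m \to A\hat{\otimes}_k K$ is flat, being a base change of the faithfully flat $k_m\to K$ after a Noetherianity/finite-generation argument, actually one can argue via $A_{k_m}\hat\otimes_{k_m} K$ and flatness of completion on Noetherian rings). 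Since the associated graded of the map of filtered rings is faithfully flat and the filtrations are finite (length $n$), the map itself is faithfully flat by a standard dévissage on the $t$-adic filtration. I should be careful that $A_\infty[t]/(t^n)$ carries the filtration with graded pieces $A_\infty$, $A_\infty t,\dots$, matching up.

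\medskip

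\noindent\textbf{Part (2).} By construction $A\hat{\otimes}L_{\dR}^+ = \varprojlim_n A\hat{\otimes}_k L_{\dR}^+/(t^n)$, so the identification $(A\hat{\otimes}L_{\dR}^+)/(t^n) = A\hat{\otimes}_k L_{\dR}^+/(t^n)$ and $t$-adic completeness will follow once I check that the transition maps $A\hat{\otimes}_k L_{\dR}^+/(t^{n+1}) \to A\hat{\otimes}_k L_{\dR}^+/(t^n)$ are surjective with kernel exactly $t^n(A\hat{\otimes}L_{\dR}^+)$ and that $t^n$ is a non-zero-divisor at each stage. Surjectivity is clear since $\hat\otimes$ is right exact on the relevant quotient. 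The point that $t$ acts injectively on $A\hat{\otimes}_k L_{\dR}^+/(t^{n+1})$ with $t^n(\cdot) = \ker$ reduces, via the identification above with $A_K^{\oplus(n+1)}$, to the analogous statement for $L_{\dR}^+/(t^{n+1})$ base-changed by the flat (indeed faithfully flat over $K$, hence exactness-preserving) operation $A\hat{\otimes}_k -$ applied to a $K$-vector space of finite dimension; concretely $A\hat{\otimes}_k(t^n L_{\dR}^+/t^{n+1}) = t^n(A\hat{\otimes}_k L_{\dR}^+/(t^{n+1}))$ because $\hat\otimes_k$ with $A$ over a field is exact on finite-dimensional $k$-vector spaces (it is just $A^{\dim}$). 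For flatness of $A\hat{\otimes}L_{\dR}^+$ over $L_{\dR}^+$: both rings are $t$-adically complete and $t$-torsion-free, so by the local criterion for flatness it suffices to show $A\hat{\otimes}_k L_{\dR}^+/(t^n)$ is flat over $L_{\dR}^+/(t^n)$ for each $n$ — equivalently, since $L_{\dR}^+/(t^n)$ is Artinian local with residue field $K$, it suffices to show $A\hat{\otimes}_k L_{\dR}^+/(t^n)$ is flat over it, which follows because it is a finite free $K$-module on which the $L_{\dR}^+/(t^n)$-structure is the evident one, i.e.\ $A\hat{\otimes}_k L_{\dR}^+/(t^n) \cong (L_{\dR}^+/(t^n))\otimes_K A_K$ as $L_{\dR}^+/(t^n)$-modules, visibly flat since $A_K$ is flat (free as $K$-module) over $K$.

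\medskip

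\noindent\textbf{Main obstacle.} I expect the genuinely delicate point is the bookkeeping around completed tensor products: justifying the isomorphisms $A\hat{\otimes}_k L_{\dR}^+/(t^n) \cong A_K\otimes_K (L_{\dR}^+/(t^n))$ and the compatibility $A\hat{\otimes}_k(t^n L_{\dR}^+/t^{n+1}) = t^n(A\hat{\otimes}_k L_{\dR}^+/(t^{n+1}))$ rigorously, i.e.\ that completing the tensor product commutes with these finite-dimensional-over-$k$ manipulations. The clean way is to observe that $L_{\dR}^+/(t^n)$, while infinite-dimensional over $k$, is finite free over $K$, so one should complete in the $K$-direction first: $A\hat{\otimes}_k L_{\dR}^+/(t^n) = (A\hat{\otimes}_k K)\otimes_K (L_{\dR}^+/(t^n))$ with no further completion needed because $L_{\dR}^+/(t^n)$ is finite over $K$. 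Once this reduction to ordinary tensor products over $K$ is in place, everything else is standard commutative algebra over the Noetherian ring $A_K$, and the faithful flatness over $A_\infty[t]/(t^n)$ is the only part requiring the $t$-adic dévissage argument sketched above.
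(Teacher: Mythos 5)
There is a genuine gap, and it occurs at both of the places where your argument carries the real weight. First, your reductions rest on the premise that $L_{\dR}^+/(t^n)$ is a topological $K$-algebra, finite free over $K$, so that $A\hat{\otimes}_k L_{\dR}^+/(t^n)\cong A_K\otimes_K L_{\dR}^+/(t^n)$. This is false for $n\geq 2$: the projection $L_{\dR}^+/(t^n)\ra K$ admits no continuous ring section. Indeed, any ring section would have to restrict to the canonical embedding on each $k_m$ (uniqueness of lifts of \'etale $\Q_p$-subalgebras along a nilpotent thickening), and that embedding is not continuous for the valuation topology --- already the lifts of $\zeta_{p^m}-1$ behave like $t/p^m$ in $L_{\dR}^+/(t^2)$, so they do not tend to $0$. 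This non-splitting is exactly why the proposition is formulated over the uncompleted ring $A_\infty[t]/(t^n)$ rather than over $A_K[t]/(t^n)$. Consequently your Noetherianity argument (``finite algebra over the Noetherian ring $A_K$''), your identification $A\hat{\otimes}_k(t^nL_{\dR}^+/t^{n+1})=t^n(A\hat{\otimes}_kL_{\dR}^+/(t^{n+1}))$ (which you justify by exactness on \emph{finite-dimensional} $k$-vector spaces, but these spaces are infinite-dimensional over $k$), and your flatness over $L_{\dR}^+$ all need a different justification. The statements themselves are true, but the correct route is either to cite \cite[Lemmas 13.3, 13.4]{BMS}, as the paper does, or to use continuous $k$-linear (not $K$-linear, not multiplicative) splittings of the $t$-adic filtration, available because $k$ is discretely valued, to compute the graded pieces as $A_K$ and then run the nilpotent-extension and local-criterion arguments.

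Second, and more importantly, the heart of part (i) --- faithful flatness of $A_K$ over $A_\infty$ (the case $n=1$, to which your d\'evissage, like the paper's appeal to the local flatness criterion, reduces the general case) --- is essentially asserted rather than proved. You establish only that each finite level $A_{k_m}\ra A_K$ is faithfully flat; the passage to the colimit $A_\infty=\varinjlim_m A_{k_m}$ is the nontrivial point and your justification (``$A_K$ is the completion of the Noetherian ring $A_\infty$'') does not apply: $A_\infty$ is not Noetherian and $A_K$ is not an ideal-adic completion of it. The paper proves flatness by showing every finitely generated ideal $I\subset A_\infty$ descends to some $I_m\subset A_{k_m}$ and comparing $I_m\otimes_{A_{k_m}}A_K\ra I\otimes_{A_\infty}A_K\ra A_K$, and then proves surjectivity of $\Spec A_K\ra\Spec A_\infty$ by a separate argument using that $\Gamma_k$ permutes transitively the primes of $A_\infty$ over a prime of $A$, together with faithful flatness of $A\ra A_K$. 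Your proposal addresses neither step; in particular the surjectivity (the ``faithful'' part) is nowhere argued. (One could replace the paper's Galois-orbit argument by noting that the fiber $A_K\otimes_{A_\infty}\kappa(\fkP)$ is a filtered colimit of the nonzero rings $A_K\otimes_{A_{k_m}}\kappa(\fkp_m)$, hence nonzero, but some such argument must be supplied.) Finally, your d\'evissage ``graded faithfully flat $\Rightarrow$ faithfully flat'' needs, as in the paper's use of \cite[Theorem~22.3]{Matsumura}, the additional input that the graded module is the base change of the graded ring, i.e.\ the computation of the graded pieces discussed above; as written it is not a correct general principle.
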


\begin{proof}
For (i), the first assertion is proved in \cite[Lemma 13.3]{BMS}.
We prove that $A\hat{\otimes}_k L_{\dR}^+/(t^n)$ is faithfully flat
over $A_{\infty}[t]/(t^n)$.

First we deal with the case $n=1$, i.e., faithful flatness of 
$A_K$ over $A_\infty$. The proof is similar to that of \cite[Lemme 5.9]{AB}.
Recall $A_\infty=\varinjlim_n A_{k_m}$.
Since $k_m$ and $K$ are both complete valuation fields,
$A_K=A_{k_m}\hat{\otimes}_{k_m}K$ is faithfully flat over $A_{k_m}$
(e.g. use \cite[Proposition 2.1.7/8 and Theorem 2.8.2/2]{BGR}).

We prove that $A_K$ is flat over $A_\infty$. 
For this it suffices to show that for any finitely generated ideal $I\subset A_\infty$, the map $I\otimes_{A_\infty}A_K \ra A_K$ is injective.
Take such an ideal $I$. As $I$ is finitely generated, there exist a positive integer $m$ and a finitely generated ideal $I_m\subset A_{k_m}$
such that $I=\Image (I_m\otimes_{A_{k_m}}A_\infty \ra A_\infty )$.
Since $A_K$ is flat over $A_{k_m}$, the map $I_m\otimes_{A_{k_m}}A_K \ra A_K$
is injective.
On the other hand, this map factors as 
$I_m\otimes_{A_{k_m}}A_K \ra I\otimes_{A_\infty}A_K \ra A_K$
and the first map is surjective by the choice of $I_m$.
Hence the second map $I\otimes_{A_\infty}A_K\ra A_K$ is injective.

For faithful flatness, 
it remains to prove that the map
$\Spec A_K \ra \Spec A_\infty$ is surjective.
Assume the contrary and take a prime ideal $\fkP\in \Spec A_\infty$
that is not in the image of the map.
Set $\fkp=\fkP\cap A \in \Spec A$. 
Note that the prime ideals of $A_\infty$ above $\fkp$ are 
conjugate to each other by the action of $\Gamma_k$.
From this we see that no prime ideal of $A_\infty$ above $\fkp$
is in the image of  $\Spec A_K \ra \Spec A_\infty$.
Hence $\fkp$ does not lie in the image of $\Spec A_K \ra \Spec A$, 
which contradicts that $A_K$ is faithfully flat over $A$.

Next we deal with the general $n$.
By the local flatness criterion (\cite[Theorem 22.3]{Matsumura})
applied to the nilpotent ideal $(t)\subset A_{\infty}[t]/(t^n)$,
the flatness follows from the case $n=1$.
Moreover, since $\Spec A_K \ra \Spec A_\infty$ is surjective,
so is $\Spec A\hat{\otimes}_k L_{\dR}^+/(t^n) \ra \Spec A_\infty[t]/(t^n)$.
Hence $A\hat{\otimes}_k L_{\dR}^+/(t^n)$ is faithfully flat
over $A_{\infty}[t]/(t^n)$.

Assertion (ii) is proved in \cite[Lemma 13.4]{BMS}.
Note that the proof of \cite[Lemma 13.4]{BMS} works in our setting 
since we assume the smoothness of $A$.
\end{proof}

\subsection{Sen-Fontaine's decompletion theory in the relative setting}

\begin{defn}
For an $A\hat{\otimes}L_{\dR}^+$-representation $V$ of $\Gamma_k$,
we define the subspace $V_{\fin}$ as follows:
\begin{itemize}
 \item If $V$ is annihilated by $t^n$ for some $n\geq 1$, then $V_{\fin}$ is defined to be the union of finitely generated $A$-submodules of $V$ that are stable under the action of $\Gamma_k$.
 \item In general, define 
\[
 V_{\fin} := \varprojlim_n (V/t^nV)_{\fin}.
\]
\end{itemize}
\end{defn}

If $V$ is killed by $t^n$, then $V_{\fin}$ is an $A_{\infty}[t]/(t^n)$-module. In general, $V_{\fin}$ is an $A_{\infty}[[t]]$-module equipped with a semilinear action of $\Gamma_k$.

The following theorem is the main goal of this subsection.

\begin{thm}\label{thm:Fontaine decompletion}
For an $A\hat{\otimes}L_{\dR}^+$-representation $V$  of $\Gamma_k$
that is finite free of rank $r$ over $A\hat{\otimes}L_{\dR}^+$,
the $A_\infty[[t]]$-module $V_{\fin}$
is finite free of rank $r$.
Moreover, the natural map
\[
 V_{\fin}\otimes_{A_{\infty}[[t]]}(A\hat{\otimes}L_{\dR}^+)
\ra V
\]
is an isomorphism, and $V_{\fin}/t^nV_{\fin}$
is isomorphic to $(V/t^nV)_{\fin}$ for each $n\geq 1$.
\end{thm}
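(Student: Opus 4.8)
The plan is to reduce the statement to the torsion case, where it becomes a relative version of Sen's classical decompletion, and then to pass to the inverse limit. First I would treat the case that $V$ is killed by $t$, i.e.\ $V$ is an $A_K$-representation of $\Gamma_k$: here one wants to show $V_{\fin}$ is a finite projective (in fact, after shrinking, finite free) $A_\infty$-module of rank $r$ and that $V_{\fin}\otimes_{A_\infty}A_K\xrightarrow{\sim}V$. This is exactly the relative Sen theory of Sen \cite{Sen-variation,Sen-infinite-dim}; the mechanism is the standard one — choose a basis of $V$, use continuity to find an open subgroup $\Gamma'\subset\Gamma_k$ acting by matrices close to the identity, apply the "almost invariants / decompletion" estimates for the tower $A_{k_m}\subset A_\infty$ with $A_K$ its completion, and produce a $\Gamma'$-stable finitely generated $A$-submodule $W$ with $W\otimes_A A_K\cong V$; then $\Gamma_k$-saturate. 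The key analytic input is the existence of a Tate normalized trace / the decay of $H^1(\Gamma',\cdot)$ on the "non-finite" part, which in this arithmetic-family setting is furnished by the cited work and ultimately rests on the classical Sen estimates for $K/k_\infty$.

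Next I would bootstrap from $V$ killed by $t$ to $V$ killed by $t^n$ by induction on $n$, using the exact sequence $0\to t^{n-1}V\to V\to V/t^{n-1}V\to 0$ of $A\hat\otimes L_{\dR}^+$-representations, where $t^{n-1}V$ is an $A_K$-representation and $V/t^{n-1}V$ is killed by $t^{n-1}$. Taking $(-)_{\fin}$ is left exact, so one gets $0\to (t^{n-1}V)_{\fin}\to V_{\fin}\to (V/t^{n-1}V)_{\fin}$; the crux is surjectivity on the right, i.e.\ lifting a $\Gamma_k$-finite vector mod $t^{n-1}$ to a $\Gamma_k$-finite vector, together with freeness of the resulting $A_\infty[t]/(t^n)$-module. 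Surjectivity follows again from a decompletion/continuity argument (a $\Gamma_k$-finite vector mod $t^{n-1}$ has a lift whose orbit spans, modulo $t^{n-1}$, a finitely generated module, and one decompletes the error term living in $t^{n-1}V\cong$ an $A_K$-representation, where Sen's theory applies); freeness follows because over the local-type ring $A_\infty[t]/(t^n)$ a module that is an extension of free modules, with the right ranks, and which base-changes to the free module $V$ over $A\hat\otimes L_{\dR}^+/(t^n)$ (faithfully flat over $A_\infty[t]/(t^n)$ by the Proposition), is itself free of rank $r$; the isomorphism $V_{\fin}\otimes A\hat\otimes L_{\dR}^+/(t^n)\xrightarrow{\sim}V$ then follows from the torsion-$1$ case by dévissage using the faithful flatness. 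Along the way this also gives $V_{\fin}/t^{n-1}V_{\fin}\cong (V/t^{n-1}V)_{\fin}$ in the torsion case.

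Finally, for a general finite free $V$ of rank $r$ over $A\hat\otimes L_{\dR}^+$, I would pass to the limit: by definition $V_{\fin}=\varprojlim_n (V/t^nV)_{\fin}$, and the torsion case gives $(V/t^nV)_{\fin}$ finite free of rank $r$ over $A_\infty[t]/(t^n)$ with transition maps $(V/t^{n+1}V)_{\fin}\to (V/t^nV)_{\fin}$ that are the canonical reductions (using the established $V'_{\fin}/t^nV'_{\fin}\cong (V'/t^nV')_{\fin}$ identifications for the torsion objects $V'=V/t^{n+1}V$). Hence the system $\{(V/t^nV)_{\fin}\}$ is, compatibly with a choice of bases that one checks can be made coherent, isomorphic to $\{(A_\infty[t]/(t^n))^r\}$, so the inverse limit $V_{\fin}$ is $(A_\infty[[t]])^r$, finite free of rank $r$; the compatibility of the $\Gamma_k$-action is automatic. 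For the comparison map, $V_{\fin}\otimes_{A_\infty[[t]]}(A\hat\otimes L_{\dR}^+)\to V$ reduces modulo $t^n$ to the isomorphism $(V/t^nV)_{\fin}\otimes A\hat\otimes L_{\dR}^+/(t^n)\xrightarrow{\sim}V/t^nV$ established in the torsion step (using $(A\hat\otimes L_{\dR}^+)/(t^n)=A\hat\otimes_k L_{\dR}^+/(t^n)$ from the Proposition), and since both sides are $t$-adically complete and $t$-torsion-free $A\hat\otimes L_{\dR}^+$-modules, an isomorphism modulo every $t^n$ is an isomorphism; finally $V_{\fin}/t^nV_{\fin}\cong (V/t^nV)_{\fin}$ in general drops out of freeness and the $\varprojlim$ description. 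The main obstacle I anticipate is the surjectivity in the inductive step — making the decompletion argument lift $\Gamma_k$-finite vectors through the nilpotent thickenings while keeping finite generation under control — and, relatedly, arranging the bases of the $(V/t^nV)_{\fin}$ coherently so that the inverse limit is visibly finite free; both are handled by the Sen-style estimates but require care with the topologies on $A_\infty[t]/(t^n)$ and the fact that $\Gamma_k$ acts through a finite-index subgroup of $\Z_p^\times$.
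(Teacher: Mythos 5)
Your proposal is correct and follows essentially the same route as the paper: the base case is the relative Sen decompletion (Theorem~\ref{thm:Sen decompletion}), the $t^n$-torsion case is done by induction by lifting a basis of $(V/t^{n-1}V)_{\fin}$ and decompleting the error term sitting in the $A_K$-representation $t^{n-1}V$ --- which in the paper is the explicit correction by a matrix $1+t^{n-1}M$ obtained from the twisted Tate--Sen estimate of Lemma~\ref{lem:matrix equation} rather than a direct appeal to Sen's theorem, since one must solve $M-\chi(\gamma)^{n-1}U\gamma(M)U^{-1}=(\text{error})$ --- and the general case follows by passing to the inverse limit, with surjectivity of the transition maps coming from faithful flatness of $A_\infty[t]/(t^n)\ra A\hat{\otimes}L_{\dR}^+/(t^n)$. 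The one point to tighten is your freeness argument: ``an extension of free modules with the right ranks is free'' is false over $A_\infty[t]/(t^n)$ (and faithfully flat descent of the base-change condition only yields finite projective over this non-local ring), so, as in Proposition~\ref{prop:Fontaine decompletion for torsion}, freeness should be extracted from the corrected lifted basis itself, with the equality $\bigoplus_i A_\infty[t]/(t^n)v_i=V_{\fin}$ proved by the d\'evissage-plus-faithful-flatness argument.
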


The key tool in the proof is the Sen method,
which is axiomatized in 
\cite[\S 3]{Berger-Colmez}.
We review parts of the Tate-Sen conditions 
that are used in our proofs.
For a thorough treatment, we refer the reader to \textit{loc.~cit.}

Consider Tate's normalized trace map
\[
 R_{k,m}=R_m\colon K\ra k_m.
\]
On $k_{m+m'}\subset K$, this map is defined as
\[
 [k_{m+m'}:k_m]^{-1}\tr_{k_{m+m'}/k_m}\colon k_{m+m'}\ra k_m,
\]
and it extends continuously to $R_{k,m}\colon K\ra k_m$.
We denote the kernel $\Ker R_{k,m}$ by $X_m$.
The map $R_{k,m}$ extends $A$-linearly to 
the map $R_{A,m}\colon A_K\ra A_{k_m}$.
Fix a real number $c_3>1$.
By work of Tate and Sen 
(\cite[Proposition 3.1.4 and Proposition 4.1.1]{Berger-Colmez}),
$G_0=\Gamma_k$, $\tilde{\Lambda}=A_K$, $R_m$, and the valuation $\val$ on $A_K$
satisfy the Tate-Sen axioms in \cite[\S 3]{Berger-Colmez}
for any fixed positive numbers $c_1$ and $c_2$.

In particular,
$X_{A,m}:=A\hat{\otimes}_k X_m$ is the kernel of $R_{A,m}$, and
we have topological splitting $A_K=A_{k_m}\oplus X_{A,m}$.
For $\gamma\in \Gamma_k$, let $m(\gamma)\in\Z$
be the valuation of $\chi(\gamma)-1 \in \Z_p$.
Then there exists a positive integer $m(k)$ such that
for each $m\geq m(k)$ and $\gamma\in \Gamma_k$ with $m(\gamma)\leq m$,
$\gamma-1$ is invertible on $X_{A,m}$ and 
\[
 \val\bigl((\gamma-1)^{-1}a\bigr)\geq \val(a)-c_3
\]
for each $a\in A_K$.

Finally, for each matrix $U=(a_{ij})\in M_r(A_K)$, we set
$\val U:=\min_{i,j}\val a_{ij}$.

\begin{prop}\label{prop:finite Gamma orbit lies in A_infty}
Each finitely generated $A$-submodule of $A_K$ that is stable under the action of an open subgroup of $\Gamma_k$ is contained in $A_\infty$.
\end{prop}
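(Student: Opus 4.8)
The plan is to reduce to the classical (non-relative) case via the normalized trace maps and the topological splitting $A_K = A_{k_m}\oplus X_{A,m}$. Let $M\subset A_K$ be a finitely generated $A$-submodule stable under the action of an open subgroup $H\subset\Gamma_k$. First I would observe that it suffices to show $M$ is contained in $A_{k_m}$ for some $m$: indeed $A_{k_m}\cap A_\infty$ is dense in $A_{k_m}$, but more to the point, once we know $M\subset A_{k_m}$ we are done since $A_{k_m}\subset A_\infty$ by definition. So the real content is: a finitely generated $A$-submodule of $A_K$ stable under an open subgroup is killed by the projection onto $X_{A,m}$ for $m$ large enough. Concretely, fix $m\geq m(k)$ large enough that $H$ contains an element $\gamma$ with $m(\gamma)\leq m$, so that $\gamma-1$ is invertible on $X_{A,m}$ with the norm estimate $\val\bigl((\gamma-1)^{-1}a\bigr)\geq \val(a)-c_3$ for all $a\in A_K$.

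Next, write the splitting $v = R_{A,m}(v) + w(v)$ with $w(v)\in X_{A,m}$ for each $v\in A_K$; both $R_{A,m}$ and $w = \id - R_{A,m}$ are $\Gamma_k$-equivariant (since Tate's normalized trace commutes with the Galois action) and $A$-linear. Let $v_1,\dots,v_s$ generate $M$ over $A$. Because $M$ is $\gamma$-stable and finitely generated over $A$, the $A$-submodule $N$ of $X_{A,m}$ generated by $w(v_1),\dots,w(v_s)$ is again finitely generated and $\gamma$-stable (apply $w$ to the relation expressing $\gamma v_i\in M$ in terms of the $v_j$). Now I would use the contraction estimate: since $\gamma-1$ is a bijective $A$-linear operator on the closure of $N$ inside $X_{A,m}$ that strictly contracts the valuation filtration, while on a finitely generated $A$-submodule the valuations of elements are bounded below, iterating $(\gamma-1)^{-1}$ on a nonzero element of $N$ would produce elements of arbitrarily large valuation inside the finitely generated module $N$ — but a finitely generated $A$-submodule of $A_K$ (equivalently, its image, which is bounded) cannot contain elements of unbounded valuation unless they are zero. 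Hence $N=0$, i.e.\ $w(v_i)=0$ for all $i$, so $M\subset A_{k_m}\subset A_\infty$.

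The main obstacle I anticipate is making the ``bounded valuation'' argument precise: one must check that a finitely generated $A$-submodule $M$ of $A_K$ is bounded (this is where one uses that $A$ carries the supremum norm and $A_K = A\hat\otimes_k K$ so that $A^\circ\hat\otimes_k\calO_K$ is a ring of definition, whence finitely generated $A$-modules inside $A_K$ are bounded), and that for a nonzero $v\in N\subset X_{A,m}$ the sequence $(\gamma-1)^{-n}v$ really does escape every bounded set — equivalently $\val\bigl((\gamma-1)^{-n}v\bigr)\to\infty$, which follows by iterating the estimate $\val\bigl((\gamma-1)^{-1}a\bigr)\geq\val(a)-c_3$ provided one checks $(\gamma-1)^{-n}v$ stays in $X_{A,m}$ (it does, since $X_{A,m}$ is $\gamma$-stable and $\gamma-1$ preserves it). A small bookkeeping point is to arrange that the open subgroup $H$ actually contains an element $\gamma$ with $m(\gamma)$ finite and $\leq m$; since $H$ is open in $\Gamma_k\subset\Z_p^\times$, any nontrivial $\gamma\in H$ works once $m$ is taken $\geq m(\gamma)$, and we may also need $\gamma$ of infinite order, which holds as $\Gamma_k$ has no torsion for $p$ odd and is handled by passing to a smaller open subgroup in general. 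Once these are in place the argument closes.
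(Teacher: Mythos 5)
The decisive step in your plan---that the finitely generated, $\gamma$-stable $A$-submodule $N\subset X_{A,m}$ must vanish---is false, and the proposal collapses there. Already for $A=k$: the element $\zeta_{p^{m+1}}$ has normalized trace $0$ to $k_m$, so $k_m\cdot\zeta_{p^{m+1}}$ is a nonzero finite-dimensional $k$-subspace of $X_m$, and it is stable under the open subgroup $\Gamma_{k_m}$, since $\gamma\zeta_{p^{m+1}}=\zeta_{p^{m+1}}\zeta_p^{j}$ with $\zeta_p\in k_m$; here a topological generator $\gamma$ of $\Gamma_{k_m}$ has $m(\gamma)=m$, so all your hypotheses ($m\geq m(k)$, $\gamma\in H$ with $m(\gamma)\leq m$, $\gamma-1$ invertible on $X_{A,m}$ with bounded inverse) are satisfied, yet $N\neq 0$. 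This is exactly the phenomenon the proposition must accommodate: such an $M$ (e.g.\ $M=A\hat{\otimes}_k k_{m+1}$) lies in $A_\infty$ but at a level strictly deeper than any $m$ you can read off from the open subgroup $H$ alone, so ``$M\subset A_{k_m}$ for the $m$ chosen from $H$'' is simply too strong. The mechanism you propose for the contradiction is also not sound on its own terms: a nonzero finitely generated $A$-submodule of $A_K$ is never bounded (it is stable under multiplication by $p^{-1}\in A$), and the estimate $\val\bigl((\gamma-1)^{-1}a\bigr)\geq\val(a)-c_3$ is only a lower bound, so iterating $(\gamma-1)^{-1}$ does not force valuations to tend to $+\infty$; in the example above $(\gamma-1)^{-1}$ acts on $N$ as multiplication by $(\zeta_p^{j}-1)^{-1}$, so the valuations in fact decrease.

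The paper's proof is built precisely to let the level grow with the module: via Berger--Colmez one embeds $A$ isometrically into a finite product of complete discretely valued fields $E_i$, reduces to a finite-dimensional $E_i$-subspace $W_i$ of $E_i\hat{\otimes}_kK$ stable under a topological generator $\gamma$ of a smaller open subgroup, and then runs Sen--Tate's eigenvalue argument: every eigenvalue of $\gamma$ on $W_i$ is a $p$-power root of unity, because otherwise $\gamma-\lambda$ would be bijective on $E_i\hat{\otimes}_kK$ (this is where the splitting $E_i\hat{\otimes}_kk_m\oplus E_i\hat{\otimes}_kX_m$ and the bounded inverse of $\gamma-1$ enter, through a geometric-series argument). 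Only after replacing $\gamma$ by a $p$-power so that it acts unipotently does one take $m$ with $k_m=k_\infty^{\gamma}$; then injectivity of $\gamma-1$ on the $X_m$-part kills the unipotent part and gives $W_i\subset E_i\hat{\otimes}_kk_m$. To repair your approach you would need some analogue of this eigenvalue step allowing $m$ to depend on $M$; the trace maps and the splitting at a level determined by $H$ alone cannot suffice.
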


\begin{proof}
We follow the proof of \cite[Proposition 3]{Sen-cont}.
By \cite[Corollaire 2.1.4]{Berger-Colmez},
there exist complete discrete valuation fields $E_1,\ldots E_s$ and
an isometric embedding $A\hra \prod_{i=1}^s E_i$.
Then extending the scalar yields an isometric embedding
\[
A_K=A_{k_m}\oplus X_{A,m}\hra
\prod_{i=1}^s E_i\hat{\otimes}_kK=\prod_{i=1}^s 
(E_i\hat{\otimes}_kk_m\oplus  E_i\hat{\otimes}_k X_m)
\]
preserving the topological splittings.

Let $\Gamma_k'$ be an open subgroup of $\Gamma_k$ and
 $W$  a finitely generated $A$-submodule of $A_K$ that is stable under the action of $\Gamma_k'$. 
Let $W_i$ be the finite-dimensional $E_i$-vector subspace of $E_i\hat{\otimes}_kK$
generated by the image of $W$ under the map $A_K\ra \prod_{i=1}^s E_i\hat{\otimes}_kK\ra E_i\hat{\otimes}_k K$.
To prove that $W$ is contained in $A_\infty=\bigcup_m A_{k_m}$,
it suffices to prove that for each $i$, there exists a large integer $m$
such that $W_i$ is contained in $E_i\hat{\otimes}_kk_m$.

Replacing $\Gamma_k'$ by a smaller open subgroup if necessary,
we may assume that there exists a topological generator $\gamma$ of $\Gamma_k'$.
Replacing $E_i$ by a finite field extension,
we may also assume that all the eigenvalues of the $E_i$-endomorphism $\gamma$
on $W_i$ lie in $E_i$.

Let $w\in W_i$ be an eigenvector for $\gamma$ and let $\lambda\in E$
be its eigenvalue.
Note that $\Gamma_k'$ acts continuously on $W_i$.
When $j$ goes to infinity, $\gamma^{p^j}$ approaches $1$
and thus $\lambda^{p^j}$ approaches $1$.
This implies that $\lambda$ is a principal unit, i.e. $\lvert \lambda-1\rvert_{E_i}<1$.

\begin{lem}
The eigenvalue $\lambda$ is a $p$-power root of unity.
\end{lem}

\begin{proof}
We follow the proof of \cite[Proposition 7(c)]{Tate}.
Assume the contrary.
We will prove that $\gamma-\lambda\colon E_i\hat{\otimes}_kK\ra E_i\hat{\otimes}_kK$ is bijective, which would contradict that the nonzero element $w\in W_i\subset E_i\hat{\otimes}_kK$ satisfies $(\gamma-\lambda)w=0$.

Let $m$ be the integer such that $k_m$ is 
the fixed subfield of $k_\infty$ by $\gamma$.
Consider the map $\gamma-1\colon E_i\hat{\otimes}_kK\ra E_i\hat{\otimes}_kK$.
This map preserves
the decomposition $E_i\hat{\otimes}_kK=E_i\hat{\otimes}_kk_m\oplus E_i\hat{\otimes}_k X_m$.
Moreover, it is zero on $E_i\hat{\otimes}_kk_m$ and bijective on $E_i\hat{\otimes}_k X_m$ with continuous inverse. Denote the inverse by $\rho$.
Then $\rho$ is a bounded $E_i\hat{\otimes}_kk_m$-linear operator with
operator norm at most $p^{c_3}$.
Since $\lambda\in E_i$ and $\lambda\neq 1$, the map $\gamma-\lambda$ is bijective on $E_i\hat{\otimes}_kk_m$. So it suffices to prove that $\gamma-\lambda$ is bijective on
$E_i\hat{\otimes}_k X_m$.

As operators on $E_i\hat{\otimes}_k X_m$, we have
\[
(\gamma-\lambda)\rho=\bigl((\gamma-1)-(\lambda-1)\bigr)\rho
=1-(\lambda-1)\rho.
\]
Thus if $\lvert \lambda-1\rvert_{E_i}p^{c_3}<1$,
then $1-(\lambda-1)\rho$ has an inverse on $E_i\hat{\otimes}_k X_m$
given by a geometric series and thus
$\gamma-\lambda$ admits a continuous inverse on $E_i\hat{\otimes}_k X_m$.
If $\lvert \lambda-1\rvert_{E_i}p^{c_3}\geq 1$,
first take a large integer $j$ with $\lvert \lambda^{p^j}-1\rvert_{E_i}p^{c_3}<1$. Then we can prove that $\gamma^{p^j}-\lambda^{p^j}$ has a bounded inverse 
on $E_i\hat{\otimes}_k X_m$. Hence so does $\gamma-\lambda$.
\end{proof}

We continue the proof of the proposition.
Since each eigenvalue of $\gamma$ on $W_i$ is a $p$-power root of unity,
we replace $\gamma$ by a higher $p$-power and may assume that
$\gamma$ acts on $W_i$ unipotently.
Thus $\gamma-1$ acts on $W_i$ nilpotently.

Let $m$ be the integer such that $k_m$ is the fixed subfield of $k_\infty$
by $\gamma$.
Then the map $\gamma-1\colon E_i\hat{\otimes}_kK\ra E_i\hat{\otimes}_kK$
is zero on $E_i\hat{\otimes}_kk_m$ and bijective on $E_i\hat{\otimes}_k X_m$.
This implies that the nilpotent endomorphism $\gamma-1$ on $W_i$ is actually zero and thus $W_i$ is contained in 
$E_i\hat{\otimes}_kk_m$.
\end{proof}

\begin{example}
For the trivial $A_K$-representation $V=A_K$ of $\Gamma_k$, 
we have $V_{\fin}=A_{\infty}$
by Proposition~\ref{prop:finite Gamma orbit lies in A_infty}.
\end{example}

The following theorem describes $V_{\fin}$
for a general $A_K$-representation $V$ of $\Gamma_k$,
and it was first proved by Sen (\cite{Sen-variation, Sen-infinite-dim}).

\begin{thm}\label{thm:Sen decompletion}
For an $A_K$-representation $V$ of $\Gamma_k$,
the $A_{\infty}$-module $V_{\fin}$
is finite free.
Moreover, the natural map
\[
 V_{\fin}\otimes_{A_{\infty}}A_K
\ra V
\]
is an isomorphism.
\end{thm}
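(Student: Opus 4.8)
The plan is to prove Theorem~\ref{thm:Sen decompletion} by the standard Sen-method descent, using the Tate-Sen axioms recalled above for the pair $(\Gamma_k, A_K)$ together with the normalized trace maps $R_{A,m}\colon A_K\to A_{k_m}$ and the estimate $\val((\gamma-1)^{-1}a)\geq \val(a)-c_3$ on $X_{A,m}$. The idea is that an $A_K$-representation $V\cong (A_K)^r$ is, after restricting to a sufficiently small open subgroup $\Gamma_k'=\langle\gamma\rangle$, close enough to the trivial action that one can iteratively conjugate the cocycle matrix into $M_r(A_{k_m})$ for some level $m$, thereby producing a $\Gamma_k'$-stable $A_{k_m}$-lattice; intersecting with the $\Gamma_k$-action and using Proposition~\ref{prop:finite Gamma orbit lies in A_infty} then gives that $V_{\fin}$ descends to $A_\infty$.

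In more detail, first I would fix a basis $e_1,\dots,e_r$ of $V$ over $A_K$ and write, for $\gamma\in\Gamma_k$, the matrix $U_\gamma\in\GL_r(A_K)$ by $\gamma(e_j)=\sum_i (U_\gamma)_{ij}e_i$; the cocycle relation is $U_{\gamma\gamma'}=U_\gamma\cdot\gamma(U_{\gamma'})$. By continuity of the action, after replacing $\Gamma_k$ by a small open subgroup $\Gamma_k'$ and $m$ by a large integer, one has $\val(U_\gamma-1)$ large for all $\gamma\in\Gamma_k'$. Next I would run the descent lemma: there exists $M\in\GL_r(A_K)$, with $\val(M-1)$ large, such that the new matrices $U'_\gamma = M^{-1}U_\gamma\gamma(M)$ lie in $\GL_r(A_{k_m})$ for all $\gamma$ in $\Gamma_k'$. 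This is exactly the conclusion of the Tate-Sen formalism of \cite[\S3]{Berger-Colmez} (their Proposition~3.2.5 or its analogue): one produces $M$ as a convergent infinite product, solving at each stage an equation of the form $(\gamma-1)X = (\text{error term in }X_{A,m})$ using the bounded inverse of $\gamma-1$ on $X_{A,m}$, with the geometric decay governed by $c_3$ and the size of $\val(U_\gamma-1)$. The new basis $e'_j=\sum_i M_{ij}e_i$ then spans a free $A_{k_m}$-submodule $V_m\subset V$ stable under $\Gamma_k'$, with $V_m\otimes_{A_{k_m}}A_K = V$.

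Having $V_m$, I would next upgrade $\Gamma_k'$-stability to $\Gamma_k$-stability at the cost of enlarging $m$: since $\Gamma_k'$ has finite index in $\Gamma_k$, the sum $\sum_{g\in\Gamma_k/\Gamma_k'} g(V_m)$ is a finitely generated $A_\infty$-module (the entries of the transition matrices between $g(V_m)$ and $V_m$ are a finite $\Gamma_k$-orbit in $A_K$, hence lie in $A_{k_{m'}}$ for some $m'\geq m$ by Proposition~\ref{prop:finite Gamma orbit lies in A_infty}), and one checks it is free over $A_{k_{m'}}$ and $\Gamma_k$-stable. Then I would identify this lattice with $V_{\fin}$: any finitely generated $\Gamma_k$-stable $A$-submodule $W\subset V$ has, upon writing its generators in the basis $e'$, coordinate functions whose $\Gamma_k$-orbits are finite, hence these coordinates lie in $A_\infty$ by Proposition~\ref{prop:finite Gamma orbit lies in A_infty}, so $W\subset V_{\fin}\subset V_m\otimes_{A_{k_m}}A_\infty$; conversely this last module visibly consists of finite-orbit vectors. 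This shows $V_{\fin}$ is a finite free $A_\infty$-module and that $V_{\fin}\otimes_{A_\infty}A_K\to V$ is an isomorphism. Finally, for independence of the various choices and well-definedness one notes $V_{\fin}$ is intrinsic as the union of $\Gamma_k$-stable finitely generated $A$-submodules.

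The main obstacle I expect is the descent step producing $M$: one must carefully check that the Tate-Sen axioms as verified in the excerpt (the splitting $A_K = A_{k_m}\oplus X_{A,m}$ and the bound on $(\gamma-1)^{-1}$ on $X_{A,m}$) are genuinely enough to run the iteration in the relative setting, in particular that the successive approximations converge $t$-adically—here trivially, since $t$ does not appear—and rather $p$-adically (or in the supremum norm on $A_K$), uniformly in $\gamma$, and that the limiting $M$ is invertible. The rest is bookkeeping with finite orbits and faithful flatness of $A_K$ over $A_\infty$. Since Sen himself proved this (\cite{Sen-variation, Sen-infinite-dim}) and the Tate-Sen machinery of Berger-Colmez is designed precisely for such descents, I would cite those for the core estimate and only spell out the passage from $\Gamma_k'$-stable to $\Gamma_k$-stable lattices and the identification with $V_{\fin}$.
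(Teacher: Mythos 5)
Your proposal is correct and follows essentially the same route as the paper: a Tate--Sen/Berger--Colmez descent producing a basis whose cocycle is $A_{k_m}$-valued on an open subgroup $\Gamma_k'$, then Proposition~\ref{prop:finite Gamma orbit lies in A_infty} both to extend this to all of $\Gamma_k$ (after enlarging $m$) and to identify $V_{\fin}$ with the $A_\infty$-span of the resulting basis. The only notable difference is in the details: the paper feeds the Berger--Colmez machinery by first invoking Chenevier's lemma to obtain a $\Gamma_k$-stable $A_K^\circ$-lattice rather than your bare continuity estimate, and where you speak of coordinates having ``finite $\Gamma_k$-orbits'' the paper verifies, via the cocycle relation, that they span a finitely generated $\Gamma_k$-stable $A_{k_m}$-submodule of $A_K$, which is the form of the hypothesis the proposition actually requires.
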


\begin{proof}
First we prove the following lemma.
\begin{lem}
There exist an $A_K$-basis $v_1,\ldots,v_r\in V$ and a large positive integer
$m$
such that 
the transformation matrix of $\gamma$ with respect to this basis
has entries in $A_{k_m}$ for each $\gamma\in \Gamma_k$.
\end{lem} 

\begin{proof}
This follows from the Tate-Sen method for $\Gamma_k$-representations in the relative setting.
By \cite[Lemme 3.18]{Chenevier}, $V$ has a $\Gamma_k$-stable $A_K^\circ$-lattice.
Note that \cite[Lemme 3.18]{Chenevier} only concerns reduced affinoid algebras over a finite extension of $\Q_p$ but the same proof works for $A_K$ since one can apply Raynaud's theory to $A_K$.

By \cite[Corollaire 3.2.4]{Berger-Colmez}, there exist 
an $A_K$-basis $v_1,\ldots,v_r\in V$, a large positive integer
$m$, and an open subgroup $\Gamma_k'$ of $\Gamma_k$
such that 
the transformation matrix of $\gamma$ with respect to this basis
has entries in $A_{k_m}$ for each $\gamma\in \Gamma_k'$.
By increasing $m$ if necessary, we may also assume that $\Gamma_k'$ acts trivially on $A_{k_m}$.

For each $\gamma\in \Gamma_k$, we denote by $U_\gamma\in \GL_r(A_K)$
the transformation matrix of $\gamma$ with respect to $v_1,\ldots,v_r$.
Note that $U_{\gamma\gamma'}=U_\gamma\gamma(U_{\gamma'})$
for $\gamma, \gamma'\in \Gamma_k$.

Take a set $\{\gamma_1,\ldots,\gamma_s\}$ of coset representatives of $\Gamma_k/\Gamma_k'$
and let $W$ be the finitely generated $A_{k_m}$-submodule of $A_K$
generated by the entries of $U_{\gamma_1},\ldots, U_{\gamma_s}$.
Since $U_{\gamma_i\gamma'}=U_{\gamma_i}\gamma_i(U_{\gamma'})$
for $\gamma'\in \Gamma_k'$ and $\gamma_i(U_{\gamma'})$ has entries in $A_{k_m}$
by our construction, it follows that
$W$ is independent of the choice of the representatives $\gamma_1,\ldots,\gamma_s$.
Moreover, we have $\gamma'(U_{\gamma_i})=U_{\gamma'}^{-1}U_{\gamma'\gamma_i}$ for $\gamma'\in \Gamma_k'$.
From this we see that $W$ is stable under the action of $\Gamma_k'$.

Proposition~\ref{prop:finite Gamma orbit lies in A_infty} implies that
$W\subset A_\infty$, namely, $U_{\gamma_1},\ldots,U_{\gamma_s}\in \GL_r(A_\infty)$.
Thus if we increase $m$ so that $U_{\gamma_1},\ldots,U_{\gamma_s}\in \GL_r(A_{k_m})$, then $U_\gamma\in \GL_r(A_{k_m})$ for any $\gamma\in \Gamma_k$.
\end{proof}

We keep the notation in the proof of the lemma.
From the lemma, we see that $\bigoplus_{i=1}^r A_{\infty}v_i\subset V_{\fin}$.
So it suffices to prove that this is an equality.

Take any $v\in V_{\fin}$.
Let $W_v$ be the $A_{k_m}$-submodule of $A_K$ generated by
the coordinates of $\gamma v$ with respect to the basis $v_1,\ldots,v_r$
where $\gamma$ runs over all elements of $\Gamma_k$.
Since $v\in V_{\fin}$, this is a finitely generated $A_{k_m}$-module.

Write $v=\sum_{i=1}^r a_i v_i$ with $a_i\in A_K$
and denote the column vector of the $a_i$
by $\vec{a}$.
Then it is easy to see that $W_v$ is generated by
the entries of $U_{\gamma}\gamma(\vec{a})$ ($\gamma\in \Gamma_k$).
Since $U_{\gamma'\gamma}=U_{\gamma'}\gamma'(U_{\gamma})$
for $\gamma, \gamma'\in \Gamma_k$, we compute
\[
 \gamma'\bigl(U_{\gamma}\gamma(\vec{a})\bigr)
=U_{\gamma'}^{-1}U_{\gamma'\gamma}(\gamma'\gamma)(\vec{a}).
\]
 From this we see that $W_v$ is stable under the action of $\Gamma_k$.

By Proposition~\ref{prop:finite Gamma orbit lies in A_infty},
we have $W_v\subset A_\infty$.
In particular, $a_1,\ldots, a_r\in A_{\infty}$ and thus
$v\in \bigoplus_{i=1}^r A_{\infty}v_i$.
\end{proof}

\begin{prop}\label{prop:Fontaine decompletion for torsion}
Let $V$ be an $A\hat{\otimes}L_{\dR}^+$-representation of $\Gamma_k$.
If $V$ is finite free of rank $r$ over $A\hat{\otimes}L_{\dR}^+/(t^n)$,
then $V_{\fin}$
is finite free of rank $r$ over $A_{\infty}[t]/(t^n)$.
Moreover, the natural map
\[
 V_{\fin}\otimes_{A_{\infty}[[t]]}(A\hat{\otimes}L_{\dR}^+)
\ra V
\]
is an isomorphism. 
\end{prop}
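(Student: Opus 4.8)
The plan is to obtain the statement by transporting the arguments behind Proposition~\ref{prop:finite Gamma orbit lies in A_infty} and Theorem~\ref{thm:Sen decompletion} from $A_K$ to the truncated ring $A\hat{\otimes}_kL_{\dR}^+/(t^n)$; the case $n=1$ is exactly Theorem~\ref{thm:Sen decompletion}, since $(A\hat{\otimes}L_{\dR}^+)/(t)=A_K$ and $A_\infty[t]/(t)=A_\infty$. The key ingredient I would first establish is that the Tate--Sen formalism of \cite{Berger-Colmez} applies to $A\hat{\otimes}_kL_{\dR}^+/(t^n)$ with its $\Gamma_k$-action: for $m$ large one has a $\Gamma_k$-equivariant topological direct sum decomposition $A\hat{\otimes}_kL_{\dR}^+/(t^n)=A_{k_m}[t]/(t^n)\oplus X$, with $X$ the kernel of a normalized trace map, on which $\gamma-1$ is invertible with bounded inverse for $\gamma$ sufficiently close to $1$. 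One can extract this from the known case $A_K$ by induction on $n$ along the $t$-adic filtration, whose successive graded quotients are copies of $A_K$ with $\Gamma_k$ acting through the twist $\chi^{j}$ ($0\le j<n$); on the $j$-th graded piece the operator one must invert on $X_{A,m}$ is $\gamma-\chi(\gamma)^{-j}$, which is handled by the geometric-series computation in the Lemma inside the proof of Proposition~\ref{prop:finite Gamma orbit lies in A_infty} together with the fact that $\val(\chi(\gamma)^{-j}-1)$ is large for $\gamma$ close to $1$.

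Granting this, the analogue of Proposition~\ref{prop:finite Gamma orbit lies in A_infty} holds by the same proof: every finitely generated $\Gamma_k$-stable $A$-submodule of $A\hat{\otimes}_kL_{\dR}^+/(t^n)$ is contained in $A_\infty[t]/(t^n)=\bigcup_m A_{k_m}[t]/(t^n)$ (reduce along an isometric embedding $A\hra\prod_i E_i$ into complete discrete valuation fields, use the equivariant splitting above, and run the eigenvalue argument). Consequently the analogue of the Lemma in the proof of Theorem~\ref{thm:Sen decompletion} goes through: $V$ admits an $A\hat{\otimes}_kL_{\dR}^+/(t^n)$-basis $e_1,\dots,e_r$ and a large $m$ such that $U_\gamma\in\GL_r(A_{k_m}[t]/(t^n))$ for every $\gamma\in\Gamma_k$ --- one first obtains this for $\gamma$ in an open subgroup by the Tate--Sen method over the truncated ring (for instance by lifting a good basis of $V/t^{n-1}V$ provided by the case $n-1$, which is possible by Nakayama since $(t^{n-1})$ is nilpotent, and applying one more descent step), and then for all $\gamma$ via the cocycle relation $U_{\gamma'\gamma}=U_{\gamma'}\gamma'(U_\gamma)$ and the analogue of Proposition~\ref{prop:finite Gamma orbit lies in A_infty}, exactly as in that proof. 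With such a basis, $\bigoplus_i (A_\infty[t]/(t^n))e_i\subseteq V_{\fin}$, being the union over $m$ of the finitely generated $\Gamma_k$-stable $A$-submodules $\bigoplus_i(A_{k_m}[t]/(t^n))e_i$; and the reverse inclusion follows from the coordinate argument at the end of the proof of Theorem~\ref{thm:Sen decompletion} (for $v=\sum a_ie_i\in V_{\fin}$, the $A_{k_m}[t]/(t^n)$-span $W_v$ of the coordinates of the $\gamma v$ is finitely generated over $A$ and $\Gamma_k$-stable, hence lies in $A_\infty[t]/(t^n)$ by the analogue of Proposition~\ref{prop:finite Gamma orbit lies in A_infty}, so $a_i\in A_\infty[t]/(t^n)$). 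Thus $V_{\fin}$ is free of rank $r$ over $A_\infty[t]/(t^n)$ with basis $e_1,\dots,e_r$.

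The displayed isomorphism is then formal: since $V$, and hence $V_{\fin}$, is killed by $t^n$, the map $V_{\fin}\otimes_{A_\infty[[t]]}(A\hat{\otimes}L_{\dR}^+)\ra V$ is the map $V_{\fin}\otimes_{A_\infty[t]/(t^n)}(A\hat{\otimes}_kL_{\dR}^+/(t^n))\ra V$, which sends the basis $\{e_i\otimes1\}$ to the basis $\{e_i\}$ and is therefore an isomorphism. The main obstacle in this plan is the first ingredient: setting up the Tate--Sen formalism over the \emph{non-reduced} ring $A\hat{\otimes}_kL_{\dR}^+/(t^n)$ --- equivalently, producing the $\Gamma_k$-equivariant normalized-trace splitting there and controlling the twisted actions $\chi^j$ on the $t$-graded quotients --- after which everything runs in parallel with the reduced case. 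I should also note that one cannot circumvent this by inducting through the short exact sequence $0\ra tV\ra V\ra V/tV\ra0$ together with faithful flatness of $A_K$ over $A_\infty$: because $V$ is not an $A_K$-representation, $V_{\fin}\otimes_{A_\infty}A_K$ carries no natural map to $V$, so the surjectivity of $V_{\fin}\ra(V/tV)_{\fin}$ --- which is precisely the crux of that approach --- does not follow formally.
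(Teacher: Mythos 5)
Your route is genuinely different from the paper's, but it has a gap at exactly the step you flag as the ``first ingredient.'' You assert that for large $m$ there is a $\Gamma_k$-equivariant topological splitting $A\hat{\otimes}_kL_{\dR}^+/(t^n)=A_{k_m}[t]/(t^n)\oplus X$ with $\gamma-1$ invertible with bounded inverse on $X$, and that this can be ``extracted from the known case $A_K$ by induction on $n$ along the $t$-adic filtration.'' The induction along the filtration only controls the associated graded pieces $A_K(j)$, where the geometric-series computation indeed inverts $\gamma-\chi(\gamma)^{-j}$ on $X_{A,m}$; it does not produce an equivariant closed complement of $A_{k_m}[t]/(t^n)$ inside the actual (filtered, non-reduced) ring. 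Splitting off such a complement is a twisted lifting problem on the extension itself, not a formal consequence of the graded statement, and one should not expect it to be cheap: the analogous extension $0\ra \C_p(1)\ra B_{\dR}^+/t^2\ra \C_p\ra 0$ is famously non-split over $G_k$, precisely because the canonical lift $k_\infty\ra L_{\dR}^+/t^2$ is unbounded (e.g.\ the lift of $\zeta_{p^m}$ acquires a $t$-component of size $p^m$). Whether a $\Gamma_k$-equivariant bounded complement of $A_{k_m}[t]/(t^n)$ exists is therefore at best a delicate statement, and nothing in your sketch establishes it; yet your analogue of Proposition~\ref{prop:finite Gamma orbit lies in A_infty} over the truncated ring, and hence your analogue of the lemma in Theorem~\ref{thm:Sen decompletion}, rests on it.

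The paper avoids needing any such ring-level Tate--Sen formalism. It inducts on $n$ directly on the module $V$: lift an $A_\infty[t]/(t^{n-1})$-basis of $(V/t^{n-1}V)_{\fin}$, and correct it by a matrix $1+t^{n-1}M$ with $M\in M_r(X_{A,m})$ solving the twisted equation of Lemma~\ref{lem:matrix equation}, $M\mapsto M-\chi(\gamma)^{n-1}U\gamma(M)U'$. The point is that only the $X_{A,m}$-component of the error needs to be (and can be) removed; the $R_{A,m}$-component is allowed to survive, since one only descends the cocycle to $A_{k_m}[t]/(t^n)$, not further. This is exactly the twisted descent step your plan defers to the unproven splitting. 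The paper then gets the equality $V_{\fin}=\bigoplus_i A_\infty[t]/(t^n)v_i$ not by your coordinate argument but by showing $V_{\fin}\otimes_{A_\infty[t]/(t^n)}(A\hat{\otimes}L_{\dR}^+/(t^n))\ra V$ is injective, using left-exactness of $(-)_{\fin}$ applied to $0\ra t^{n-1}V\ra V\ra V/t^{n-1}V\ra 0$ together with faithful flatness; your coordinate argument would require your truncated-ring analogue of Proposition~\ref{prop:finite Gamma orbit lies in A_infty}, so it inherits the same gap. To salvage your approach you would have to actually construct the equivariant normalized-trace splitting of $A\hat{\otimes}_kL_{\dR}^+/(t^n)$ (or prove the twisted descent directly, which is what the paper does), and that is the substantive content, not a corollary of the $n=1$ case.
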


\begin{proof}
We prove this proposition by induction on $n$.
When $n=1$, this is Theorem~\ref{thm:Sen decompletion}.
So we assume $n>1$.

Set $V':=t^{n-1}V$ and $V'':=V/V'$.
They are $A\hat{\otimes}L_{\dR}^+$-representations of $\Gamma_k$
and $V''$ is finite free of rank $r$ over 
$A\hat{\otimes}L_{\dR}^+/(t^{n-1})$.
By induction hypothesis, $V''_{\fin}$
is finite free of rank $r$ over $A_{\infty}[t]/(t^{n-1})$
and $V''_{\fin}\otimes_{A_{\infty}[t]/(t^{n-1})}A\hat{\otimes}L_{\dR}^+/(t^{n-1})\cong V''$.

Take lifts $v_1,\ldots,v_r$ of a basis of $V''_{\fin}$ to $V$.
Then $v_1,\ldots,v_r$ form an $A_{\infty}[t]/(t^n)$-basis of $V$.
We will prove that after a suitable modification of $v_1,\ldots,v_r$
the transformation matrix of $\gamma$ on $V$ with respect to the new basis
has entries in $A_{\infty}[t]/(t^n)$ for every $\gamma\in \Gamma_k$.

Suppose that we are given an element $\gamma$ of $\Gamma_k$.
For each $1\leq j\leq r$, write $\gamma v_j=\sum_{i=1}^r a_{ij} v_i$ with $a_{ij}\in A\hat{\otimes}L_{\dR}^+/(t^n)$.
Then the $r\times r$ matrix $T:=(a_{ij})$ 
is invertible since it is so modulo $t^{n-1}$. By the property of $V''_{\fin}$, we can write
\[
a_{ij}=a^0_{ij}+t^{n-1}a^1_{ij},\qquad
a^0_{ij}\in A_{\infty}[t]/(t^n),\quad  a^1_{ij}\in A_K=A\hat{\otimes}L_{\dR}^+/(t).
 \]
Set $U:=(a^{0}_{ij}\bmod t)\in M_r(A_\infty)$. This is invertible. In fact, $U$ is the transformation matrix of $\gamma$ acting on $V/tV$
with respect to the basis $(v_i \bmod t)$.

Since $\Gamma_k$ acts continuously on $V/tV$, 
$\val(U-1)>c_3$ and $m(\gamma)> \max\{c_3, m(k)\}$ for some $\gamma\neq 1$ close to $1$. From now on, we fix such $\gamma$.

\begin{claim}
There exists an element in $\GL_r(A\hat{\otimes}L_{\dR}^+/(t^n))$
of the form $1+t^{n-1}M$ with $M\in M_r(A_K)$ such that 
the $r\times r$ matrix
\[
 (1+t^{n-1}M)^{-1}T \gamma(1+t^{n-1}M)
\]
lies in $\GL_r(A_{\infty}[t]/(t^n))$.
\end{claim}

\begin{proof}
Noting that every element in $A\hat{\otimes}L_{\dR}^+/(t^n)$
is annihilated by $t^n$, we compute
\begin{align*}
  (1+t^{n-1}M)^{-1}T \gamma(1+t^{n-1}M)
&= (1-t^{n-1}M)T (1+\chi(\gamma)^{n-1}t^{n-1}\gamma(M))\\
&=T
-t^{n-1}\bigl(MT-\chi(\gamma)^{n-1}T\gamma(M)\bigr)\\
&\phantom{=}-t^{2(n-1)}\chi(\gamma)^{n-1}MT\gamma(M)\\
&=T-t^{n-1}\bigl(MU-\chi(\gamma)^{n-1}U\gamma(M)\bigr).
\end{align*}
Since $T=(a^0_{ij})+t^{n-1}(a^1_{ij})$ with $(a^0_{ij})\in \GL_r(A_{\infty}[t]/(t^n))$,
 it suffices to find $M\in M_r(A_K)$ such that
\[
 (a^1_{ij})-\bigl(MU-\chi(\gamma)^{n-1}U\gamma(M)\bigr)\in M_r(A_{\infty}).
\]

We will apply Lemma~\ref{lem:matrix equation} below to $U$, $U'=U^{-1}$ and $s=n-1$. Take $m\geq m(\gamma)$ large enough so that $U$ and $U^{-1}$ lie in $\GL_r(A_{k_m})$.
Recall the normalized trace map $R_{A,m}\colon A_K\ra A_{k_m}$ with the kernel $X_{A,m}$.
Since $R_{A,m}$ is $A_{k_m}$-linear, we see that 
$\bigl((1-R_{A,m})(a^1_{ij})\bigr)U^{-1}\in M_r(X_{A,m})$.
Therefore, by Lemma~\ref{lem:matrix equation}, there exists
$M_0\in M_r(X_{A,m})$ such that
\[
 \bigl((1-R_{A,m})(a^1_{ij})\bigr)U^{-1}
=M_0-\chi(\gamma)^{n-1}U\gamma(M_0)U^{-1}.
\]
From this we have
\[
 (a^1_{ij})-\bigl(M_0U-\chi(\gamma)^{n-1}U\gamma(M_0)\bigr)
= R_{A,m}(a^1_{ij}) \in M_r(A_{k_m}),
\]
and the matrix $1+t^{n-1}M_0$ satisfies the condition of the lemma.
\end{proof}

We continue the proof of the proposition.
We replace the basis $v_1,\ldots, v_r$ by the one corresponding to the matrix $1+t^{n-1}M$ in the lemma.
Then the transformation matrix of our fixed $\gamma$ 
with respect to the new $v_1,\ldots,v_r$ has entries in $A_{k_m}[t]/(t^n)$.
Thus for each $1\leq i\leq r$, 
the $\gamma^{\Z_p}$-orbit of $v_i$ is contained in 
a finitely generated $A_{k_m}[t]/(t^n)$-submodule of $V$
that is stable under $\gamma^{\Z_p}$.
Since $\gamma^{\Z_p}$ is of finite index in $\Gamma_k$,
the $\Gamma_k$-orbit of $v_i$ is also contained in 
a finitely generated $A_{k_m}[t]/(t^n)$-submodule of $V$ 
that is stable under $\Gamma_k$.
This means that $v_1,\ldots,v_r\in V_{\fin}$.
Hence $\bigoplus_{i=1}^r A_{\infty}[t]/(t^n)v_i\subset V_{\fin}$.

It remains to prove that $\bigoplus_{i=1}^r A_{\infty}[t]/(t^n)v_i= V_{\fin}$.
Since $A_{\infty}[t]/(t^n)\ra A\hat{\otimes}L_{\dR}^+/(t^n)$
is faithfully flat and $V=\bigoplus_{i=1}^r A\hat{\otimes}L_{\dR}^+/(t^n)v_i$, 
it is enough to show that the natural map 
$V_{\fin}\otimes_{A_{\infty}[t]/(t^n)} A\hat{\otimes}L_{\dR}^+/(t^n)\ra V$
is injective.
Note that 
$V_{\fin}\otimes_{A_{\infty}[t]/(t^n)} A\hat{\otimes}L_{\dR}^+/(t^n)
=V_{\fin}\otimes_{A_{\infty}[[t]]} A\hat{\otimes}L_{\dR}^+$.

Recall the exact sequence $0\ra V'\ra V\ra V''\ra 0$.
From this we have an exact sequence $0\ra V'_{\fin}\ra V_{\fin}\ra V''_{\fin}$, and it yields the following commutative diagram with exact rows 
\[
 \xymatrix{
0 \ar[r] 
&V'_{\fin}\otimes (A\hat{\otimes}L_{\dR}^+)\ar[r]\ar[d]
&V_{\fin}\otimes (A\hat{\otimes}L_{\dR}^+)\ar[r]\ar[d]
&V''_{\fin}\otimes (A\hat{\otimes}L_{\dR}^+)\ar[d]
&\\
0\ar[r]& V'\ar[r] &V\ar[r] &V''\ar[r] &0,
}
\]
where the tensor products in the first row are taken over $A_{\infty}[[t]]$.
By induction hypothesis, the first and the third vertical maps are isomorphisms.
Hence the second vertical map is injective and this completes the proof.
\end{proof}

The following lemma is used in the proof of Proposition~\ref{prop:Fontaine decompletion for torsion}.

\begin{lem}\label{lem:matrix equation}
Let $s$ be a positive integer.
Let $U, U'$ be elements in $ M_r(A_\infty)$
satisfying $\val(U-1)>c_3$ and $\val(U'-1)>c_3$.
Take a positive integer $m$ such that
$m> \max\{m(k),c_3\}$ and $U,U'\in M_r(A_{k_m})$.
Then for any $\gamma\in \Gamma_k$ with $c_3<m(\gamma)\leq m$,
the map
\[
 f\colon M_r(A_K)\ra M_r(A_K),\quad
M\mapsto M-\chi(\gamma)^sU\gamma(M)U'
\]
is bijective on the subset $M_r(X_{A,m})$
consisting of the $r\times r$ matrices with entries in 
the kernel $X_{A,m}$ of $R_{A,m}\colon A_K\ra A_{k_m}$.
\end{lem}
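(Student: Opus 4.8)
The plan is to write $f$ as a small perturbation, in the operator-norm sense, of the ``linearized'' operator $f_0$ defined by $f_0(M):=M-\chi(\gamma)^s\gamma(M)$, and to invert both maps on the subspace $M_r(X_{A,m})$ by geometric series; note that the topological splitting $A_K=A_{k_m}\oplus X_{A,m}$ exhibits $X_{A,m}$ as a closed, hence complete, $k$-Banach subspace, so that $M_r(X_{A,m})$ is complete. First I would check that $f$, $f_0$, and $f_0-f$ all preserve $M_r(X_{A,m})$: since $\Gamma_k$ is abelian and each $k_{m'}/k$ is Galois, the normalized trace $R_{A,m}$ commutes with the action of $\Gamma_k$, so $\gamma(X_{A,m})=X_{A,m}$; combined with $U,U'\in M_r(A_{k_m})$ and the fact that $X_{A,m}=A\hat{\otimes}_kX_m$ is an $A_{k_m}$-module, this gives the claim.

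Next I would show that $f_0$ is bijective on $M_r(X_{A,m})$ with the bound $\val(f_0^{-1}N)\geq\val(N)-c_3$. Writing
\[
1-\chi(\gamma)^s\gamma=(1-\gamma)+\bigl(1-\chi(\gamma)^s\bigr)\gamma,
\]
I would factor $f_0=(1-\gamma)\bigl(1+(1-\gamma)^{-1}(1-\chi(\gamma)^s)\gamma\bigr)$ on $M_r(X_{A,m})$. The Tate--Sen axioms, applicable since $m\geq m(k)$ and $m(\gamma)\leq m$, say that $\gamma-1$ is invertible on $X_{A,m}$ with $\val((\gamma-1)^{-1}a)\geq\val(a)-c_3$; and since the integer $1-\chi(\gamma)^s$ has $p$-adic valuation at least $m(\gamma)$ while $\gamma$ is an isometry, the operator $(1-\gamma)^{-1}(1-\chi(\gamma)^s)\gamma$ raises $\val$ by at least $m(\gamma)-c_3>0$. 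Hence $1+(1-\gamma)^{-1}(1-\chi(\gamma)^s)\gamma$ is invertible with inverse of operator norm $\leq1$ (Neumann series), and composing with $(1-\gamma)^{-1}$ gives the desired inverse of $f_0$ and its norm bound.

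Finally, expanding $U=1+(U-1)$ and $U'=1+(U'-1)$ yields
\[
(f_0-f)(M)=\chi(\gamma)^s\bigl((U-1)\gamma(M)U'+\gamma(M)(U'-1)\bigr),
\]
which raises $\val$ by at least $c:=\min\{\val(U-1),\val(U'-1)\}>c_3$, using that $\gamma$ is an isometry and that $U,U'\in M_r(A_{k_m})$ have non-negative valuation. Therefore $f_0^{-1}(f_0-f)$ raises $\val$ by at least $c-c_3>0$, so $1-f_0^{-1}(f_0-f)$ is bijective on the complete space $M_r(X_{A,m})$ by a Neumann series, and $f=f_0\bigl(1-f_0^{-1}(f_0-f)\bigr)$ is bijective on $M_r(X_{A,m})$, as claimed. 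The step I expect to be the main obstacle is the bookkeeping of these $\val$-estimates under composition: one must ensure that the loss of $c_3$ incurred each time $(\gamma-1)^{-1}$ is applied is strictly outweighed by the gains coming from the hypotheses $\val(U-1),\val(U'-1)>c_3$ and $c_3<m(\gamma)$, so that every Neumann series in sight genuinely converges to an operator on $M_r(X_{A,m})$.
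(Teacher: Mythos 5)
Your proof is correct and takes essentially the same route as the paper's (which follows Brinon--Conrad): both rest on the stability of $M_r(X_{A,m})$ under $f$, the Tate--Sen invertibility of $1-\gamma$ on $X_{A,m}$ with operator norm of the inverse at most $p^{c_3}$, and a Neumann-series inversion of a perturbation whose norm is controlled by $\min\{m(\gamma),\val(U-1),\val(U'-1)\}>c_3$. The only difference is bookkeeping: you invert in two stages (first $f_0=1-\chi(\gamma)^s\gamma$ as a perturbation of $1-\gamma$, then $f$ as a perturbation of $f_0$), whereas the paper bundles everything into the single operator $h(N)=N-\chi(\gamma)^s UNU'$ via $f(M)=(1-\gamma)M+h(\gamma M)$ and shows $\val(h(N))\geq \val(N)+\delta$ with $\delta>c_3$.
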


\begin{proof}
The proof of \cite[Lemma 15.3.9]{Brinon-Conrad} works in our setting.
For the convenience of the reader, we reproduce their proof here.

We first check that $f$ restricts to an endomorphism on $M_r(X_{A,m})$.
This follows from the fact that the map $R_{A,m}$ is $A_{k_m}$-linear and $\Gamma_k$-equivariant and thus 
$X_{A,m}$ is an $A_{k_m}$-module stable under the action of $\Gamma_k$.

We define a map $h\colon M_r(A_K)\ra M_r(A_K)$ by
\begin{align*}
  h(N)&:=N-\chi(\gamma)^sUNU'\\
&\phantom{:}=(N-\chi(\gamma)^sN)
+\chi(\gamma)^s\bigl((N-UN)+UN(1-U')\bigr).
\end{align*}
Then the same argument as above shows that $h$ restricts
 to an endomorphism on $M_r(X_{A,m})$.
We also have $f(M)=(1-\gamma)M +h(\gamma M)$.

Recall that the map $1-\gamma\colon M_r(X_{A,m})\ra M_r(X_{A,m})$
admits a continuous inverse with the operator norm at most $p^{c_3}$.
We denote this inverse by $\rho$.
Since $(f\circ \rho-\id)M=h(\gamma \rho(M))$,
it suffices to prove that the operator norm of $h$ 
is less than $p^{-c_3}$; this would imply that the operator norm of $h\circ \gamma\circ \rho$ 
is less than $1$.
Thus $f\circ \rho$ admits a continuous inverse given by a geometric series and hence $f$ is bijective on $M_r(X_{A,m})$.

By the second expression of $h$, we have
\begin{align*}
  \val(h(N))
&\geq \min\{\val((1-\chi(\gamma)^s)N), \val((U-1)N), \val(UN(1-U'))\}\\
&\geq \min\{\val((1-\chi(\gamma))N), \val((U-1)N), \val(N(1-U'))\}.
\end{align*}
From this we have
\[
 \val(h(N))\geq \val(N)+\delta
\]
where $\delta:=\min\{m(\gamma),\val(U-1),\val(U'-1)\}$.
Thus the operator norm of $h$ is at most $p^{-\delta}$.
Since $\delta>c_3$ by assumption, this completes the proof.
\end{proof}

\begin{proof}[Proof of Theorem~\ref{thm:Fontaine decompletion}]
 For each $n\geq 1$, put $V_n:=V/t^nV$.
This is an $A\hat{\otimes}L_{\dR}^+$-representation of $\Gamma_k$
that is finite free of rank $r$ over $A\hat{\otimes}L_{\dR}^+/(t^n)$.
Thus by Proposition~\ref{prop:Fontaine decompletion for torsion}, 
$(V_{n})_{\fin}$
is finite free of rank $r$ over $A_{\infty}[t]/(t^n)$, and
$(V_{n})_{\fin}\otimes_{A_{\infty}[[t]]}(A\hat{\otimes}L_{\dR}^+)\ra V_n$
is an isomorphism. 

By definition, we have $V_{\fin}=\varprojlim_n (V_{n})_{\fin}$.
Since the natural map $V_{n+1}\ra V_n$ is surjective,
so is the map $(V_{n+1})_{\fin}\ra (V_{n})_{\fin}$
by the faithfully flatness of $A_\infty[t]/(t^{n+1})\ra A\hat{\otimes}L_{\dR}^+/(t^{n+1})$.
Thus lifting a basis of $(V_{n})_{\fin}$ gives a basis of $V_{\fin}$
and we see that $V_{\fin}$ is finite free of rank $r$ over $A_\infty[[t]]$.
The remaining assertions also follow from this.
\end{proof}

\begin{prop}\label{prop:fully faithfulness}
 For an $A\hat{\otimes}L_{\dR}^+$-representation $V$ of $\Gamma_k$
that is finite free of rank $r$ over $A\hat{\otimes}L_{\dR}^+$,
the $A_\infty[[t]]$-module $V_{\fin}$
is the union of finitely generated $A_\infty[[t]]$-submodules of $V$
that are stable under the action of $\Gamma_k$.
In particular, the natural inclusion
\[
 (V_{\fin})^{\Gamma_k}\hra V^{\Gamma_k}
\]
is an isomorphism.
\end{prop}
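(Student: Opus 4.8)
The plan is to prove the two inclusions between $V_{\fin}$ and the union $\mathcal{U}$ of all finitely generated $\Gamma_k$-stable $A_\infty[[t]]$-submodules of $V$. The inclusion $V_{\fin}\subseteq\mathcal{U}$ is easy: by Theorem~\ref{thm:Fontaine decompletion}, $V_{\fin}$ is finite free over $A_\infty[[t]]$, hence finitely generated; it is stable under $\Gamma_k$ by construction; and the natural map $V_{\fin}\to V$ is injective because $V_{\fin}$ is finite free over $A_\infty[[t]]$, $A_\infty[[t]]$ is a subring of $A\hat{\otimes}L_{\dR}^+$, and $V_{\fin}\otimes_{A_\infty[[t]]}(A\hat{\otimes}L_{\dR}^+)\xrightarrow{\sim}V$ by that theorem. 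Thus $V_{\fin}$ is one of the submodules defining $\mathcal{U}$. For the reverse inclusion, let $W\subseteq V$ be a finitely generated $\Gamma_k$-stable $A_\infty[[t]]$-submodule; I first reduce to the trivial representation. Choosing an $A_\infty[[t]]$-basis $e_1,\dots,e_r$ of $V_{\fin}$, which by Theorem~\ref{thm:Fontaine decompletion} is also an $A\hat{\otimes}L_{\dR}^+$-basis of $V$, the transformation matrices $P_\gamma$ of $\Gamma_k$ on $V_{\fin}$ lie in $\GL_r(A_\infty[[t]])$; writing a finite generating set $w_1,\dots,w_s$ of $W$ as $w_j=\sum_i\alpha_{ij}e_i$ with $\alpha=(\alpha_{ij})\in M_{r\times s}(A\hat{\otimes}L_{\dR}^+)$ and using $\gamma w_j\in W$, one obtains, exactly as in the proof of Proposition~\ref{prop:Fontaine decompletion for torsion}, a matrix identity $\gamma(\alpha)=P_\gamma^{-1}\,\alpha\,Q^\gamma$ with $Q^\gamma\in M_s(A_\infty[[t]])$. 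Hence the $A_\infty[[t]]$-submodule $M\subseteq A\hat{\otimes}L_{\dR}^+$ generated by the entries $\alpha_{ij}$ is finitely generated and $\Gamma_k$-stable, and it suffices to prove that every finitely generated $\Gamma_k$-stable $A_\infty[[t]]$-submodule of $A\hat{\otimes}L_{\dR}^+$ is contained in $A_\infty[[t]]$ (call this $(\dagger)$); indeed, then all $\alpha_{ij}\in A_\infty[[t]]$, so $W\subseteq\bigoplus_i A_\infty[[t]]e_i=V_{\fin}$.

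To prove $(\dagger)$ I would reduce modulo $t^n$ and induct on $n$. Since $A\hat{\otimes}L_{\dR}^+=\varprojlim_n A\hat{\otimes}_kL_{\dR}^+/(t^n)$, $A_\infty[[t]]=\varprojlim_n A_\infty[t]/(t^n)$, and $\bigcap_n\bigl(A_\infty[[t]]+t^n(A\hat{\otimes}L_{\dR}^+)\bigr)=A_\infty[[t]]$, statement $(\dagger)$ follows once one knows, for every $n$, that a finitely generated $\Gamma_k$-stable $A_\infty[t]/(t^n)$-submodule of $A\hat{\otimes}_kL_{\dR}^+/(t^n)$ lies in $A_\infty[t]/(t^n)$; call this $(\dagger_n)$. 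For the step from $(\dagger_n)$ to $(\dagger_{n+1})$, applying $(\dagger_n)$ to the reduction modulo $t^n$ places such a submodule inside $A_\infty[t]/(t^{n+1})+t^n\bigl(A\hat{\otimes}_kL_{\dR}^+/(t^{n+1})\bigr)$, and since multiplication by $t^n$ identifies the ``leading coefficient'' quotient with $A_K/A_\infty$, the remaining obstruction is a finitely generated $\Gamma_k$-stable $A_\infty$-submodule of $A_K/A_\infty$, which vanishes precisely by the case $n=1$. Thus everything comes down to $(\dagger_1)$: a finitely generated $\Gamma_k$-stable $A_\infty$-submodule of $A_K$ is contained in $A_\infty$ (equivalently, by the matrix reduction applied to an $A_K$-representation $V_1$, such a submodule of $V_1$ lies in $(V_1)_{\fin}$). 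This base case is the main obstacle, and the only place requiring a genuine Sen-method argument rather than formal manipulation: I would prove it in parallel with Proposition~\ref{prop:finite Gamma orbit lies in A_infty}, via the isometric embedding of $A$ into a finite product of complete discrete valuation fields $E_i$ and the analysis of a topological generator $\gamma$ of a small open subgroup of $\Gamma_k$ — its eigenvalues are principal units, hence $p$-power roots of unity as in \cite[Proposition~7]{Tate}, and the resulting unipotent operator must vanish because $\gamma-1$ is bijective on the complement $X_{A,m}$ of $A_{k_m}$ in $A_K$ — the only new feature, compared with Proposition~\ref{prop:finite Gamma orbit lies in A_infty}, being that the module is finitely generated over $A_\infty$ rather than over $A$ (accommodated by working with its finitely generated $A_\infty$-span and, if needed, faithfully flat Galois descent from $A_{k_m}$ to $A$).

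Finally, the ``in particular'' is a formal consequence of the displayed equality: for $v\in V^{\Gamma_k}$ the cyclic submodule $A_\infty[[t]]\,v\subseteq V$ is finitely generated and $\Gamma_k$-stable (since $\gamma(av)=\gamma(a)v$ for $a\in A_\infty[[t]]$), hence contained in $V_{\fin}$ by what was just proved, so $v\in(V_{\fin})^{\Gamma_k}$; the reverse inclusion $(V_{\fin})^{\Gamma_k}\subseteq V^{\Gamma_k}$ is trivial.
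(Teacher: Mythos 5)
Your overall architecture is sound and close in spirit to the paper's: the easy inclusion, the matrix reduction (via a basis of $V_{\fin}$ from Theorem~\ref{thm:Fontaine decompletion}) to the statement $(\dagger)$ about finitely generated $\Gamma_k$-stable $A_\infty[[t]]$-submodules of $A\hat{\otimes}L_{\dR}^+$, the $t$-adic d\'evissage to $(\dagger_1)$, and the deduction of the ``in particular'' clause are all correct. The gap is exactly where you locate the crux: $(\dagger_1)$ is not proved by the method you describe. The proof of Proposition~\ref{prop:finite Gamma orbit lies in A_infty} hinges on the hypothesis that the module is finitely generated over $A$: only then does its image in $E_i\hat{\otimes}_kK$ span a \emph{finite-dimensional $E_i$-vector space} $W_i$, on which $\gamma$ acts $E_i$-linearly and the eigenvalue/unipotence analysis (principal units, hence $p$-power roots of unity, hence $\gamma-1$ nilpotent and zero) can run. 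For a module that is finitely generated only over $A_\infty$, the $E_i$-span of its image is in general infinite-dimensional; it is finite only over $E_i\otimes_k k_\infty$, and over that ring $\gamma$ is merely semilinear, so ``the eigenvalues of $\gamma$'' no longer make sense and the argument does not start. Your parenthetical remedies (``working with its finitely generated $A_\infty$-span'', ``faithfully flat Galois descent from $A_{k_m}$ to $A$'') do not supply the missing finiteness over $A$ (or over a fixed $A_{k_m}$), which is the whole point.

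The statement $(\dagger_1)$ is true, but the missing step is a reduction to finite level rather than a rerun of the eigenvalue argument over $A_\infty$: if $N=\sum_j A_\infty x_j$ is $\Gamma_k$-stable, choose a topological generator $\gamma$ of an open subgroup $\Gamma'\subset\Gamma_k$; the finitely many entries of the matrix expressing $\gamma(x_1),\ldots,\gamma(x_s)$ in terms of the $x_j$, together with the $x_j$ themselves, involve only finitely many levels, so $P:=\sum_j A_{k_m}x_j$ is stable under all non-negative powers of $\gamma$ for $m$ large; since finitely generated $A_{k_m}$-submodules of the Banach $A_{k_m}$-module $A_K$ are closed and the action is continuous, $P$ is stable under $\overline{\langle\gamma\rangle}=\Gamma'$, and $P$ is finitely generated over $A$ because $A_{k_m}$ is finite over $A$; now Proposition~\ref{prop:finite Gamma orbit lies in A_infty} (which allows stability only under an open subgroup) gives $P\subset A_\infty$, hence $N\subset A_\infty$. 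This is also, in effect, what the paper's own (terser) proof does: it reduces modulo $t^n$ and invokes the definition of $(V/t^nV)_{\fin}$ together with $A_\infty[t]/(t^n)=\bigcup_m A_{k_m}[t]/(t^n)$, i.e.\ it passes to a finite-level, $A$-finite submodule before applying the already-established decompletion results, instead of trying to adapt the $E_i$-embedding argument to $A_\infty$-coefficients. With the finite-level reduction inserted at $(\dagger_1)$ (or applied directly, level by level in $t$, as in the paper), your proof closes; without it, the key base case is unestablished.
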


\begin{proof}
Let $V_{\fin}'$ denote the union of finitely generated $A_\infty[[t]]$-submodules of $V$ that are stable under the action of $\Gamma_k$.
Then $V_{\fin}\subset V_{\fin}'$ by Theorem~\ref{thm:Fontaine decompletion}.
So it remains to prove the opposite inclusion.
For this it suffices to prove $V_{\fin}'/t^nV_{\fin}'\subset V_{\fin}/t^nV_{\fin}$ for each $n\geq 1$.
Since $V_{\fin}/t^nV_{\fin}=(V/t^nV)_{\fin}$ 
by Theorem~\ref{thm:Fontaine decompletion},
the desired inclusion follows from the definition of $(V/t^nV)_{\fin}$
noting $A_\infty[t]/(t^n)=\bigcup_m A_{k_m}[t]/(t^n)$.
The second assertion follows from the first.
\end{proof}

\begin{example}
For the trivial $A\hat{\otimes}L_{\dR}^+$-representation 
$V=A\hat{\otimes}L_{\dR}^+$ of $\Gamma_k$, 
we have $V_{\fin}=A_{\infty}[[t]]$.
\end{example}

Finally, we discuss topologies on $V_{\fin}$ and 
the continuity of the action of $\Gamma_k$.

\begin{lem}\label{lem:topology and continuity}
Let $W$ be a finite free $A_{\infty}[[t]]/(t^n)$-module equipped with an action of $\Gamma_k$.
Then $\Gamma_k$-action is continuous with respect to the topology on $W$ induced from the product topology on $A_\infty[[t]]/(t^n)\cong A_\infty^n$
if and only if it is continuous with respect to 
the topology on $W$ induced from the subspace topology on 
$A_\infty[[t]]/(t^n)\subset A\hat{\otimes}L_{\dR}^+/(t^n)$.
\end{lem}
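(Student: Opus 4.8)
The plan is to reduce the claim to a statement about the transformation matrices of the action, and then to exploit the cocycle relation together with the fact that $\Gamma_k$ preserves every finite level $A_{k_m}$ of $A_\infty$. Write $R:=A_\infty[[t]]/(t^n)=A_\infty[t]/(t^n)$, let $\tau_1$ denote its colimit topology $\varinjlim_m A_{k_m}[t]/(t^n)$ (which is the product topology under $R\cong A_\infty^n$), and let $\tau_2$ denote the subspace topology from $A\hat{\otimes}L_{\dR}^+/(t^n)$. Both are ring topologies on $R$ for which $\Gamma_k$ acts continuously ($\tau_1$ by the relative Sen theory recalled in \S\ref{subsection:setup}, $\tau_2$ by restriction from the Banach ring $A\hat{\otimes}L_{\dR}^+/(t^n)$). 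Fixing a basis identifies $W\cong R^r$; a change of basis is given by an element of $\GL_r(R)$, which acts by a homeomorphism of $R^r$ in either topology, so the topology on $W$ is basis-independent, and by the standard criterion (valid since $R$ is a topological ring on which $\Gamma_k$ acts continuously) the semilinear $\Gamma_k$-action on $W$ is continuous for $\tau_i$ if and only if the associated cocycle $\gamma\mapsto U_\gamma\in\GL_r(R)$ is continuous for $\tau_i$. Since each $A_{k_m}[t]/(t^n)\to A\hat{\otimes}L_{\dR}^+/(t^n)$ is continuous, the universal property of the colimit gives $\tau_1\supseteq\tau_2$, so the implication ``$\tau_1$-continuous $\Rightarrow$ $\tau_2$-continuous'' is immediate. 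The remaining implication is the real point.

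So suppose the cocycle $U$ is $\tau_2$-continuous. Pick an open, topologically procyclic subgroup $\Gamma_k'\subseteq\Gamma_k$ with topological generator $\gamma_0$ (e.g.\ $\Gamma_k\cap(1+p^N\Z_p)$ for $N\gg0$). Since $U_{\gamma_0}$ has finitely many entries in $A_\infty=\bigcup_m A_{k_m}$, we have $U_{\gamma_0}\in\GL_r(A_{k_m}[t]/(t^n))$ for some $m$. As $k_m/k$ is Galois and $\gamma_0(t)=\chi(\gamma_0)t$ with $\chi(\gamma_0)\in k^\times$, the subring $A_{k_m}[t]/(t^n)$ is $\gamma_0$-stable, so the cocycle identity $U_{\gamma_0^{N+1}}=U_{\gamma_0}\,\gamma_0(U_{\gamma_0^{N}})$ together with induction gives $U_{\gamma_0^{N}}\in\GL_r(A_{k_m}[t]/(t^n))$ for every $N\in\Z$. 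I would then invoke that $A_{k_m}[t]/(t^n)=A\hat{\otimes}_k(k_m[t]/(t^n))$ is the completed tensor product of $A$ with a finite-dimensional $k$-subspace of $L_{\dR}^+/(t^n)$; such a subspace is a topological direct summand of $L_{\dR}^+/(t^n)$, hence $A_{k_m}[t]/(t^n)$ is a topological direct summand, in particular a closed subspace, of $A\hat{\otimes}L_{\dR}^+/(t^n)$. Since $\langle\gamma_0\rangle$ is dense in $\Gamma_k'$ and $U$ is $\tau_2$-continuous, this forces $U_\gamma\in\GL_r(A_{k_m}[t]/(t^n))$ for all $\gamma\in\Gamma_k'$. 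Enlarging $m$ to accommodate the finitely many $U$-values of a set of coset representatives of $\Gamma_k/\Gamma_k'$, and using once more that $\Gamma_k$ preserves each $A_{k_m}[t]/(t^n)$, the cocycle identity gives $U_\gamma\in\GL_r(A_{k_m}[t]/(t^n))$ for all $\gamma\in\Gamma_k$, for some fixed $m$.

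To finish, I would note that on $A_{k_m}[t]/(t^n)$ the topology induced from $(R,\tau_1)$ and the one induced from $A\hat{\otimes}L_{\dR}^+/(t^n)$ both coincide with its intrinsic Banach topology $\cong A_{k_m}^n$: for $\tau_1$ this is strictness of $A_\infty=\varinjlim_m A_{k_m}$, and for $\tau_2$ it follows from the direct-summand property above (or from the splitting $A_K=A_{k_m}\oplus X_{A,m}$) together with the open mapping theorem. Therefore the $\tau_2$-continuous map $U$, which now factors through $\GL_r(A_{k_m}[t]/(t^n))$, is continuous for that Banach topology, hence continuous into $(R,\tau_1)$ (the inclusion $A_{k_m}[t]/(t^n)\hookrightarrow(R,\tau_1)$ being continuous). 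By the matrix criterion again, the $\Gamma_k$-action on $W$ is $\tau_1$-continuous, which completes the argument.

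The step I expect to be the main obstacle is the confinement to a single level in the second paragraph. Two tempting shortcuts both fail: $\tau_1$ is strictly finer than $\tau_2$ in general, so the lemma is not formal; and compactness of $\Gamma_k$ alone does not push the orbit of a vector into a single $A_{k_m}$ — for instance $\{0\}\cup\{p^{m}e_m\}_{m\geq1}$ with $e_m\in A_{k_{m+1}}\setminus A_{k_m}$ of norm $1$ is a compact subset of $A_K$ contained in $A_\infty$ but in no $A_{k_m}$. What makes the argument work is that $\Gamma_k$ stabilizes every finite level (because $k_m/k$ is Galois), so ``defined over $A_{k_m}$'' propagates along the dense cyclic subgroup via the cocycle relation; the accompanying point-set facts — the behaviour of $A\hat{\otimes}_k(-)$ on topological direct sums, complementedness of finite-dimensional subspaces of non-archimedean Banach spaces, and the open mapping theorem — are routine and I would only cite them.
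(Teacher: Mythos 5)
Your overall strategy --- confine the cocycle to a single level $A_{k_m}[t]/(t^n)$ using a topological generator, the cocycle relation and the $\Gamma_k$-stability of each level, then use that the two topologies agree on that level --- is essentially the paper's proof (the paper phrases the descent via a $\Gamma_k$-stable submodule $W_m$ and asserts it; your second paragraph actually supplies more detail). But there is a genuine gap in the other implication. You define $\tau_1$ as the colimit topology $\varinjlim_m A_{k_m}[t]/(t^n)$ and assert parenthetically that this ``is the product topology under $R\cong A_\infty^n$''; that identification is false, and the lemma's $\tau_1$ is the product of the $p$-adic (norm) topology on $A_\infty\subset A_K$, not the colimit topology. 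With the correct $\tau_1$, the inclusion $\tau_1\supseteq\tau_2$ fails as soon as $n\geq 2$: it would require the Hensel-lift embedding $\iota\colon A_\infty\hookrightarrow A\hat{\otimes}L_{\dR}^+/(t^n)$ to be continuous for the norm topology on $A_\infty$, and already for $A=k$, $n=2$ it is not. Indeed $\iota(\zeta_{p^m})=[\varepsilon^{1/p^m}]\exp(-t/p^m)\equiv[\varepsilon^{1/p^m}]-\zeta_{p^m}t/p^m \pmod{t^2}$, so $x_m:=p^{\lceil m/2\rceil}\zeta_{p^m}\to 0$ in $k_\infty$ while $\iota(x_m)=p^{\lceil m/2\rceil}[\varepsilon^{1/p^m}]-p^{\lceil m/2\rceil-m}\zeta_{p^m}t$ is unbounded in $L_{\dR}^+/(t^2)$ (the first term tends to $0$, the second lies in the closed subspace $Kt$ with norm about $p^{m/2}$). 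One checks similarly that $\tau_2\not\supseteq\tau_1$, so the two topologies are incomparable and neither implication of the lemma is formal. Consequently your one-line proof of ``$\tau_1$-continuous $\Rightarrow$ $\tau_2$-continuous'' collapses, and the discontinuity of $\iota$ blocks the cheap alternative of deducing $\tau_2$-continuity of the cocycle from continuity of its $t$-adic coefficients.

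The good news is that your own mechanism repairs this direction with no new ideas: $A_{k_m}[t]/(t^n)\cong A_{k_m}^n$ is closed in $(R,\tau_1)$ as well (each $A_{k_m}$ is complete, hence closed in $A_K$), so the generator/cocycle/density argument of your second paragraph confines a $\tau_1$-continuous cocycle to some level $m$, where the $\tau_1$-induced topology is the Banach topology $A_{k_m}^n$; since $k_m[t]/(t^n)$ is a finite-dimensional $k$-subspace of $L_{\dR}^+/(t^n)$, the inclusion $A_{k_m}[t]/(t^n)\hookrightarrow A\hat{\otimes}L_{\dR}^+/(t^n)$ is continuous, and the matrix criterion then gives $\tau_2$-continuity of the action. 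With that symmetric repair your argument coincides in substance with the paper's proof: both directions are reduced to a single level on which the subspace topology from $A\hat{\otimes}L_{\dR}^+/(t^n)$ and the product topology coincide.
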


\begin{proof}
For each of the two topologies on $W$, the continuity of $\Gamma_k$ implies that
there exist an $A_\infty[[t]]/(t^n)$-basis $w_1,\ldots, w_r$ of $W$
and a large positive integer $m$ such that
$W_m:=\bigoplus_{i=1}^r A_{k_m}[[t]]/(t^n) w_i$ is stable under $\Gamma_k$
and its action on $W_m$ is continuous with respect to the induced topology 
$W_m\subset W$.
Conversely, if the $\Gamma_k$-action on $W_m$ is continuous 
with respect to the induced topology $W_m\subset W$
 for such $\Gamma_k$-stable $A_{k_m}[[t]]/(t^n)$-submodule $W_m$
with $W_m\otimes_{A_{k_m}[[t]]/(t^n)}A_\infty[[t]]/(t^n)=W$,
the $\Gamma_k$-action on $W$ is continuous.
 
The subspace topology on $A_{k_m}[[t]]/(t^n)$ from $A\hat{\otimes}L_{\dR}^+/(t^n)$ coincides with the product topology 
on $A_{k_m}[[t]]/(t^n)\cong A_{k_m}^n$.
From this we find that the continuity conditions on the action of $\Gamma_k$ on $W_m$ with respect to the two topologies coincide.
Hence the two continuity properties of the action of $\Gamma_k$ on $W$ are equivalent.
\end{proof}

\begin{defn}
 Let $V$ be an $A\hat{\otimes}L_{\dR}^+$-representation of $\Gamma_k$.
\begin{itemize}
 \item If $V$ is finite free over $A\hat{\otimes}L_{\dR}^+/(t^n)$ for some $n\geq 1$,
we equip $V_{\fin}$ with the topology acquired from topologizing
$A_{\infty}[[t]]/(t^n)$ with the product topology of the $p$-adic topology on $A_\infty$. 
Then $\Gamma_k$ acts continuously on $V_{\fin}$
by Lemma~\ref{lem:topology and continuity}.
 \item If $V$ is finite free over $A\hat{\otimes}L_{\dR}^+$,
we equip $V_{\fin}$ with the inverse limit topology
via $V_{\fin}=\varprojlim_n (V/t^nV)_{\fin}$.
Then $\Gamma_k$ acts continuously on $V_{\fin}$.
\end{itemize}
\end{defn}

\begin{defn}
An \textit{$A_\infty[[t]]$-representation} of $\Gamma_k$ 
is an $A_\infty[[t]]$-module $W$ 
that is isomorphic to either $(A_\infty[[t]])^r$
or $(A_\infty[[t]]/(t^n))^r$ for some $r$ and $n$,
equipped with a continuous
$A_\infty[[t]]$-semilinear action of $\Gamma_k$
(here the topology on $W$ is acquired from the $p$-adic topology on $A_\infty$
by considering the product topology and the inverse limit topology as before).
We denote the category of $A_\infty[[t]]$-representations of $\Gamma_k$ 
by $\mathrm{Rep}_{\Gamma_k}(A_{\infty}[[t]])$.
An $A_\infty[[t]]$-representation of $\Gamma_k$
that is annihilated by $t$ is also called
an \textit{$A_\infty$-representation} of $\Gamma_k$.
\end{defn}

\begin{thm}
The decompletion functor
\[
 \mathrm{Rep}_{\Gamma_k}(A\hat{\otimes}L_{\dR}^+)
\ra \mathrm{Rep}_{\Gamma_k}(A_\infty[[t]]),
\quad V\mapsto V_{\fin}
\]
is an equivalence of categories.
A quasi-inverse is given by
$W\mapsto W\otimes_{A_\infty[[t]]}(A\hat{\otimes}L_{\dR}^+)$.
\end{thm}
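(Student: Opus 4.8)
The plan is to show that the two functors in the statement are well-defined and mutually quasi-inverse; the equivalence then follows formally. Write $R:=A\hat{\otimes}L_{\dR}^+$. That $V\mapsto V_{\fin}$ lands in $\mathrm{Rep}_{\Gamma_k}(A_\infty[[t]])$ is already in hand: Theorem~\ref{thm:Fontaine decompletion} and Proposition~\ref{prop:Fontaine decompletion for torsion} give that $V_{\fin}$ is finite free of the required rank over $A_\infty[[t]]$ (resp.\ over $A_\infty[t]/(t^n)$), and continuity of the induced $\Gamma_k$-action is the content of the definition of the topology on $V_{\fin}$ together with Lemma~\ref{lem:topology and continuity}. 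Functoriality is clear: a morphism $f\colon V\to V'$ induces compatible maps $(V/t^nV)_{\fin}\to(V'/t^nV')_{\fin}$, since finitely generated $\Gamma_k$-stable $A$-submodules are carried into finitely generated $\Gamma_k$-stable $A$-submodules, and passing to the inverse limit yields an $A_\infty[[t]]$-linear, $\Gamma_k$-equivariant, continuous map $V_{\fin}\to V'_{\fin}$. Conversely, for $W\in\mathrm{Rep}_{\Gamma_k}(A_\infty[[t]])$ the module $W\otimes_{A_\infty[[t]]}R$ is finite free over $R$ (resp.\ over $R/(t^n)$); fixing an $A_\infty[[t]]$-basis of $W$, the transition matrices $U_\gamma$ of the $\Gamma_k$-action lie in $\GL_r(A_\infty[[t]])$ and depend continuously on $\gamma$, hence continuously as elements of $\GL_r(R)$ since $A_\infty[[t]]\hookrightarrow R$ is continuous, and together with the continuous $\Gamma_k$-action on $R$ (cf.\ \cite[Appendix]{Bellovin}) this makes the diagonal action on $W\otimes_{A_\infty[[t]]}R$ continuous. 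So both functors are well-defined.

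Next I would produce the two natural isomorphisms. The map $V_{\fin}\otimes_{A_\infty[[t]]}R\to V$ is an isomorphism by Theorem~\ref{thm:Fontaine decompletion} and Proposition~\ref{prop:Fontaine decompletion for torsion}, and it is $\Gamma_k$-equivariant and natural in $V$ by construction. For the other direction set $V:=W\otimes_{A_\infty[[t]]}R$ and consider $\varepsilon_W\colon W\to V$, $w\mapsto w\otimes1$. It is injective because $A_\infty[t]/(t^n)\to R/(t^n)$ is faithfully flat for every $n$ (established in Subsection~\ref{subsection:setup}) and $W$ is free, and its image lies in $V_{\fin}$. Indeed, when $W$ is free over $A_\infty[[t]]$, $W$ is itself a finitely generated $\Gamma_k$-stable $A_\infty[[t]]$-submodule of $V$, so $W\subseteq V_{\fin}$ by Proposition~\ref{prop:fully faithfulness}; and when $W$ is free over $A_\infty[t]/(t^n)$, the proof of Lemma~\ref{lem:topology and continuity} provides a $\Gamma_k$-stable $A_{k_m}[t]/(t^n)$-form $W_m$ of $W$, so that each $W_{m'}:=A_{k_{m'}}[t]/(t^n)\cdot W_m$ for $m'\geq m$ is $\Gamma_k$-stable and finitely generated over $A$ (because $A_{k_{m'}}[t]/(t^n)$ is finite over $A$), whence $W=\bigcup_{m'\geq m}W_{m'}\subseteq V_{\fin}$. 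Thus $\varepsilon_W$ is an inclusion $W\hookrightarrow V_{\fin}$ of finite free $A_\infty[[t]]$-modules (resp.\ $A_\infty[t]/(t^n)$-modules) of the same rank.

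It remains to see this inclusion is an equality, after which assembling the two natural isomorphisms finishes the proof. Expressing $\varepsilon_W$ in bases of $W$ and $V_{\fin}$ by a matrix $P$, the triangle formed by $W\otimes_{A_\infty[[t]]}R$, $V_{\fin}\otimes_{A_\infty[[t]]}R$ and $V$, with the two multiplication maps as its slanted sides, commutes; since both slanted maps are isomorphisms, $\varepsilon_W\otimes_{A_\infty[[t]]}R$ is an isomorphism, i.e.\ $P$ becomes invertible over $R$. Reducing modulo $t$ and using that $A_\infty\to A_K$ is faithfully flat, hence reflects units, we find that $\det P\bmod t$ is a unit in $A_\infty$; since $A_\infty[[t]]$ is $t$-adically separated and complete (resp.\ $t$ is nilpotent in $A_\infty[t]/(t^n)$), $\det P$ is a unit, $P$ is invertible, and $W\xrightarrow{\sim}V_{\fin}$ — $\Gamma_k$-equivariantly, being the identity on the common submodule of $V$, and naturally in $W$. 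The one step requiring genuine (if modest) care is the containment $\varepsilon_W(W)\subseteq V_{\fin}$ in the $t$-torsion case, i.e.\ exhibiting $W$ as an increasing union of finitely generated $\Gamma_k$-stable $A$-submodules; everything else is either contained in the cited results or routine faithfully-flat bookkeeping.
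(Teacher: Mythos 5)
Your argument is correct, and it leans on the same pillars as the paper's (two-sentence) proof --- Theorem~\ref{thm:Fontaine decompletion}, Proposition~\ref{prop:Fontaine decompletion for torsion}, Lemma~\ref{lem:topology and continuity}, and Proposition~\ref{prop:fully faithfulness} --- but it is organized differently. The paper splits the verification into ``well-defined and essentially surjective'' (quoting the decompletion results, with the identification $(W\otimes_{A_\infty[[t]]}(A\hat{\otimes}L_{\dR}^+))_{\fin}\cong W$ left implicit) and ``fully faithful,'' the latter deduced from Proposition~\ref{prop:fully faithfulness} via $(V_{\fin})^{\Gamma_k}=V^{\Gamma_k}$ applied to internal Homs. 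You instead exhibit the unit and counit of the adjunction and show both are isomorphisms: the counit is Theorem~\ref{thm:Fontaine decompletion} / Proposition~\ref{prop:Fontaine decompletion for torsion}, and for the unit $\varepsilon_W\colon W\to (W\otimes R)_{\fin}$ you supply the containment $\varepsilon_W(W)\subset V_{\fin}$ (via Proposition~\ref{prop:fully faithfulness} in the free case, and via the $A_{k_{m'}}[t]/(t^n)$-forms coming from the proof of Lemma~\ref{lem:topology and continuity} in the torsion case, which is exactly the point needing care and which you handle correctly) and then conclude by the determinant argument, using that $A_\infty\to A_K$ is faithfully flat and hence reflects units, together with $t$-adic completeness (resp.\ nilpotence of $t$). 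This buys you full faithfulness for free from the quasi-inverse property and makes explicit the essential-surjectivity step the paper glosses over, at the cost of a slightly longer argument than the paper's appeal to $\Gamma_k$-invariants; both routes are sound.
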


\begin{proof}
By Theorem~\ref{thm:Fontaine decompletion}, Proposition~\ref{prop:Fontaine decompletion for torsion}, and Lemma~\ref{lem:topology and continuity}, the functor is well-defined and essentially surjective.
The full faithfulness follows from 
Proposition~\ref{prop:fully faithfulness}.
\end{proof}

\subsection{Sen's endomorphism and Fontaine's connection in the relative setting}
\label{subsection:Sen's operator and Fontaine's connection}

\begin{prop}\label{prop:Sen endom}
Let $W$ be an $A_{\infty}$-representation of $\Gamma_k$.
Then there exists a unique  $A_{\infty}$-linear map
$\phi_{W}\colon W\ra W$ satisfying the following property:
For any $w\in W$, there exists
an open subgroup $\Gamma_{k,w}$ of $\Gamma_k$ such that
\[
 \gamma w=\exp (\log(\chi(\gamma))\phi_{W})(w)
\]
for $\gamma\in \Gamma_{k,w}$.
Here $\log$ (resp.~$\exp$)
is the $p$-adic logarithm (resp. exponential).
Moreover, $\phi_{W}$ is $\Gamma_k$-equivariant and 
functorial with respect to $W$.
\end{prop}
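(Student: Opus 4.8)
The plan is to construct $\phi_W$ by first finding a single open subgroup $\Gamma_k'\subseteq\Gamma_k$ and a $\Gamma_k'$-stable finite free $A_\infty$-submodule $W_0\subset W$ with $W_0\otimes_{A_\infty}A_\infty=W$ (i.e.\ a basis of $W$ over $A_\infty$ defined over some $A_{k_m}$) on which $\Gamma_k'$ acts through matrices in $\GL_r(A_{k_m})$; such a basis exists by the same Tate–Sen argument used in the proof of Theorem~\ref{thm:Sen decompletion}. After shrinking $\Gamma_k'$ so that it is topologically cyclic, generated by some $\gamma_0$ with $m(\gamma_0)$ large, and so that $\Gamma_k'$ acts trivially on $A_{k_m}$, the map $\gamma_0$ acts on $W_0$ by a matrix $U_{\gamma_0}\in\GL_r(A_{k_m})$ with $\val(U_{\gamma_0}-1)$ as large as we like (by continuity). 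Then $\log U_{\gamma_0}$ converges in $M_r(A_{k_m})$, and I would set, on $W_0$,
\[
 \phi_W := \frac{1}{\log\chi(\gamma_0)}\,\log U_{\gamma_0},
\]
extended $A_\infty$-linearly to $W$.

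The key points to verify are: (1) \emph{well-definedness / independence of choices.} Replacing $\gamma_0$ by $\gamma_0^{p^j}$ replaces $U_{\gamma_0}$ by $U_{\gamma_0}^{p^j}$ (since $\Gamma_k'$ acts trivially on the entries of $U_{\gamma_0}$, the cocycle relation $U_{\gamma\gamma'}=U_\gamma\gamma(U_{\gamma'})$ degenerates to $U_{\gamma_0^{p^j}}=U_{\gamma_0}^{p^j}$) and $\log\chi(\gamma_0^{p^j})=p^j\log\chi(\gamma_0)$, so the ratio is unchanged; a similar computation handles a change of basis. (2) \emph{The defining exponential identity.} For $\gamma\in\Gamma_k'$ we have $\gamma=\gamma_0^{c}$ for some $c\in\Z_p$ (after possibly further shrinking so $\Gamma_k'\cong\Z_p$), hence $U_\gamma=U_{\gamma_0}^{\,c}=\exp(c\log U_{\gamma_0})=\exp(\log\chi(\gamma)\cdot\phi_W)$ because $\log\chi(\gamma)=c\log\chi(\gamma_0)$; this gives the required formula with $\Gamma_{k,w}=\Gamma_k'$ uniformly in $w$. (3) \emph{Uniqueness.} If $\phi'$ also satisfies the property, then for any $w$ there is an open subgroup on which $\exp(\log\chi(\gamma)\phi_W)(w)=\exp(\log\chi(\gamma)\phi')(w)$; differentiating along a one-parameter subgroup (i.e.\ comparing the $\Z_p$-analytic germs in $\gamma$, or taking $\gamma=\gamma_0^{p^j}$ and letting $j\to\infty$ after dividing by $\log\chi(\gamma_0^{p^j})$) forces $\phi_W(w)=\phi'(w)$, since $\log\chi$ is a nonzero analytic coordinate near $1$ and $\exp$ is injective on the relevant small disc.

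For $\Gamma_k$-equivariance: given $\delta\in\Gamma_k$, the submodule $\delta(W_0)$ is stable under $\delta\Gamma_k'\delta^{-1}$, and the associated matrix is the conjugate $\delta$-transform of $U_{\gamma_0}$; unwinding the cocycle relation shows $\phi_W$ computed from $\delta(W_0)$ equals $\delta\circ\phi_W\circ\delta^{-1}$, while the uniqueness clause shows it also equals $\phi_W$, so $\delta\phi_W=\phi_W\delta$. (Concretely: $\gamma$ and $\delta\gamma\delta^{-1}$ have the same $\chi$-value, and $\gamma(\delta^{-1}w)=\exp(\log\chi(\gamma)\phi_W)(\delta^{-1}w)$ translates under $\delta$ into $(\delta\gamma\delta^{-1})(w)=\delta\exp(\log\chi(\gamma)\phi_W)\delta^{-1}(w)$, forcing $\phi_W=\delta\phi_W\delta^{-1}$ by uniqueness.) Functoriality in $W$ is immediate from uniqueness: a morphism $W\to W'$ intertwines the $\Gamma_k$-actions, hence intertwines the exponential germs, hence intertwines the Sen endomorphisms. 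The main obstacle I anticipate is purely bookkeeping — keeping track of how large $m$, how small $\Gamma_k'$, and how close $\gamma_0$ to $1$ must be so that $\log U_{\gamma_0}$ converges and $\exp$ is a bijection onto the appropriate neighborhood of $1$ in $\GL_r$ — rather than anything conceptually deep; all the hard analytic input has already been packaged into the Tate–Sen machinery invoked for Theorem~\ref{thm:Sen decompletion}.
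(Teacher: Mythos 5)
Your construction is the standard Sen argument --- choose a basis on which a procyclic open subgroup acts through matrices over some $A_{k_m}$ that it fixes, and set $\phi_W=\log U_{\gamma_0}/\log\chi(\gamma_0)$ --- and this is exactly the argument the paper invokes by reference (it gives no proof beyond citing Sen and Brinon--Conrad), so in approach you agree with the paper. However, one step is wrong as stated: the defining identity does \emph{not} hold ``with $\Gamma_{k,w}=\Gamma_k'$ uniformly in $w$''. The operator $\exp(\log\chi(\gamma)\phi_W)$ is $A_\infty$-linear, whereas $\gamma$ acts only semilinearly; already for the trivial representation $W=A_\infty$ one has $\phi_W=0$, and $\gamma w=w$ fails for general $w\in A_\infty$ and $1\neq\gamma\in\Gamma_k'$. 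Your matrix identity $U_\gamma=\exp(\log\chi(\gamma)\phi_W)$ for $\gamma\in\Gamma_k'$ yields the displayed formula only for those $w$ whose coordinates in the chosen basis are fixed by $\gamma$; for a general $w=\sum_i a_iw_i$ with $a_i\in A_{k_{m'}}$ one must shrink to $\Gamma_{k,w}=\Gamma_k'\cap\Gal(k_\infty/k_{m'})$. This is precisely why the proposition lets $\Gamma_{k,w}$ depend on $w$, and with this one-line repair your existence proof is correct; nothing downstream (uniqueness, equivariance, functoriality) is affected, since those arguments only use the pointwise property.

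Two smaller remarks. You do not need the Tate--Sen (normalized trace) machinery behind Theorem~\ref{thm:Sen decompletion} to produce the $A_{k_m}$-form: since $W$ is already finite free over $A_\infty$, the entries of $U_{\gamma_0}$ for a fixed $\gamma_0$ lie in some $A_{k_m}$, each $A_{k_m}$ is $\Gamma_k$-stable and closed in $A_\infty$, and the cocycle relation together with continuity then gives $U_\gamma\in\GL_r(A_{k_m})$ for every $\gamma$ in the closure of $\gamma_0^{\Z}$; the completed-descent input of Theorem~\ref{thm:Sen decompletion} is only needed when one starts from an $A_K$-representation. Also, functoriality is not literally ``immediate from uniqueness'': the clean statement is the limit formula $\phi_W(w)=\lim_{\gamma\to1}(\gamma w-w)/\log\chi(\gamma)$ (cf.\ the remark following the proposition), which any continuous $A_\infty$-linear $\Gamma_k$-equivariant map automatically intertwines; this is the same germ comparison you use for uniqueness, so it is fine, but it deserves an explicit sentence rather than an appeal to uniqueness alone.
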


\begin{rem}
The proposition says that the endomorphism $\phi_{W}$
is computed as 
\[
 \phi_{W}(w)=\lim_{\gamma \to 1}
\frac{\gamma w-w}{\log \chi(\gamma)}
\]
for $w\in W$.
\end{rem}

\begin{proof}
 This is standard;
arguments in \cite[Theorem 4]{Sen-cont} also
work in our setting.
See also \cite[\S 2]{Sen-infinite-dim}, \cite[Proposition 4]{Sen-variation},
\cite[Proposition 2.5]{Fontaine}, and \cite[\S 15.1]{Brinon-Conrad}.
\end{proof}

The following lemma is also proved by standard arguments.

\begin{lem}\label{lem:properties of Sen endom}
 Let $W_1$ and $W_2$ be
$A_{\infty}$-representations of $\Gamma_k$.
Then we have the following equalities:
\begin{itemize}
 \item $\phi_{W_1\oplus W_2}=\phi_{W_1}\oplus \phi_{W_2}$ on $W_1\oplus W_2$.
 \item $\phi_{W_1\otimes W_2}=\phi_{W_1}\otimes \phi_{W_2}$ on $W_1\otimes W_2$.
 \item $\phi_{\Hom(W_1,W_2)}(f)=\phi_{W_2}\circ f - f\circ \phi_{W_2}$
for $f\in \Hom(W_1,W_2)$.
\end{itemize}
\end{lem}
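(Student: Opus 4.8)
The plan is to deduce all three identities from the characterization of the Sen endomorphism in Proposition~\ref{prop:Sen endom}. Recall that $\phi_W$ is the unique $A_\infty$-linear endomorphism of $W$ such that, for every $w\in W$, one has $\gamma w=\exp(\log\chi(\gamma)\phi_W)(w)$ for $\gamma$ in a suitable open subgroup of $\Gamma_k$. Expanding the exponential shows
\[
 \phi_W(w)=\lim_{\gamma\to 1}\frac{\gamma w-w}{\log\chi(\gamma)}\qquad(w\in W),
\]
so for each identity it suffices to produce an explicit $A_\infty$-linear operator on the target module, evaluate the right-hand side of this limit formula on a set of $A_\infty$-module generators, and then invoke the $A_\infty$-linearity and uniqueness from Proposition~\ref{prop:Sen endom}. (That $W_1\oplus W_2$, $W_1\otimes_{A_\infty}W_2$, and $\Hom_{A_\infty}(W_1,W_2)$ are again $A_\infty$-representations of $\Gamma_k$ is clear.) The only analytic inputs are the continuity of the $\Gamma_k$-actions and the continuity of an $A_\infty$-linear map between finite free $A_\infty$-modules.

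For the direct sum, $\Gamma_k$ acts diagonally, so $\gamma(w_1,w_2)-(w_1,w_2)=(\gamma w_1-w_1,\gamma w_2-w_2)$; dividing by $\log\chi(\gamma)$ and letting $\gamma\to 1$ yields $(\phi_{W_1}(w_1),\phi_{W_2}(w_2))$, whence $\phi_{W_1\oplus W_2}=\phi_{W_1}\oplus\phi_{W_2}$ (this also follows at once from the exponential characterization). For the tensor product, taking $v_i\in W_i$ and writing
\[
 (\gamma v_1)\otimes(\gamma v_2)-v_1\otimes v_2=(\gamma v_1-v_1)\otimes(\gamma v_2)+v_1\otimes(\gamma v_2-v_2),
\]
I would divide by $\log\chi(\gamma)$ and pass to the limit; using $\gamma v_2\to v_2$ this gives $\phi_{W_1}(v_1)\otimes v_2+v_1\otimes\phi_{W_2}(v_2)$. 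Since pure tensors additively generate $W_1\otimes_{A_\infty}W_2$ and $\phi_{W_1\otimes W_2}$ is additive, this proves $\phi_{W_1\otimes W_2}=\phi_{W_1}\otimes\id_{W_2}+\id_{W_1}\otimes\phi_{W_2}$.

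For the Hom module, the action is $(\gamma f)(w)=\gamma\bigl(f(\gamma^{-1}w)\bigr)$. Writing $\gamma^{-1}w=w+(\gamma^{-1}w-w)$ and using $A_\infty$-linearity of $f$,
\[
 (\gamma f-f)(w)=\bigl(\gamma(f(w))-f(w)\bigr)+\gamma\Bigl(f\bigl(\gamma^{-1}w-w\bigr)\Bigr).
\]
Dividing by $\log\chi(\gamma)$, the first summand tends to $\phi_{W_2}(f(w))$; in the second, $(\gamma^{-1}w-w)/\log\chi(\gamma)=-(\gamma^{-1}w-w)/\log\chi(\gamma^{-1})\to-\phi_{W_1}(w)$, and then continuity of $f$ and of the $\Gamma_k$-action gives $-f(\phi_{W_1}(w))$ in the limit. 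Hence $\phi_{\Hom(W_1,W_2)}(f)=\phi_{W_2}\circ f-f\circ\phi_{W_1}$, which is the third identity. Alternatively, once the dual case $\phi_{W^\vee}=-\phi_W^\vee$ is handled by the same computation, this follows from the tensor-product identity through $\Hom_{A_\infty}(W_1,W_2)\cong W_1^\vee\otimes_{A_\infty}W_2$, noting that $\phi_{A_\infty}=0$ for the tautological representation $A_\infty$.

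I do not expect any real obstacle here: this is the standard "differentiate the group action" computation. The points that require a little care are the bookkeeping forced by the $A_\infty$-semilinearity of $\gamma$ — which is why each identity is verified on $A_\infty$-module generators and only then extended by $A_\infty$-linearity, rather than being checked on arbitrary elements — and the routine verification that the maps involved are continuous, so that the limits may legitimately be interchanged with the operations $\otimes$, $f$, and $\gamma(-)$.
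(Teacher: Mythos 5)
Your proposal is correct and is exactly the standard ``differentiate the group action'' argument that the paper leaves implicit (it only remarks that the lemma follows by standard arguments, citing Sen and Fontaine), so there is nothing to add on the method. One point worth recording: the formulas you actually prove, $\phi_{W_1\otimes W_2}=\phi_{W_1}\otimes\id+\id\otimes\phi_{W_2}$ and $\phi_{\Hom(W_1,W_2)}(f)=\phi_{W_2}\circ f-f\circ\phi_{W_1}$, are the correct ones; the statement as printed ($\phi_{W_1}\otimes\phi_{W_2}$, and $f\circ\phi_{W_2}$ in the last term) contains typos, as one sees already from $W_1=W_2=A_\infty(1)$, and it is the Leibniz/commutator form that is used later (e.g.\ in the proof that Hodge--Tate sheaves are stable under tensor product and duals).
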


\begin{defn}\label{defn:base chage of Sen endom}
Let $V$ be an $A_K$-representation of $\Gamma_k$.
We denote by $\phi_V$ the $A_K$-linear endomorphism $\phi_{V_{\fin}}\otimes \id_{A_K}$ on $V=V_{\fin}\otimes_{A_\infty}A_K$.
\end{defn}

\begin{prop}\label{prop:Fontaine connection}
Let $W$ be an $A_{\infty}[[t]]$-representation of $\Gamma_k$. Then there exists a unique $A_{\infty}$-linear map
$\phi_{\dR,W}\colon W\ra W$ 
satisfying the following property:
For each $n\in\N$ and $w\in W$, there exists
an open subgroup $\Gamma_{k,n,w}$ of $\Gamma_k$ such that
\[
 \gamma w\equiv \exp (\log (\chi(\gamma))\phi_{\dR,W})(w)  
\pmod{t^n W}
\]
for $\gamma \in \Gamma_{k,n,w}$.
\end{prop}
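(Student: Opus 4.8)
The plan is to reduce Proposition~\ref{prop:Fontaine connection} to the already-established Proposition~\ref{prop:Sen endom} by working modulo each power of $t$ and then passing to the inverse limit. First I would treat the case where $W$ is annihilated by $t^n$, so that $W$ is a finite free $A_\infty[t]/(t^n)$-module. Forgetting the $A_\infty[t]/(t^n)$-module structure, $W$ is in particular an $A_\infty$-representation of $\Gamma_k$ (of rank $nr$ if $W$ has rank $r$), so Proposition~\ref{prop:Sen endom} furnishes an $A_\infty$-linear endomorphism $\phi_{\dR,W}:=\phi_W$ with the property that for every $w\in W$ there is an open subgroup $\Gamma_{k,w}\subset\Gamma_k$ with $\gamma w=\exp(\log\chi(\gamma)\,\phi_W)(w)$ for $\gamma\in\Gamma_{k,w}$. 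This is exactly the asserted congruence (with no $t^n$ needed, since $W$ is already $t^n$-torsion), and uniqueness is inherited from Proposition~\ref{prop:Sen endom}. The one extra point to check here is compatibility: if $W$ is $t^n$-torsion then it is also $t^{n'}$-torsion for $n'\ge n$, and the two endomorphisms obtained agree because both are characterized by the same limit formula $\phi_{\dR,W}(w)=\lim_{\gamma\to1}(\gamma w-w)/\log\chi(\gamma)$ from the Remark after Proposition~\ref{prop:Sen endom}.

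Next I would handle the general case, where $W$ is finite free over $A_\infty[[t]]$. Apply the torsion case to each quotient $W/t^nW$, which is a finite free $A_\infty[t]/(t^n)$-representation of $\Gamma_k$, to get $A_\infty$-linear endomorphisms $\phi_{\dR,W/t^nW}$. The limit-formula characterization shows these are compatible with the transition maps $W/t^{n+1}W\to W/t^nW$, so they assemble to an $A_\infty$-linear endomorphism $\phi_{\dR,W}$ of $W=\varprojlim_n W/t^nW$. Given $w\in W$ and $n\in\N$, reduce $w$ modulo $t^n$, apply the torsion case to obtain an open subgroup $\Gamma_{k,n,w}$ with $\gamma\bar w=\exp(\log\chi(\gamma)\,\phi_{\dR,W/t^nW})(\bar w)$ in $W/t^nW$ for $\gamma\in\Gamma_{k,n,w}$; lifting this back to $W$ gives precisely the congruence $\gamma w\equiv\exp(\log\chi(\gamma)\,\phi_{\dR,W})(w)\pmod{t^nW}$. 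Uniqueness follows since the congruences for all $n$ determine $\phi_{\dR,W}$ modulo every $t^n$, hence determine it on $W$.

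The genuinely delicate point is making sense of $\exp(\log\chi(\gamma)\,\phi_{\dR,W})$ and verifying the congruence uniformly, i.e.\ that the open subgroup $\Gamma_{k,n,w}$ can be chosen so the exponential series converges and the identity holds. The convergence is not automatic for a single fixed operator on an infinite-dimensional space, which is exactly why Proposition~\ref{prop:Sen endom} phrases things with a $w$-dependent (and here also $n$-dependent) open subgroup: on the finitely generated $\Gamma_k$-stable $A_\infty$-submodule of $W/t^nW$ containing $\bar w$, the operator $\phi_{\dR,W/t^nW}$ is realized by a matrix over $A_\infty$ and $\log\chi(\gamma)$ is $p$-adically small for $\gamma$ close to $1$, so the series converges there. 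I expect essentially all the work to be bookkeeping already carried out in the proof of Proposition~\ref{prop:Sen endom}; concretely, I would simply say that arguments in \cite[Theorem~4]{Sen-cont}, together with Proposition~\ref{prop:Fontaine decompletion for torsion} and Lemma~\ref{lem:topology and continuity}, carry over, applying Proposition~\ref{prop:Sen endom} to $W/t^nW$ viewed as an $A_\infty$-representation and passing to the limit, and that functoriality and $\Gamma_k$-equivariance of $\phi_{\dR,W}$ follow from the corresponding properties in Proposition~\ref{prop:Sen endom} together with the limit-formula characterization.
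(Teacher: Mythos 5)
Your proposal is correct and follows essentially the same route as the paper: the paper's proof simply observes that each $W/t^nW$ is an $A_\infty$-representation of $\Gamma_k$ and invokes Proposition~\ref{prop:Sen endom}, with the passage to the inverse limit and the uniqueness left implicit. Your additional verifications (compatibility along the transition maps via functoriality/the limit formula, and convergence of the exponential on a finitely generated $\Gamma_k$-stable submodule) are exactly the bookkeeping the paper suppresses.
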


\begin{proof}
Note that $W/t^n W$ is an $A_\infty$-representation of $\Gamma_k$.
So the proposition follows from Proposition~\ref{prop:Sen endom}.
\end{proof}

\begin{defn}
Set $A_{\infty}((t)):=A_{\infty}[[t]][t^{-1}]$.
We denote by $\partial_t$ the $A_{\infty}$-linear endomorphism
\[
 A_{\infty}((t))\ra A_{\infty}((t)),
\quad \textstyle\sum_{j\gg-\infty}a_jt^j\mapsto 
\textstyle\sum_{j\gg-\infty}ja_jt^{j-1}.
\]
The restriction of $\partial_t$ to $A_{\infty}[[t]]$ is also denoted by $\partial_t$.
\end{defn}

\begin{prop}\label{prop:properties of Fontaine's connection}
For an $A_{\infty}[[t]]$-representation $W$ of $\Gamma_k$,  
the endomorphism $\phi_{\dR,W}\colon W\ra W$ satisfies
\[
 \phi_{\dR,W}(\alpha w)=t\partial_t(\alpha)w+\alpha\phi_{\dR,W}(w)
\]
for every $\alpha\in A_{\infty}[[t]]$ and $w\in W$.
\end{prop}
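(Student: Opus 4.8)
The plan is to reduce everything to the ordinary Sen endomorphism of Proposition~\ref{prop:Sen endom} by working modulo powers of $t$, and then to recognize $\phi_{\dR,W}$ modulo $t^n$ as the connection on a tensor product. First I would note that, by the construction of $\phi_{\dR,W}$ in the proof of Proposition~\ref{prop:Fontaine connection} (equivalently, by the uniqueness clause of Proposition~\ref{prop:Sen endom}), the operator $\phi_{\dR,W}$ preserves each $t^nW$ and induces on the $A_\infty$-representation $W/t^nW$ exactly its Sen endomorphism $\phi_{W/t^nW}$. Since $W$ is $t$-adically separated when $W\cong(A_\infty[[t]])^r$, and is annihilated by a fixed power of $t$ otherwise, it suffices to prove, for every $n\geq 1$, $\alpha\in A_\infty[[t]]$ and $w\in W$, the identity
\[
 \phi_{W/t^nW}(\bar\alpha\,\bar w)=t\partial_t(\bar\alpha)\,\bar w+\bar\alpha\,\phi_{W/t^nW}(\bar w)
\]
in $W/t^nW$, where bars denote reduction modulo $t^n$ and $t\partial_t$ denotes the $A_\infty$-linear endomorphism induced on $A_\infty[[t]]/(t^n)$ (note that $t\partial_t\bigl(t^nA_\infty[[t]]\bigr)\subset t^nA_\infty[[t]]$, so this makes sense).

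Next I would compute the Sen endomorphism of the $A_\infty$-representation $A_\infty[[t]]/(t^n)$. For $a\in A_\infty$, choose $m$ with $a\in A_{k_m}$; then for every $\gamma\in\Gal(k_\infty/k_m)$ one has $\gamma(at^j)=\chi(\gamma)^j\,at^j=\exp\!\bigl(\log\chi(\gamma)\cdot t\partial_t\bigr)(at^j)$, since $(t\partial_t)^\ell(at^j)=j^\ell at^j$. By the uniqueness in Proposition~\ref{prop:Sen endom} this forces $\phi_{A_\infty[[t]]/(t^n)}=t\partial_t$. Now the multiplication map
\[
 \mu\colon \bigl(A_\infty[[t]]/(t^n)\bigr)\otimes_{A_\infty}\bigl(W/t^nW\bigr)\lra W/t^nW
\]
is an $A_\infty$-linear, $\Gamma_k$-equivariant morphism in $\mathrm{Rep}_{\Gamma_k}(A_\infty)$ (the source is finite free over $A_\infty$ and carries the continuous diagonal semilinear action). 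Applying the functoriality of the Sen endomorphism (Proposition~\ref{prop:Sen endom}) and the tensor-product formula (Lemma~\ref{lem:properties of Sen endom}) — which, applied to $\beta\otimes\bar w$, reads $\phi\bigl(\beta\otimes\bar w\bigr)=t\partial_t(\beta)\otimes\bar w+\beta\otimes\phi_{W/t^nW}(\bar w)$ — and pushing this identity through $\mu$ with $\beta=\bar\alpha$ gives precisely the displayed identity. Letting $n$ vary (and using separatedness, or taking $n$ with $t^nW=0$) yields the proposition.

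I do not anticipate a serious obstacle: the content is essentially that the tensor-product/Leibniz rule for Sen endomorphisms survives truncation. The only points requiring care are the two identifications $\phi_{\dR,W}\bmod t^n=\phi_{W/t^nW}$ and $\phi_{A_\infty[[t]]/(t^n)}=t\partial_t$, both immediate from the uniqueness characterizations, and the verification that the tensor product above genuinely lies in $\mathrm{Rep}_{\Gamma_k}(A_\infty)$ (finiteness and continuity of the diagonal action), which is routine. Alternatively, one can bypass the tensor-product language and argue directly: by $A_\infty$-linearity of $\phi_{\dR,W}$ it is enough to treat $\alpha=t^j$; fixing $w$ and choosing $\gamma$ close to $1$ so that both $\gamma(t^jw)\equiv\exp(\log\chi(\gamma)\phi_{\dR,W})(t^jw)$ and $\gamma w\equiv\exp(\log\chi(\gamma)\phi_{\dR,W})(w)$ modulo $t^nW$, one expands $\gamma(t^jw)=\chi(\gamma)^j t^j\gamma(w)$ and differentiates the resulting identity of $W/t^nW$-valued analytic functions of $\log\chi(\gamma)$ at the origin, obtaining $\phi_{\dR,W}(t^jw)\equiv jt^jw+t^j\phi_{\dR,W}(w)\pmod{t^nW}$, and then lets $n$ vary.
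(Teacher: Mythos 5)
Your proposal is correct. Your main route, however, is packaged differently from the paper's: the paper simply reduces (via the characterizing property) to the case that $W$ is $t$-power torsion, then by $A_\infty$-linearity to $\alpha=t^j$ and by induction to $\alpha=t$, and verifies $\phi_{\dR,W}(tw)=tw+t\phi_{\dR,W}(w)$ by a direct limit computation $\phi_{\dR,W}(tw)=\lim_{\gamma\to1}\frac{\gamma(tw)-tw}{\log\chi(\gamma)}$, splitting $\gamma(tw)=\chi(\gamma)t\gamma(w)$. You instead identify $\phi_{\dR,W}\bmod t^n$ with the Sen endomorphism of $W/t^nW$, compute $\phi_{A_\infty[[t]]/(t^n)}=t\partial_t$ from the uniqueness clause, and deduce the Leibniz rule by pushing the tensor-product formula through the ($\Gamma_k$-equivariant) multiplication map and invoking functoriality of $\phi$; this is a clean structural argument, at the cost of having to check that the tensor product with its diagonal action is an object of $\mathrm{Rep}_{\Gamma_k}(A_\infty)$ and of relying on the Leibniz form of the tensor formula (note the paper's Lemma~\ref{lem:properties of Sen endom} literally reads $\phi_{W_1\otimes W_2}=\phi_{W_1}\otimes\phi_{W_2}$, an evident typo for $\phi_{W_1}\otimes\id+\id\otimes\phi_{W_2}$, which is the version you correctly use and which is proved by the same standard argument). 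Your closing ``alternative'' direct argument — reduce to $\alpha=t^j$, expand $\gamma(t^jw)=\chi(\gamma)^jt^j\gamma(w)$, and take the derivative at $\gamma=1$ modulo $t^n$ — is essentially the paper's own proof, so both verifications are sound and the two routes differ only in whether the Leibniz identity is computed by hand or extracted from the formal properties of the Sen functor.
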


\begin{proof}
By the characterizing property of $\phi_{\dR,W}$,
we may assume that $W$ is annihilated by some power of $t$.
In this case, it is enough to check the equality for $\alpha=t^j$
by $A_\infty$-linearity of $\phi_{\dR,W}$.
By induction on $j$, we may further assume that $\alpha=t$.

So we need to show 
$\phi_{\dR,W}(t w)=t w+t\phi_{\dR,W}(w)$.
This follows from
\begin{align*}
 \phi_{\dR, W}(t w)
&=\lim_{\gamma \to 1}
\frac{\gamma (t w)-t w}{\log \chi(\gamma)}\\
&=\lim_{\gamma \to 1}
\frac{\chi(\gamma)-1}{\log \chi(\gamma)}t\gamma(w)
+t\lim_{\gamma \to 1}\frac{\gamma (w)-w}{\log \chi(\gamma)}\\
&=t w+t\phi_{\dR,W}(w).
\end{align*}

\end{proof}

\begin{lem-defn}
Let $W$ be a finite free $A_{\infty}[[t]]$-representation of $\Gamma_k$. Then $W[t^{-1}]:=W\otimes_{A_{\infty}[[t]]}A_{\infty}((t))$
is a finite free $A_{\infty}((t))$-representation of $\Gamma_k$
with $\Gamma_k$-stable decreasing filtration
$\Fil^j W[t^{-1}]:=t^j W$.
Moreover, the $A_{\infty}$-linear endomorphism $\phi_{\dR, W[t^{-1}]}\colon W[t^{-1}]\ra W[t^{-1}]$ sending $w\in \Fil^{j}W[t^{-1}]$ to
\[
 \phi_{\dR,W[t^{-1}]}(w):=jw+t^j\phi_{\dR,W}(t^{-j}w)
\]
is well-defined and satisfies $\phi_{\dR,W[t^{-1}]}|_{W}=\phi_{\dR,W}$.
\end{lem-defn}

\begin{proof}
This follows from Proposition~\ref{prop:properties of Fontaine's connection}.
\end{proof}

\begin{defn}\label{defn:de Rham connection}
Let $V$ be a finite free filtered $A\hat{\otimes}L_{\dR}$-representation
of $\Gamma_k$.
Define
\[
 V_{\fin}:=(\Fil^0 V)_{\fin}[t^{-1}].
\]
Then $V_{\fin}$ is 
 a finite free $A_{\infty}((t))$-representation of $\Gamma_k$
equipped with $\Gamma_k$-stable decreasing filtration and 
$\phi_{\dR, V_{\fin}}$.
Since $\phi_{\dR, V_{\fin}}$ preserves the filtration,
it defines an $A_{\infty}$-linear endomorphism on 
$\gr^0 V_{\fin}$, which we denote by $\Res_{\Fil^0 V_{\fin}}\phi_{\dR, V_{\fin}}$.
it follows from definition that
\[
\Res_{\Fil^0 V_{\fin}}\phi_{\dR, V_{\fin}}
=\phi_{\gr^0 V} 
\]
as endomorphisms on the finite free $A_\infty$-module
$\gr^0 (V_{\fin})= (\gr^0 V)_{\fin}$.
\end{defn}

\section{The arithmetic Sen endomorphism of a $p$-adic local system}
\label{section:arithmetic Sen endomorphism}

From this section, we study the relative $p$-adic Hodge theory in geometric families.
Let $k$ be a finite field extension of $\Q_p$ and 
let $X$ be an $n$-dimensional smooth rigid analytic variety over $\Spa(k,\calO_k)$.
Let $K$ be the $p$-adic completion of $k_\infty:=\bigcup_n k(\mu_{p^n})$ and
let $X_K$ denote the base change of $X$ to $\Spa (K,\calO_K)$.
We denote by $\Gamma_k$ the Galois group $\Gal(k_\infty/k)$.

Based on the recent progresses on the relative $p$-adic Hodge theory 
\cite{KL-I, KL-II, Scholze-perfectoid, Scholze}, Liu and Zhu attached to an \'etale $\Q_p$-local system $\bL$ 
a nilpotent Higgs bundle $\calH(\bL)$ on $X_K$ equipped with $\Gamma_k$-action
(\cite{LZ}).
Our goal is to define an endomorphism $\phi_{\bL}$ on $\calH(\bL)$ 
by decompleting $\Gamma_k$-action. 
The endomorphism $\phi_{\bL}$, 
which we will call the arithmetic Sen endomorphism, 
is a natural generalization of the Sen endomorphism of a $p$-adic Galois representation of $k$.

\subsection{Review of the $p$-adic Simpson correspondence \`a la Liu and Zhu}

First let us briefly recall the sites and sheaves that we use.
Let $X_{\proet}$ be the pro-\'etale site on $X$ in the sense of 
\cite{Scholze, Scholze-errata}.
The pro-\'etale site is equipped with a natural projection to the \'etale site on $X$
\[
\nu\colon X_{\proet}\ra X_{\et}.
\]
Let $\nu'\colon X_{\proet}/X_K\ra (X_K)_{\et}$ be the restriction of $\nu$
and we identify $X_{\proet}/X_K$ with $(X_K)_{\proet}$ (see 
a discussion before Proposition 6.10 in \cite{Scholze}).

We denote by $\hat{\Z}_p$ (resp. $\hat{\Q}_p$) the constant sheaf on $X_{\proet}$ associated to $\Z_p$ (resp. $\Q_p$). For a $\Z_p$-local system $\bL$ (resp. $\Q_p$-local system) on $X_{\et}$, let $\hat{\bL}$ denote the $\hat{\Z}_p$-module
(resp. $\hat{\Q}_p$-module) on $X_{\proet}$ associated to $\bL$ (see \cite[\S 8.2]{Scholze}).

We define sheaves on $X_{\proet}$ as follows.
We set 
\[
 \calO_X^+:=\nu^\ast \calO_{X_{\et}}^+, \quad
\calO_X:=\nu^\ast \calO_{X_{\et}}, \quad \text{and}
\quad \hat{\calO}_X:=\bigl(\varprojlim_n \calO_X^+/p^n\bigr)[p^{-1}].
\]
We also set $\Omega_X^1=\nu^\ast \Omega_{X_{\et}}^1$ and we denote its $i$-th exterior power by $\Omega_X^i$.
Moreover, Scholze introduced the de Rham period sheaves $\bB_{\dR}^+$, 
$\bB_{\dR}$, $\OB_{\dR}^+$ and $\OB_{\dR}$ on $X_{\proet}$ in \cite[\S 6]{Scholze} and \cite{Scholze-errata}.
The structural de Rham sheaf $\OB_{\dR}$ has the following properties:
it is a sheaf of $\calO_X$-algebras equipped with
a decreasing filtration $\Fil^\bullet \OB_{\dR}$ and an integrable connection
\[
 \nabla\colon \OB_{\dR}\ra \OB_{\dR}\otimes_{\calO_X}\Omega_X^1
\]
satisfying the Griffiths transversality.
Since $X$ is assumed to be smooth of dimension $n$, 
this gives rise to the following exact sequence of sheaves on $X_{\proet}$:
\[
 0 \ra \bB_{\dR}\lra \OB_{\dR}\stackrel{\nabla}{\lra}
\OB_{\dR}\otimes_{\calO_X}\Omega_X^1\stackrel{\nabla}{\lra}\cdots
\OB_{\dR}\otimes_{\calO_X}\Omega_X^n \lra 0.
\]

Finally, we set $\OC:=\gr^0\OB_{\dR}$. Taking the associated graded connection
of $\nabla$ on $\OB_{\dR}$ equips $\OC$ with a Higgs field 
\[
 \gr^0 \nabla \colon \OC\ra \OC \otimes_{\calO_X}\Omega_X^1(-1),
\]
where $(-1)$ stands for the $(-1)$st Tate twist.

We review the formulation of the $p$-adic Simpson correspondence 
by Liu and Zhu.
Let $\bL$ be a $\Q_p$-local system on $X_{\et}$ of rank $r$.
We define
\[
 \calH(\bL)
=\nu'_\ast\bigl(\hat{\bL}\otimes_{\hat{\Q}_p}\OC\bigr).
\]
Then Liu and Zhu proved the following theorem.

\begin{thm}[Rough form of {\cite[Theorem 2.1]{LZ}}]\label{thm:Simpson by LZ}
 $\calH(\bL)$ is a vector bundle on $X_K$ of rank $r$
equipped with a nilpotent Higgs field $\vartheta_{\bL}$
and a semilinear action of $\Gamma_k$.
The functor $\calH$ is a tensor functor
from the category of $\Q_p$-local systems on $X_{\et}$
to the category of nilpotent Higgs bundles on $X_K$.
Moreover, $\calH$ is compatible with pullback and (under some conditions)
smooth proper pushforward. 
\end{thm}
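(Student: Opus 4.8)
The plan is to follow Liu and Zhu \cite{LZ} and reduce the statement to a computation over a ``toric chart.'' Since all the assertions are \'etale local on $X_K$, I would first assume that $X=\Spa(R,R^+)$ is affinoid and admits an \'etale map $X\to\mathbb{T}^n=\Spa\bigl(k\langle T_1^{\pm1},\dots,T_n^{\pm1}\rangle,\calO_k\langle T_1^{\pm1},\dots,T_n^{\pm1}\rangle\bigr)$ that is a composite of rational localizations and finite \'etale maps. Adjoining all $p$-power roots $T_i^{1/p^m}$ (together with $\mu_{p^\infty}$, working over $K$) produces an affinoid perfectoid $\widetilde X\in X_{\proet}$ which is pro-finite-\'etale over $X_K$ with Galois group $\Gamma_{\geom}\cong\Z_p(1)^n$, and which carries a compatible $\Gamma_k$-action. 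Over this chart $\calH(\bL)$ is, by the definition of $\nu'_\ast$, the sheaf built from $(\hat{\bL}\otimes_{\hat{\Q}_p}\OC)(\widetilde X)^{\Gamma_{\geom}}$, and the higher direct images $R^i\nu'_\ast(\hat{\bL}\otimes_{\hat{\Q}_p}\OC)$ are computed by the continuous cohomology of $\Gamma_{\geom}$ acting on $(\hat{\bL}\otimes_{\hat{\Q}_p}\OC)(\widetilde X)$.

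Next I would invoke Scholze's explicit local description of the de Rham period sheaves: over such a chart $\OB_{\dR}^+$ is a power-series ring over $\bB_{\dR}^+$ in variables $X_1,\dots,X_n$ with $\nabla X_i=-\,dT_i$, so that passing to $\gr^0$ identifies $\OC\cong\hatO_X[w_1,\dots,w_n]$, a polynomial ring over $\hatO_X$, on which $\Gamma_{\geom}$ acts by ($\hatO_X$-valued) translations in the $w_i$ and the induced Higgs field $\gr^0\nabla$ is $\hatO_X$-linear, identifying $\gr^0\nabla(w_i)$ with $-\,dT_i$ up to the Tate twist. From this one computes $R\nu'_\ast\OC=\calO_{X_K}$, concentrated in degree $0$ and with vanishing induced Higgs field; this settles the case $\bL=\hat{\Q}_p$ and, more importantly, supplies the explicit model into which a general $\calH(\bL)$ is built.

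The heart of the matter is then a relative decompletion theorem in the $\Gamma_{\geom}$-direction, the geometric counterpart of Sen's theorem and of Theorem~\ref{thm:Sen decompletion}: pro-\'etale locally the finite projective $\Gamma_{\geom}$-representation $\hat{\bL}\otimes_{\hat{\Q}_p}\hatO_X$ over $\widetilde X$ descends to a ``decompleted'' subring, and after base change to $\OC$ the resulting cocycle is trivialized by a change of basis involving the $w_i$. This is furnished by the relative $p$-adic Hodge theory of Kedlaya--Liu \cite{KL-I,KL-II} and Scholze \cite{Scholze}, and parallels the computations of Faltings \cite{Faltings-Simpson} and of Andreatta--Brinon \cite{AB}. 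Granting it, solving for $\Gamma_{\geom}$-invariants in $(\hat{\bL}\otimes_{\hat{\Q}_p}\OC)(\widetilde X)$ shows that $\calH(\bL)$ is finite locally free of rank $r$ over $\calO_{X_K}$ with $R^{>0}\nu'_\ast(\hat{\bL}\otimes_{\hat{\Q}_p}\OC)=0$; the Higgs field $\vartheta_{\bL}$ is the infinitesimal geometric action (the ``geometric Sen operator'' along the $w_i$-directions), and it is nilpotent because the geometric monodromy of a local system defined over the $p$-adic field $k$ is forced to be pro-unipotent in the Kummer directions, equivalently, because this reflects the nilpotent Higgs field carried by $\OC$. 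Tensor-functoriality is then checked on the chart after the trivialization; compatibility with pullback is immediate from the construction together with base change for $\nu'_\ast$; and compatibility with smooth proper pushforward (under the stated hypotheses) follows by combining the relative de Rham Poincar\'e lemma recalled in the excerpt, proper base change on $X_{\proet}$, and the projection formula. I expect the main obstacle to be the decompletion step itself --- it is exactly what upgrades $\calH(\bL)$ from a coherent sheaf to a genuine vector bundle of the correct rank --- with the smooth-proper pushforward compatibility a close second.
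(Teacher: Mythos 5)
This theorem is not proved in the paper: it is quoted as \cite[Theorem~2.1]{LZ}, and the paper only recalls the relevant parts of Liu--Zhu's argument (the local description $\calH(\bL)(Y_K)\cong M_K(Y)$ from \cite[Proposition~2.8]{LZ}, the polynomial description of $\OC$ over the toric perfectoid cover, and the Higgs field as the logarithm of the unipotent $\Gamma_{\geom}$-action). Your sketch follows essentially this same Liu--Zhu strategy, with the one caveat that the unipotence of the geometric monodromy (hence nilpotence of $\vartheta_{\bL}$) is not automatic from definability over $k$ as you assert but is precisely the content of the decompletion statement \cite[Proposition~2.8]{LZ}, proved via the arithmetic action through the relation $\gamma\gamma_i\gamma^{-1}=\gamma_i^{\chi(\gamma)}$.
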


\begin{rem}
For our purpose, we use the $p$-adic Simpson correspondence 
formulated by Liu and Zhu
as their output is a Higgs bundle over $X_K$ with 
a $\Gamma_k$-action.
See \cite{Faltings-Simpson} and \cite{AGT} 
for the $p$-adic Simpson correspondence
by Faltings and Abbes-Gros-Tsuji
in a more general setting, and 
see \cite{AB-surconv, AB} for the one
over a pro-\'etale cover of $X_K$
 by Andreatta and Brinon.
\end{rem}

To define the arithmetic Sen endomorphism on $\calH(\bL)$ and discuss its properties,
let us recall Liu and Zhu's arguments in the proof of Theorem~\ref{thm:Simpson by LZ}.

We follow the notation on base changes of adic spaces and rings
in \cite{LZ}.
We denote by $\bT^n$ the $n$-dimensional rigid analytic torus
\[
 \Spa (k\langle T_1^{\pm},\ldots, T_n^{\pm}\rangle,
\calO_k\langle T_1^{\pm},\ldots, T_n^{\pm}\rangle).
\]
For $m\geq 0$, we set
\[
 \bT^n_m=\Spa (k_m\langle T_1^{\pm 1/p^m},\ldots, T_n^{\pm 1/p^m}\rangle,
\calO_{k_m}\langle T_1^{\pm 1/p^m},\ldots, T_n^{\pm 1/p^m}\rangle).
\]
We denote by $\tilde{\bT}^n_\infty$ the affinoid perfectoid
$\varprojlim_m \bT^n_m$ in $X_{\proet}$.

To study properties of $\calH(\bL)$, 
we introduce the following base $\calB$ for $(X_K)_{\et}$:
objects of $\calB$ are
the \'etale maps to $X_K$ that are the base changes 
of standard \'etale morphisms $Y\ra X_{k'}$
defined over some finite extension $k'$ of $k$ in $K$
where $Y$ is affinoid admitting a toric chart after some finite 
extension of $k'$.
Recall that an \'etale morphism between adic spaces
is called standard \'etale if it is a composite of 
rational localizations and finite \'etale morphisms
and that a toric chart means a standard \'etale morphism to $\bT^n$.
Morphisms of $\calB$ are the base changes of 
\'etale morphisms over some finite extension of $k$ in $K$.
We equip $\calB$ with the induced topology from $(X_K)_{\et}$.
Then the associated topoi $(X_K)_{\et}^\sim$
and $\calB^\sim$ are equivalent (\cite[Lemma 2.5]{LZ}).

When $Y=\Spa (B,B^+)$ admits a toric chart over $k$, 
we use the following notation:
we set
\[
 Y_m=\Spa (B_m, B_m^+) := Y\times_{\bT^n}\bT^n_m.
\]
Then $\tilde{Y}_\infty:= Y\times_{\bT^n}\tilde{\bT}^n_\infty$
is the affinoid perfectoid in $Y_{\proet}$
represented by the relative toric tower $(Y_n)$.
We denote by $(\hat{B}_\infty, \hat{B}^+_\infty)$
the perfectoid affinoid completed direct limit of the $(B_m,B^+_m)$'s
and set $\hat{Y}_\infty:=\Spa (\hat{B}_\infty, \hat{B}^+_\infty)$,
the affinoid perfectoid space associated to $Y_\infty$.
We also set $B_{k_m}=B\otimes_k k_m$ as in Subsection~\ref{subsection:setup}.
When $Y$ admits a toric chart over a finite extension of $k$ in $K$,
we similarly define these objects using the rigid analytic torus over the field.

Let $Y_{K,m}:=\Spa (B_{K,m}, B_{K,m}^+)$
be the base change of $Y_m$ from $k_m$ to $K$
and let $\tilde{Y}_{K,\infty}$
be the affinoid perfectoid represented by the toric tower $(Y_{K,m})$.
We denote the associated affinoid perfectoid space
by $\hat{Y}_{K,\infty}=\Spa (\hat{B}_{K,m},\hat{B}_{K,m}^+)$.
The cover $\tilde{Y}_{K,\infty}/Y$ is Galois.
We denote its Galois group by $\Gamma$.
Then $\Gamma$ fits into a splitting exact sequence
\[
 1\ra \Gamma_{\geom}\ra \Gamma \ra \Gamma_k\ra 1.
\]

To prove Theorem~\ref{thm:Simpson by LZ}, 
Liu and Zhu gave a simple description of 
\[
 \calH(\bL)(Y_K)=H^0(X_{\proet}/Y_K,\hat{\bL}\otimes\OC)
\]
for $\bigl(Y=\Spa(B,B^+)\ra X_{k'}\bigr)\in\calB$,
which we recall now.

\begin{prop}[{\cite[Proposition 2.8]{LZ}}]\label{prop:def of M_K(Y)}
Put $\calM=\hat{\bL}\otimes_{\hat{\Q}_p}\hatO_X$.
Then there exists a unique finite projective $B_K$-submodule $M_K(Y)$ of $\calM(\tilde Y_{K,\infty})$, which is stable under $\Gamma$, such that
\begin{enumerate}
\item $M_K(Y)\otimes_{B_K}\hat{B}_{K,\infty}=\calM(\tilde Y_{K,\infty})$;
\item The $B_K$-linear representation of $\Gamma_{\geom}$ on $M_K(Y)$ is unipotent;
\end{enumerate}
In addition, the module $M_K(Y)$ has the following properties:
\begin{enumerate}
\item[(P1)] There exist some positive integer $j_0$ and some finite projective $B_{k_{j_0}}$-submodule $M(Y)$ of $M_K(Y)$ stable under $\Gamma$ such that $M(Y)\otimes_{B_{k_{j_0}}}B_K=M_K(Y)$. Moreover, the construction of $M(Y)$ is compatible with base change along standard \'etale morphisms.
\item[(P2)] The natural map
\[
 M_K(Y)^{\Gamma_{\geom}} \ra \calM(\tilde Y_{K,\infty})^{\Gamma_{\geom}}
\]
is an isomorphism.
 \end{enumerate}
\end{prop}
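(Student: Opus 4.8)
The plan is to prove Proposition~\ref{prop:def of M_K(Y)} by running the Tate--Sen decompletion machinery along the pro-(finite \'etale) tower $\tilde Y_{K,\infty}/Y_K$, in the same way that Theorem~\ref{thm:Sen decompletion} is deduced from that machinery applied to $K/k$. \emph{Set-up.} By Scholze's relative $p$-adic Hodge theory \cite{Scholze}, $\hatO_X(\tilde Y_{K,\infty})=\hat B_{K,\infty}$ and $\calM(\tilde Y_{K,\infty})$ is a finite projective $\hat B_{K,\infty}$-module with a continuous semilinear action of $\Gamma$; since $\pi_1(Y)$ is compact, $\bL|_Y$ has a $\Z_p$-lattice, so $\calM(\tilde Y_{K,\infty})$ carries a $\Gamma$-stable bounded $\hat B_{K,\infty}^+$-lattice. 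The subgroup $\Gamma_{\geom}\cong\Z_p^n=\Gal(\tilde Y_{K,\infty}/Y_K)$ acts through $p$-power roots of unity on the coordinates $T_i^{1/p^m}$ and fixes $B_K$, while $\Gamma_k$ normalizes it and acts on $B_K$ with $B_K^{\Gamma_k}=B$.

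\emph{Existence.} First I would verify that the torus tower satisfies the Tate--Sen conditions of \cite[\S3]{Berger-Colmez}: one has topological splittings $\hat B_{K,\infty}=B_{K,m}\oplus\Ker R_m$ for the toric normalized traces $R_m\colon\hat B_{K,\infty}\to B_{K,m}$, and for $\gamma\in\Gamma_{\geom}$ close to $1$ the operator $\gamma-1$ is invertible on $\Ker R_m$ with bounded inverse --- this is the torus analogue of Tate's computation (see \cite[\S\S14--15]{Brinon-Conrad}, \cite{KL-I}). Feeding the $\Gamma$-stable lattice into the Sen method (\cite[Cor.~3.2.4]{Berger-Colmez}, exactly as in the proof of the Lemma inside Theorem~\ref{thm:Sen decompletion}) produces, for $m\gg 0$, a $\Gamma$-stable finite projective $B_{K,m}$-submodule $\calM_m\subset\calM(\tilde Y_{K,\infty})$ with $\calM_m\otimes_{B_{K,m}}\hat B_{K,\infty}=\calM(\tilde Y_{K,\infty})$ and transition matrices of $\Gamma_{\geom}$ lying in $\GL_r(B_{K,m})$. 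Inside $\varinjlim_m\calM_m$ one then isolates the unique $B_K$-submodule $M_K(Y)$ with $M_K(Y)\otimes_{B_K}B_{K,m}=\calM_m$ on which all of $\Gamma_{\geom}$ acts unipotently; the existence of this descent to level $0$ rests on the unipotence of the geometric monodromy of $\bL$ along the tower, which one obtains from a finiteness/bounded-lattice argument for the geometric Sen operators (equivalently, it reflects that the induced Higgs field is nilpotent, cf.~Theorem~\ref{thm:Simpson by LZ}). Conditions (1) and (2) then hold by construction, and $\Gamma$-stability of $M_K(Y)$ follows because $\Gamma_k$ normalizes $\Gamma_{\geom}$ and hence permutes submodules satisfying (1)--(2), so the uniqueness proved next forces $\gamma(M_K(Y))=M_K(Y)$.

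\emph{Uniqueness, (P1), (P2).} If $M$ and $M'$ both satisfy (1)--(2), then writing a $B_K$-basis of $M'$ over one of $M$ gives $P\in\GL_r(\hat B_{K,\infty})$ intertwining two $B_K$-rational unipotent $\Gamma_{\geom}$-actions, and the normalized-trace argument that proves Proposition~\ref{prop:fully faithfulness} (ultimately Proposition~\ref{prop:finite Gamma orbit lies in A_infty}) forces $P\in\GL_r(B_K)$, so $M=M'$. For (P1): $M_K(Y)$ with its residual $\Gamma_k$-action is a $B_K$-representation of $\Gamma_k$ in the sense of Section~\ref{section:decompletion} (with $A=B$, after replacing $k$ by the field of definition of $Y$ if necessary), so Theorem~\ref{thm:Sen decompletion} shows $(M_K(Y))_{\fin}$ is finite free over $\varinjlim_m B_{k_m}$ with $(M_K(Y))_{\fin}\otimes_{\varinjlim_m B_{k_m}}B_K\cong M_K(Y)$; hence it is defined over some $B_{k_{j_0}}$ by a module $M(Y)$ with $M(Y)\otimes_{B_{k_{j_0}}}B_K=M_K(Y)$, and $\Gamma$-stability of $M(Y)$ as well as compatibility with standard \'etale base change $Y'\to Y$ follow from canonicity (the base change of $M(Y)$ satisfies the defining properties of $M(Y')$). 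For (P2): one d\'evisses along the $\Gamma$-stable unipotent filtration of $M_K(Y)$, where on each graded piece $\Gamma_{\geom}$ acts trivially on the $M_K(Y)$-factor and $(\gr M_K(Y)\otimes_{B_K}\hat B_{K,\infty})^{\Gamma_{\geom}}=\gr M_K(Y)$ because $\hat B_{K,\infty}^{\Gamma_{\geom}}=B_K$ and $\Ker R_0$ has no nonzero $\Gamma_{\geom}$-invariants; propagating up the filtration gives $\calM(\tilde Y_{K,\infty})^{\Gamma_{\geom}}=M_K(Y)^{\Gamma_{\geom}}$.

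\emph{Main obstacle.} The heart of the matter is the existence step together with the unipotence normalization: checking the Tate--Sen conditions for the torus tower over the affinoid $\hat B_{K,\infty}$, and then descending from $B_{K,m}$ to $B_K$ by extracting the unique lattice on which the \emph{full} group $\Gamma_{\geom}$ --- not merely a finite-index subgroup --- acts unipotently. This last point is a genuinely geometric input (it can fail for an arbitrary continuous action of $\Z_p^n$ on a module over a $\Q_p$-affinoid), so the crux is to isolate it cleanly; everything downstream --- uniqueness, (P1), (P2) --- is then formal given it and the arithmetic decompletion of Section~\ref{section:decompletion}.
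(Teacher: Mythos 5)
This proposition is not proved in the paper at all: it is quoted verbatim from Liu--Zhu (\cite[Proposition~2.8]{LZ}), as the bracketed citation in its statement indicates, so there is no internal argument to compare yours with; it has to be measured against Liu--Zhu's proof. Your skeleton does match theirs in outline: decompletion along the toric tower via the Tate--Sen formalism to get a $\Gamma$-stable finite projective model over some $B_{K,m}$, descent to level zero with unipotent $\Gamma_{\geom}$-action, uniqueness by showing that finite $B_K$-modules in $\hat{B}_{K,\infty}$ with unipotent $\Gamma_{\geom}$-action lie in $B_K$, then (P1) from the arithmetic decompletion of Section~\ref{section:decompletion} and (P2) by d\'evissage along the unipotence filtration.

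The genuine gap is exactly the step you yourself flag as the ``main obstacle'': the passage from the level-$m$ model to a $B_K$-module on which all of $\Gamma_{\geom}$ acts unipotently. You offer as justification that ``the induced Higgs field is nilpotent, cf.~Theorem~\ref{thm:Simpson by LZ}'', but that is circular: in Liu--Zhu the nilpotent Higgs field $\vartheta_{\bL}$ is \emph{constructed} from $M(Y)$ by taking $\log$ of the unipotent $\Gamma_{\geom}$-action, i.e.\ it is a consequence of Proposition~\ref{prop:def of M_K(Y)}, not an available input. The actual mechanism that kills the nontrivial characters/eigenvalues of $\Gamma_{\geom}$ is the normalizing arithmetic action: the relation $\gamma\gamma_i\gamma^{-1}=\gamma_i^{\chi(\gamma)}$ for $\gamma\in\Gamma_k$ forces the relevant eigenvalue data to be stable under multiplication by $\chi(\Gamma_k)$ and hence trivial (the same relation this paper exploits later to prove $[\phi_{M_K(Y)},\vartheta_i]=\vartheta_i$ in Proposition~\ref{prop:commutativity of Higgs and Sen}); your sketch never brings this arithmetic input to bear at the crucial point, so the existence statement is not actually established. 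Two smaller issues: the uniqueness step invokes Proposition~\ref{prop:finite Gamma orbit lies in A_infty}, which concerns the cyclotomic tower $A_\infty\subset A_K$, whereas what you need is the toric analogue (finitely generated $B_K$-submodules of $\hat{B}_{K,\infty}$ stable under $\Gamma_{\geom}$ with unipotent action are contained in $B_K$), which requires its own argument via the isotypic decomposition of $\hat{B}_{K,\infty}$ under $\Gamma_{\geom}$; and the d\'evissage for (P2) needs, beyond $\hat{B}_{K,\infty}^{\Gamma_{\geom}}=B_K$, some control of the non-invariant part (vanishing of invariants of $\Ker R_0$ and a cocycle argument to climb the filtration), which should be spelled out rather than asserted.
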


Once this proposition is proved, we can describe 
$\calH(\bL)(Y_K)$ in terms of $M_K(Y)$ as follows:
the vanishing theorem on  affinoid perfectoid spaces 
(\cite[Proposition 7.13]{Scholze-perfectoid})
implies the degeneration of 
the Cartan-Leray spectral sequence to the Galois cover 
$\{\tilde{Y}_{K,\infty}\ra Y_K\}$ with Galois group $\Gamma_{\geom}$,
and thus we have
\begin{align*}
 H^i(\Gamma_{\geom},\calM(\tilde{Y}_{K,\infty}))&\stackrel{\cong}{\lra}
H^i(X_{\proet}/Y_K,\calM),\\
 H^i(\Gamma_{\geom},(\calM\otimes \OC)(\tilde{Y}_{K,\infty}))&\stackrel{\cong}{\lra}
H^i(X_{\proet}/Y_K,\calM\otimes\OC).
\end{align*}
Moreover, we know that 
$\OC|_{\tilde{Y}_{K,\infty}}\cong(\hatO_X|_{\tilde{Y}_{K,\infty}})[V_1,\ldots,V_n]$,
where $V_i=t^{-1}\log ([T_i^\flat]/T_i)$
for a fixed compatible sequence of $p$-power roots of
the coordinate $T_i^\flat=(T_i,T_i^{1/p},\ldots)$.
It follows from these results and a simple argument on the direct limit of sheaves on $X_{\proet}$ that the natural $\Gamma_k$-equivariant map
\[
 \bigl(M_K(Y)[V_1,\ldots,V_n]\bigr)^{\Gamma_{\geom}}
\ra \calH(\bL)(Y_K)
\]
is an isomorphism.
A simple computation shows that
the map $M_K(Y)[V_1,\ldots,V_n]\ra M_K(Y)$ sending $V_i$ to $0$
induces a $\Gamma_k$-equivariant isomorphism
\[
\bigl(M_K(Y)[V_1,\ldots,V_n]\bigr)^{\Gamma_{\geom}}
\stackrel{\cong}{\lra}M_K(Y).
\]
Thus we have a $\Gamma_k$-equivariant isomorphism
\[
 \calH(\bL)(Y_K)\cong M_K(Y).
\]
The above discussion is summarized in the following commutative diagram:
\[
 \xymatrix{
M_K(Y)\otimes_{B_K}\hat{B}_{K,\infty} \ar@{^{(}->}[r]
& (M_K(Y)\otimes_{B_K}\hat{B}_{K,\infty})[V_1,\ldots,V_n]
&\\
M_K(Y) \ar@{^{(}->}[u]\ar@{^{(}->}[r]
& M_K(Y)[V_1,\ldots,V_n] \ar@{^{(}->}[u]\ar[r]^{V_i=0}
& M_K(Y)\\
M_K(Y)^{\Gamma_{\geom}} \ar@{^{(}->}[u]\ar@{^{(}->}[r]
& \bigl(M_K(Y)[V_1,\ldots,V_n]\bigr)^{\Gamma_{\geom}} 
\ar@{^{(}->}[u]\ar[r]^{\phantom{aaaaa}\cong}
& M_K(Y).\ar@{=}[u]\\
 }
\]

Finally, we recall the Higgs field $\vartheta_{\bL}$.
This is defined to be
\[
 \vartheta_{\bL}:=\nu_\ast'\bigl(\gr\nabla\colon
\hat{\bL}\otimes\OC\ra\hat{\bL}\otimes\OC\otimes\Omega_X^1(-1)\bigr)
\]
under the identification
$\nu'_\ast\bigl(\hat{\bL}\otimes\OC\otimes\Omega_X^1(-1)\bigr)
\cong \calH(\bL)\otimes\Omega_{X/k}^1(-1)$.
Here $\calH(\bL)\otimes\Omega_{X/k}^1(-1)$
denotes the $\calO_{X_K}$-module
$\calH(\bL)\otimes_{\calO_X}\Omega_{X/k}^1(-1)
=\calH(\bL)\otimes_{\calO_{X_K}}\Omega_{X_K/K}^1(-1)$
equipped with a natural $\Gamma_k$-action.

We have another description under 
the isomorphism
$\calH(\bL)(Y_K)\cong M_K(Y)$, which proves $\vartheta_{\bL}$ is nilpotent.
Namely, let $\rho_{\geom}$ denote
the action of $\Gamma_{\geom}$ on $M_K(Y)$ and
let $\chi_i\colon \Gamma_{\geom}\cong \Z_p(1)^n\ra \Z_p(1)$ 
denote the composite of the natural identification and projection to the $i$-th component.
We can take the logarithm of $\rho_{\geom}$ on $M(Y)\subset M_K(Y)$
since the action is unipotent.
Suppose 
the logarithm is written as
\[
 \log \rho_{\geom} 
=\sum_{i=1}^n \vartheta_i\otimes \chi_i\otimes t^{-1}, 
\]
where $\vartheta_i\in \End(M(Y))$.
Then $\vartheta_i$ can be regarded as an endomorphism on $M_K(Y)$ by extension of scalars and we define 
\begin{equation}\label{eq:Higgs field of M_K(Y)}
 \vartheta_{M_K(Y)}:=\sum_{i=1}^n \vartheta_i\otimes d\log T_i\otimes t^{-1}
=\sum_{i=1}^n \vartheta_i\otimes \frac{dT_i}{T_i}\otimes t^{-1}
\in \End(M_K(Y))\otimes_B\Omega_{B/k'}^1(-1). 
\end{equation}
We can check $\vartheta_{M_K(Y)}\wedge\vartheta_{M_K(Y)}=0$
and this defines a Higgs field on $M_K(Y)$.
It turns out that $\vartheta_{\bL}(Y_K)=\vartheta_{M_K(Y)}$
under the $\Gamma_k$-equivariant isomorphism
$\calH(\bL)(Y_K)\cong M_K(Y)$.
See \cite[\S 2]{LZ} for the detail.

\subsection{Definition and properties of the arithmetic Sen endomorphism}
We will define the arithmetic Sen endomorphism
$\phi_{\bL}\in\End \calH(\bL)$.
Let $\calB_{\bL}$ be the refinement of the base $\calB$
for $(X_K)_{\et}$ whose objects consist of $(Y=\Spa(B,B^+)\ra X_{k'})\in\calB$
such that $\calH(\bL)(Y_K)$ is a finite free $B_K$-module.

For $(Y=\Spa(B,B^+)\ra X_{k'})\in\calB_{\bL}$, 
$\calH(\bL)(Y_K)$ is a $B_K$-representation
 of $\Gamma_{k'}:=\Gal(K/k')$ in the sense of Definition~\ref{defn:representation}. 
Thus Proposition~\ref{prop:Sen endom} and Definition~\ref{defn:base chage of Sen endom}
equip $\calH(\bL)(Y_K)$ with the $B_K$-linear endomorphism
\[
 \phi_{\calH(\bL)(Y_K)}\colon \calH(\bL)(Y_K)\ra \calH(\bL)(Y_K).
\]

\begin{lem-defn}
The assignment of endomorphisms 
\[
 \calB_{\bL}\ni(Y=\Spa(B,B^+)\ra X_{k'})\lmt
\phi_{\calH(\bL)(Y_K)}\in \End_{B_K}\calH(\bL)(Y_K)
\]
defines an endomorphism $\phi_{\bL}$
of the vector bundle $\calH_{\bL}$ on $(X_K)_{\et}$.
We call $\phi_{\bL}$ the \emph{arithmetic Sen endomorphism} of $\bL$.
\end{lem-defn}

\begin{proof}
We need to check the compatibility of $\phi_{\bL,Y_K}$
via the pullback $Y''_K\ra Y_K$
for $(Y=\Spa(B,B^+)\ra X_{k'}), (Y''=\Spa(B'',B''^+)\ra X_{k''})\in\calB_{\bL}$.
For this it suffices to prove that 
\[
 \calH(\bL)(Y_K)_{\fin}\otimes_{B_{\infty}}B''_{\infty}
\cong \calH(\bL)(Y''_K)_{\fin}
\]
as $B''_{\infty}$-representation of $\Gal(k_\infty/k'')$,
where $B_\infty$ and $B''_\infty$ 
are defined as in Subsection~\ref{subsection:setup}.

Since $\calH(\bL)$ is a vector bundle on $X_K$, we have the natural isomorphisms
\begin{align*}
  \bigl(\calH(\bL)(Y_K)_{\fin}\otimes_{B_{\infty}}B''_{\infty}\bigr)
\otimes_{B''_{\infty}}B''_K
&\cong \bigl(\calH(\bL)(Y_K)_{\fin}\otimes_{B_{\infty}}B_K\bigr)
\otimes_{B_K}B''_K\\
&\cong \calH(\bL)(Y_K)\otimes_{B_K}B''_K
\cong \calH(\bL)(Y''_K).
\end{align*}
On the other hand, 
we see from definition
$\calH(\bL)(Y_K)_{\fin}\otimes_{B_{\infty}}B''_{\infty}\subset \calH(\bL)(Y''_K)_{\fin}$ .
Hence the lemma follows from the faithful flatness of $B''_{\infty}\ra B''_K$.
\end{proof}

\begin{prop}\label{prop:commutativity of Higgs and Sen}
The following diagram commutes:
\[
 \xymatrix{
\calH(\bL)\ar[r]^-{\vartheta_{\bL}}\ar[d]_{\phi_{\bL}}
&\calH(\bL)\otimes\Omega_{X/k}^1(-1)
\ar[d]^{\phi_{\bL}\otimes\id-\id\otimes\id}\\
\calH(\bL)\ar[r]^-{\vartheta_{\bL}}
&\calH(\bL)\otimes\Omega_{X/k}^1(-1).
}
\]
In particular, the endomorphisms
$\phi_{\bL}\otimes \id - i(\id\otimes\id)$
on $\calH(\bL)\otimes\Omega_{X/k}^i(-i)$
give rise to an endomorphism on the complex of $\calO_{X_K}$-modules on $X_K$
\[
 \calH(\bL)\stackrel{\vartheta_{\bL}}{\lra}
\calH(\bL)\otimes\Omega_{X/k}^1(-1)
\stackrel{\vartheta_{\bL}}{\lra}
\calH(\bL)\otimes\Omega_{X/k}^2(-2)\lra \cdots
\]
induced by the Higgs field.
\end{prop}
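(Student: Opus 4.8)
The plan is to verify the commutativity locally on the base $\calB_{\bL}$ of $(X_K)_{\et}$ using the explicit model $\calH(\bL)(Y_K)\cong M_K(Y)$ of the preceding subsection, and then to deduce it from the functoriality of the Sen endomorphism. Both $\vartheta_{\bL}$ and $\phi_{\bL}$, and hence also the operator $\phi_{\bL}\otimes\id-\id\otimes\id$ on $\calH(\bL)\otimes\Omega^1_{X/k}(-1)$, are defined sheaf-theoretically through their sections over $\calB_{\bL}$, so it is enough to fix $(Y=\Spa(B,B^+)\to X_{k'})\in\calB_{\bL}$, enlarge $k'$ so that $Y$ carries a toric chart over $k'$ (so that $\Omega^1_{B/k'}$ is free on $\tfrac{dT_1}{T_1},\dots,\tfrac{dT_n}{T_n}$), and show that the square of sections over $Y_K$
\[
\xymatrix{
M_K(Y)\ar[r]^-{\vartheta_{M_K(Y)}}\ar[d]_{\phi_{M_K(Y)}}
&M_K(Y)\otimes_B\Omega^1_{B/k'}(-1)\ar[d]^{\phi_{M_K(Y)}\otimes\id-\id\otimes\id}\\
M_K(Y)\ar[r]^-{\vartheta_{M_K(Y)}}
&M_K(Y)\otimes_B\Omega^1_{B/k'}(-1)
}
\]
commutes. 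Here $M_K(Y)$ and $M_K(Y)\otimes_B\Omega^1_{B/k'}(-1)\cong M_K(Y)(-1)^{\oplus n}$ are $B_K$-representations of $\Gamma_{k'}$ in the sense of Definition~\ref{defn:representation}, the $\tfrac{dT_i}{T_i}$ being $\Gamma_{k'}$-fixed.

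The first key point I need is that $\vartheta_{M_K(Y)}$ is $\Gamma_{k'}$-equivariant for these representations. This is where the geometry enters: by the splitting exact sequence $1\to\Gamma_{\geom}\to\Gamma\to\Gamma_k\to1$, the conjugation action of $\Gamma_k$ on $\Gamma_{\geom}\cong\Z_p(1)^n$ is the Tate-twist action. Viewing $\log\rho_{\geom}=\sum_i\vartheta_i\otimes\chi_i\otimes t^{-1}$ as a $\Z_p$-linear map $\Gamma_{\geom}\to\End(M(Y))$, the identity $\rho(g)\log\rho_{\geom}(h)\rho(g)^{-1}=\log\rho_{\geom}(ghg^{-1})$, valid because $\rho$ is a homomorphism, says exactly that $\log\rho_{\geom}$ is invariant under $\Gamma_k$ in $\Hom_{\Z_p}(\Z_p(1)^n,\End(M(Y)))=\End(M(Y))\otimes_{\Z_p}\Z_p(-1)^n$. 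Under the Kummer-theoretic identification of $\Z_p(-1)^n$ with $\Omega^1_{B/k'}(-1)$ that sends the $i$-th dual basis vector to $\tfrac{dT_i}{T_i}\otimes t^{-1}$, this becomes the assertion that the Higgs field $\vartheta_{M_K(Y)}$ of \eqref{eq:Higgs field of M_K(Y)} lies in $\bigl(\End_{B_K}(M_K(Y))\otimes_B\Omega^1_{B/k'}(-1)\bigr)^{\Gamma_{k'}}$, i.e.\ that the map $M_K(Y)\to M_K(Y)\otimes_B\Omega^1_{B/k'}(-1)$ commutes with $\Gamma_{k'}$. (Alternatively, one may simply invoke that $\vartheta_{\bL}=\nu'_\ast(\gr\nabla)$ is a morphism of $\Gamma_k$-equivariant sheaves, $\gr\nabla$ being canonical.)

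The second step is to identify the right-hand vertical map with the Sen endomorphism of the target. Since $\gamma$ acts on $B_K(-1)$ by $\chi(\gamma)^{-1}$, the formula $\phi_W(w)=\lim_{\gamma\to1}\tfrac{\gamma w-w}{\log\chi(\gamma)}$ gives $\phi_{B_K(-1)}=-\id$, and combined with the direct-sum and tensor-product formulas of Lemma~\ref{lem:properties of Sen endom} this shows the Sen endomorphism of $M_K(Y)\otimes_B\Omega^1_{B/k'}(-1)$ equals $\phi_{M_K(Y)}\otimes\id-\id\otimes\id$. Now applying the functoriality of the Sen endomorphism (Proposition~\ref{prop:Sen endom}, extended to $B_K$-representations via Definition~\ref{defn:base chage of Sen endom}) to the $\Gamma_{k'}$-equivariant $B_K$-linear map $\vartheta_{M_K(Y)}$ shows that it intertwines the Sen endomorphisms of source and target, which is precisely the commutativity of the square; since this holds for every $Y\in\calB_{\bL}$, the diagram of sheaves commutes. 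For the final assertion I would repeat the same reasoning in each degree: wedging with $\vartheta_{\bL}$ gives $\Gamma_k$-equivariant differentials $\calH(\bL)\otimes\Omega^i_{X/k}(-i)\to\calH(\bL)\otimes\Omega^{i+1}_{X/k}(-i-1)$, the Sen endomorphism of $\calH(\bL)\otimes\Omega^i_{X/k}(-i)$ is $\phi_{\bL}\otimes\id-i(\id\otimes\id)$ by the same computation (now with $\phi_{B_K(-i)}=-i\,\id$), and functoriality shows these commute with the differentials, so they assemble into an endomorphism of the complex (which is indeed a complex since $\vartheta_{\bL}\wedge\vartheta_{\bL}=0$).

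The step I expect to be the main obstacle is the first one: unwinding the Liu--Zhu construction far enough to see that $\vartheta_{M_K(Y)}$, which a priori only records the geometric monodromy $\Gamma_{\geom}$, is genuinely $\Gamma_{k'}$-equivariant, and in particular pinning down the Tate twist carefully enough that the correction term comes out as $-\id$ (equivalently $\id\otimes\phi_{\Omega^1_{X/k}(-1)}$) rather than $+\id$. Once that equivariance is in place, everything else is a formal consequence of the decompletion theory of Section~\ref{section:decompletion}.
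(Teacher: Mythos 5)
Your proposal is correct and follows essentially the paper's route: reduce to the local model $M_K(Y)$ via the base $\calB_{\bL}$ and extract the key relation $\gamma\vartheta_i=\chi(\gamma)\vartheta_i\gamma$ from the conjugation action $\gamma\gamma_i\gamma^{-1}=\gamma_i^{\chi(\gamma)}$ of $\Gamma_k$ on $\Gamma_{\geom}\cong\Z_p(1)^n$. The only (harmless) difference is at the last step, where the paper checks $[\phi_{M_K(Y)},\vartheta_i]=\vartheta_i$ by a direct limit computation on $M(Y)$, while you repackage the same relation as $\Gamma_{k'}$-equivariance of the twisted Higgs field and invoke functoriality of the Sen endomorphism together with $\phi_{B_K(-1)}=-\id$.
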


\begin{proof}
It is enough to check the commutativity of the diagram
evaluated at $Y_K$ for each
$(Y=\Spa(B,B^+)\ra X_{k'})\in\calB_{\bL}$.
In this setting, we can use the identification
\[
 \bigl(\calH(\bL)(Y_K),\vartheta_{\bL}(Y_K),\phi_{\bL}(Y_K)\bigr)
\cong \bigl(M_K(Y),\vartheta_{M_K(Y)},\phi_{M_K(Y)}\bigr).
\]
So it suffices to show the commutativity of the diagram
\[
\xymatrix{
M_K(Y)\ar[r]^-{\vartheta_{M_K(Y)}}\ar[d]_{\phi_{M_K(Y)}}
&M_K(Y)\otimes_B\Omega_{B/k'}^1(-1)
\ar[d]^{\phi_{M_K(Y)}\otimes\id-\id\otimes\id}\\
M_K(Y)\ar[r]^-{\vartheta_{M_K(Y)}}
&M_K(Y)\otimes_B\Omega_{B/k'}^1(-1).
}
\]
Moreover, since $M(Y)\otimes_{B_{k_{j_0}}}B_K=M_K(Y)$, 
we only need to check the commutativity on $M(Y)\subset M_K(Y)$.

We use the notation in (\ref{eq:Higgs field of M_K(Y)}).
Then we have
\begin{align*}
 \vartheta_{M_K(Y)}\circ \phi_{M_K(Y)}
&=\sum_{i=1}^n (\vartheta_i\circ \phi_{M_K(Y)})\otimes \frac{dT_i}{T_i}\otimes t^{-1},\quad \text{and}\\
 (\phi_{M_K(Y)}\otimes\id-\id\otimes\id)\circ \vartheta_{M_K(Y)}
&=\sum_{i=1}^n (\phi_{M_K(Y)}\circ\vartheta_i-\vartheta_i)\otimes \frac{dT_i}{T_i}\otimes t^{-1}.
\end{align*}

Thus we need to show that $[\phi_{M_K(Y)},\vartheta_i]=\vartheta_i$
for each $i$.
To see this, take a topological generator $\gamma_i$
of the $i$-th component of $\Gamma_{\geom}\cong \Z_p(1)^n$.
Let $\rho_{\geom}$ denote
the action of $\Gamma_{\geom}$ on $M_K(Y)$ 
and write $\log \rho_{\geom}=\sum_{i=1}^n \vartheta_i\otimes \chi_i\otimes t^{-1}$ as before.
Since
 $\gamma \gamma_i \gamma^{-1}=\gamma_i^{\chi(\gamma)}$
for $\gamma\in \Gamma_k$,
we have
\[
 \gamma (\log \rho_{\geom}(\gamma_i))\gamma^{-1}
=\log \rho_{\geom}(\gamma\gamma_i\gamma^{-1})
=\chi(\gamma)\log \rho_{\geom}(\gamma_i).
\]
Hence $\gamma\vartheta_i=\chi(\gamma)\vartheta_i \gamma$
for $\gamma\in \Gamma_k$.

For $m\in M(Y)$, we compute
\begin{align*}
\phi_{M_K(Y)} \vartheta_i m
&=\lim_{j\to \infty} \frac{1}{\log \chi(\gamma)}
\frac{\gamma^{p^j}\vartheta_i m-\vartheta_i m}{p^j} \\
&=\lim_{j\to \infty} \frac{1}{\log \chi(\gamma)}
\frac{(\chi(\gamma)^{p^j}-1)\vartheta_i\gamma^{p^j} m
+\vartheta_i (\gamma^{p^j}m-m)}{p^j} \\
&=\vartheta_i m + \vartheta_i \phi_{M_K(Y)} m.
\end{align*}
Hence $[\phi_{M_K(Y)},\vartheta_i]=\vartheta_i$.
\end{proof}

\begin{rem}
Brinon generalized Sen's theory to the case 
of $p$-adic fields with imperfect residue fields
in \cite{Brinon}.
Analogues of $\phi_{\bL}$ and $\vartheta_i$ 
have already appeared in his work.
\end{rem}

We discuss properties of the arithmetic Sen endomorphism 
along the lines of 
Theorem~\ref{thm:Simpson by LZ} (i.e., \cite[Theorem 2.1]{LZ}).

\begin{thm}\hfill
\begin{enumerate}
 \item 
There are canonical isomorphisms
\[
 (\calH(\bL_1\otimes\bL_2),\vartheta_{\bL_1\otimes\bL_2},
\phi_{\bL_1\otimes\bL_2})\cong
(\calH(\bL_1)\otimes\calH(\bL_2),\vartheta_{\bL_1}\otimes\id+
\id\otimes\vartheta_{\bL_2},\phi_{\bL_1}\otimes\id+\id\otimes\phi_{\bL_2})
\]
and
\[
 (\calH(\bL^\vee),\vartheta_{(\bL^\vee)},\phi_{(\bL^\vee)})
\cong(\calH(\bL)^\vee,(\vartheta_{\bL})^\vee,(\phi_{\bL})^\vee).
\]
\item
Let $f\colon Y\ra X$ be a morphism between smooth rigid analytic varieties over $k$ and $\bL$ be a $\Q_p$-local system on $X_{\et}$.
Then there is a canonical isomorphism
\[
 f^\ast(\calH(\bL),\vartheta_{\bL},\phi_{\bL})\cong
(f^\ast\calH(\bL),\vartheta_{f^\ast\bL},\phi_{f^\ast\bL}).
\]
\end{enumerate}
\end{thm}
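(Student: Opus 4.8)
The plan is to deduce all three identities from the corresponding statements at the level of the Higgs bundle $(\calH(-),\vartheta_{-})$ with its $\Gamma_k$-action, which are already contained in Theorem~\ref{thm:Simpson by LZ} (i.e.\ in \cite[Theorem~2.1]{LZ}), together with the decompletion results of Section~\ref{section:decompletion}. Concretely, the isomorphisms $\calH(\bL_1\otimes\bL_2)\cong\calH(\bL_1)\otimes\calH(\bL_2)$, $\calH(\bL^\vee)\cong\calH(\bL)^\vee$, and $f^\ast\calH(\bL)\cong\calH(f^\ast\bL)$ respecting Higgs fields and $\Gamma_k$-actions are provided by \emph{loc.\ cit.}; what has to be added is that they also match up the arithmetic Sen endomorphisms. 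Since $\phi_{\bL}$ is defined $\calO_{X_K}$-linearly on a suitable base and gluing is automatic, in each case it suffices to work over an affinoid $Y=\Spa(B,B^+)$ admitting a toric chart over a finite extension $k'$ of $k$ (after a further base change) on which all the vector bundles in sight are finite free, and to prove that the decompletion functor $V\mapsto V_{\fin}$ of Section~\ref{section:decompletion} is compatible with the three operations.

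For (i), working over a common refinement of $\calB_{\bL_1}$, $\calB_{\bL_2}$, $\calB_{\bL_1\otimes\bL_2}$ (resp.\ of $\calB_{\bL}$ and $\calB_{\bL^\vee}$), the needed statement is that for finite free $B_K$-representations $V_1,V_2$ of $\Gamma_{k'}$ the natural maps
\[
 (V_1)_{\fin}\otimes_{B_\infty}(V_2)_{\fin}\lra(V_1\otimes_{B_K}V_2)_{\fin},
\qquad
 \Hom_{B_\infty}(V_{\fin},B_\infty)\lra(V^\vee)_{\fin}
\]
are isomorphisms of $B_\infty$-representations of $\Gamma_{k'}$. In each case the source is a $\Gamma_{k'}$-stable, finitely generated $B_\infty$-submodule of the ambient representation of the target, hence lands in the $\fin$-part; the map is injective because the source is finite free over $B_\infty$ and $B_\infty\hra B_K$; by Theorem~\ref{thm:Sen decompletion} both source and target are finite free over $B_\infty$ of the same rank, and both recover the full tensor product (resp.\ dual) after the faithfully flat base change $B_\infty\to B_K$; so the maps are isomorphisms. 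Granting this, Lemma~\ref{lem:properties of Sen endom} identifies $\phi_{(V_1\otimes V_2)_{\fin}}$ with $\phi_{(V_1)_{\fin}}\otimes\id+\id\otimes\phi_{(V_2)_{\fin}}$ and $\phi_{(V^\vee)_{\fin}}$ with $(\phi_{V_{\fin}})^\vee$, and extending scalars along $B_\infty\to B_K$ (Definition~\ref{defn:base chage of Sen endom}) gives the asserted formulas on sections; compatibility over the base glues these to isomorphisms of vector bundles with endomorphism.

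For (ii), after localizing we may assume $X$ and $Y$ are affinoids with toric charts over a common finite extension $k'$ of $k$, that $f$ is given by a map $B=\calO(X)\to B'=\calO(Y)$ of $k'$-algebras, and that $\calH(\bL)(X_K)$ and $\calH(f^\ast\bL)(Y_K)$ are finite free. Since $\calH(\bL)$ is a vector bundle, $f^\ast\calH(\bL)(Y_K)=\calH(\bL)(X_K)\otimes_{B_K}B'_K$, so by Theorem~\ref{thm:Simpson by LZ} we have a $\Gamma_{k'}$-equivariant, Higgs-compatible identification $\calH(f^\ast\bL)(Y_K)\cong\calH(\bL)(X_K)\otimes_{B_K}B'_K$. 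It remains to show that for a finite free $B_K$-representation $V$ of $\Gamma_{k'}$ the natural map $V_{\fin}\otimes_{B_\infty}B'_\infty\to(V\otimes_{B_K}B'_K)_{\fin}$ is a $\Gamma_{k'}$-equivariant isomorphism carrying $\phi_{V_{\fin}}\otimes\id$ to $\phi_{(V\otimes_{B_K}B'_K)_{\fin}}$. The isomorphism is proved exactly as in (i): the image of $V_{\fin}\otimes_{B_\infty}B'_\infty$ in $V\otimes_{B_K}B'_K$ is the $B'_\infty$-span of $V_{\fin}$, a $\Gamma_{k'}$-stable finitely generated $B'_\infty$-submodule of rank $r=\rank V$, hence equal to $(V\otimes_{B_K}B'_K)_{\fin}$ by Theorem~\ref{thm:Sen decompletion} and the faithful flatness of $B'_\infty\to B'_K$. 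For the endomorphisms, the limit formula of Proposition~\ref{prop:Sen endom} and the equality $\gamma(v\otimes 1)=\gamma(v)\otimes 1$ for $v\in V_{\fin}$ give $\phi_{(V\otimes_{B_K}B'_K)_{\fin}}(v\otimes 1)=\phi_{V_{\fin}}(v)\otimes 1$; since the elements $v\otimes 1$ generate $V_{\fin}\otimes_{B_\infty}B'_\infty$ over $B'_\infty$ and both operators are $B'_\infty$-linear, they coincide. Extending scalars to $B'_K$ gives $f^\ast\phi_{\bL}=\phi_{f^\ast\bL}$ over $Y_K$, and these glue.

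The routine parts are the faithfully flat descent bookkeeping, the passage between $\Gamma_k$ and the open subgroup $\Gamma_{k'}$ (which does not affect the Sen endomorphism), and keeping track of the identifications as one varies over the base. The main obstacle is the pullback in (ii) along a morphism $f$ that need not be \'etale: one cannot invoke the standard-\'etale base change of property (P1) in Proposition~\ref{prop:def of M_K(Y)} directly, so one must localize $f$ to a map $B\to B'$ of affinoid algebras and check by hand that the Sen endomorphism commutes with this base change. What makes this work is that the characterizing property of $\phi$ in Proposition~\ref{prop:Sen endom} involves an open subgroup depending on the vector, so it persists under extension of scalars $B_\infty\to B'_\infty$, together with the $\Gamma_{k'}$-equivariance of $v\mapsto v\otimes 1$.
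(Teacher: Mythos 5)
Your proposal is correct and follows essentially the same route as the paper: the isomorphisms of Higgs bundles with their $\Gamma_k$-actions are taken from \cite[Theorem 2.1(iii),(iv)]{LZ} (Theorem~\ref{thm:Simpson by LZ}), and the matching of the arithmetic Sen endomorphisms comes from Lemma~\ref{lem:properties of Sen endom} and the characterizing/functorial property of $\phi_W$ in Proposition~\ref{prop:Sen endom}. The only difference is that you spell out the compatibility of the decompletion $V\mapsto V_{\fin}$ with tensor products, duals, and the base change $B_K\to B'_K$, which the paper's two-line proof leaves implicit; these verifications are correct and consistent with the arguments already used in Theorem~\ref{thm:Sen decompletion} and the gluing Lemma-Definition for $\phi_{\bL}$.
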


\begin{proof}
Part (i) follows from \cite[Theorem 2.1(iv)]{LZ} and 
Lemma~\ref{lem:properties of Sen endom}.
Part (ii) follows from  
\cite[Theorem 2.1(iii)]{LZ}
and Proposition~\ref{prop:Sen endom} (functoriality of $\phi_W$).
\end{proof}

By construction, we also have the following for the case of points.

\begin{prop}
If $X$ is a point, then $\phi_{\bL}$ coincides with the Sen endomorphism
attached to the Galois representation $\bL$.
\end{prop}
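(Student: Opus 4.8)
The plan is to unwind both constructions and observe that, for $X$ a point, the definition of $\phi_{\bL}$ degenerates literally to Sen's original one. Write $X=\Spa(k,\calO_k)$, so that $X_K=\Spa(K,\calO_K)$, $\Omega^1_X=0$, and $\bL$ corresponds to a continuous $p$-adic representation $V$ of $G_k:=\Gal(\overline k/k)$. Note that $X_K$ is an object of the base $\calB_{\bL}$ for $(X_K)_{\et}$ (with the trivial toric chart), so $\phi_{\bL}$ is determined by the single endomorphism $\phi_{\calH(\bL)(X_K)}$ together with its restrictions along finite extensions of $k$.

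First I would identify $\calH(\bL)(X_K)$, as a $K$-representation of $\Gamma_k$, with Sen's space $\calH(V)=(V\otimes_{\Q_p}\C_p)^{\Gal(\overline k/k_\infty)}$. Since $\dim X=0$ the structural de Rham sheaf reduces to $\bB_{\dR}$, hence $\OC=\gr^0\OB_{\dR}=\hat{\calO}_X$; and since $X_K=\Spa(K,\calO_K)$ is a final object of $X_{\proet}/X_K$, we get $\calH(\bL)(X_K)=(\hat{\bL}\otimes_{\hat{\Q}_p}\hat{\calO}_X)(X_K)$, which equals $(V\otimes_{\Q_p}\C_p)^{\Gal(\overline k/k_\infty)}=\calH(V)$ by descent along the pro-\'etale cover $\Spa(\C_p,\calO_{\C_p})\to X_K$. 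Equivalently, $\calH(V)$ satisfies the characterizing properties of Liu--Zhu's module $M_K(X)$ in Proposition~\ref{prop:def of M_K(Y)} --- finite free over $K$, recovering $V\otimes_{\Q_p}\C_p$ after base change, with trivial (hence unipotent) $\Gamma_{\geom}$-action, as $\Gamma_{\geom}=1$ here --- so $M_K(X)=\calH(V)$ by uniqueness. In all cases the $\Gamma_k$-action transported from $\calH(\bL)$ is the usual cyclotomic one on $\calH(V)$.

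Next I would compare the decompletions and the infinitesimal actions, where the two constructions meet essentially by definition. Taking $A=k$ in Section~\ref{section:decompletion} gives $A_{k_m}=k_m$, $A_\infty=k_\infty$, $A_K=K$, so $\calH(\bL)(X_K)$ is an $A_K$-representation of $\Gamma_k$ in the sense of Definition~\ref{defn:representation}, and its subspace $\calH(\bL)(X_K)_{\fin}$ --- the union of the finitely generated (equivalently finite-dimensional) $k$-subspaces of $\calH(V)$ stable under $\Gamma_k$ --- is precisely Sen's $\calH(V)_{\fin}$. By Proposition~\ref{prop:Sen endom} and the remark following it, $\phi_{\calH(\bL)(X_K)_{\fin}}$ is the unique $k_\infty$-linear operator $\phi$ with $\gamma w=\exp(\log(\chi(\gamma))\,\phi)(w)$ for $\gamma$ in a sufficiently small open subgroup, i.e. $\phi(w)=\lim_{\gamma\to 1}(\gamma w-w)/\log\chi(\gamma)$, which is exactly Sen's defining formula for the Sen endomorphism $\phi_V$. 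By Definition~\ref{defn:base chage of Sen endom}, $\phi_{\bL}$ then acts on $\calH(\bL)(X_K)=\calH(V)_{\fin}\otimes_{k_\infty}K$ as $\phi_{\calH(\bL)(X_K)_{\fin}}\otimes\id_K=\phi_V\otimes\id_K$, i.e. it coincides with the Sen endomorphism of $V$; the compatibility over all of $\calB_{\bL}$ is automatic because $\phi_V$ depends only on the restriction of the $\Gamma_k$-action to an open subgroup and is functorial in $V$.

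The one step requiring genuine care is the first: checking that Liu--Zhu's normalization really degenerates, in dimension zero, to Sen's $\calH(V)$ --- in particular that $\OC$ collapses to $\hat{\calO}_X$, that the perfectoid tower collapses so that $\Gamma_{\geom}$ disappears, and that the residual $\Gamma_k$-action is the cyclotomic one. Once this is settled, the rest is a direct comparison of two literally identical constructions, so I expect no further obstacle.
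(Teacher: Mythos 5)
Your proposal is correct and follows the same route as the paper, which simply records this proposition as holding ``by construction'': in dimension zero $\OC$ collapses to $\hat{\calO}_X$, $\Gamma_{\geom}$ is trivial, $\calH(\bL)(X_K)$ is Sen's $\calH(V)=(V\otimes_{\Q_p}\C_p)^{\Gal(\overline{k}/k_\infty)}$ with its cyclotomic $\Gamma_k$-action, and the decompletion and limit formula of Proposition~\ref{prop:Sen endom} literally reproduce Sen's definition. Your unwinding just makes explicit what the paper leaves implicit (the only cosmetic slip is that in the point case $\hat{B}_{K,\infty}=K$, so condition (i) of Proposition~\ref{prop:def of M_K(Y)} directly forces $M_K(X)=\calH(V)$ rather than involving a base change to $\C_p$), and no further argument is needed.
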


For pushforwards, we have the following theorem 
(the notation is explained after the statement).

\begin{thm}\label{thm:pushforward of Sen endomorphism}
Let $f \colon X\ra Y$ be a smooth proper morphism 
between smooth rigid analytic varieties over $k$
and let $\bL$ be a $\Z_p$-local system on $X_{\et}$.
Assume that $R^if_\ast \bL$ is a $\Z_p$-local system on $Y_{\et}$
for every $i$.
Then we have
\[
 (\calH(R^if_{\ast}\bL), \vartheta_{R^if_{\ast}\bL})
\cong R^if_{\Higgs,\ast}
(\calH(\bL)\otimes \Omega^\bullet_{X/Y}(-\bullet),\bar{\vartheta_{\bL}}).
\]
Moreover, under this isomorphism, we have
\[
 \phi_{R^if_{\ast}\bL}
=R^if_{K,\et,\ast}(\phi_\bL\otimes \id - \bullet (\id\otimes \id)).
\]
\end{thm}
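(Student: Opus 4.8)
The plan is to deduce this from the compatibility of $\calH(\cdot)$ with smooth proper pushforward already recorded in Theorem~\ref{thm:Simpson by LZ} (i.e.\ \cite[Theorem 2.1]{LZ}) together with the way the arithmetic Sen endomorphism is extracted from the $\Gamma_k$-action. The first isomorphism $(\calH(R^if_\ast\bL),\vartheta_{R^if_\ast\bL})\cong R^if_{\Higgs,\ast}(\calH(\bL)\otimes\Omega^\bullet_{X/Y}(-\bullet),\bar\vartheta_\bL)$ is exactly the Higgs-bundle part of \cite[Theorem 2.1]{LZ}, so it may be taken as given; what must be checked is that this isomorphism intertwines the two sides' arithmetic Sen endomorphisms, where on the target side the relevant operator is the one induced on the relative Higgs cohomology by $\phi_\bL\otimes\id-\bullet(\id\otimes\id)$, which makes sense by Proposition~\ref{prop:commutativity of Higgs and Sen} (that proposition says $\phi_\bL\otimes\id-\bullet(\id\otimes\id)$ is an endomorphism of the relative Higgs complex, hence induces an endomorphism of each $R^if_{\Higgs,\ast}$).

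The key point is that the Liu--Zhu pushforward isomorphism is constructed at the level of pro-\'etale cohomology, hence is automatically $\Gamma_k$-equivariant: one has $\calH(R^if_\ast\bL)=\nu'_{Y,\ast}(\widehat{R^if_\ast\bL}\otimes\OC_Y)$, and the comparison between $R^if_\ast\bL$ and $R^if_{\proet,\ast}\hat\bL$ together with the relative Poincar\'e lemma and base change for the structural de Rham/Higgs sheaves produces the isomorphism in a way that commutes with the descent datum for the cover $\tilde Y_{K,\infty}/Y$, i.e.\ with $\Gamma_k$. So I would first record that the isomorphism of Theorem~\ref{thm:pushforward of Sen endomorphism}(first part) is $\Gamma_k$-equivariant. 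Then, working over an object $(Y'=\Spa(B,B^+)\to Y_{k'})\in\calB_{R^if_\ast\bL}$ (refined so that all relevant cohomology modules are finite free over $B_K$ and the toric-chart description applies), I would pass to the decompletion $(\cdot)_{\fin}$ over $B_\infty$. By the functoriality and exactness properties in Section~\ref{section:decompletion} (Theorem~\ref{thm:Sen decompletion}, Lemma~\ref{lem:properties of Sen endom}), the decompletion functor commutes with the formation of cohomology of the finite complex $\calH(\bL)\otimes\Omega^\bullet_{X/Y}(-\bullet)$ pulled back to $Y'_K$, because this is a finite complex of finite free $\calO_{X_{K}}$-modules with $\Gamma_k$-semilinear action and decompletion is exact on such. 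Hence $(R^if_{\Higgs,\ast}(\ldots))_{\fin}$ is computed by the cohomology of the decompleted complex, and on the latter the Sen endomorphism is, by the defining limit formula of Proposition~\ref{prop:Sen endom}, induced by $\phi_\bL\otimes\id-\bullet(\id\otimes\id)$ applied termwise (Proposition~\ref{prop:commutativity of Higgs and Sen} guarantees this is a map of complexes).

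Concretely, the second identity is then a statement about a single $\Gamma_k$-representation: on $\calH(R^if_\ast\bL)(Y'_K)$, viewed as a $B_K$-representation of $\Gamma_{k'}$, the Sen endomorphism is characterized by $\phi(v)=\lim_{\gamma\to1}(\gamma v-v)/\log\chi(\gamma)$ on the finite part (Remark after Proposition~\ref{prop:Sen endom}). Under the $\Gamma_k$-equivariant pushforward isomorphism this limit is computed on the cohomology of the relative Higgs complex, where the $\Gamma_k$-action is the one induced from $\calH(\bL)$; so the limit is induced by the termwise limit, which on the degree-$j$ term $\calH(\bL)\otimes\Omega^j_{X/Y}(-j)$ equals $\phi_\bL$ on the $\calH(\bL)$-factor and, on the Tate-twisted factor $\Omega^j_{X/Y}(-j)$, equals multiplication by $-j$ (since $\Gamma_k$ acts on $\Q_p(1)$ through $\chi$, so the limit formula yields $\lim(\chi(\gamma)^{-j}-1)/\log\chi(\gamma)=-j$, and $\Omega^j_{X/Y}$ carries the trivial $\Gamma_k$-action). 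This is exactly $\phi_\bL\otimes\id-j(\id\otimes\id)$, and taking cohomology gives $R^if_{K,\et,\ast}(\phi_\bL\otimes\id-\bullet(\id\otimes\id))$, as claimed.

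The main obstacle I expect is the bookkeeping in the second paragraph: establishing cleanly that the Liu--Zhu pushforward isomorphism is $\Gamma_k$-equivariant and that the decompletion functor $(\cdot)_{\fin}$ commutes with the formation of $R^if_{\Higgs,\ast}$ of this particular finite complex. The first is essentially formal once one inspects the construction in \cite[\S2]{LZ} (the cover $\tilde Y_{K,\infty}\to Y$ is $\Gamma$-Galois with $\Gamma/\Gamma_{\geom}\cong\Gamma_k$, and all constructions are equivariant for $\Gamma$), but it must be stated carefully. The second reduces, after choosing $Y'\in\calB_{R^if_\ast\bL}$ and using that the toric-chart models $M_K$ are finite free with unipotent $\Gamma_{\geom}$-action, to the exactness of the Sen decompletion on bounded complexes of finite free $\Gamma_k$-modules, which follows from Theorem~\ref{thm:Sen decompletion} together with the fact that $(\cdot)_{\fin}$ preserves injections and surjections in this setting (Proposition~\ref{prop:finite Gamma orbit lies in A_infty} and the argument of Theorem~\ref{thm:Sen decompletion}); everything else is the routine limit computation above.
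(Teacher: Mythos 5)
Your overall outline matches the paper's: the first isomorphism is quoted from Liu--Zhu, and the second identity is to be verified by showing the operator induced on Higgs cohomology satisfies the characterizing $\exp/\log$ (limit) property of the Sen endomorphism, with the $-j$ on the degree-$j$ term coming from the Tate twist exactly as you compute. The gap is in the step you yourself flag as the main obstacle: the claim that the decompletion functor commutes with $R^if_{\Higgs,\ast}$, or equivalently that ``the limit on cohomology is induced by the termwise limit.'' Your proposed reduction of this to ``exactness of Sen decompletion on bounded complexes of finite free $\Gamma_k$-modules'' does not go through as stated. Over $Y'_K$ the Higgs pushforward is hypercohomology over the fibre $X\times_Y Y'$, computed by a \v{C}ech total complex whose terms are representations over the varying rings $B^{(i_0\cdots i_a)}_K$ attached to an affinoid cover of the fibre, not finite free $B_K$-modules; so even granting exactness of $(\cdot)_{\fin}$ (which is itself not proved in Section~\ref{section:decompletion}, though it can be extracted from faithful flatness of $A_\infty\ra A_K$ and Noetherianity), one still has no identification of $(R^if_{\Higgs,\ast}(\cdots))_{\fin}$ with the cohomology of a decompleted complex, nor the uniform continuity needed to pass the limit formula through cohomology (one needs a single open subgroup of $\Gamma_k$ working simultaneously for all \v{C}ech terms).

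The paper supplies precisely this missing machinery, and does so without ever claiming exactness of $(\cdot)_{\fin}$ or commutation with pushforward: using finiteness of the affinoid cover and the finite-level models from the proof of Theorem~\ref{thm:Sen decompletion}, the whole complex $\calH(\bL)\otimes\Omega^\bullet_{X/Y}(-\bullet)$ together with its $\Gamma_k$-action and the endomorphism $\phi_\bL\otimes\id-\bullet(\id\otimes\id)$ is descended to a complex $\calF^\bullet$ of coherent sheaves on $X_{k_m}$ for one sufficiently large $m$; Kiehl's finiteness theorem and the \v{C}ech-to-derived functor spectral sequence (degenerate because the terms are coherent and the intersections affinoid) then produce an explicit $A_{k_m}$-lattice $V_{k_m}\subset V_{\fin}$ with $V_{k_m}\otimes_{A_{k_m}}A_K=V$, computed by the finite-level \v{C}ech total complex. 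On that lattice the characterizing property of $\phi_V$ is verified termwise, and uniqueness in Proposition~\ref{prop:Sen endom} finishes the argument. If you want to keep your route, you would have to prove a statement of this strength anyway (uniform descent level $m$ for the complex, finiteness of the cohomology before decompletion, and compatibility of the finite-level \v{C}ech cohomology with $V_{\fin}$); simply invoking exactness of decompletion does not substitute for it.
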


Let us explain the notation in the theorem.
Recall 
the complex of $\calO_{X_K}$-modules 
\[
 \calH(\bL)\stackrel{\vartheta_{\bL}}{\lra}
\calH(\bL)\otimes\Omega_{X/k}^1(-1)
\stackrel{\vartheta_{\bL}}{\lra}
\calH(\bL)\otimes\Omega_{X/k}^2(-2)\lra \cdots.
\]
This has an $\calO_X$-linear endomorphism
$\phi_\bL\otimes\id - \bullet(\id\otimes \id)$
by Proposition~\ref{prop:commutativity of Higgs and Sen}.
The complex yields
a  complex of $\calO_{X_K}$-modules 
\[
 \calH(\bL)\stackrel{\bar{\vartheta}_{\bL}}{\lra}
\calH(\bL)\otimes\Omega_{X/Y}^1(-1)
\stackrel{\bar{\vartheta}_{\bL}}{\lra}
\calH(\bL)\otimes\Omega_{X/Y}^2(-2)\lra \cdots
\]
by composing with the projection $\Omega^i_{X/k}\ra \Omega^i_{X/Y}$.
The new complex has an induced $\calO_X$-linear endomorphism,
which we still denote by 
$\phi_\bL\otimes\id - \bullet(\id\otimes \id)$.

We denote by $f_K\colon X_K\ra Y_K$ the base change of $f$.
Then $R^if_{\Higgs,\ast}$
is the $i$-th derived pushforward of the complex with the Higgs field.
In particular, 
$R^if_{\Higgs,\ast}
(\calH(\bL)\otimes \Omega^\bullet_{X/Y}(-\bullet),\bar{\vartheta_{\bL}})$
is the $\calO_{X_K,\et}$-module
$R^if_{K,\et,\ast}(\calH(\bL)\otimes \Omega^\bullet_{X/Y}(-\bullet))$
together with a Higgs field.

\begin{proof}
The first part is \cite[Theorem 2.1(v)]{LZ} (see Theorem~\ref{thm:Simpson by LZ}).
So we will prove the statement on arithmetic Sen endomorphisms.

Since the statement is local on $Y$, we may assume that $Y$ is an affinoid
$\Spa (A,A^+)$ and that
$\calH(R^if_{\ast}\bL)$ is a globally free vector bundle on $Y_K$.
So $\calH(R^if_{\ast}\bL)$ is associated to 
a finite free $A_K$-module (say $V$).
Then $V$ is an $A_K$-representation of $\Gamma_k$ and 
the endomorphism $\phi_{R^if_{\ast}\bL}$ is associated to $\phi_V$.

Since $X$ is quasi-compact, there exists a finite affinoid open cover
$X=\bigcup_{i\in I}U^{(i)}$
with $U^{(i)}=\Spa (B^{(i)}, B^{(i),+})$
such that $\calH(\bL)|_{U^{(i)}_K}$ is a globally finite free
vector bundle for each $i$.
So $\calH(\bL)|_{U^{(i)}_K}$ gives rise to
a $B_K^{(i)}$-representation of $\Gamma_k$ and 
the latter is defined over $B_{k_m}^{(i)}$ for a sufficiently large $m$
(cf.~the proof of Theorem~\ref{thm:Sen decompletion}).
Since the same holds for $\calH(\bL)|_{U^{(i)}_K\cap U^{(j)}_K}$,
there exists a large integer $m$ such that 
 the complex of $\calO_{X_K}$-modules
\[
 \calH(\bL)\stackrel{\bar{\vartheta}_{\bL}}{\lra}
\calH(\bL)\otimes\Omega_{X/Y}^1(-1)
\stackrel{\bar{\vartheta}_{\bL}}{\lra}
\calH(\bL)\otimes\Omega_{X/Y}^2(-2)\lra \cdots
\]
with the $\Gamma_k$-action and the endomorphism 
$\phi_\bL\otimes\id - \bullet(\id\otimes \id)$
descends to a complex of $\calO_{X_{k_m}}$-modules
\[
 \calH(\bL)_{k_m}\stackrel{\bar{\vartheta}_{\bL}}{\lra}
\calH(\bL)_{k_m}\otimes\Omega_{X/Y}^1(-1)
\stackrel{\bar{\vartheta}_{\bL}}{\lra}
\calH(\bL)_{k_m}\otimes\Omega_{X/Y}^2(-2)\lra \cdots
 \]
on $X_{k_m}$ equipped with a $\Gamma$-action and an endomorphism
$\phi_\bL\otimes\id - \bullet(\id\otimes \id)$
such that
$\calH(\bL)_{k_m}|_{U^{(i)}_{k_m}}$ is 
a globally finite free vector bundle for each $i$.
 We denote by $\calF^\bullet$ the complex on $X_{k_m}$ 
and by $\phi^\bullet$ the descended endomorphism.

Let $f_{k_m}\colon X_{k_m}\ra Y_{k_m}$ denote the base change of $f$.
Set
\[
 \calH_{Y,k_m}:=R^if_{k_m,\et}\calF^\bullet.
\]
Since $f_{k_m}$ is proper, this is a coherent $\calO_{Y_{k_m}}$-module by
Kiehl's finiteness theorem\footnote{For a coherent $\calO_{k_m}$-module $\calF$, 
we have $(R^i f_{k_m}\calF)_{\et}=R^i f_{k_m,\et}\calF_{\et}$ 
(\cite[Proposition 9.2]{Scholze}).
So we simply write $\calF$ for the sheaf $\calF_{\et}$ on $X_{k_m,\et}$.}.
We have 
\[
  \calH_{Y,k_m}|_{Y_K}
= (R^if_{k_m,\et}\calF^\bullet)|_{Y_K}
=R^i f_{K,\et}(\calF^\bullet|_{X_K})
=R^i f_{K,\et}(\calH(\bL)\otimes \Omega_{X/Y}^\bullet (-\bullet)),
\]
and this is isomorphic to $\calH(R^if_{\ast}\bL)$
by the first assertion.
Thus (after increasing $m$) $\calH_{Y,k_m}$ is globally finite free
and associated to a finite free $A_{k_m}$-module (say $V_{k_m}$) 
with a $\Gamma_k$-action satisfying $V_{k_m}\otimes_{A_{k_m}}A_K=V$.
By construction, $V_{k_m}$ is contained in $V_{\fin}$
and the $A_K$-linear endomorphism $\phi_V$ on $V$ 
is uniquely characterized by the following property:
for each $v\in V_{k_m}$, there exists 
an open subgroup $\Gamma_k'\subset \Gamma_k$
such that 
\[
 \exp(\log \chi(\gamma)\phi_V)v=\gamma v
\]
for all $\gamma\in \Gamma_k'$.

We will show that $R^if_{K,\et,\ast}(\phi_\bL\otimes \id - \bullet (\id\otimes \id))$ defines an $A_K$-linear endomorphism on $V$ with the same property.
To see this, we compute $V_{k_m}$ via the \v{C}ech-to-derived functor spectral sequence.
Note that
\[
 V_{k_m}
=\Gamma(Y_{k_m,\et},\calH_{Y,k_m})
=R^i\Gamma(X_{k_m,\et},\calF^\bullet)
\]
by definition.

Let us briefly recall the \v{C}ech-to-derived functor spectral sequence.
Set $\calU:=\{U_{k_m}^{(i)}\}_{i\in I}$. For $i_0,\ldots,i_a\in I$, 
we denote by $U_{k_m}^{(i_0\cdots i_a)}$ the affinoid open
$U_{k_m}^{(i_0)}\cap\cdots\cap U_{k_m}^{(i_a)}$.
Consider the \v{C}ech double complex 
$\check{C}^\bullet(\calU,\calF^\bullet)$
associated to the complex $\calF^\bullet$;
this is defined by
\[
 \check{C}^a(\calU,\calF^b)
:=\prod_{i_0,\ldots, i_a\in I}\calF^b(U_{k_m}^{(i_0\cdots i_a)}).
\]
Let $\underline{H}^b$ be the $b$-th right derived functor
of the forgetful functor from the category of abelian sheaves on
$X_{k_m,\et}$ to the category of abelian presheaves on
$X_{k_m,\et}$;
for an abelian sheaf $\calG$, $\underline{H}^b(\calG)$
associates to $(U\ra X_{k_m})$ 
the abelian group $H^b(U,\calG)$.
Then the \v{C}ech-to-derived functor spectral sequence is a
spectral sequence with
\[
 E_2^{a,b}=H^a(\Tot(\check{C}^\bullet(\calU,\underline{H}^b(\calF^\bullet))))
\]
converging to $R^{a+b}\Gamma(X_{k_m,\et},\calF^\bullet)$.
Moreover, this is functorial in $\calF^\bullet$.

In our case, $\calF^\bullet$ consists of coherent $\calO_{k_m}$-modules
and $U_{k_m}^{(i_0\cdots i_a)}$ are all affinoid.
So $\underline{H}^b(\calF^c)(U_{k_m}^{(i_0\cdots i_a)})=0$
for each $b>0$ and any $a$ and $c$ by Kiehl's theorem.
Thus the spectral sequence yields an isomorphism
\[
 H^i(\Tot(\check{C}^\bullet(\calU,\calF^\bullet)))
\stackrel{\cong}{\lra}
R^i\Gamma(X_{k_m,\et},\calF^\bullet)=V_{k_m}.
 \]
Moreover, this isomorphism is 
$\Gamma_k$-equivariant as the construction is functorial in $\calF^\bullet$.

Let us unwind the definition of $\check{C}^a(\calU,\calF^b)$:
\[
 \check{C}^a(\calU,\calF^b)
=\prod_{i_0,\ldots, i_a\in I}
\Gamma(U_{k_m}^{(i_0\cdots i_a)}, \calH(\bL)_{k_m}\otimes\Omega_{X/Y}^b(-b)).
\]
Set 
\[
W^{(i_0\cdots i_a),b}:=\Gamma(U_{K}^{(i_0\cdots i_a)}, \calH(\bL)\otimes\Omega_{X/Y}^b(-b))
\]
and 
\[
 W_{k_m}^{(i_0\cdots i_a),b}:=\Gamma(U_{k_m}^{(i_0\cdots i_a)}, \calH(\bL)_{k_m}\otimes\Omega_{X/Y}^b(-b)).
\]
They have a natural $\Gamma_k$-action, and 
$W_{k_m}^{(i_0\cdots i_a),b}$ is contained in $(W^{(i_0\cdots i_a),b})_{\fin}$.
In particular, the restriction of $\phi_{W^{(i_0\cdots i_a),b}}$ to
$W_{k_m}^{(i_0\cdots i_a),b}$ satisfies the following property:
for each $w\in W_{k_m}^{(i_0\cdots i_a),b}$, there exists 
an open subgroup $\Gamma_k'\subset \Gamma_k$
such that 
\[
 \exp(\log \chi(\gamma)\phi_{W^{(i_0\cdots i_a),b}})w=\gamma w
\]
for all $\gamma\in \Gamma_k'$.

It follows from our construction that 
under the isomorphism
$ H^i(\Tot(\check{C}^\bullet(\calU,\calF^\bullet)))
\cong
R^i\Gamma(X_{k_m,\et},\calF^\bullet)=V_{k_m}$,
the endomorphism 
$R^if_{K,\et,\ast}(\phi_\bL\otimes \id - \bullet (\id\otimes \id))|_{V_{k_m}}
=R^if_{k_m,\et,\ast}\phi^\bullet$
corresponds to
\[
 H^i(\Tot(\prod_{i_0,\ldots, i_a\in I}\phi_{W^{(i_0\cdots i_a),b}})).
\]
Since differentials in the complex $\Tot(\check{C}^\bullet(\calU,\calF^\bullet))$ are all $\Gamma_k$-equivariant, we see that
$H^i(\Tot(\prod_{i_0,\ldots, i_a\in I}\phi_{W^{(i_0\cdots i_a),b}}))$
satisfies the above-mentioned characterizing
property of $\phi_V$.
This completes the proof.
\end{proof}

\section{Constancy of generalized Hodge-Tate weights}
\label{section:constancy}
In this section, we prove the multiset of eigenvalues of $\phi_{\bL}$
is constant on $X_K$
(Theorem~\ref{thm:constancy of eigenvalues of arithmetic Sen operator}).
For this
we give a description of $\phi_{\bL}$ as the residue of a formal connection
in Subsection~\ref{subsection:review of gemetric Riemann-Hilbert}.
Then the constancy is proved by the theory 
of formal connections 
developed in Subsection~\ref{subsection:formal connections}.

\subsection{The decompletion of the geometric Riemann-Hilbert correspondence}
\label{subsection:review of gemetric Riemann-Hilbert}
 We review the geometric Riemann-Hilbert correspondence by Liu and Zhu
and discuss its decompletion.

Keep the notation in Section~\ref{section:arithmetic Sen endomorphism}.
Let $\bL$ be a $\Q_p$-local system on $X_{\et}$ of rank $r$.
Following \cite{LZ}, we define
\[
 \RH(\bL)=\nu'_\ast\bigl(\hat{\bL}\otimes_{\hat{\Q}_p}\OB_{\dR}\bigr).
\]

In order to state their theorem,
let us recall a ringed space $\calX$ introduced in \cite[\S 3.1]{LZ}.
Let $L_{\dR}^+$ denote the de Rham period ring 
$\bB_{\dR}^+(K,\calO_K)$ as before 
(\cite{LZ} uses $B_{\dR}^+$ but we prefer to use $L_{\dR}^+$).
Define a sheaf $\calO_X\hat{\otimes}(L_{\dR}^+/t^i)$ on $X_{K,\et}$
by assigning
\[
 (Y=\Spa(B,B+)\ra X_{k'})\in\calB
\longmapsto B\hat{\otimes}_{k'}(L_{\dR}^+/t^i).
\]
This  defines a sheaf by the Tate acyclicity theorem.
We also set
\[
 \calO_X\hat{\otimes}L_{\dR}^+=\varprojlim_i\calO_X\hat{\otimes}(L_{\dR}^+/t^i)
\]
and
\[
 \calO_X\hat{\otimes}L_{\dR}=(\calO_X\hat{\otimes}L_{\dR}^+)[t^{-1}].
\]
We denote the ringed space $(X_K,\calO_X\hat{\otimes}L_{\dR})$
by $\calX$.
We have a natural base change functor $\calE\mapsto \calE\hat{\otimes}L_{\dR}$
from the category of vector bundles on $X$ to the category of vector bundles on $\calX$. We set
\[
 \Omega_{\calX/L_{\dR}}^1=\Omega_{X/k}^1\hat{\otimes} L_{\dR}.
\]

\begin{thm}[{\cite[Theorem 3.8]{LZ}}]\label{thm:LZ Theorem 3.8}
\hfill
\begin{enumerate}
 \item $\RH(\bL)$ is a filtered vector bundle on $\calX$
of rank $r$
equipped with an integrable connection
\[
\RH(\bL)\stackrel{\nabla_{\bL}}{\lra}
\RH(\bL)\otimes_{\calO_{\calX}}\Omega_{\calX/L_{\dR}}
\]
that satisfies the Griffiths transversality.
Moreover, $\Gal(K/k)$ acts on $\RH(\bL)$ semilinearly,  
and the action preserves the filtration and commutes with $\nabla_{\bL}$.
 \item
There is a canonical isomorphism
\[
 (\gr^0\RH(\bL),\gr^0(\nabla_{\bL}))
\cong (\calH(\bL),\vartheta_{\bL}).
\]
\end{enumerate}
\end{thm}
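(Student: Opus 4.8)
The theorem is due to Liu and Zhu (\cite[Theorem 3.8]{LZ}); I would prove it by following their strategy, namely by reducing everything to a local computation over the base $\calB$, in parallel with the proof of the $p$-adic Simpson correspondence recalled above (Proposition~\ref{prop:def of M_K(Y)}) but carried out with coefficients in $\bB_{\dR}^+$ in place of $\hatO_X$. The key local input is a \emph{de Rham decompletion}: for $(Y=\Spa(B,B^+)\to X_{k'})\in\calB$ admitting a toric chart, the $\bB_{\dR}^+|_{\tilde Y_{K,\infty}}$-module $(\hat\bL\otimes_{\hat{\Q}_p}\bB_{\dR}^+)(\tilde Y_{K,\infty})$ is finite projective of rank $r$ and descends canonically to a finite projective $B\hat{\otimes}L_{\dR}^+$-submodule $M_{\dR,K}(Y)$, stable under $\Gamma$, on which $\Gamma_{\geom}$ acts unipotently and which generates $(\hat\bL\otimes\bB_{\dR}^+)(\tilde Y_{K,\infty})$ after base change to the completion; moreover $M_{\dR,K}(Y)$ is the unique such lattice. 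This is proved exactly as Proposition~\ref{prop:def of M_K(Y)}: finite projectivity over the perfectoid follows from the $\hatO_X$-case by d\'evissage along the $t$-adic filtration (using $\bB_{\dR}^+/t=\hatO_X$ and $t$-adic completeness), and the descent is the Tate--Sen method, now available with $L_{\dR}^+$-coefficients thanks to the decompletion theory of Section~\ref{section:decompletion} (Theorem~\ref{thm:Fontaine decompletion}) combined with the geometric descent over the toric tower as in Proposition~\ref{prop:def of M_K(Y)}.

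For part (i), I would compute $\RH(\bL)(Y_K)=H^0(X_{\proet}/Y_K,\hat\bL\otimes\OB_{\dR})$ by Cartan--Leray for the Galois cover $\tilde Y_{K,\infty}\to Y_K$ with group $\Gamma_{\geom}$: the vanishing theorem on affinoid perfectoid spaces (\cite[Proposition 7.13]{Scholze-perfectoid}) gives $\RH(\bL)(Y_K)=\bigl((\hat\bL\otimes\OB_{\dR})(\tilde Y_{K,\infty})\bigr)^{\Gamma_{\geom}}$. Over the tower, Scholze's explicit description of $\OB_{\dR}$ (in terms of $\bB_{\dR}$ and finitely many extra variables on which $\nabla$ and $\Gamma_{\geom}$ act explicitly, refining the description $\OC|_{\tilde Y_{K,\infty}}\cong\hatO_X|_{\tilde Y_{K,\infty}}[V_1,\ldots,V_n]$ recalled above) identifies $(\hat\bL\otimes\OB_{\dR})(\tilde Y_{K,\infty})$ with $M_{\dR,K}(Y)$ base-changed along $\bB_{\dR}^+|_{\tilde Y_{K,\infty}}\to\OB_{\dR}|_{\tilde Y_{K,\infty}}$, and a Taylor-expansion argument in these variables --- the exact analogue of the isomorphism $\bigl(M_K(Y)[V_1,\ldots,V_n]\bigr)^{\Gamma_{\geom}}\cong M_K(Y)$ recalled above --- identifies the $\Gamma_{\geom}$-invariants with a finite projective $B\hat{\otimes}L_{\dR}$-module of rank $r$. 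Gluing over $\calB$ produces the filtered vector bundle $\RH(\bL)$ on $\calX$. Finally, $\nabla_\bL$ is $\nu'_\ast$ of $\id\otimes\nabla$ (with $\hat\bL$ horizontal), and integrability, the Griffiths transversality, the filtration, the semilinear $\Gal(K/k)$-action, and the facts that this action preserves the filtration and commutes with $\nabla_\bL$, are all inherited from the corresponding structures on $\OB_{\dR}$ together with the $\Gal(K/k)$-equivariance of the construction.

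For part (ii), I would take $\gr^0$ of everything in part (i). Since $\hat\bL$ lies in filtration degree $0$ and is flat over $\hat{\Q}_p$, one has $\gr^0(\hat\bL\otimes\OB_{\dR})=\hat\bL\otimes\gr^0\OB_{\dR}=\hat\bL\otimes\OC$, so it suffices to see that $\gr^0$ commutes with $\nu'_\ast$ here, i.e.\ that $\gr^0\RH(\bL)=\nu'_\ast(\hat\bL\otimes\OC)=\calH(\bL)$. Locally over a toric $Y$ this is read off from the explicit description: $M_{\dR,K}(Y)/tM_{\dR,K}(Y)$ is a $\Gamma$-stable $B_K$-lattice with unipotent $\Gamma_{\geom}$-action inside $(\hat\bL\otimes\hatO_X)(\tilde Y_{K,\infty})$ (using $\bB_{\dR}^+/t=\hatO_X$), hence equals $M_K(Y)$ by the uniqueness in Proposition~\ref{prop:def of M_K(Y)}; and the Taylor-expansion isomorphism for $\OB_{\dR}$ is compatible with the one for $\OC=\gr^0\OB_{\dR}$, so passing to $\gr^0$ on the invariants recovers $\calH(\bL)(Y_K)\cong M_K(Y)$. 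Under this identification $\gr^0\nabla_\bL$ becomes $\nu'_\ast(\gr\nabla)$ on $\hat\bL\otimes\OC$, which is precisely the Higgs field $\vartheta_\bL$ by the definition recalled after Theorem~\ref{thm:Simpson by LZ}, giving the canonical isomorphism $(\gr^0\RH(\bL),\gr^0\nabla_\bL)\cong(\calH(\bL),\vartheta_\bL)$.

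I expect the main obstacle to be the de Rham decompletion statement of the first paragraph --- running the Tate--Sen method with $L_{\dR}^+$-coefficients over the toric tower while keeping track of the filtration --- together with the verification (again over the tower, via the vanishing theorem for affinoid perfectoids and Scholze's explicit local model for $\OB_{\dR}$) that the filtration on $\RH(\bL)$ is by sub-vector-bundles and that $\gr^0$ genuinely commutes with the pushforward $\nu'_\ast$; these points form the technical core of \cite[\S 3]{LZ}.
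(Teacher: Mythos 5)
The paper does not prove this statement itself: it is quoted from Liu--Zhu \cite[Theorem 3.8]{LZ}, so there is no in-paper argument to compare against, and your sketch follows the same route that Liu and Zhu take (and that the paper recalls in detail for $\calH(\bL)$): local toric charts, the vanishing theorem on affinoid perfectoids and Cartan--Leray, a Tate--Sen style decompletion of the $\bB_{\dR}^+$-coefficients by d\'evissage along the $t$-adic filtration using $\bB_{\dR}^+/t\cong\hat{\calO}_X$, and the Taylor-expansion description of $\OB_{\dR}$ over the toric tower, with part (ii) obtained by passing to $\gr^0$. Your outline is consistent with the cited proof and correctly isolates its technical core (the relative de Rham decompletion and the compatibility of $\gr^0$ with $\nu'_\ast$).
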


We want to consider a decompletion of $\RH(\bL)$.
Here we only develop an ad hoc local theory that is sufficient for our purpose.

Take $(Y=\Spa(B,B^+)\ra X_{k'})\in \calB_{\bL}$ and 
consider $\Fil^0\RH(\bL)(Y_K)$.
This is a $B\hat{\otimes}_{k'}L_{\dR}^+$-representation of $\Gal(K/k')$, 
and $\RH(\bL)(Y_K)=\bigl( \Fil^0\RH(\bL)(Y_K)\bigr)[t^{-1}]$.
Moreover, since $\gr^0\RH(\bL)(Y_K)$ is a finite free $B_K$-module
by the definition of $\calB_{\bL}$, 
the $B\hat{\otimes}_{k'}L_{\dR}^+$-module $\Fil^0\RH(\bL)(Y_K)$ is also finite free.

Definition~\ref{defn:de Rham connection} yields the $B_{\infty}((t))$-module $\RH(\bL)(Y_K)_{\fin}$ and 
the $B_{\infty}$-linear endomorphism
\[
 \phi_{\dR,\RH(\bL)(Y_K)_{\fin}}\colon \RH(\bL)(Y_K)_{\fin}\ra \RH(\bL)(Y_K)_{\fin}.
\]
For simplicity, we denote $\phi_{\dR,\RH(\bL)(Y_K)_{\fin}}$ by $\phi_{\dR,\bL,Y_K}$.
It satisfies
\[
 \phi_{\dR,\bL,Y_K}(\alpha m)=t\partial_t(\alpha)m+\alpha\phi_{\dR,\bL,Y_K}(m)
\]
for every $\alpha\in B_{\infty}((t))$ and $m\in \RH(\bL)(Y_K)_{\fin}$.
Note that $\nabla_{\bL}$ is $\Gal(K/k)$-equivariant.
Hence under the identification
\[
\RH(\bL)\otimes_{\calO_{\calX}}\Omega_{\calX/L_{\dR}}^1\cong
\RH(\bL)\otimes_{\calO_X}\Omega^1_{X/k},
\]
we have
\[
 \nabla_{\bL,Y_K} \bigl(\RH(\bL)(Y_K)_{\fin}\bigr) \subset 
(\RH(\bL)(Y_K)\otimes \Omega^1_{B/k'})_{\fin}
=\RH(\bL)(Y_K)_{\fin}\otimes \Omega^1_{B/k'}.
\]

\begin{prop}\label{prop:commutativity of de Rham Sen and connection}
The following diagram commutes:
\[
 \xymatrix{
\RH(\bL)(Y_K)_{\fin}\ar[r]^-{\nabla_{\bL,Y_K}}\ar[d]_{\phi_{\dR,\bL,Y_K}}
&\RH(\bL)(Y_K)_{\fin}\otimes_{B}\Omega^1_{B/k'}
\ar[d]^{\phi_{\dR,\bL,Y_K}\otimes \id}\\
\RH(\bL)(Y_K)_{\fin}\ar[r]^-{\nabla_{\bL,Y_K}}
&\RH(\bL)(Y_K)_{\fin}\otimes_{B}\Omega^1_{B/k'}.
}
\]
Moreover, we have
\[
 \Res_{\Fil^0 \RH(\bL)(Y_K)_{\fin}}\phi_{\dR,\bL,Y_K}
=\phi_{\bL, Y_K}.
\]
\end{prop}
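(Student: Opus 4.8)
The plan is to prove both statements by evaluating at a fixed $(Y = \Spa(B,B^+) \ra X_{k'}) \in \calB_{\bL}$. Enlarging $k'$ if necessary we may assume $Y$ carries a toric chart, so that $\Omega^1_{B/k'}$ is free on $d\log T_1,\dots,d\log T_n$ and $\nabla_{\bL,Y_K}$ is given by commuting operators $\nabla_1,\dots,\nabla_n$ on $\RH(\bL)(Y_K)_{\fin}$; the commutativity of the square in the proposition is then equivalent to the relations $\nabla_i \circ \phi_{\dR,\bL,Y_K} = \phi_{\dR,\bL,Y_K} \circ \nabla_i$ for $i=1,\dots,n$. To prove these I would first record that, by Propositions~\ref{prop:Sen endom} and \ref{prop:Fontaine connection} together with the remark following Proposition~\ref{prop:Sen endom}, the endomorphism $\phi_{\dR,\bL,Y_K}$ is computed as a limit
\[
 \phi_{\dR,\bL,Y_K}(m) = \lim_{\gamma\to 1}\frac{\gamma m - m}{\log\chi(\gamma)}
\qquad (m\in\RH(\bL)(Y_K)_{\fin}),
\]
the limit being taken modulo each power of $t$ on the $B_\infty$-representation $\bigl(\Fil^0\RH(\bL)(Y_K)/t^n\bigr)_{\fin}$ and then transported to $\RH(\bL)(Y_K)_{\fin} = \Fil^0\RH(\bL)(Y_K)_{\fin}[t^{-1}]$ by means of the identity $\phi_{\dR}(t^{-j}u) = -j t^{-j}u + t^{-j}\phi_{\dR}(u)$ (a consequence of Proposition~\ref{prop:properties of Fontaine's connection}).

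Granting this, the first assertion follows from $\Gamma_k$-equivariance and continuity. Indeed, $\nabla_{\bL}$ commutes with the Galois action by Theorem~\ref{thm:LZ Theorem 3.8}(i), so each $\nabla_i$ is $\Gamma_k$-equivariant on $\RH(\bL)(Y_K)_{\fin}$ and preserves it (this last fact is already noted before the proposition); moreover $\nabla_{\bL,Y_K}$ is continuous on $\RH(\bL)(Y_K)_{\fin}$ for its natural topology — it is the restriction of the continuous connection $\nabla_{\bL}$ and, as in the proof of Theorem~\ref{thm:Sen decompletion}, descends to a connection defined over $B_{k_m}$ for $m\gg 0$ (Lemma~\ref{lem:topology and continuity} identifying the relevant topologies). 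Therefore, for $m\in\RH(\bL)(Y_K)_{\fin}$, using the limit formula for both $m$ and $\nabla_i(m)$,
\[
 \nabla_i\bigl(\phi_{\dR,\bL,Y_K}(m)\bigr)
 = \lim_{\gamma\to 1}\nabla_i\bigl(\tfrac{\gamma m - m}{\log\chi(\gamma)}\bigr)
 = \lim_{\gamma\to 1}\tfrac{\gamma\,\nabla_i(m) - \nabla_i(m)}{\log\chi(\gamma)}
 = \phi_{\dR,\bL,Y_K}\bigl(\nabla_i(m)\bigr).
\]
This is the claimed commutativity; it is the exact analogue of the computation in the proof of Proposition~\ref{prop:commutativity of Higgs and Sen}, the absence of a correction term reflecting that $\nabla_i$ commutes with $\Gamma_k$ rather than satisfying $\gamma\nabla_i = \chi(\gamma)\nabla_i\gamma$.

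For the residue statement the point is that $\RH(\bL)(Y_K)$ is a finite free filtered $B\hat{\otimes}_{k'}L_{\dR}$-representation of $\Gal(K/k')$ in the sense of Definition~\ref{defn:de Rham connection}: since $(Y\ra X_{k'})\in\calB_{\bL}$, the module $\gr^0\RH(\bL)(Y_K)$ is finite free over $B_K$, hence $\Fil^0\RH(\bL)(Y_K)$ is finite free over $B\hat{\otimes}_{k'}L_{\dR}^+$ with $\RH(\bL)(Y_K) = \Fil^0\RH(\bL)(Y_K)[t^{-1}]$, while $\RH(\bL)$ is a filtered vector bundle on $\calX$ whose Galois action preserves the filtration by Theorem~\ref{thm:LZ Theorem 3.8}(i). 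Definition~\ref{defn:de Rham connection} then gives directly
\[
 \Res_{\Fil^0\RH(\bL)(Y_K)_{\fin}}\phi_{\dR,\bL,Y_K} = \phi_{\gr^0\RH(\bL)(Y_K)}
\]
on $\gr^0\RH(\bL)(Y_K)_{\fin} = \bigl(\gr^0\RH(\bL)(Y_K)\bigr)_{\fin}$. Finally, the canonical isomorphism $(\gr^0\RH(\bL),\gr^0\nabla_{\bL})\cong(\calH(\bL),\vartheta_{\bL})$ of Theorem~\ref{thm:LZ Theorem 3.8}(ii) is $\Gamma_k$-equivariant, so evaluating at $Y_K$ identifies $\gr^0\RH(\bL)(Y_K)$ with $\calH(\bL)(Y_K)$ as $B_K$-representations of $\Gal(K/k')$; the functoriality of the Sen endomorphism (Proposition~\ref{prop:Sen endom}) then yields $\phi_{\gr^0\RH(\bL)(Y_K)} = \phi_{\calH(\bL)(Y_K)} = \phi_{\bL,Y_K}$, the last equality being the definition of the arithmetic Sen endomorphism. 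Combining the two displays proves the second assertion.

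The step I expect to require the most care is the first one: making precise the topology in which $\phi_{\dR,\bL,Y_K}$ is the asserted limit and checking that $\nabla_{\bL,Y_K}$ is continuous for it, so that $\nabla_i$ may legitimately be interchanged with $\lim_{\gamma\to 1}$. Once this is in place the rest is formal, and the residue statement is essentially a translation through Definition~\ref{defn:de Rham connection} and Theorem~\ref{thm:LZ Theorem 3.8}(ii).
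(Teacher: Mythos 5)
Your proposal is correct and follows essentially the same route as the paper: the paper's own proof consists precisely of the two observations you elaborate, namely that the commutativity of the square follows from the $\Gal(K/k)$-equivariance of $\nabla_{\bL}$ (made precise by the limit description of $\phi_{\dR,\bL,Y_K}$ and the fact that $\nabla_{\bL,Y_K}$ preserves $\RH(\bL)(Y_K)_{\fin}$), and that the residue identity is a direct consequence of Definition~\ref{defn:de Rham connection} combined with the canonical $\Gamma_k$-equivariant isomorphism $(\gr^0\RH(\bL),\gr^0\nabla_{\bL})\cong(\calH(\bL),\vartheta_{\bL})$ of Theorem~\ref{thm:LZ Theorem 3.8}(ii). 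Your added care about the topology in which the limit is taken and the continuity of $\nabla_{\bL,Y_K}$ only fills in details the paper leaves implicit.
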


\begin{proof}
The commutativity of the diagram follows from the fact that
$\nabla_{\bL}$ is $\Gal(K/k)$-equivariant.
The second assertion is 
a consequence of Theorem~\ref{thm:LZ Theorem 3.8}(ii)
(cf.~Definition~\ref{defn:de Rham connection}).
\end{proof}

\begin{rem}
In \cite{AB}, Andreatta and Brinon 
developed a Fontaine-type decompletion theory in the relative setting.
Roughly speaking, they associated to a local system on $X$
a formal connection over the pro-\'etale cover $\widetilde{X}_{K,\infty}$
over $X_K$ when $X$ is an affine scheme admitting a toric chart.
\end{rem}

\subsection{Theory of formal connections}
\label{subsection:formal connections}
To study $\phi_{\dR,\bL, Y_K}$ in the previous subsection,
we develop a theory of formal connections.
We work on the following general setting:
let $R$ be an integral domain of characteristic $0$
(e.g.~ $R=B_{\infty}$ in the previous subsection)
and fix an algebraic closure of the fraction field of $R$.
Consider the ring of Laurent series $R((t))$ and
define the $R$-linear derivation 
$d_0\colon R((t))\ra R((t))$
by
\[
 d_0\biggl(\sum_{j\in\Z}a_jt^j\biggr)=\sum_{j\in\Z}ja_jt^{j-1}.
\]
Let $M$ be a finite free $R((t))$-module of rank $r$ and
let $D_0\colon M\ra M$ be an $R$-linear map which satisfies the Leibniz rule
\[
 D_0(\alpha m)=\alpha D_0(m)+d_0(\alpha)m\quad (\alpha\in R((t)), m\in M).
\]

\begin{defn}
A \textit{$tD_0$-stable lattice} of $M$ is a finite free $R[[t]]$-submodule $\Lambda$ of $M$
that satisfies 
\[
 \Lambda\otimes_{R[[t]]}R((t))=M\quad \text{and}\quad
tD_0(\Lambda)\subset \Lambda.
\] 
\end{defn}

For a $tD_0$-stable lattice $\Lambda$ of $M$, we have
$tD_0(t\Lambda)\subset t\Lambda$ by the Leibniz rule.
Thus $tD_0\colon \Lambda\ra \Lambda$ induces an $R$-linear endomorphism
on $\Lambda/t\Lambda$. We denote this endomorphism by $\Res_\Lambda D_0$.
Since $\Lambda/t\Lambda$ is a finite free $R$-module of rank $r$,
the endomorphism $\Res_\Lambda D_0$ has $r$ eigenvalues (counted with multiplicity) in the algebraic closure of the fraction field of $R$.

The following is known for $tD_0$-stable lattices.
\begin{thm}\label{thm:diff eq}
Assume that $R$ is an algebraically closed field.
\begin{enumerate}
 \item 
There exists a finite subset $\calA$ of $R$ such that
the submodule
\[
 \Lambda_{\calA}:=\bigoplus_{\alpha\in\calA}\Ker (tD_0-\alpha)^r\otimes_RR[[t]]
\]
is a $tD_0$-stable lattice of $M$.
In particular, the eigenvalues of $\Res_{\Lambda_{\calA}}D_0$
lie in $\calA$.
 \item For any $tD_0$-stable lattices $\Lambda$ and $\Lambda'$ of $M$,
the eigenvalues of $\Res_{\Lambda}D_0$ and those of $\Res_{\Lambda'}D_0$
differ by integers. Namely,
for each eigenvalue $\alpha$ of $\Res_{\Lambda}D_0$,
there exists an eigenvalue $\alpha'$ of $\Res_{\Lambda'}D_0$
such that $\alpha-\alpha'\in \Z$.
\end{enumerate}
 \end{thm}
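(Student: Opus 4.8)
The plan is to reduce everything to the classical theory of formal (meromorphic) differential equations over a field, for which the key input is the cyclic vector / formal reduction theorem: over an algebraically closed field $R$ of characteristic $0$, a finite free $R((t))$-module $M$ with connection $D_0$ (here with the operator $tD_0$, i.e.\ an irregularity-$\leq 0$ situation since we only ever divide by $t$ once) admits a $tD_0$-stable lattice, and any two such lattices are related by a finite sequence of elementary modifications. Concretely, for part (i), I would first apply the Hukuhara--Turrittin--Levelt classification of formal connections to the operator $tD_0$ on $M$: after passing to $R((t))$-rational equivalence one may put $tD_0$ in a block form where each block is $(t\partial_t - \alpha) + N + (\text{strictly positive-order terms})$ with $\alpha\in R$ and $N$ nilpotent. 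Grouping the exponents $\alpha$ into the finite set $\calA := \{\alpha_1,\dots,\alpha_s\}\subset R$ of ``residues,'' one checks that $\Lambda_{\calA}:=\bigoplus_{\alpha\in\calA}\Ker(tD_0-\alpha)^r\otimes_R R[[t]]$ is indeed finite free over $R[[t]]$ of rank $r$, spans $M$ over $R((t))$, and is stable under $tD_0$ (stability is immediate from the Leibniz rule once one knows each $\Ker(tD_0-\alpha)^r$ is an $R$-subspace on which $tD_0-\alpha$ is nilpotent, since then $tD_0(\Ker(tD_0-\alpha)^r\otimes R[[t]])\subset \alpha\cdot(\text{same})+t\cdot(\text{same})$). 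The eigenvalues of $\Res_{\Lambda_{\calA}}D_0$ on $\Lambda_{\calA}/t\Lambda_{\calA}=\bigoplus_\alpha\Ker(tD_0-\alpha)^r$ are then visibly contained in $\calA$.

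For part (ii), given two $tD_0$-stable lattices $\Lambda,\Lambda'$, the plan is to interpolate between them by finitely many steps in which the lattice is enlarged or shrunk along a single ``Hodge line,'' i.e.\ one replaces a lattice $\Lambda''$ by $\Lambda''+t^{-1}\Lambda'''$ or $t\Lambda''$ for a sublattice $\Lambda'''\subset\Lambda''$ with $\Lambda''/\Lambda'''$ a line; this is possible because $\Lambda+\Lambda'$ and $\Lambda\cap\Lambda'$ are again $tD_0$-stable lattices (using that $R[[t]]$ is a PID so these are again finite free and span $M$), so one reduces to the case $t^N\Lambda\subset\Lambda'\subset\Lambda$ and then filters $\Lambda'/t^N\Lambda$ by $R[[t]]$-submodules between $\Lambda$ and $t^N\Lambda$. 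In the one-step case one has an exact sequence $0\to \Lambda'''\to\Lambda''\to (\text{line})\to 0$ of $tD_0$-modules, which upon reduction mod $t$ and the corresponding snake-lemma/long-exact-sequence bookkeeping shows that the multiset of eigenvalues of $\Res_{\Lambda''}D_0$ and of the enlarged lattice differ: an eigenvalue $\alpha$ either persists or is replaced by $\alpha\pm 1$. Chaining the finitely many steps proves the assertion that eigenvalues of $\Res_\Lambda D_0$ and $\Res_{\Lambda'}D_0$ differ by integers.

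The main obstacle I anticipate is making the formal classification (Hukuhara--Turrittin--Levelt) available over a general algebraically closed field $R$ of characteristic $0$ rather than over $\bC$ or a complete field: the standard references phrase it analytically, but the purely formal statement — existence of a cyclic vector for $M$ over $R((t))$ and the Levelt--Turrittin decomposition of the resulting scalar equation when the pole order is $1$ — holds over any such $R$ by algebraic arguments (the cyclic vector lemma needs only infinitely many elements in $R$, i.e.\ characteristic $0$). I would therefore either cite a sufficiently algebraic source for the formal theory or, more likely, give a short self-contained argument: pick a cyclic vector, reduce to a single operator $P(t\partial_t)$ of order $r$ with coefficients in $R[[t]]$, read off $\calA$ as the roots of the ``indicial polynomial'' $P\bmod t$ together with their $\Z$-translates occurring as actual exponents, and construct $\Lambda_{\calA}$ by a Frobenius-type ansatz solving $(tD_0-\alpha)$-eigenvector equations order by order in $t$ — the characteristic-zero hypothesis guarantees the recursions $(\text{indicial poly at }\alpha+n)\cdot c_n = (\text{lower terms})$ are solvable except for the finitely many resonant $n$, which is exactly where the nilpotent part and the passage from $\Ker(tD_0-\alpha)$ to $\Ker(tD_0-\alpha)^r$ enter. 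Once the local model is in hand, parts (i) and (ii) are the formal bookkeeping sketched above.
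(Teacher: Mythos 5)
The paper does not actually prove this theorem: it is quoted from the literature, with the proof delegated to \cite[III.8 and V.~Lemma 2.4]{DGS} and \cite[Proposition 3.2.2]{Andre-Baldassarri}, which treat exponents of \emph{regular singular} formal differential modules. Your plan is essentially to reprove those classical results directly, and most of it is sound. In particular your argument for (ii) works: $\Lambda+\Lambda'$ and $\Lambda\cap\Lambda'$ are again $tD_0$-stable lattices (finite free because $R[[t]]$ is a discrete valuation ring when $R$ is a field), so one reduces to $t^N\Lambda\subset\Lambda'\subset\Lambda$ and interpolates by the stable lattices $\Lambda'+t^j\Lambda$; in a single step $t\Lambda_2\subset\Lambda_1\subset\Lambda_2$ one compares $\Lambda_1/t\Lambda_1$ and $\Lambda_2/t\Lambda_2$ through the common pieces $\Lambda_1/t\Lambda_2$ and $\Lambda_2/\Lambda_1\cong t\Lambda_2/t\Lambda_1$, and the isomorphism given by multiplication by $t$ conjugates the induced operator to itself plus $1$, which is exactly the ``persists or shifts by $\pm1$'' bookkeeping you describe. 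The refinement to one-dimensional steps is possible (an eigenvector of the induced operator on $\Lambda_2/\Lambda_1$ gives a stable intermediate lattice, using that $R$ is algebraically closed) but unnecessary.

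The genuine gap is in part (i): your parenthetical claim that one is automatically in an ``irregularity $\leq 0$'' situation ``since we only ever divide by $t$ once'' is false. Writing the operator as $tD_0$ does not bound the pole order of its matrix in a basis of $M$; for example on $M=R((t))e$ with $tD_0e=t^{-1}e$ one has $\Ker(tD_0-\alpha)^r=0$ for every $\alpha$, so no set $\calA$ makes $\Lambda_{\calA}$ a lattice, and no cyclic-vector/indicial-polynomial argument can succeed. Part (i) is really a statement about regular singular modules, i.e.\ it requires (as the cited references assume, and as is available in the paper's application, where a $tD_0$-stable lattice $(\Fil^0\RH(\bL)(Y_K))_{\fin}$ is given) the hypothesis that $M$ admits some $tD_0$-stable lattice; your proof must either invoke that hypothesis or derive regular singularity from it, rather than assert it for free. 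Granting regular singularity, your Frobenius-method construction does work, but note one further point you gloss over with ``one checks'': the set $\calA$ must be chosen so that no two of its elements differ by a nonzero integer (e.g.\ representatives of the exponents modulo $\Z$), since $t^n\Ker(tD_0-\alpha)^r\subset\Ker(tD_0-\alpha-n)^r$ and otherwise the sum defining $\Lambda_{\calA}$ fails to be direct of rank $r$.
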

See \cite[III.8 and V. Lemma 2.4]{DGS} and \cite[Proposition 3.2.2]{Andre-Baldassarri} for details.

We now turn to the following multivariable situation:
Let $R$ be an integral domain of characteristic 0 as before.
Suppose that $R$ is equipped with pairwise commuting derivations
$d_1,\ldots,d_n$;
this means that for each $i=1,\ldots, n$, the map $d_i\colon R\ra R$
is additive and satisfies the Leibniz rule
\[
 d_i(ab)=d_i(a)b+ad_i(b)\quad (a,b\in R),
\]
and $d_i\circ d_j=d_j\circ d_i$ for each $i$ and $j$.
Since $R$ is an integral domain of characteristic 0, the derivations $d_1,\ldots,d_n$ extend uniquely over the algebraic closure of the fraction field of $R$.

Consider the ring of Laurent series $R((t))$ and
define the $R$-linear derivation $d_0\colon R((t))\ra R((t))$
by
\[
 d_0\biggl(\sum_{j\in\Z}a_jt^j\biggr)=\sum_{j\in\Z}ja_jt^{j-1}.
\]
For each $i=1,\ldots,n$, we extend $d_i\colon R\ra R$ to an additive map
$d_i\colon R((t))\ra R((t))$ by
\[
 d_i\biggl(\sum_{j\in\Z}a_jt^j\biggr)=\sum_{j\in\Z}d_i(a_j)t^j.
\]
Then endomorphisms $d_0,d_1,\ldots,d_n$ on $R((t))$ commute with each other.
Moreover, $d_1,\ldots,d_n$ commute with $td_0$.

Let $M$ be a finite free $R((t))$-module of rank $r$ 
together with pairwise commuting additive endomorphisms
$D_0, D_1,\ldots, D_n\colon M\ra M$ satisfying the Leibniz rules
\[
 D_i(\alpha m)=\alpha D_i(m)+d_i(\alpha)m\quad (\alpha\in R((t)), m\in M, 0\leq i\leq n).
\]
Note that $D_0$ is $R$-linear and $D_1,\ldots,D_n$ commute with $tD_0$.

The following proposition is the key to the constancy of generalized Hodge-Tate weights.

\begin{prop}\label{prop:constancy of residue}
With the notation as above, let $\Lambda$ be a $tD_0$-stable lattice of $M$.
Then each eigenvalue $\alpha$ of 
$\Res_{\Lambda}D_0$ in the algebraic closure of the fraction field of $R$
satisfies
\[
d_1(\alpha)=\cdots=d_n(\alpha)=0. 
\]
\end{prop}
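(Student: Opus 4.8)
The plan is to reduce the multivariable statement to the one-variable input Theorem~\ref{thm:diff eq}(ii) applied over a field obtained by adjoining a ``constant'' to $R$. Fix an index $i\in\{1,\dots,n\}$ and an eigenvalue $\alpha$ of $\Res_\Lambda D_0$; I want to show $d_i(\alpha)=0$. The idea is to build, from the single derivation $d_i$ on $R$, a new copy of the one-variable setup in which $\alpha$ becomes the residue of \emph{two} different $tD_0$-stable lattices whose residues differ by $d_i(\alpha)\cdot(\text{non-integer})$, contradicting part (ii) unless $d_i(\alpha)=0$.

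Concretely, first I would pass to $\bar R$, the algebraic closure of $\Frac(R)$, extend all the $d_j$ and the $D_j$ there, and replace $M$ by $M\otimes_{R((t))}\bar R((t))$ and $\Lambda$ by $\Lambda\otimes_R\bar R$; this is harmless since residues and the conditions $d_i(\alpha)=0$ are unchanged, and now Theorem~\ref{thm:diff eq} is available. Next, introduce a new variable: consider the ring $\bar R[[s]]$ (or $\bar R((s))$) with the derivation $\partial_s$, and form the ``translation in the $d_i$-direction'' automorphism, informally $\exp(s\,d_i)$, acting on $\bar R((t))((s))$. The point is that conjugating the pair $(D_0,\Lambda)$ by this automorphism produces a new $D_0^{(s)}$-structure whose residue endomorphism is $\exp(s\,d_i)(\Res_\Lambda D_0)$, a matrix over $\bar R((s))$ whose eigenvalues are the images $\exp(s\,d_i)(\alpha)=\alpha+s\,d_i(\alpha)+O(s^2)$. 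Since $D_i$ commutes with $tD_0$ and satisfies the Leibniz rule for $d_i$, this conjugated connection is again a legitimate one-variable object over the base field $\bar R((s))$ (or a suitable completion), so Theorem~\ref{thm:diff eq}(ii) says its residue eigenvalues differ from those of $\Res_\Lambda D_0$ (viewed over the bigger field) by integers. As $s$ varies, $\alpha+s\,d_i(\alpha)+\cdots$ can only stay in a fixed coset of $\Z$ if $d_i(\alpha)=0$; this is where I extract the conclusion.

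I expect the main obstacle to be making the ``$\exp(s\,d_i)$'' conjugation rigorous: $d_i$ need not be locally nilpotent, so $\exp(s\,d_i)$ is only a formal-power-series operator in $s$, and I must choose the right ring (e.g.\ work modulo $s^2$, or over $\bar R[[s]]$ with its $s$-adic topology) so that it genuinely acts, commutes appropriately with $d_0$ and $t$, and carries the finite-free module with its lattice to a finite-free module with a $tD_0^{(s)}$-stable lattice. An alternative that sidesteps this: argue by contradiction assuming $d_i(\alpha)\neq 0$, rescale so that $d_i(\alpha)=1$ after localizing, and directly produce from $D_i$ a second $tD_0$-stable lattice of $M$ (over $\bar R$) on which the residue is $\Res_\Lambda D_0$ shifted by a non-integer — for instance, use that $D_i$ generates a $\mathbb{G}_a$- or $\widehat{\mathbb{G}}_a$-action under which $\alpha$ is not a fixed point, yet any two stable lattices must have $\Z$-congruent residues; the contradiction is that a non-integer translate of a finite eigenvalue set cannot remain $\Z$-congruent to it. Once the rescaling and the construction of the auxiliary lattice are in place, the rest is a short invocation of Theorem~\ref{thm:diff eq}(ii) and an elementary observation about cosets of $\Z$ in $\bar R$.
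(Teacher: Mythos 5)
Your overall frame --- extend scalars to $\overline{\Frac R}$, observe that integer shifts of eigenvalues are invisible to $d_i$, and reduce everything to Theorem~\ref{thm:diff eq}(ii) --- is reasonable, but the step that carries all the weight is exactly the one you flag and never supply: that the $\exp(s\,d_i)$-conjugated object is ``again a legitimate one-variable object'' with a $tD_0$-stable lattice sitting inside the same finite free module over (an algebraic closure of) $\overline{\Frac R}((s))$, so that Theorem~\ref{thm:diff eq}(ii) can be invoked. Concretely, what you need is an isomorphism, compatible with $D_0$, between the two base changes of $(M,D_0,\Lambda)$ along the constant inclusion and along the Taylor homomorphism $a\mapsto \sum_j d_i^j(a)s^j/j!$; the natural candidate is the formal flow $m\mapsto \sum_j s^jD_i^j(m)/j!$. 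But in a basis of $\Lambda$ one has $D_i=d_i+C_i$ with $C_i\in M_r\bigl(\overline{\Frac R}((t))\bigr)$ a priori having a pole at $t=0$ (the lattice $\Lambda$ is only assumed $tD_0$-stable, not $D_i$-stable), so the coefficient of $s^j$ in the flow has $t$-pole order that a priori grows with $j$: the flow lands in $M\otimes R((t))[[s]]$, not in anything of the form $F((t))^r$ with $F$ a field containing $R((s))$. Hence the ``conjugated connection'' is not an object Theorem~\ref{thm:diff eq} applies to. Working modulo $s^2$ does not rescue this, since $R[s]/(s^2)$ is not a field and Theorem~\ref{thm:diff eq} requires an algebraically closed field as base; and your alternative route (B) has the same hole: Theorem~\ref{thm:diff eq}(ii) says precisely that no $tD_0$-stable lattice with non-integrally shifted residue exists, so to reach a contradiction you must actually construct one, and the only mechanism offered is the same non-convergent flow. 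Controlling those $t$-poles is the missing regularity statement, and it is not a technicality --- it is essentially equivalent to the proposition.

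This missing input is the paper's Lemma~\ref{lem:ker lattice is D-stable}: after replacing $\Lambda$ by the canonical lattice $\Lambda_{\calA}=\bigoplus_{\alpha\in\calA}\Ker(tD_0-\alpha)^r\otimes_R R[[t]]$ (harmless by Theorem~\ref{thm:diff eq}(ii), exactly the integer-shift observation you make), one proves that $\Lambda_{\calA}$ is stable under $D_1,\dots,D_n$. Once a lattice commensurable with $\Lambda$ is $D_i$-stable, your flow would indeed converge where you need it, but at that point the detour through $R((s))$ is unnecessary: the paper concludes directly by writing $tD_0=td_0+C_0$ and $D_i=d_i+C_i$ on $\Lambda_{\calA}$, deducing $[\overline{C}_0,\overline{C}_i]=d_i\overline{C}_0$ at $t=0$, hence $d_i\bigl(\tr(\overline{C}_0^{\,j})\bigr)=0$ for all $j$, so the eigenvalues of $\Res_{\Lambda_{\calA}}D_0$ are $d_i$-constants. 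So the proposal, as written, has a genuine gap at its central step, and filling it leads back to proving the regular-singularity lemma on which the paper's argument rests.
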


\begin{proof}
By extending scalars from $R$ to the algebraic closure of its fraction field, we may assume that $R$ is an algebraically closed field.
By Theorem~\ref{thm:diff eq} (i),
there exists a finite subset $\calA$ of $R$ such that
the submodule
\[
 \Lambda_{\calA}:=\bigoplus_{\alpha\in\calA}\Ker (tD_0-\alpha)^r\otimes_RR[[t]]
\]
is a $tD_0$-stable lattice of $M$.

By Theorem~\ref{thm:diff eq} (ii), 
the eigenvalues of $\Res_{\Lambda}D_0$ and those of $\Res_{\Lambda_{\calA}}D_0$
differ by integers.
Since every integer $a$ satisfies $d_1(a)=\cdots=d_n(a)=0$,
it suffices to treat the case where $\Lambda=\Lambda_{\calA}$.

\begin{lem}\label{lem:ker lattice is D-stable}
 The finite free $R[[t]]$-submodule $\Lambda_{\calA}$ is
stable under $D_1,\ldots,D_n$.
\end{lem}

Note that
Lemma~\ref{lem:ker lattice is D-stable} says
that the connection $(\Lambda_{\calA}, D_0,\ldots,D_n)$
is regular singular along $t=0$.
In this case, Proposition~\ref{prop:constancy of residue} is easy to prove.
In fact, this is an algebraic analogue
of the following fact:
let $X$ be  the complex affine space $\A^n_{\C}$ and $D$ the divisor
$\{0\}\times \A^{n-1}_{\C}$.
Consider a vector bundle $\Lambda$ on $X$ 
and an integrable connection $\Gamma$ on $\Lambda|_{X\setminus D}$
that admits a logarithmic pole along $D$.
Let $T$ be the monodromy transformation of $\Lambda|_{X\setminus D}$
defined by the positive generator of $\pi_1(X\setminus D)=\Z$.
Then $T$ extended to an automorphism $\widetilde{T}$ of $\Lambda$
and satisfies
\[
 \widetilde{T}|_D=\exp (-2\pi i \Res_D \Lambda).
\]
See \cite[Proposition 3.11]{Deligne-diffeq}.

\begin{proof}[Proof of Lemma~\ref{lem:ker lattice is D-stable}]
This is \cite[Lemma 3.3.2]{Andre-Baldassarri}.
For the convenience of the reader, we reproduce the proof here.
Fix $1\leq i\leq n$ and $\alpha\in\calA$.
It is enough to show that for each $0\leq j\leq r$, 
\[
 D_i\Ker (tD_0-\alpha)^j\subset \Ker (tD_0-\alpha)^{j+1}.
\]

We prove this inclusion by induction on $j$. The assertion is trivial when $j=0$.
Assume $j>0$ and take $m\in \Ker (tD_0-\alpha)^j$.
Then $(tD_0-\alpha)m\in \Ker (tD_0-\alpha)^{j-1}$, and thus
$D_i (tD_0-\alpha)m\in \Ker (tD_0-\alpha)^j$
by the induction hypothesis.
We need to show $(tD_0-\alpha)^{j+1}D_im=0$.
Since $D_i$ commutes with $tD_0$ and satisfies $D_i(\alpha m)=\alpha D_i(m)+d_i(\alpha)m$, we have
\[
 (tD_0-\alpha)D_im=D_i(tD_0-\alpha)m+d_i(\alpha)m.
\]
Therefore 
\begin{align*}
  (tD_0-\alpha)^{j+1}D_im
&= (tD_0-\alpha)^j  (tD_0-\alpha)D_im \\
&=(tD_0-\alpha)^j  D_i(tD_0-\alpha)m+(tD_0-\alpha)^j d_i(\alpha)m\\
&=(tD_0-\alpha)^j  D_i(tD_0-\alpha)m+d_i(\alpha)(tD_0-\alpha)^j m.
\end{align*}
For the third equality, note that $d_i(\alpha)\in R$ and $D_0$ is $R$-linear.
Since $D_i (tD_0-\alpha)m\in \Ker (tD_0-\alpha)^j$ and $m\in \Ker (tD_0-\alpha)^j$, the last sum is zero.
\end{proof}

We continue the proof of Proposition~\ref{prop:constancy of residue}.
Fix an $R[[t]]$-basis of $\Lambda_{\calA}$ and identify $\Lambda_{\calA}$
with $R[[t]]^r$. Note that $R[[t]]^r$ has natural differentials $d_0,d_1,\ldots,d_n\colon R[[t]]^r\ra R[[t]]^r$.
Consider the map
\[
 t (D_0-d_0)\colon R[[t]]^r\ra R[[t]]^r.
\]
This is $R[[t]]$-linear.
We denote the corresponding $r\times r$ matrix by $C_0 \in M_r\bigl(R[[t]]\bigr)$.

Fix $1\leq i\leq n$.
By Lemma~\ref{lem:ker lattice is D-stable}, the map $D_i$ gives an endomorphism on $R[[t]]^r$ that satisfies the Leibniz rule
and thus $D_i-d_i$ is an $R[[t]]$-linear endomorphism on $R[[t]]^r$.
We denote the corresponding $r\times r$ matrix by $C_i \in M_r\bigl(R[[t]]\bigr)$.

We have $[tD_0,D_i]=0$ and $[td_0,d_i]=0$ in $\End \bigl(R[[t]]^r\bigr)$.
Plugging $tD_0=td_0+C_0$ and $D_i=d_i+C_i$ into $[tD_0,D_i]=0$ yields
\begin{equation}\label{eq:commutator relation for residue}
 [C_0,C_i]=d_iC_0-td_0C_i,
\end{equation}
where $d_0$ and $d_i$ are differentials acting on the matrices entrywise.

Consider the surjection $R[[t]]\ra R$ evaluating $t$ by 0.
We denote the image of $C_0$ (resp.~$C_i$) in $M_r(R)$ by $\overline{C}_0$
(resp.~$\overline{C}_i$).
By construction $\overline{C}_0$ is the matrix corresponding to $\Res_{\Lambda_{\calA}}D_0$. 
Thus it suffices to show that each eigenvalue of $\overline{C}_0$ is killed by $d_i$. This is standard.
Namely, from (\ref{eq:commutator relation for residue}), 
we have
\[
  \Bigl[\overline{C}_0,\overline{C}_i\Bigr]=d_i\overline{C}_0.
\]

This implies that 
\[
 d_i (\overline{C}_0^2)=\overline{C}_0d_i(\overline{C}_0)+d_i(\overline{C}_0)\overline{C}_0=\overline{C}_0\Bigl[\overline{C}_0,\overline{C}_i\Bigr]+\Bigl[\overline{C}_0,\overline{C}_i\Bigr]\overline{C}_0
=\Bigl[\overline{C}_0^2,\overline{C}_i\Bigr].
\]
Similarly, for each $j\in \N$,
\[
 d_i (\overline{C}_0^j)=\Bigl[\overline{C}_0^j,\overline{C}_i\Bigr].
\]
In particular, we get
\[
 d_i\bigl(\tr(\overline{C}_0^j)\bigr)=0.
\]
This implies that each eigenvalue of $\overline{C}_0$ is killed by $d_i$.
\end{proof}

\subsection{Constancy of generalized Hodge-Tate weights}

Here is the key theorem of this paper.

\begin{thm}\label{thm:constancy of eigenvalues of arithmetic Sen operator}
Let $k$ be a finite extension of $\Q_p$.
Let $X$ be a smooth rigid  analytic variety over $k$
and $\bL$ a $\Q_p$-local system on $X_{\et}$.
Consider the arithmetic Sen endomorphism $\phi_{\bL}\in \End\bigl(\calH(\bL)\bigr)$.
Then eigenvalues of $\phi_{\bL,x}\in \End\bigl(\calH(\bL)_x\bigr)$ for $x\in X_K$ are algebraic over $k$ and constant on each connected component of $X_K$.
\end{thm}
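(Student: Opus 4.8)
The plan is to realize $\phi_{\bL}$, locally on $X_K$, as the residue along $t=0$ of a formal integrable connection, and then apply the theory of Subsection~\ref{subsection:formal connections}, in particular Proposition~\ref{prop:constancy of residue}. Fix $(Y=\Spa(B,B^+)\ra X_{k'})\in\calB_{\bL}$; since $\calB_{\bL}$ is a base for $(X_K)_{\et}$, such $Y_K$ cover $X_K$, and after enlarging $k'$ by a finite extension we may assume $Y$ admits a toric chart over $k'$, so that $\Omega^1_{B/k'}$ is finite free and $B$ is finite étale over a rational subdomain of the Laurent torus over $k'$. Put $R:=B_\infty$, equipped with the pairwise commuting $k_\infty$-linear derivations $d_1,\dots,d_n$ dual to the toric basis of $\Omega^1_{B/k'}$, and set $M:=\RH(\bL)(Y_K)_{\fin}$, a finite free $R((t))$-module by Definition~\ref{defn:de Rham connection}. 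On $M$ define $D_0:=t^{-1}\phi_{\dR,\bL,Y_K}$; it is $R$-linear and satisfies the Leibniz rule for $d_0=\partial_t$ by Proposition~\ref{prop:properties of Fontaine's connection}. Define $D_1,\dots,D_n$ by contracting $\nabla_{\bL,Y_K}\colon M\ra M\otimes_B\Omega^1_{B/k'}$ against the dual basis; these satisfy the $d_i$-Leibniz rules. Integrability of $\nabla_{\bL}$ gives $[D_i,D_j]=0$, and the commuting square of Proposition~\ref{prop:commutativity of de Rham Sen and connection} gives $[tD_0,D_i]=0$, whence $[D_0,D_i]=0$ after inverting $t$. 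Thus $(M;D_0,\dots,D_n)$ is exactly of the form treated in Subsection~\ref{subsection:formal connections}.

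Next I would exhibit the relevant $tD_0$-stable lattice. Take $\Lambda:=(\Fil^0\RH(\bL)(Y_K))_{\fin}$: it is finite free over $R[[t]]$ with $\Lambda[t^{-1}]=M$, and $tD_0=\phi_{\dR,\bL,Y_K}$ preserves it because it preserves the filtration (Definition~\ref{defn:de Rham connection}). By construction $\Lambda/t\Lambda=\gr^0 M$, which is $\calH(\bL)(Y_K)_{\fin}$ by Theorem~\ref{thm:LZ Theorem 3.8}(ii), and the induced endomorphism $\Res_{\Lambda}D_0$ is $\phi_{\calH(\bL)(Y_K)_{\fin}}$ by the second assertion of Proposition~\ref{prop:commutativity of de Rham Sen and connection}. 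Proposition~\ref{prop:constancy of residue} then shows that every eigenvalue $\alpha$ of $\phi_{\calH(\bL)(Y_K)_{\fin}}$ in the algebraic closure of $\Frac(B_\infty)$ satisfies $d_1(\alpha)=\dots=d_n(\alpha)=0$. Consequently the coefficients of the characteristic polynomial of $\phi_{\calH(\bL)(Y_K)_{\fin}}$, which a priori lie in $B_\infty$ (being symmetric functions of the $\alpha$'s), lie in $B_\infty^{d=0}:=\{b\in B_\infty : d_1 b=\dots=d_n b=0\}$. Since this characteristic polynomial becomes that of $\phi_{\bL,Y_K}$ after base change to $B_K$, the coefficients $c_1,\dots,c_r$ of $\det(\lambda\cdot\id-\phi_{\bL})$, which are global sections of $\calO_{X_K}$ because $\phi_{\bL}$ is a global endomorphism of the vector bundle $\calH(\bL)$, restrict on each such $Y_K$ to elements of $B_\infty^{d=0}$; in particular each $c_i$ is horizontal for the relative de Rham differential $\calO_{X_K}\ra\Omega^1_{X_K/K}$.

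Finally I would extract the two conclusions. Because $B_\infty=B\otimes_{k'}k_\infty$ and the $d_i$ act through $B$ over $k'$, we have $B_\infty^{d=0}=H^0_{\dR}(\Spa B/k')\otimes_{k'}k_\infty$, and $H^0_{\dR}(\Spa B/k')$ is a finite étale $k'$-algebra (the ring of de Rham--locally constant functions on the smooth affinoid $\Spa B$); hence $B_\infty^{d=0}$ is a finite étale $k_\infty$-algebra and every element of it is algebraic over $k$. Therefore each $c_i$, and hence every eigenvalue of every $\phi_{\bL,x}$ for $x\in X_K$, is algebraic over $k$. For the constancy statement, a horizontal global function restricts on a connected component $Z$ of $X_K$ to an element of a single finite extension $E_Z$ of $k_\infty$ that embeds compatibly into $\kappa(x)$ for every $x\in Z$: locally it lies in the ring $B_\infty^{d=0}$ above, which has no nontrivial idempotents once $Y_K$ is connected and so is a finite field extension of $k_\infty$, and these fields glue over $Z$. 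Hence $\det(\lambda\cdot\id-\phi_{\bL,x})\in E_Z[\lambda]$ is one and the same polynomial for all $x\in Z$, which gives the asserted constancy of the multiset of eigenvalues.

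The main obstacle is the second step: fitting $\RH(\bL)$, its Fontaine-type decompletion $\RH(\bL)(Y_K)_{\fin}$, and the Liu--Zhu connection into the abstract framework of Subsection~\ref{subsection:formal connections} --- one must check carefully that $t^{-1}\phi_{\dR,\bL,Y_K}$ commutes with the $D_i$, that $(\Fil^0\RH(\bL)(Y_K))_{\fin}$ genuinely is a $tD_0$-stable lattice whose residue recovers $\phi_{\calH(\bL)(Y_K)_{\fin}}$, and that these local constructions are compatible under restriction so that the $c_i$ glue to global horizontal functions. The remaining ingredients --- identifying $B_\infty^{d=0}$ with a finite étale $k_\infty$-algebra and gluing horizontal functions over connected components --- are comparatively routine, relying on standard facts about smooth affinoid algebras.
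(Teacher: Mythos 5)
Your proposal is correct and follows essentially the same route as the paper: reduce to a toric-chart affinoid $(Y\ra X_{k'})\in\calB_{\bL}$, set $R=B_\infty$, $M=\RH(\bL)(Y_K)_{\fin}$, $D_0=t^{-1}\phi_{\dR,\bL,Y_K}$, $D_i=(\nabla_{\bL,Y_K})_{\partial/\partial T_i}$, take $\Lambda=(\Fil^0\RH(\bL)(Y_K))_{\fin}$ and apply Proposition~\ref{prop:constancy of residue}, the paper's concluding computation of constants $\bigl(\overline{\Frac\, k''\langle T_1^\pm,\ldots,T_n^\pm\rangle}\bigr)^{\partial_{T_1}=\cdots=\partial_{T_n}=0}=\bar{k}$ being only cosmetically different from your passage through the characteristic polynomial and $B_\infty^{d_1=\cdots=d_n=0}$. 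The only point to make explicit, as the paper does, is to first arrange that $B_\infty$ is connected, hence an integral domain, since Proposition~\ref{prop:constancy of residue} and your use of $\overline{\Frac(B_\infty)}$ require $R$ to be a domain.
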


We call these eigenvalues \textit{generalized Hodge-Tate weights} of $\bL$.

\begin{proof}
Since $\phi_{\bL}$ is an endomorphism on the vector bundle $\calH(\bL)$ on $X_K$, it suffices to prove the statement \'etale locally on $X$.
Thus we may assume that $X$ is an affinoid $\Spa (B,B^+)$ which admits
a toric chart $X_{k'}\ra \bT^n_{k'}$ over some finite extension $k'$ of $k$ in $K$.

Take $(Y=\Spa(B,B^+)\ra X_{k'})\in\calB_{\bL}$.
We may assume that $B_{\infty}$ is connected, hence an integral domain.
Note that $Y$ admits a toric chart 
\[
 Y_{k''}\ra \bT^n_{k''}
=\Spa (k''\langle T_1^\pm,\ldots,T_n^\pm\rangle,\calO_{k''}\langle T_1^\pm,\ldots,T_n^\pm\rangle)
\]
after base change to a finite extension $k''$ of $k'$ in $K$.
Then the derivations $\frac{\partial}{\partial T_1},\ldots,\frac{\partial}{\partial T_n}$ on $k''\langle T_1^\pm,\ldots,T_n^\pm\rangle$
extends over $B_{\infty}$. We also denote the extensions by 
$\frac{\partial}{\partial T_1},\ldots,\frac{\partial}{\partial T_n}$.

We set
\[
 R=B_{k_\infty}, \qquad d_0=\partial_t, \quad \text{and}\quad
d_i=\frac{\partial}{\partial T_i} \quad (1\leq i\leq n).
\]

Consider $R((t))$-module 
\[
 M=\RH(\bL)(Y_K)_{\fin}
\]
equipped with endomorphisms
\[
 D_0=t^{-1}\phi_{\dR,\bL,Y_K}, \quad \text{and} \quad
D_i=(\nabla_{\bL, Y_K})_{\frac{\partial}{\partial T_i}} \quad (1\leq i\leq n).
\]
By Proposition~\ref{prop:commutativity of de Rham Sen and connection}, they satisfy the assumptions in the previous subsection 

Consider the $R[[t]]$-submodule of $M$
\[
 \Lambda=(\Fil^0\RH(\bL)(Y_K))_{\fin}.
\]
Then $\Lambda$ is $tD_0$-stable, and 
$\Res_{\Lambda}D_0$ is $\phi_{\bL,Y_K}$.
Thus by Proposition~\ref{prop:constancy of residue},
each eigenvalue $\alpha$ of $\Res_{\Lambda}D_0$ 
in an algebraic closure $L$ of $\Frac R$
satisfies
\[
 d_1(\alpha)=\cdots=d_n(\alpha)=0.
\]

On the other hand, we can check that
\[
L^{d_1=\cdots=d_n=0}
=\Bigl(\overline{\Frac k''\langle T_1^\pm,\ldots,T_n^\pm\rangle}\Bigr)^{\frac{\partial}{\partial T_1}=\cdots=\frac{\partial}{\partial T_n}=0}= \bar{k}.
\]
Therefore the eigenvalues of $\phi_{\bL,Y_K}$ are 
algebraic over $k$ and constant on $Y_K$.
 \end{proof}

 \begin{cor}\label{cor:constancy of generalized HT weights}
Let $k$ be a finite extension of $\Q_p$.
Let $X$ be a geometrically connected smooth rigid  analytic variety over $k$
and $\bL$ a $\Q_p$-local system on $X$.
Then the multiset of generalized Hodge-Tate weights
of the $p$-adic representations $\bL_{\overline{x}}$ of $\Gal\bigl(\overline{k(x)}/k(x)\bigr)$ does not depend on the choice of a classical point $x$ of $X$.

In particular, if $\bL_{\overline{x}}$ is presque Hodge-Tate
for one classical point $x$ of $X$ (i.e., generalized Hodge-Tate weights are all integers), $\bL_{\overline{y}}$ is presque Hodge-Tate for every classical point $y$ of $X$.
\end{cor}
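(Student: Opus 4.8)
The plan is to deduce the corollary from Theorem~\ref{thm:constancy of eigenvalues of arithmetic Sen operator} by identifying the fibre of the arithmetic Sen endomorphism at a classical point with the classical Sen endomorphism of the corresponding Galois representation. First I would observe that, since $X$ is geometrically connected over $k$ and $k\subset K\subset\C_p$, the base change $X_K$ is connected. Hence Theorem~\ref{thm:constancy of eigenvalues of arithmetic Sen operator} provides a single multiset $S$, consisting of elements algebraic over $k$, such that for \emph{every} point $z\in X_K$ the eigenvalues of $\phi_{\bL,z}\in\End(\calH(\bL)_z)$ coincide with $S$. It therefore suffices to show that, for each classical point $x$ of $X$, the generalized Hodge-Tate weights of $\bL_{\overline x}$ are the eigenvalues of $\phi_{\bL,\tilde x}$ for some point $\tilde x\in X_K$ lying over $x$.

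To see this, let $x$ be a classical point of $X$, with residue field $k(x)$ finite over $k$, and let $i_x\colon\Spa(k(x),\calO_{k(x)})\to X$ be the corresponding morphism over $k$. By the compatibility of $(\calH,\vartheta,\phi)$ with pullback established in Section~\ref{section:arithmetic Sen endomorphism} (using the functoriality of $\phi_W$ from Proposition~\ref{prop:Sen endom}), we have $i_x^\ast(\calH(\bL),\phi_{\bL})\cong(\calH(\bL_{\overline x}),\phi_{\bL_{\overline x}})$, where $\bL_{\overline x}=i_x^\ast\bL$ is the $\Q_p$-representation of $\Gal(\overline{k(x)}/k(x))$ attached to $x$. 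Since $k(x)\cdot k_\infty=k(x)(\mu_{p^\infty})$, the relative decompletion theory over $k_\infty$ specialises at the point $\Spa(k(x))$ to Sen's theory over the cyclotomic extension of $k(x)$; hence, by the point case recalled in Section~\ref{section:arithmetic Sen endomorphism} ($\phi_{\bL}$ coincides with the Sen endomorphism when $X$ is a point), the eigenvalues of $\phi_{\bL_{\overline x}}$ are exactly the generalized Hodge-Tate weights of $\bL_{\overline x}$. Now $\Spa(k(x),\calO_{k(x)})_K=\Spa(k(x)\hat\otimes_kK)$ is a nonempty finite disjoint union of $\Spa$'s of finite extensions of $K$, all of which map to $x$ in $X$; picking any such point $\tilde x$, the fibre of $(\calH(\bL),\phi_{\bL})$ at $\tilde x$ is obtained from $(\calH(\bL_{\overline x}),\phi_{\bL_{\overline x}})$ by a field extension, which does not change the multiset of eigenvalues because these are algebraic over $k$. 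Thus the generalized Hodge-Tate weights of $\bL_{\overline x}$ equal the eigenvalues of $\phi_{\bL,\tilde x}$, which equal $S$ by the first paragraph; in particular they are independent of $x$. The ``in particular'' assertion is then immediate: being presque Hodge-Tate is precisely the condition that $S\subset\Z$, which is a property of the single multiset $S$.

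The step requiring the most care is the identification in the second paragraph, namely checking that the arithmetic Sen endomorphism — constructed from the relative decompletion theory over the completed cyclotomic extension of the \emph{fixed} ground field $k$ — restricts at a classical point $x$ to the usual Sen operator of $\bL_{\overline x}$ regarded over $k(x)$, together with the bookkeeping of how the finitely many points of $X_K$ above $x$ enter. Both are routine once one invokes the equality $k(x)\cdot k_\infty=k(x)(\mu_{p^\infty})$ and the pullback compatibility already proved, so I do not expect a genuine new difficulty here; the real content of the corollary lies entirely in Theorem~\ref{thm:constancy of eigenvalues of arithmetic Sen operator}.
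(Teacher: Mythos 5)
Your proposal is correct and follows essentially the same route as the paper: the paper deduces the corollary directly from Theorem~\ref{thm:constancy of eigenvalues of arithmetic Sen operator}, implicitly using the pullback compatibility of $(\calH(\bL),\vartheta_{\bL},\phi_{\bL})$ and the proposition that $\phi_{\bL}$ is the classical Sen endomorphism when $X$ is a point, which is exactly the reduction you spell out. Your extra care about the points of $X_K$ above $x$ and the equality $k(x)\cdot k_\infty=k(x)(\mu_{p^\infty})$ just makes explicit what the paper leaves to the reader.
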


\begin{proof}
This follows from Theorem~\ref{thm:constancy of eigenvalues of arithmetic Sen operator}.
\end{proof}

\section{Applications and related topics}
\label{section:applications}
We study properties of Hodge-Tate sheaves
using the arithmetic Sen endomorphism.
We keep the notation in Section~\ref{section:arithmetic Sen endomorphism}.

Consider the Hodge-Tate period sheaf on $X_{\proet}$
\[
 \OB_{\HT}:=\gr^\bullet \OB_{\dR}=\bigoplus_{j\in\Z} \OC(j).
\]
For a $\Q_p$-local system $\bL$ on $X_{\et}$, we define
a sheaf $D_{\HT}(\bL)$ on $X_{\et}$ by
\[
 D_{\HT}(\bL):= \nu_\ast (\hat{\bL}\otimes_{\hat{\Q}_p}\OB_{\HT}).
\]

\begin{prop}
The sheaf $D_{\HT}(\bL)$ is a coherent $\calO_{X_{\et}}$-module.
Moreover, for every affinoid $Y\in X_{\et}$,
\[
 \Gamma(Y,D_{\HT}(\bL))
=\bigoplus_{j\in\Z}H^0(\Gamma_k,\calH(\bL)(j)).
\]
\end{prop}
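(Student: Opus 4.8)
The plan is to unwind the definition of $D_{\HT}(\bL)$ in terms of the already-understood sheaf $\calH(\bL)$, reduce the computation of $\nu_\ast$ to a computation over the pro-étale Galois cover used in Liu–Zhu's analysis, and then apply the decompletion theory of Section~\ref{section:decompletion}. First I would restrict to an affinoid $Y=\Spa(B,B^+)\in\calB_{\bL}$ admitting a toric chart, so that $\calH(\bL)(Y_K)$ is finite free over $B_K$; since coherence and the stated formula are local on $X_{\et}$ and such $Y$ form a base, this suffices. Next, using $\OB_{\HT}=\bigoplus_j\OC(j)$ and the fact that $\nu_\ast$ commutes with the direct sum here (the grading is locally finite in the relevant range because $\hat\bL\otimes\OC$ has a bounded filtration coming from the polynomial structure $\OC|_{\tilde Y_{K,\infty}}\cong \hatO_X[V_1,\dots,V_n]$), I would write
\[
 \Gamma(Y,D_{\HT}(\bL))=\bigoplus_{j\in\Z}\nu_\ast(\hat\bL\otimes_{\hat\Q_p}\OC(j))(Y).
\]

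The key step is then to identify $\nu_\ast(\hat\bL\otimes\OC(j))(Y)$ with $H^0(\Gamma_k,\calH(\bL)(j))$. Recall that by definition $\calH(\bL)(j)=\nu'_\ast(\hat\bL\otimes\OC(j))$ is the pushforward to $(X_K)_{\et}$, i.e.\ it remembers the $\Gamma_k=\Gal(k_\infty/k)$-action, whereas $\nu_\ast$ is the pushforward all the way down to $X_{\et}$. Concretely, for $Y\in\calB_{\bL}$ one has $X_{\proet}/Y = X_{\proet}/Y_K$ together with the $\Gamma_k$-action coming from the cover $Y_K\to Y$ (more precisely, from the Galois cover $\tilde Y_{K,\infty}\to Y$ with group $\Gamma$ fitting in $1\to\Gamma_{\geom}\to\Gamma\to\Gamma_k\to1$). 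The Cartan–Leray / Hochschild–Serre spectral sequence for the tower $\tilde Y_{K,\infty}\to Y_K\to Y$ gives $H^0(X_{\proet}/Y,\hat\bL\otimes\OC(j)) = H^0(\Gamma_k,H^0(X_{\proet}/Y_K,\hat\bL\otimes\OC(j))) = H^0(\Gamma_k,\calH(\bL)(Y_K)(j))$, which is exactly $H^0(\Gamma_k,\calH(\bL)(j))(Y)$. This yields the displayed formula.

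It remains to deduce coherence of $D_{\HT}(\bL)$ as an $\calO_{X_{\et}}$-module. Here I would argue that $H^0(\Gamma_k,\calH(\bL)(j))$ vanishes for all but finitely many $j$ on a fixed connected affinoid, so the direct sum is finite; for the nonvanishing $j$, $H^0(\Gamma_k,\calH(\bL)(j)(Y_K))$ is a finitely generated $B$-module. The finiteness over $j$ follows from Theorem~\ref{thm:constancy of eigenvalues of arithmetic Sen operator}: a $\Gamma_k$-invariant section of $\calH(\bL)(j)$ is in particular killed by the Sen endomorphism of $\calH(\bL)(j)$, which is $\phi_{\bL}+j$, so $H^0(\Gamma_k,\calH(\bL)(j))\neq0$ forces $-j$ to be an eigenvalue of $\phi_{\bL}$; by constancy there are only finitely many such $j$. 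For finite generation of each summand: $\calH(\bL)(Y_K)(j)$ is a finite free $B_K$-representation of $\Gamma_k$, its $\Gamma_k$-finite part $\calH(\bL)(Y_K)(j)_{\fin}$ is finite free over $B_\infty$ by Theorem~\ref{thm:Sen decompletion}, and $H^0(\Gamma_k,-)=H^0(\Gamma_k,(-)_{\fin})$ (the invariants only see the finite part, since $B_\infty\to B_K$ is faithfully flat, cf.\ Proposition~\ref{prop:fully faithfulness}); then $H^0(\Gamma_k,\calH(\bL)(Y_K)(j)_{\fin})$ is the kernel of the Sen operator $\phi_{\bL}+j$ acting $B$-linearly on a finite $B_\infty$-module descended to some finite level $B_{k_m}$, hence is a finite $B$-module. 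Sheafifying over the base $\calB_{\bL}$ gives the coherent $\calO_{X_{\et}}$-module $D_{\HT}(\bL)$, and the compatibility with restriction follows from the base-change compatibility of $\calH(\bL)$ and of the decompletion functor already recorded in Proposition~\ref{prop:def of M_K(Y)}(P1) and the Lemma–Definition preceding Proposition~\ref{prop:commutativity of Higgs and Sen}.

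The main obstacle I anticipate is the careful bookkeeping of the two pushforwards $\nu$ and $\nu'$ and the resulting Hochschild–Serre argument: one must check that the higher cohomology of $\Gamma_k$ does not interfere with the $H^0$ computation at the level needed, and that $\nu_\ast$ genuinely commutes with the $\bigoplus_j$ (equivalently, that on each affinoid only finitely many graded pieces contribute), which is precisely where the constancy theorem is invoked. Everything else is a routine descent-and-decompletion argument building on the machinery already set up.
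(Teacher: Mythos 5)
Your overall route is the intended one: the paper's own proof is simply an appeal to the proof of \cite[Theorem 3.9(i)]{LZ}, and that argument is what you spell out --- identify $\Gamma(Y,D_{\HT}(\bL))$ with $\bigoplus_j H^0(\Gamma_k,\calH(\bL)(j)(Y_K))$ by descent along the pro-\'etale cover $Y_K\to Y$, then control the invariants by decompletion and the Sen operator. Two of your justifications are off but harmless: for $H^0$ the identification is just the sheaf condition for the cover $Y_K\to Y$ (no Hochschild--Serre issue can arise), and the interchange of $\nu_\ast$ with $\bigoplus_j$ follows from the fact that pro-\'etale cohomology of a quasi-compact object commutes with filtered colimits, not from boundedness of the filtration on $\hat{\bL}\otimes\OC$; in particular the displayed formula holds for every affinoid $Y$, the toric base $\calB_{\bL}$ being needed only for the coherence statement.

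The genuine gap is the final sentence of your coherence argument. The module $H^0(\Gamma_k,\calH(\bL)(Y_K)(j)_{\fin})$ is not ``the kernel of the Sen operator'': it is only \emph{contained} in $\Ker(\phi_{V(j)})$, and that kernel is a $B_{\infty}$-module --- by flatness it equals $\Ker(\phi_m)\otimes_{B_{k_m}}B_{\infty}$ for a finite-level model $\phi_m$ on $V_m$ --- which is finite over $B_{\infty}$ but not over $B$ (already for the trivial representation the kernel is $B_{\infty}$ while the invariants are $B$). So ``hence is a finite $B$-module'' does not follow from what you wrote. The missing step is finite-level descent of the invariants: after enlarging $m$, an open subgroup $\Gal(k_\infty/k_{m'})$ acts trivially on the finite $B_{k_m}$-module $\Ker(\phi_m)$ (its elements are fixed by $\exp(\log\chi(\gamma)\phi)=\id$ for $\gamma$ in an open subgroup), so the invariants of $\Ker(\phi_m)\otimes_{B_{k_m}}B_{\infty}$ under that subgroup are $\Ker(\phi_m)\otimes_{B_{k_m}}B_{k_{m'}}$, which is finite over $B$; taking invariants under the finite quotient $\Gal(k_{m'}/k)$ then gives a submodule of a finite module over the Noetherian ring $B$, hence a finite $B$-module. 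This is exactly the maneuver the paper performs later, in the proof of Proposition~\ref{prop:DHT and kernel of Sen}, via \cite[Proposition 2.2.1]{Berger-Colmez}. With that step inserted (your appeal to Theorem~\ref{thm:constancy of eigenvalues of arithmetic Sen operator} to bound the set of contributing $j$ is legitimate, since it precedes this proposition), the proof closes.
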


\begin{proof}
 This follows from the proof of \cite[Theorem 3.9 (i)]{LZ}.
\end{proof}

\begin{rem}
In \cite[Theorem 8.6.2 (a)]{KL-II}.
Kedlaya and Liu proved this statement for pseudocoherent modules
over a pro-coherent analytic field.
\end{rem}

We are going to study the relation between $D_{\HT}(\bL)$ and 
$\phi_{\bL}\in \End \calH(\bL)$.
For each $j\in\Z$, we set
\[
 \calH(\bL)^{\phi_{\bL}=j}:=\Ker 
\bigl(\phi_{\bL}-j\id\colon \calH(\bL)\ra \calH(\bL)\bigr).
\]
This is a coherent $\calO_{X_{K,\et}}$-module.
We denote by $D_{\HT}(\bL)|_{X_K}$ the coherent $\calO_{X_K,\et}$-module
associated to the pullback of $D_{\HT}(\bL)$ on $X$ to $X_K$ as coherent sheaves.

\begin{prop}\label{prop:DHT and kernel of Sen}
Let $\bL$ be a $\Q_p$-local system of rank $r$ on $X_{\et}$.
 Assume that $\bL$ satisfies one of the following conditions:
\begin{enumerate}
 \item $\calH(\bL)^{\phi_{\bL}=j}$ is a vector bundle
on $X_{K,\et}$ for each $j\in \Z$.
 \item $D_{\HT}(\bL)$ is a vector bundle of rank $r$ on $X_{\et}$.
\end{enumerate}
Then we have
\[
 D_{\HT}(\bL)|_{X_{K,\et}}\cong
\bigoplus_{j\in\Z}\calH(\bL(j))^{\phi_{\bL(j)}=0}.
\]
Moreover, this is isomorphic to 
$\bigoplus_{j\in\Z}\calH(\bL)^{\phi_{\bL}=j}$.
In particular, 
$D_{\HT}(\bL)$ is a vector bundle on $X_{\et}$
and $\bigoplus_{j\in\Z}\calH(\bL)^{\phi_{\bL}=j}$
is a vector bundle on $X_{K,\et}$.
\end{prop}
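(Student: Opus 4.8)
The plan is to verify the asserted identity of coherent $\calO_{X_K,\et}$-modules on the base of $(X_K)_{\et}$ given by the $Y_K$ with $(Y=\Spa(B,B^+)\to X_{k'})\in\calB_{\bL}$, refining $\calB_{\bL}$ if necessary so that each $Y_K$ is connected and every sheaf appearing below is globally free on it. Fix such a $Y$, put $\Gamma':=\Gal(k_\infty/k')$ and $V:=\calH(\bL)(Y_K)$, a finite free $B_K$-representation of $\Gamma'$ with Sen endomorphism $\phi_V=\phi_{\bL}(Y_K)$; Theorem~\ref{thm:Sen decompletion} gives $V=V_{\fin}\otimes_{B_\infty}B_K$ with $V_{\fin}$ finite free over $B_\infty$. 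For $j\in\Z$ write $V_j:=\calH(\bL(j))(Y_K)$; this is $V$ with the $\Gamma'$-action twisted by $\chi^j$, so $\phi_{V_j}=\phi_V+j$. The preceding proposition (in its $k'$-relative form) identifies $\Gamma(Y_K,D_{\HT}(\bL)|_{X_K})$ with $\bigoplus_j V_j^{\Gamma'}\otimes_B B_K$, a finite sum because $D_{\HT}(\bL)$ is coherent. Hence it suffices to prove that for each $j$ the tautological map $V_j^{\Gamma'}\otimes_B B_K\to V_j^{\phi_{V_j}=0}$ — well defined because a $\Gamma'$-invariant vector is killed by the Sen endomorphism — is an isomorphism.

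The tool is a decompletion-plus-Galois-descent argument. For any finite free $B_K$-representation $W$ of $\Gamma'$ one has $W^{\Gamma'}\subseteq W_{\fin}$, since an invariant vector spans a finitely generated $\Gamma'$-stable $B$-submodule; as $B\to B_\infty$ is flat, $W^{\Gamma'}\otimes_B B_K\to W$ is then injective, which already gives injectivity of the map above. If moreover $\phi_W=0$, then by the characterizing property of the Sen endomorphism (Proposition~\ref{prop:Sen endom}) each vector of $W_{\fin}$ is fixed by an open subgroup of $\Gamma'$, so $W_{\fin}$ is a finite free $B_\infty$-module with a smooth semilinear $\Gamma'$-action, and Galois descent along the tower $B\subseteq B_{k_m}\subseteq B_\infty$ yields $W_{\fin}=W_{\fin}^{\Gamma'}\otimes_B B_\infty$, hence $W=W^{\Gamma'}\otimes_B B_K$. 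I would apply this with $W=V_j^{\phi_{V_j}=0}$: its Sen-finite part lies in $(V_j)_{\fin}$, so $\phi_W=0$, and under hypothesis (1) this $W$ is the module of sections over $Y_K$ of the vector bundle $\calH(\bL(j))^{\phi_{\bL(j)}=0}$, hence finite free over $B_K$. The engine then produces $V_j^{\phi_{V_j}=0}=W^{\Gamma'}\otimes_B B_K=V_j^{\Gamma'}\otimes_B B_K$ (using $V_j^{\Gamma'}\subseteq W$), the desired isomorphism. Reassembling, $D_{\HT}(\bL)|_{X_K}\cong\bigoplus_j\calH(\bL(j))^{\phi_{\bL(j)}=0}$; twisting by the trivial line bundles $\calO_{X_K}(j)$ rewrites the right-hand side as $\bigoplus_j\calH(\bL)^{\phi_{\bL}=j}$, which under (1) is a finite sum of vector bundles, hence a vector bundle on $(X_K)_{\et}$; and $D_{\HT}(\bL)$ is then a vector bundle on $X_{\et}$ by descent along the pro-finite-\'etale Galois cover $X_K\to X$.

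It remains to deduce (1) from (2). Suppose $D_{\HT}(\bL)$ is a vector bundle of rank $r$. Over a connected $Y_K$ as above, $\bigoplus_j V_j^{\Gamma'}$ is finite projective of rank $r$ over $B$, and the injection $V_j^{\Gamma'}\otimes_B B_K\hookrightarrow V_j^{\phi_{V_j}=0}=\Ker(\phi_V+j)$ gives
\[
 r=\sum_j\rank_B V_j^{\Gamma'}\le\sum_j\rank_{B_K}\Ker(\phi_V+j)\le\rank_{B_K}V=r,
\]
the last inequality because the eigen-submodules of $V$ for the distinct integers $-j$ lie in direct sum (their differences are units in the $\Q_p$-algebra $B_K$); the same count forbids a non-integer eigenvalue of $\phi_V$, since the corresponding eigenspace would lie in direct sum with all the $\Ker(\phi_V+j)$ and break the equality. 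Thus $\phi_V$ is semisimple with integer eigenvalues over $\Frac B_K$, and since $V$ is torsion-free this forces $\prod_m(\phi_V-m)=0$ on $V$, the product over the finitely many (integer) eigenvalues $m$; hence $V=\bigoplus_m\Ker(\phi_V-m)$ with each summand a direct summand of $V$, that is, $\calH(\bL)=\bigoplus_j\calH(\bL)^{\phi_{\bL}=j}$ with each $\calH(\bL)^{\phi_{\bL}=j}$ a vector bundle on $(X_K)_{\et}$. This is hypothesis (1), and the previous paragraph finishes the proof.

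The step I expect to be delicate is the reduction (2)$\Rightarrow$(1): extracting full semisimplicity and integrality of the Hodge--Tate weights from the single numerical hypothesis that $D_{\HT}(\bL)$ attains the maximal rank $r$. This rests on carefully matching ranks across the three incarnations of the data — $D_{\HT}(\bL)$ on $X_{\et}$, its restriction to $X_K$, and the eigenspace decomposition of $\phi_{\bL}$ — and on using coherence of $D_{\HT}(\bL)$ to keep all direct sums finite. The Galois-descent input of the second paragraph is the other point demanding care, but it becomes routine once the relevant eigenspace $V_j^{\phi_{V_j}=0}$ is known to be a vector bundle, which is exactly where hypothesis (1), or its consequence under (2), enters.
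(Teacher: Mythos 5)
Your treatment of the main isomorphism under hypothesis (i) is essentially the paper's own proof: decompleting $W=V_j^{\phi_{V_j}=0}$, noting that the vanishing of the Sen endomorphism forces every vector of $W_{\fin}$ to be fixed by an open subgroup of $\Gamma'$, descending to a finite level $B_{k_m}$, and then applying finite Galois descent (the paper invokes \cite[Proposition 2.2.1]{Berger-Colmez} for exactly this step, after the same basis-and-matrix bookkeeping you are implicitly assuming). The deduction of (i) from (ii) by a rank count is also the paper's strategy, though you run it rationally over $\Frac B_K$ and use torsion-freeness to get $\prod_m(\phi_V-m)=0$, whereas the paper compares the eigenspaces $V_{\fin}^{\phi_V=-j}$ with the generalized eigenspaces $V_{\fin}^{(-j)}$ inside $V_{\fin}$, using the constancy theorem to know the latter are direct summands.

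The genuine gap is your justification of the injectivity of $V_j^{\Gamma'}\otimes_B B_K\to V_j^{\phi_{V_j}=0}$. You derive it from flatness of $B\to B_\infty$, but flatness only yields injectivity of $W^{\Gamma'}\otimes_B B_\infty\to W_{\fin}\otimes_B B_\infty$; the map you actually need lands in $W_{\fin}$ itself, i.e.\ it is the composite with the multiplication map $W_{\fin}\otimes_B B_\infty\to W_{\fin}$, which is far from injective. What is really required is that elements of $W^{\Gamma'}$ linearly independent over $B$ remain linearly independent over $B_\infty$ (equivalently over $B_K$), and this is not formal: the paper isolates it as Lemma~\ref{lem:Gamma-inv is inj}(ii) and proves it by passing to total fraction rings and running a minimal-length-relation argument resting on $(\Frac B_\infty)^{\Gamma_k}=\Frac B$. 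In the hypothesis-(i) case this omission is harmless, because your descent isomorphism $W=W^{\Gamma'}\otimes_B B_K$ subsumes the injectivity; but your reduction of (ii) to (i) hinges on the chain $r=\sum_j\rank_B V_j^{\Gamma'}\le\sum_j\rank_{B_K}\Ker(\phi_V+j)\le r$, whose first inequality rests entirely on that injection (even after tensoring to the total fraction ring), so as written that step does not go through. Supply the lemma — or reprove it — and the rest of your argument assembles into a correct proof along the same lines as the paper's.
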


\begin{proof}
The statement is local. 
So it suffices to prove that
for each affinoid $Y=\Spa(B,B^+)\in X_{\et}$
such that $\calH(\bL)|_{Y_K}$ is 
associated to a finite free $B_K$-module (say $V$),
we have
\[
 \Gamma(Y,D_{\HT}(\bL))\hat{\otimes}_B B_K
 \cong \bigoplus_{j\in\Z}V(j)^{\phi_{V(j)}=0}.
\]

 Note $\Gamma(Y,D_{\HT}(\bL))=\bigoplus_{j\in\Z} \bigl(V(j)\bigr)^{\Gamma_k}$.
Moreover, it follows from the Tate-Sen method (\cite[Lemma 3.10]{LZ})
that
\[
 \bigl(V_{\fin}(j)\bigr)^{\Gamma_k}
\stackrel{\cong}{\lra} \bigl(V(j)\bigr)^{\Gamma_k}.
\]

\begin{lem}\label{lem:Gamma-inv is inj}
\hfill
\begin{enumerate}
 \item 
$\bigl(V_{\fin}^{\phi_V=0}\bigr)\otimes_{B_{\infty}}B_K\cong V^{\phi_V=0}$.
 \item
The natural map
\[
 (V_{\fin}^{\Gamma_k})\otimes_B B_{\infty} \ra V_{\fin}^{\phi_V=0}
\]
is injective.
\end{enumerate}
\end{lem}

\begin{proof}
 Part (i) follows from the flatness of $B_{\infty}\ra B_K$
and $V_{\fin}\otimes_{B_{\infty}}B_K\cong V$.

We prove part (ii).
By the definition of $\phi_V$, the natural map
\[
  (V_{\fin}^{\Gamma_k})\otimes_B B_{\infty} \ra V_{\fin}
\]
factors through $V_{\fin}^{\phi_V=0}$. 
So we show that the above map is injective.

We denote the total fraction ring of $B$ (resp.~$B_{\infty}$)
by $\Frac B$ (resp.~$\Frac B_{\infty}$).
We first claim that the natural map
\[
 V_{\fin}^{\Gamma_k} \ra V_{\fin}^{\Gamma_k}\otimes_B \Frac B
\]
is injective.
To see this, note that $V_{\fin}$ is a finite free $B_{\infty}$-module.
Hence the composite
\[
 V_{\fin}\ra V_{\fin}\otimes_B \Frac B
= V_{\fin}\otimes_{B_{\infty}}(B_{\infty}\otimes_B \Frac B)
\ra V_{\fin}\otimes_{B_{\infty}}\Frac B_{\infty}
\]
is injective, and thus so is the first map.
Since the composite
\[
 V_{\fin}^{\Gamma_k}\ra V_{\fin}^{\Gamma_k}\otimes_B \Frac B
\ra V_{\fin}\otimes_B \Frac B
\]
coincides with the composite of injective maps
$V_{\fin}^{\Gamma_k}\ra V_{\fin}$ and $V_{\fin}\ra V_{\fin}\otimes_B \Frac B$,
the map $V_{\fin}^{\Gamma_k} \ra V_{\fin}^{\Gamma_k}\otimes_B \Frac B$ is also injective.

By the above claim, it suffices to show the injectivity of the natural map
\[
 (V_{\fin}^{\Gamma_k}\otimes_B \Frac B) \otimes_{\Frac B} \Frac B_{\infty}
\ra V_{\fin}\otimes_{B_{\infty}} \Frac B_{\infty}.
\]
Now that $\Frac B$ and $\Frac B_{\infty}$ are products of fields,
this follows from standard arguments;
we may assume that $\Frac B$ is a field. Replacing $k$ by an algebraic closure in $\Frac B$, we may further assume that $\Frac B_{\infty}$ is also a field.
Note that $\Frac B_{\infty}=(\Frac B)\otimes_k k_\infty$ and thus
$(\Frac B_{\infty})^{\Gamma_k}=\Frac B$.

Assume the contrary. Take $v_1,\ldots,v_a \in V_{\fin}^{\Gamma_k}\otimes_B \Frac B$ and  $b_1,\ldots, b_a\in \Frac B_{\infty}$
such that $v_1,\ldots,v_a$ are linearly independent over $\Frac B$
and $b_1 v_1+\cdots+b_a v_a=0$.
We may assume that they are of minimum length.
Then $b_1\neq 0$ and thus replacing $b_i$ by $b_1^{-1}b_i$ 
we may further assume $b_1=1$. It is obvious when $a=1$, so we assume $a>1$.
Take any $\gamma\in \Gamma_k$. As $v_1,\ldots,v_a \in V_{\fin}^{\Gamma_k}\otimes_B \Frac B$, we have
$v_1+\gamma(b_2)v_2+\cdots \gamma(b_a)v_a=0$ and thus
$(\gamma(b_2)-b_2)v_2+\cdots +(\gamma(b_a)-b_a)v_a=0$.
By the minimality, we have $\gamma(b_i)=b_i$
for each $2\leq i\leq a$ and $\gamma\in \Gamma_k$.
Therefore we have $b_i\in \Frac B$ for all $i$, which contradicts
the linear independence of $v_1,\ldots,v_a$ over $\Frac B$.
\end{proof}

We continue the proof of Proposition~\ref{prop:DHT and kernel of Sen}.
By Lemma~\ref{lem:Gamma-inv is inj} and discussions above,
it is enough to show 
$V_{\fin}^{\Gamma_k}\otimes_B B_{\infty}\cong V_{\fin}^{\phi_V=0}$
assuming either condition (i) or (ii).
In fact, the Tate twist of this isomorphism implies 
$(V(j))^{\Gamma_k}\hat{\otimes}_B B_K\cong (V(j))^{\phi_{V(j)}=0}$,
and a choice of a generator of $\calO_{X_{K,\et}}(j)$
yields $\calH(\bL(j))^{\phi_{\bL(j)}=0}\cong \calH(\bL)^{\phi_{\bL}=-j}$.

We show that condition (ii) implies condition (i).
For each $j\in\Z$, let $V_{\fin}^{(j)}$
denote the generalized eigenspace of $\phi_V$ on $V_{\fin}$
with eigenvalue $j$.
By the constancy of $\phi_V$, 
$V_{\fin}^{(j)}$ is a direct summand of $V_{\fin}$
and thus a finite projective $B_{\infty}$-module.
By Lemma~\ref{lem:Gamma-inv is inj}(ii), we have injective $B_{\infty}$-linear maps
\[
 \bigl(V(j)_{\fin}\bigr)^{\Gamma_k}\otimes_B B_{\infty}
\hra \bigl(V(j)_{\fin}\bigr)^{\phi_{V(j)}=0}
\cong V_{\fin}^{\phi_V=-j}
\hra V_{\fin}^{(-j)}
\]
for each $j\in \Z$. From this we obtain
\[
 \rank D_{HT}(\bL)=\sum_{j\in\Z}\rank_B \bigl(V_{\fin}(j)\bigr)^{\Gamma_k}
\leq \sum_{j\in\Z} \rank_{B_{\infty}}V_{\fin}^{(-j)}
\leq \rank \calH(\bL)=r.
\]
Hence it follows from condition (ii) that
$\bigl(V_{\fin}(j)\bigr)^{\Gamma_k}\otimes_B B_{\infty}$
and $V_{\fin}^{(-j)}$ are finite projective $B_{\infty}$-modules of the same rank.
This implies $V_{\fin}^{\phi_V=-j}=V_{\fin}^{(-j)}$. 
So $V_{\fin}^{\phi_V=-j}$ is a finite projective $B_{\infty}$-module for every $j\in \Z$ and thus $\calH(\bL)$ satisfies condition (i).

From now on, we assume that $\calH(\bL)$ satisfies condition (i).
By condition (i) and Lemma~\ref{lem:Gamma-inv is inj}(i), 
$V_{\fin}^{\phi_V=0}$ is finite projective over $B_{\infty}$.
So shrinking $Y$ if necessary, we may assume that $V_{\fin}^{\phi_V=0}$ is finite free over $B_{\infty}$.
Note that we only concern the $B_{\infty}$-representation $V_{\fin}$ of $\Gamma_k$ and we have $(V_{\fin}^{\phi_V=0})^{\Gamma_k}=V_{\fin}^{\Gamma_k}$.
Thus replacing $V_{\fin}$ by the subrepresentation $V_{\fin}^{\phi_V=0}$,
we may further assume $\phi_V=0$ on $V_{\fin}$.
Under this assumption, it remains to prove
$V_{\fin}^{\Gamma_k}\otimes_B B_{\infty}\cong V_{\fin}$.

Fix a $B_{\infty}$-basis $v_1,\ldots, v_r$ of $V_{\fin}$.
Then there exists a large positive integer $m$
such that for each $\gamma\in \Gamma_k$ the matrix of $\gamma$ 
with respect to $(v_i)$ has entries in $\GL_r(B_{k_m})$.
Since $\phi_V=0$, by increasing $m$ if necessary,
we may further assume that 
$\gamma v_i=v_i$ for each $1\leq i\leq r$
and $\gamma\in \Gamma_k':=\Gal(k_\infty/k_m)\subset \Gamma_k$.
Set $V_{k_m}:=\bigoplus_{1\leq i\leq r}B_{k_m}v_i$.
This is a $B_{k_m}$-representation of $\Gamma_k/\Gamma_k'=\Gal(k_m/k)$
and satisfies $V_{\fin}=V_{k_m}\otimes_{B_{k_m}}B_{\infty}$.

It follows from \cite[Proposition 2.2.1]{Berger-Colmez}
that $(V_{k_m})^{\Gamma_k/\Gamma_k'}$ is a finite projective $B$-module
and that $(V_{k_m})^{\Gamma_k/\Gamma_k'}\otimes_B B_{k_m}\cong V_{k_m}$.
As $V_{\fin}^{\Gamma_k}=(V_{k_m})^{\Gamma_k/\Gamma_k'}$,
this yields
\[
 V_{\fin}^{\Gamma_k}\otimes_B B_{\infty}\cong V_{\fin}.
\]
\end{proof}

\begin{thm}\label{thm:def of Hodge-Tate sheaves}
Let $\bL$ be a $\Q_p$-local system of rank $r$ on $X_{\et}$.
Then the following conditions are equivalent:
\begin{enumerate}
 \item $D_{\HT}(\bL)$ is a vector bundle of rank $r$ on $X_{\et}$.
 \item 
$\nu^\ast D_{\HT}(\bL)\otimes_{\calO_X}\OB_{\HT}
\cong \hat{\bL}\otimes_{\hat{\Q}_p}\OB_{\HT}$.
 \item $\phi_{\bL}$ is a semisimple endomorphism on $\calH(\bL)$
with integer eigenvalues.
 \item There exist integers $j_1<\ldots <j_a$ such that
if we set $F(s):=\prod_{1\leq i\leq a}(s-j_i)\in \Z[s]$, then
\[
 F(\phi_{\bL})=0
\]
as endomorphism of $\calH(\bL)$.
\end{enumerate}
\end{thm}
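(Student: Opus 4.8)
The plan is to establish the four equivalences in three pieces: (iii)$\Leftrightarrow$(iv) by linear algebra together with the constancy theorem, (i)$\Leftrightarrow$(iii) by reading off the proof of Proposition~\ref{prop:DHT and kernel of Sen}, and (i)$\Leftrightarrow$(ii) by a local computation on affinoid perfectoids. For (iii)$\Leftrightarrow$(iv): by Theorem~\ref{thm:constancy of eigenvalues of arithmetic Sen operator} the eigenvalues of $\phi_{\bL}$ are constant on connected components of $X_K$, so if they are integers we may enumerate all of them as $j_1<\cdots<j_a$ and put $F(s)=\prod_i(s-j_i)$. If $F(\phi_{\bL})=0$, Lagrange interpolation gives orthogonal idempotents $e_\ell=\prod_{i\ne\ell}(\phi_{\bL}-j_i)/(j_\ell-j_i)$ summing to the identity (the denominators are nonzero integers, hence invertible), so $\calH(\bL)=\bigoplus_\ell\calH(\bL)^{\phi_{\bL}=j_\ell}$ with each summand the image of an idempotent, hence a subbundle, and $\phi_{\bL}$ is semisimple with integer eigenvalues; conversely, if $\phi_{\bL}$ is semisimple with integer eigenvalues then by constancy $\calH(\bL)=\bigoplus_j\calH(\bL)^{\phi_{\bL}=j}$ is a \emph{finite} direct sum of eigen-subbundles over integers and $\prod_i(\phi_{\bL}-j_i)$ annihilates it. Thus (iii) and (iv) both amount to the statement that $\calH(\bL)=\bigoplus_{j\in\Z}\calH(\bL)^{\phi_{\bL}=j}$ with each summand a subbundle.

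For (i)$\Leftrightarrow$(iii): if (i) holds, then $\bL$ satisfies hypothesis~(ii) of Proposition~\ref{prop:DHT and kernel of Sen}, and the rank computation in its proof (the passage showing that condition~(ii) there implies condition~(i) there) shows that on each affinoid $Y=\Spa(B,B^+)$ with $\calH(\bL)|_{Y_K}$ associated to a finite free $B_K$-module $V$, the generalized $\phi_V$-eigenspaces $V_{\fin}^{(j)}$ coincide with the genuine eigenspaces $V_{\fin}^{\phi_V=j}$ and exhaust $V_{\fin}$; this is precisely (iii). Conversely, if (iii) holds, then by the previous paragraph each $\calH(\bL)^{\phi_{\bL}=j}$ is a subbundle, so $\bL$ satisfies hypothesis~(i) of Proposition~\ref{prop:DHT and kernel of Sen}; that proposition then yields both that $D_{\HT}(\bL)$ is a vector bundle on $X_{\et}$ and that $D_{\HT}(\bL)|_{X_{K}}\cong\bigoplus_j\calH(\bL)^{\phi_{\bL}=j}=\calH(\bL)$, which has rank $r$, and hence $D_{\HT}(\bL)$ has rank $r$ because $X_K\to X$ is surjective. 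So (i) holds.

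For (i)$\Leftrightarrow$(ii): the counit of $(\nu^\ast,\nu_\ast)$, combined with the $\OB_{\HT}$-module structure on $\hat{\bL}\otimes\OB_{\HT}$, furnishes a canonical map
\[
 c\colon \nu^\ast D_{\HT}(\bL)\otimes_{\calO_X}\OB_{\HT}\ra \hat{\bL}\otimes_{\hat{\Q}_p}\OB_{\HT},
\]
and (ii) asks that $c$ be an isomorphism. One tests this after restriction to the affinoid perfectoids $\tilde{Y}_{K,\infty}$, with $Y=\Spa(B,B^+)$ running over a basis on which $M_K(Y):=\calH(\bL)(Y_K)$ is finite free over $B_K$. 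There $(\hat{\bL}\otimes\OB_{\HT})(\tilde{Y}_{K,\infty})\cong M_K(Y)\otimes_{B_K}\OB_{\HT}(\tilde{Y}_{K,\infty})$ is free of rank $r$ over $\OB_{\HT}(\tilde{Y}_{K,\infty})=\OC(\tilde{Y}_{K,\infty})[\tau^{\pm 1}]$, where $\tau$ is the invertible symbol of $t$ in $\gr^\bullet\OB_{\dR}$, while the source of $c$ evaluates to $D_{\HT}(\bL)(Y)\otimes_B\OB_{\HT}(\tilde{Y}_{K,\infty})$. If $c$ is an isomorphism, then $D_{\HT}(\bL)(Y)\otimes_B\OB_{\HT}(\tilde{Y}_{K,\infty})$ is free of rank $r$; since $B\to\hat{B}_{K,\infty}\to\OB_{\HT}(\tilde{Y}_{K,\infty})$ is faithfully flat and $D_{\HT}(\bL)(Y)$ is a finite $B$-module, faithfully flat descent forces $D_{\HT}(\bL)(Y)$ to be finite projective of rank $r$, giving (i). For the converse, assuming (i) and hence (iii) we have $\calH(\bL)=\bigoplus_j\calH(\bL)^{\phi_{\bL}=j}$; using $(V_{\fin}(j))^{\Gamma_k}\otimes_B B_\infty\cong V_{\fin}^{\phi_V=-j}(j)$ together with the vanishing of the $\Gamma_k$-invariants of the other twisted eigenspaces (as in Proposition~\ref{prop:DHT and kernel of Sen} and Lemma~\ref{lem:Gamma-inv is inj}), one identifies $D_{\HT}(\bL)(Y)\otimes_B B_K\cong\bigoplus_j V_{\fin}^{\phi_V=-j}(j)$; tensoring up to $\OB_{\HT}(\tilde{Y}_{K,\infty})$ absorbs the Tate twists into powers of the unit $\tau$ and produces $\bigl(\bigoplus_j V_{\fin}^{\phi_V=-j}\bigr)\otimes_{B_K}\OB_{\HT}(\tilde{Y}_{K,\infty})=V\otimes_{B_K}\OB_{\HT}(\tilde{Y}_{K,\infty})$, the target of $c$.

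The hardest part will be this last identification in (i)$\Rightarrow$(ii): one must check not merely that the two $\OB_{\HT}(\tilde{Y}_{K,\infty})$-modules are abstractly isomorphic but that the canonical map $c$ realizes the isomorphism, which requires careful bookkeeping of the grading on $\OB_{\HT}=\gr^\bullet\OB_{\dR}$, of the $\Gamma_k$-action, and of Liu--Zhu's concrete identification $\calH(\bL)(Y_K)\cong M_K(Y)$; the computation runs in parallel to the treatment of the de Rham period sheaf in \cite{LZ} and of the Hodge--Tate decomposition in \cite{Scholze}. A minor technical point to settle along the way is that packaging all generalized Hodge--Tate weights into a single polynomial $F$ in (iv) uses finiteness of the set of these integers, which is automatic when $X$ is quasi-compact (and otherwise (iv) should be read over each connected component).
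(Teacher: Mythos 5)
Your handling of (iii)$\Leftrightarrow$(iv) and of the equivalence with (i) is correct and essentially the paper's: everything is funneled through Proposition~\ref{prop:DHT and kernel of Sen} (the paper quotes its statement for (iii)$\Rightarrow$(i) and for (i)$\Rightarrow$(iv), while you additionally read off the rank count inside its proof to land on (iii) directly; the Lagrange idempotents, and your per-component finiteness caveat for packaging the weights into one polynomial $F$, are fine and if anything more careful than the text, which uses Theorem~\ref{thm:constancy of eigenvalues of arithmetic Sen operator} in the same way). The genuine divergence is in condition (ii). For (iv)$\Rightarrow$(ii) the paper does not redo any toric computation: it restricts the canonical map to $X_{K,\proet}$, writes $\OB_{\HT}|_{X_{K,\proet}}$ as $\bigoplus_j\OC(j)$, inserts the identification $\bigoplus_j D_{\HT}(\bL)|_{X_K}(j)\cong\bigoplus_j\calH(\bL)(j)$ furnished by (iii) together with Proposition~\ref{prop:DHT and kernel of Sen}, and then concludes from the already-established comparison $\nu^\ast\calH(\bL)\otimes_{\calO_{X_K}}\OC\cong\hat{\bL}\otimes_{\hat{\Q}_p}\OC$ of \cite[Theorem 2.1(ii)]{LZ}; the ``hardest part'' you flag, namely checking that the canonical map (and not merely an abstract isomorphism) realizes the comparison, is exactly what that citation buys, so you need not reproduce the $M_K(Y)$-bookkeeping, only check twist-by-twist compatibility on the toric tower as the paper leaves to the reader. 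For (ii)$\Rightarrow$(i) the paper simply applies $\nu'_\ast$ over $X_{K,\proet}$, obtaining $\bigoplus_j D_{\HT}(\bL)|_{X_K}(j)\cong\bigoplus_j\calH(\bL(j))$, whence projectivity and the rank; your alternative of evaluating on $\tilde{Y}_{K,\infty}$ and descending along $B\ra\OB_{\HT}(\tilde{Y}_{K,\infty})$ also works, but you should make explicit two inputs it uses: that sections of $\nu^\ast D_{\HT}(\bL)\otimes_{\calO_X}\OB_{\HT}$ on the affinoid perfectoid compute the naive base change of the coherent module $D_{\HT}(\bL)(Y)$ (choose a finite presentation and use vanishing of higher cohomology of $\OC$-free sheaves on $\tilde{Y}_{K,\infty}$), and that $B\ra\hat{B}_{K,\infty}$, hence $B\ra\OB_{\HT}(\tilde{Y}_{K,\infty})$, is faithfully flat; the paper's pushforward argument sidesteps both, at the cost of the (easy) computation of $\nu'_\ast$ of the left-hand side.
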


\begin{defn}
A $\Q_p$-local system on $X_{\et}$ is a \textit{Hodge-Tate sheaf}
if it satisfies the equivalent conditions in Theorem~\ref{thm:def of Hodge-Tate sheaves}.
\end{defn}

\begin{rem}
 Tsuji obtained Theorem~\ref{thm:def of Hodge-Tate sheaves}
in the case of semistable schemes (\cite[Theorem~9.1]{Tsuji}).
He also gave a characterization of Hodge-Tate local systems
in terms of restrictions to divisors. See \textit{loc.~cit.} for the detail.
\end{rem}

\begin{proof}[Proof of Theorem~\ref{thm:def of Hodge-Tate sheaves}]
The equivalence of (iii) and (iv) is clear,
and (iii) implies (i) by Proposition~\ref{prop:DHT and kernel of Sen}.
Conversely, assume condition (i).
Thus $D_{\HT}(\bL)|_{X_K}$ is a vector bundle of rank $r$ on $X_{K,\et}$.
By Proposition~\ref{prop:DHT and kernel of Sen},
it is also isomorphic to $\bigoplus_{j\in\Z}\calH(\bL)^{\phi_{\bL}=j}$.
Thus $\bigoplus_{j\in\Z}\calH(\bL)^{\phi_{\bL}=j}=\calH(\bL)$,
and there exist integers $j_1<\ldots< j_a$
such that $\bigoplus_{1\leq i\leq a}\calH(\bL)^{\phi_{\bL}=j_i}=\calH(\bL)$.
So $F(s):=\prod_{1\leq i\leq a}(s-j_i)$
satisfies $F(\phi_{\bL})=0$, which is condition (iv).

Next we show that condition (iv) implies (ii).
Obviously, there is a natural morphism
\begin{equation}\label{eq:comp map on proetale site}
 \nu^\ast D_{\HT}(\bL)\otimes_{\calO_X}\OB_{\HT}
\ra \hat{\bL}\otimes_{\hat{\Q}_p}\OB_{\HT}
\end{equation}
on $X_{\proet}$ and we will prove that this is an isomorphism.
It is enough to check this on $X_{\proet}/X_K\cong X_{K,\proet}$.
Recall a canonical isomorphism in \cite[Theorem 2.1(ii)]{LZ}:
\[
\nu^\ast \calH(\bL)\otimes_{\calO_{X_K}}\OC|_{X_{K,\proet}}
\cong (\hat{\bL}\otimes_{\hat{\Q}_p}\OC)|_{X_{K,\proet}}. 
\]
Then the restriction of the morphism (\ref{eq:comp map on proetale site})
to $X_{K,\proet}$ is obtained as
\begin{align*}
 (\nu^\ast D_{\HT}(\bL)\otimes_{\calO_X}\OB_{\HT})|_{X_{K,\proet}}
&\cong 
\nu^\ast D_{\HT}(\bL)|_{X_{K,\proet}}
\otimes_{\calO_{X_K}}\OB_{\HT}|_{X_{K,\proet}}\\
&\cong \nu'^\ast(D_{\HT}(\bL)|_{X_K})\otimes_{\calO_{X_K}}
\bigl(\bigoplus_{j\in\Z}\OC(j)\bigr)|_{X_{K,\proet}}\\
&\cong \nu'^\ast\bigl(
D_{\HT}(\bL)|_{X_K}\otimes \bigoplus_{j\in\Z}\calO_{X_K}(j)\bigr)
\otimes_{\calO_{X_K}}\OC|_{X_{K,\proet}}\\
&\ra \nu'^\ast\bigl(
\bigoplus_{j\in\Z}\calH(\bL)(j)\bigr)\otimes_{\calO_{X_K}}
\OC|_{X_{K,\proet}}\\
&\cong \bigl(\bigoplus_{j\in\Z}
\hat{\bL}\otimes_{\hat{\Q}_p}\OC(j)\bigr)|_{X_{K,\proet}}
\cong (\hat{\bL}\otimes_{\hat{\Q}_p}\OB_{\HT})|_{X_{K,\proet}}.
\end{align*}
This can be checked by considering affinoid perfectoids
represented by the toric tower, and 
the verification is left to the reader.
It follows from condition (iii) and Proposition~\ref{prop:DHT and kernel of Sen}
that 
\[
 \bigoplus_{j\in\Z}D_{\HT}(\bL)|_{X_K}(j)
\cong \bigoplus_{j\in\Z}\calH(\bL)(j).
\]
Hence 
$(\nu^\ast D_{\HT}(\bL)\otimes_{\calO_X}\OB_{\HT})|_{X_{K,\proet}}
\cong (\hat{\bL}\otimes_{\hat{\Q}_p}\OB_{\HT})|_{X_{K,\proet}}$.

Finally we show that (ii) implies (i).
By condition (ii), we have
\[
 \nu'_\ast\bigl((\nu^\ast D_{\HT}(\bL)\otimes_{\calO_X}\OB_{\HT})|_{X_{K,\proet}}\bigr)
\cong \nu'_\ast\bigl((\hat{\bL}\otimes_{\hat{\Q}_p}\OB_{\HT})|_{X_{K,\proet}}\bigr).
\]
On the other hand, it is easy to check 
\[
 \nu'_\ast\bigl((\nu^\ast D_{\HT}(\bL)\otimes_{\calO_X}\OB_{\HT})|_{X_{K,\proet}}\bigr)
\cong \bigoplus_{j\in\Z}D_{\HT}(\bL)|_{X_K}(j).
\]
Since $\nu'_\ast\bigl((\hat{\bL}\otimes_{\hat{\Q}_p}\OB_{\HT})|_{X_{K,\proet}}\bigr)=\bigoplus_{j\in\Z}\calH(\bL(j))$, we have
\[
 \bigoplus_{j\in\Z}D_{\HT}(\bL)|_{X_K}(j)\cong 
\bigoplus_{j\in\Z}\calH(\bL(j)).
\]
In particular, 
$D_{\HT}(\bL)|_{X_K}$ is a vector bundle on $X_{K,\et}$,
and thus $D_{\HT}(\bL)$
is a vector bundle on $X_{\et}$.
Moreover, condition (ii) implies $\rank D_{\HT}(\bL)=r$.
\end{proof}

\begin{example}
Suppose that there exists a Zariski dense subset $T\subset X$ consisting
of classical rigid points with residue field finite over $k$
such that the restriction of $\bL$ to each $x\in T$
defines a Hodge-Tate representation.
Then $\bL$ is a Hodge-Tate sheaf 
by Theorem~\ref{thm:constancy of eigenvalues of arithmetic Sen operator} 
and Theorem~\ref{thm:def of Hodge-Tate sheaves}(iii).
See \cite[Theorem 8.6.6]{KL-II} for a generalization of this remark.
\end{example}

\begin{cor}
\hfill
\begin{enumerate}
 \item Hodge-Tate sheaves are stable under taking dual, tensor product,
and subquotients.
 \item Let $f\colon Y\ra X$ be a morphism between
smooth rigid analytic varieties over $k$.
If $\bL$ is a Hodge-Tate sheaf on $X_{\et}$, then 
$f^\ast{\bL}$ is a Hodge-Tate sheaf on $Y_{\et}$.
\end{enumerate}
\end{cor}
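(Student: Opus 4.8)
The plan is to reduce both statements to the polynomial characterization of Hodge-Tate sheaves. Recall from Theorem~\ref{thm:def of Hodge-Tate sheaves}(iv) that a $\Q_p$-local system is a Hodge-Tate sheaf if and only if there is a polynomial $F(s)=\prod_{i=1}^{a}(s-j_i)$ with distinct integers $j_1<\cdots<j_a$ such that $F(\phi_{\bullet})=0$ on the corresponding $\calH(\bullet)$. All assertions will then follow by feeding this into the functoriality of the pair $(\calH(\bullet),\phi_{\bullet})$ with respect to tensor products, duals and pullbacks established in Section~\ref{section:arithmetic Sen endomorphism}, together with Lemma~\ref{lem:properties of Sen endom}.

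For part (ii) I would argue directly. Choose $F$ with distinct integer roots and $F(\phi_{\bL})=0$, and write $f_K\colon Y_K\to X_K$ for the base change of $f$. The pullback compatibility gives a canonical isomorphism $(\calH(f^\ast\bL),\phi_{f^\ast\bL})\cong f_K^\ast(\calH(\bL),\phi_{\bL})$ of $\calO_{X_K}$-modules with endomorphism, under which $F(\phi_{f^\ast\bL})$ is identified with $f_K^\ast\bigl(F(\phi_{\bL})\bigr)=0$. Hence $f^\ast\bL$ satisfies condition (iv) of Theorem~\ref{thm:def of Hodge-Tate sheaves} and is a Hodge-Tate sheaf.

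For part (i), the cases of duals and tensor products are formal. For the dual, $\phi_{\bL^\vee}$ is the endomorphism $\xi\mapsto-\xi\circ\phi_{\bL}$ of $\calH(\bL)^\vee$ by Lemma~\ref{lem:properties of Sen endom}, so in any local trivialization it is represented by $-A^{t}$, where $A$ represents $\phi_{\bL}$; transposing $\prod_{i}(A-j_i)=0$ and substituting shows that $G(s):=\prod_{i}(s+j_i)$, which again has distinct integer roots, satisfies $G(\phi_{\bL^\vee})=0$, so $\bL^\vee$ is a Hodge-Tate sheaf. For the tensor product, suppose $F_\ell(\phi_{\bL_\ell})=0$ with distinct integer roots $j^{(\ell)}_1<\cdots<j^{(\ell)}_{a_\ell}$ for $\ell=1,2$. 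The differences $j^{(\ell)}_i-j^{(\ell)}_{i'}$ are nonzero integers, hence invertible sections of $\calO_{X_K}$, so Lagrange interpolation yields eigenspace decompositions $\calH(\bL_\ell)=\bigoplus_{i}\calH(\bL_\ell)^{\phi_{\bL_\ell}=j^{(\ell)}_i}$. Since $\phi_{\bL_1\otimes\bL_2}=\phi_{\bL_1}\otimes\id+\id\otimes\phi_{\bL_2}$ acts on the summand $\calH(\bL_1)^{\phi_{\bL_1}=j^{(1)}_i}\otimes\calH(\bL_2)^{\phi_{\bL_2}=j^{(2)}_{i'}}$ by the scalar $j^{(1)}_i+j^{(2)}_{i'}$, the polynomial $\prod_{c}(s-c)$, with $c$ running over the distinct values of the $j^{(1)}_i+j^{(2)}_{i'}$, annihilates $\phi_{\bL_1\otimes\bL_2}$; hence $\bL_1\otimes\bL_2$ is a Hodge-Tate sheaf.

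It remains to treat subquotients. First I would record that $\calH$ is left exact: for a short exact sequence $0\to\bL'\to\bL\to\bL''\to 0$ of $\Q_p$-local systems the associated sequence $0\to\hat{\bL'}\otimes_{\hat{\Q}_p}\OC\to\hat{\bL}\otimes_{\hat{\Q}_p}\OC\to\hat{\bL''}\otimes_{\hat{\Q}_p}\OC\to 0$ on $X_{\proet}/X_K$ is exact (it is locally split, $\hat{\bL''}$ being locally free over $\hat{\Q}_p$), so applying the left-exact pushforward $\nu'_\ast$ gives an exact sequence $0\to\calH(\bL')\to\calH(\bL)\to\calH(\bL'')$; moreover these maps intertwine the arithmetic Sen endomorphisms, by the functoriality in Proposition~\ref{prop:Sen endom}. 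Consequently, if $\bL'\subset\bL$ is a sub-local system of a Hodge-Tate sheaf, then $\calH(\bL')\hookrightarrow\calH(\bL)$ is $\phi$-equivariant, so $F(\phi_{\bL'})$ is the restriction of $F(\phi_{\bL})=0$ and $\bL'$ is a Hodge-Tate sheaf. A quotient $\bL''$ of $\bL$ is handled by duality: dualizing $\bL\twoheadrightarrow\bL''$ realizes $(\bL'')^\vee$ as a sub-local system of the Hodge-Tate sheaf $\bL^\vee$, hence $(\bL'')^\vee$, and therefore $\bL''\cong\bigl((\bL'')^\vee\bigr)^\vee$, are Hodge-Tate sheaves by the cases already treated. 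Finally an arbitrary subquotient $\bL_2/\bL_1$ with $\bL_1\subseteq\bL_2\subseteq\bL$, being a quotient of the sub-local system $\bL_2$, is a Hodge-Tate sheaf. The only ingredient above that is not a purely formal manipulation of the identity $F(\phi_{\bL})=0$ is this left-exactness of $\calH$ together with its compatibility with $\phi_{\bullet}$, which I expect to be the main (though mild) point to pin down carefully.
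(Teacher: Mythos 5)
Your proposal is correct and takes essentially the same route as the paper, whose proof simply cites the functoriality of the Sen endomorphism (Proposition~\ref{prop:Sen endom}), its behaviour under tensor products and duals (Lemma~\ref{lem:properties of Sen endom}), and the characterization of Hodge-Tate sheaves through $\phi_{\bL}$ (Theorem~\ref{thm:def of Hodge-Tate sheaves}). Your only addition is to spell out the subquotient case via the left-exactness of $\nu'_\ast$ plus duality, and the pullback case via the comparison $(\calH(f^\ast\bL),\phi_{f^\ast\bL})\cong f_K^\ast(\calH(\bL),\phi_{\bL})$, which is exactly the intended (if unstated) instantiation of that functoriality.
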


\begin{proof}
This follows from Proposition~\ref{prop:Sen endom}, 
Lemma~\ref{lem:properties of Sen endom}, 
and Theorem~\ref{thm:def of Hodge-Tate sheaves}(iii).
\end{proof}

We next turn to the pushforward of Hodge-Tate sheaves.

\begin{thm}\label{thm:pushforward of Hodge-Tate sheaves}
Let $f \colon X\ra Y$ be a smooth proper morphism 
between smooth rigid analytic varieties over $k$
of relative dimension $m$
and let $\bL$ be a $\Z_p$-local system on $X_{\et}$.
Assume that $R^if_\ast \bL$ is a $\Z_p$-local system on $Y_{\et}$
for each $i$.
\begin{enumerate}
 \item If $\alpha\in \overline{k}$ is a generalized Hodge-Tate
for $R^if_\ast \bL$,
then $\alpha$ is of the form $\beta-j$
with a generalized Hodge-Tate weight $\beta$ of $\bL$
and an integer $j\in [0,m]$.
 \item If $\bL$ is a Hodge-Tate sheaf on $X_{\et}$
\footnote{This means that the $\Q_p$-local system $\bL\otimes_{\Z_p}\Q_p$ is a Hodge-Tate sheaf on $X_{\et}$.}, then 
$R^if_\ast \bL$ is a Hodge-Tate sheaf on $Y_{\et}$.
\end{enumerate}
\end{thm}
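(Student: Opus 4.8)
The plan is to derive everything from Theorem~\ref{thm:pushforward of Sen endomorphism}: after replacing $\bL$ by $\bL\otimes_{\Z_p}\Q_p$ throughout, that theorem identifies the arithmetic Sen endomorphism $\phi_{R^if_\ast\bL}$ of $\calH(R^if_\ast\bL)$ with the endomorphism induced on the hyperpushforward $R^if_{K,\et,\ast}$ of the Higgs complex
\[
 \calK^\bullet:=\bigl(\calH(\bL)\otimes\Omega^\bullet_{X/Y}(-\bullet),\ \bar\vartheta_\bL\bigr)
\]
by the chain endomorphism $\Phi^\bullet:=\phi_\bL\otimes\id-\bullet(\id\otimes\id)$. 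The crucial preliminary step is to split $\calK^\bullet$ as a complex of vector bundles on $X_K$ according to the eigenvalues of $\Phi^\bullet$. On the degree-$j$ term $\calK^j=\calH(\bL)\otimes\Omega^j_{X/Y}(-j)$ the $\calO_{X_K}$-linear endomorphism $\Phi^j$ has, at each point, eigenvalues equal to those of $\phi_\bL$ shifted by $-j$; by Theorem~\ref{thm:constancy of eigenvalues of arithmetic Sen operator} these are constant on each connected component, so the characteristic polynomial of $\Phi^j$ has locally constant coefficients and, possibly after a finite base change on $k$ (which is harmless), factors with constant roots. The Chinese Remainder Theorem then yields a decomposition into generalized eigen-subbundles $\calK^j=\bigoplus_\lambda\calK^j_\lambda$, and since $\Phi^{j+1}\bar\vartheta_\bL=\bar\vartheta_\bL\Phi^j$ the differential respects it; hence $\calK^\bullet=\bigoplus_\lambda\calK^\bullet_\lambda$ as complexes, with finitely many nonzero summands and $\Phi^\bullet-\lambda$ uniformly nilpotent on $\calK^\bullet_\lambda$.

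Because $R^if_{K,\et,\ast}$ is additive, it commutes with this finite direct sum, so $\calH(R^if_\ast\bL)=\bigoplus_\lambda R^if_{K,\et,\ast}\calK^\bullet_\lambda$ and $\phi_{R^if_\ast\bL}-\lambda$ is nilpotent on the $\lambda$-th summand (a uniformly nilpotent chain endomorphism induces a nilpotent map on hyperpushforward). Consequently every eigenvalue of $\phi_{R^if_\ast\bL}$ is one of the $\lambda$ with $\calK^\bullet_\lambda\neq0$. Since $\calK^j=0$ unless $0\le j\le m$ and $\calK^j_\lambda\neq0$ forces $\lambda+j$ to be an eigenvalue of $\phi_\bL$, every such $\lambda$ is of the form $\beta-j$ with $\beta$ a generalized Hodge--Tate weight of $\bL$ and $j\in[0,m]\cap\Z$; these are algebraic over $k$ by Theorem~\ref{thm:constancy of eigenvalues of arithmetic Sen operator}. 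This proves part (i).

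For part (ii), suppose $\bL$ is a Hodge--Tate sheaf. By Theorem~\ref{thm:def of Hodge-Tate sheaves}, $\phi_\bL$ is semisimple with integer eigenvalues, so $\calH(\bL)=\bigoplus_{\beta\in\Z}\calH(\bL)^{\phi_\bL=\beta}$ with each term a subbundle. The relation $[\phi_\bL,\vartheta_\bL]=\vartheta_\bL$ from Proposition~\ref{prop:commutativity of Higgs and Sen} shows that $\vartheta_\bL$ carries $\calH(\bL)^{\phi_\bL=\beta}$ into $\calH(\bL)^{\phi_\bL=\beta+1}\otimes\Omega^1_{X/k}(-1)$, so in the decomposition above $\calK^j_\lambda=\calH(\bL)^{\phi_\bL=\lambda+j}\otimes\Omega^j_{X/Y}(-j)$ is a genuine eigen-subbundle and $\Phi^j$ acts on it as the scalar $(\lambda+j)-j=\lambda$. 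Thus $\Phi^\bullet$ acts on $\calK^\bullet_\lambda$ as multiplication by the integer $\lambda$, hence $\phi_{R^if_\ast\bL}$ acts on $R^if_{K,\et,\ast}\calK^\bullet_\lambda$ as multiplication by $\lambda$. Therefore $\phi_{R^if_\ast\bL}$ is semisimple with integer eigenvalues, and by the equivalence of conditions (iii) and (i) in Theorem~\ref{thm:def of Hodge-Tate sheaves}, $R^if_\ast\bL$ is a Hodge--Tate sheaf on $Y_{\et}$.

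The routine verifications to be filled in are the existence of the generalized eigen-subbundle decomposition of each $\calK^j$ (standard once the characteristic polynomial has locally constant coefficients), the compatibility of $\bar\vartheta_\bL$ and of the Tate twists with this decomposition, and the elementary fact that a uniformly nilpotent chain endomorphism induces a nilpotent endomorphism on hyperpushforward. I expect the only genuinely new observation to be the one driving part (ii): the Hodge--Tate hypothesis forces the Higgs complex, once organized by $\Phi^\bullet$, to break into pieces on which the Sen operator is a single \emph{constant integer}, a structure that visibly descends to hyperpushforward. Part (i) by contrast needs nothing beyond the constancy theorem and the principle that passing to (hyper)cohomology cannot manufacture new eigenvalues.
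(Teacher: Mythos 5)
Your argument is correct, and it rests on the same two pillars as the paper's proof -- the identification $\phi_{R^if_\ast\bL}=R^if_{K,\et,\ast}(\phi_{\bL}\otimes\id-\bullet(\id\otimes\id))$ from Theorem~\ref{thm:pushforward of Sen endomorphism} and the constancy/polynomial criteria of Theorems~\ref{thm:constancy of eigenvalues of arithmetic Sen operator} and \ref{thm:def of Hodge-Tate sheaves} -- but the bookkeeping is different. The paper never decomposes the Higgs complex: for (i) it uses the $E_1$ spectral sequence of the naive filtration, $E_1^{a,b}=R^bf_{K,\et,\ast}(\calH(\bL)\otimes\Omega^a_{X/Y}(-a))$ with endomorphism $R^bf_{K,\et,\ast}((\phi_\bL-a)\otimes\id)$, and for (ii) it propagates the annihilating polynomial $G(s)=\prod_{j\in J}(s-j)$ through the \v{C}ech total complex computing the pushforward (this re-run of the \v{C}ech argument is needed there because the identification of Theorem~\ref{thm:pushforward of Sen endomorphism} is itself \v{C}ech-theoretic). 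You instead split the complex $\calK^\bullet$ into generalized eigen-subcomplexes, which is legitimate and makes the structure transparent (in case (ii) the pushforward Sen operator is visibly a direct sum of integer scalars), but it costs you the extra routine steps you flag plus two you should make explicit: the eigen-subbundle decomposition requires a finite extension of the base field, hence compatibilities of $\calH$, $\phi$ and $R^if_\ast$ with that base change (Theorem~3.7(ii) and proper/flat base change), and in (i) you should localize on $Y$ (so $X$ becomes quasi-compact) to ensure only finitely many components and hence finitely many eigenvalues enter the direct sum. Note also that your part (ii) can be shortened to exactly the paper's mechanism without any decomposition: the Hodge--Tate hypothesis gives $G(\Phi^\bullet)=0$ already at the chain level, and functoriality of $R^if_{K,\et,\ast}$ (which is what Theorem~\ref{thm:pushforward of Sen endomorphism} licenses) yields $G(\phi_{R^if_\ast\bL})=0$, i.e.\ condition (iv) of Theorem~\ref{thm:def of Hodge-Tate sheaves}; the same polynomial trick with $\prod_\beta(s-(\beta-j))$ also gives (i) with no field extension at all.
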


\begin{rem}
Theorem~\ref{thm:pushforward of Hodge-Tate sheaves}(ii)
is proved by Hyodo (\cite[\S 3, Corollary]{Hyodo})
when $f\colon X\ra Y$ and $\bL$ are analytifications of corresponding algebraic objects.
\end{rem}

\begin{proof}
Let $f_K\colon X_K\ra Y_K$ denote the base change of $f$ over $K$.

Part (i) easily follows from 
Theorem~\ref {thm:pushforward of Sen endomorphism}.
In fact, we have the isomorphism
\[
 \calH(R^if_\ast \bL) \cong R^if_{K,\et,\ast}
\bigl(
\calH(\bL)\otimes \Omega_{X/Y}^\bullet(-\bullet) \bigr),
\]
and under this identification
$\phi_{R^if_\ast\bL}$ corresponds to 
$R^if_{K,\et,\ast}(\phi_{\bL}\otimes\id -\bullet (\id\otimes\id))$.
Consider the spectral sequence with 
\[
 E_1^{a,b}=R^bf_{K,\et,\ast}\calH(\bL)\otimes \Omega_{X/Y}^a(-a)
\]
converging to $\calH(R^{a+b}f_\ast \bL)$.
Then the endomorphism $R^bf_{K,\et,\ast}((\phi_{\bL}-a)\otimes \id)$
on $E_1^{a,b}$ converges to $\phi_{R^{a+b}f_\ast\bL}$, and
this implies part (i).

For part (ii), we need arguments similar to the proof of Theorem~\ref {thm:pushforward of Sen endomorphism}. 
We may assume that $Y$ is affinoid.
Take a finite affinoid covering
$\calU=\{U_K^{(i)}\}$ of $X_K$.
Let $\calF^\bullet$ denote
the complex of $\calO_{X_K}$-modules 
\[
 \calH(\bL)\stackrel{\vartheta_{\bL}}{\lra}
\calH(\bL)\otimes\Omega_{X/Y}^1(-1)
\stackrel{\vartheta_{\bL}}{\lra}
\calH(\bL)\otimes\Omega_{X/Y}^2(-2)\lra \cdots
\]
on $X_K$ equipped with the natural $\Gamma_k$-action and the endomorphism
$\phi_{\calF^\bullet}=\phi_\bL\otimes\id - \bullet(\id\otimes \id)$.

Recall also 
the \v{C}ech-to-derived functor spectral sequence 
 with
\[
 E_2^{a,b}=H^a(\Tot(\check{C}^\bullet(\calU,\underline{H}^b(\calF^\bullet))))
\]
converging to $R^{a+b}\Gamma(X_{K,\et},\calF^\bullet)$.
This spectral sequence degenerates at $E_2$ and yields 
\begin{equation}\label{eq:Cech description}
  H^i(\Tot(\check{C}^\bullet(\calU,\calF^\bullet)))
\stackrel{\cong}{\lra}
R^i\Gamma(X_{K,\et},\calF^\bullet)
=\Gamma(Y_K, \calH(R^if_\ast \bL)).
\end{equation}
Note that both source and target in (\ref{eq:Cech description})
have arithmetic Sen endomorphisms and they are compatible under the isomorphism.

Since $\bL$ is a Hodge-Tate sheaf, 
there exist integers $j_1<\ldots <j_a$ such that
$F(\phi_{\bL})=0$
with $F(s):=\prod_{1\leq i\leq a}(s-j_i)$.
Set $J:=\{j_1-m, j_1-m+1,\ldots, j_a-1, j_a\}$.
This is a finite subset of $\Z$.
We set $G(s):=\prod_{j\in J}(s-j)\in \Z[s]$.
For each $0\leq j\leq m$, 
the endomorphism $\phi_{\bL}\otimes \id - j(\id\otimes \id)$
on $\calH(\bL)\otimes \Omega^j_{X/Y}(-j)$ 
satisfies
\[
 G(\phi_{\bL}\otimes \id - j(\id\otimes \id))=0.
\]
This implies $G(\phi_{\calF^\bullet})=0$, and thus
$G(\Tot(\check{C}^\bullet(\calU,\phi_{\calF^\bullet})))=0$.
Therefore (\ref{eq:Cech description}) yields
\[
 G(\phi_{R^if_\ast\bL})=0.
\]
Hence $R^if_\ast\bL$ is a Hodge-Tate sheaf on $Y_{\et}$.
\end{proof}

We now turn to a rigidity of Hodge-Tate representations.
Let us first recall Liu and Zhu's rigidity result for de Rham representations (\cite[Theorem 1.3]{LZ}):
let $X$ be a geometrically connected smooth rigid  analytic variety over $k$
and let $\bL$ be a $\Q_p$-local system on $X_{\et}$.
If $\bL_{\overline{x}}$ is a de Rham representation at a classical point $x\in X$, then $\bL$ is a de Rham sheaf. In particular, $\bL_{\overline{y}}$ is a de Rham representation at every classical point $y\in X$.

The same result holds for Hodge-Tate local systems of rank at most $2$.
We do not know whether this is true for Hodge-Tate local systems of higher rank.

\begin{thm}\label{thm:principle B for rank 2}
Let $k$ be a finite extension of $\Q_p$.
Let $X$ be a geometrically connected smooth rigid analytic variety over $k$
and let $\bL$ be a $\Q_p$-local system on $X_{\et}$.
Assume that $\rank \bL$ is at most two.
If $\bL_{\overline{x}}$ is a Hodge-Tate representation at a classical point $x\in X$, then $\bL$ is a Hodge-Tate sheaf.
In particular, $\bL_{\overline{y}}$ is a Hodge-Tate representation at every classical point $y\in X$
\end{thm}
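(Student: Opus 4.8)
The plan is to reduce to criterion (iv) of Theorem~\ref{thm:def of Hodge-Tate sheaves} and then argue differently according to whether $\bL_{\overline{x}}$ has pairwise distinct Hodge-Tate weights or not. First I would collect the input on weights. Since $X$ is geometrically connected, $X_K$ is connected, so by Theorem~\ref{thm:constancy of eigenvalues of arithmetic Sen operator} the eigenvalues of $\phi_{\bL,y}\in\End(\calH(\bL)_y)$ are independent of $y\in X_K$; at the classical point $x$ they coincide with the generalized Hodge-Tate weights of the Hodge-Tate representation $\bL_{\overline{x}}$ (compatibility of $\phi_{\bL}$ with specialization, cf.~Corollary~\ref{cor:constancy of generalized HT weights}), hence are integers, say $j_1<\dots<j_a$ with multiplicities $m_1,\dots,m_a$ and $\sum m_i=r$. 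Consequently the characteristic polynomial $\det(s\cdot\id-\phi_{\bL})\in\calO(X_K)[s]$ has constant coefficients and equals $\prod_{i=1}^{a}(s-j_i)^{m_i}$ (a global function on the reduced space $X_K$ vanishing at every classical point is zero).

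\emph{Case 1: $a=r$.} Then the characteristic polynomial is $F(s):=\prod_{i=1}^{r}(s-j_i)$, which has simple roots. By the Cayley--Hamilton theorem applied to the endomorphism $\phi_{\bL}$ of the vector bundle $\calH(\bL)$ we get $F(\phi_{\bL})=0$, so $\bL$ is a Hodge-Tate sheaf by Theorem~\ref{thm:def of Hodge-Tate sheaves}(iv). (This argument does not use $\rank\bL\le 2$.)

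\emph{Case 2: $a<r$.} Since $\rank\bL\le 2$, this forces $r=2$ and $a=1$: the representation $\bL_{\overline{x}}$ is Hodge-Tate with a single Hodge-Tate weight $j:=j_1$. Twisting $\bL$ by $\Q_p(-j)$ (which shifts every generalized Hodge-Tate weight by $-j$ and preserves the Hodge-Tate property of both $\bL$ and $\bL_{\overline{x}}$), we may assume $j=0$, so that $\phi_{\bL_{\overline{x}}}=0$ and $\bL_{\overline{x}}$ is a $\C_p$-admissible representation of $G_{k(x)}$; such a representation is potentially unramified, hence de Rham. By Liu and Zhu's rigidity for de Rham local systems (\cite[Theorem~1.3]{LZ}), $\bL$ is then a de Rham sheaf, and taking $\gr^{\bullet}$ of the de Rham comparison isomorphism (using $\OB_{\HT}=\gr^{\bullet}\OB_{\dR}$; cf.~\cite[Theorem~3.9]{LZ}) shows that a de Rham sheaf satisfies the Hodge-Tate comparison isomorphism of Theorem~\ref{thm:def of Hodge-Tate sheaves}(ii), hence is a Hodge-Tate sheaf. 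In all cases $\bL$ is therefore a Hodge-Tate sheaf, and the ``in particular'' assertion follows by specializing the identity $F(\phi_{\bL})=0$ (with $F$ a product of distinct linear factors over $\Z$) at an arbitrary classical point $y$.

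The crux is Case~2: when the Hodge-Tate weight is repeated, Cayley--Hamilton only yields $(\phi_{\bL}-j)^2=0$, which is not enough for semisimplicity, so one genuinely needs an analytic-continuation (``Principle~B'') input; here it is provided by the classical fact that a Hodge-Tate representation of a $p$-adic field with a single Hodge-Tate weight is de Rham, combined with Liu--Zhu's de Rham rigidity. This is precisely the point where the hypothesis $\rank\bL\le 2$ is used: for $\rank\bL\ge 3$ a Hodge-Tate representation may have a repeated Hodge-Tate weight without all weights being equal, so neither Case~1 nor Case~2 applies, which is why the analogous statement is open in higher rank.
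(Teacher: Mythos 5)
Your proposal is correct and follows essentially the same route as the paper: constancy of the eigenvalues of $\phi_{\bL}$ handles the case of distinct integer weights (the paper verifies criterion (iii) directly where you invoke Cayley--Hamilton for criterion (iv)), and the repeated-weight case in rank two is settled exactly as in the paper by Sen's theorem on single-weight Hodge--Tate representations together with Liu--Zhu's de Rham rigidity \cite[Theorem~1.3]{LZ}. Your closing remark on why rank $\le 2$ is needed matches the paper's discussion as well.
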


Before the proof, let us recall a remarkable theorem by Sen
on Hodge-Tate representations of weights $0$.

\begin{thm}[{\cite[\S Corollary]{Sen-cont}}]\label{thm:Sen}
 Let $k$ be a finite extension of $\Q_p$ and let
$\rho\colon G_k\ra \GL_r(\Q_p)$ be a continuous representation of
the absolute Galois group of $k$.
Then $\rho$ is a Hodge-Tate representations with all the Hodge-Tate weights zero
if and only if $\rho$ is potentially unramified, i.e., the image of 
the inertia subgroup of $k$ is finite.
\end{thm}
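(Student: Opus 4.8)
The plan is to rephrase the statement in terms of the Sen endomorphism. Let $V=\Q_p^r$ carry the $G_k$-action $\rho$, and let $\phi_V$ be its Sen endomorphism on $\calH(V)_{\fin}$. By Sen's criterion recalled in the introduction, $\rho$ is Hodge--Tate with all Hodge--Tate weights $0$ if and only if $\phi_V$ is semisimple with integer eigenvalues all equal to $0$, i.e.\ if and only if $\phi_V=0$. So it suffices to prove that $\phi_V=0$ if and only if $\rho(I_k)$ is finite.

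For the implication ``$\rho(I_k)$ finite $\Rightarrow$ $\phi_V=0$'', I would pass to a finite extension $k'/k$ with $\rho(I_{k'})$ trivial, so that $\rho|_{G_{k'}}$ is unramified, hence crystalline with de Rham filtration concentrated in degree $0$; thus $\rho|_{G_{k'}}$ is Hodge--Tate with all weights $0$, so $\phi_V$ computed over $k'$ vanishes. Since the generalized Hodge--Tate weights are unchanged under restriction to an open subgroup of the Galois group (the Sen endomorphism is compatible with such restriction), $\phi_V=0$ over $k$ as well.

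For the converse, suppose $\phi_V=0$. By the characterizing property of $\phi_V$, each vector of the finite-dimensional $k_\infty$-vector space $\calH(V)_{\fin}$ is fixed by an open subgroup of $\Gamma_k$; intersecting the stabilizers of a basis, there is a finite extension $k'$ of $k$ inside $k_\infty$ such that $\Gamma_{k'}=\Gal(k_\infty/k')$ acts trivially on $\calH(V)_{\fin}$, whence $\calH(V)_{\fin}$ admits a $k'$-basis of $\Gamma_{k'}$-invariant vectors. By Sen's decompletion theorem this is simultaneously a $\C_p$-basis of $\C_p\otimes_{\Q_p}V$, and it is fixed both by $\Gal(\overline{k}/k_\infty)$ (by construction) and by $\Gamma_{k'}$, hence by all of $G_{k'}$; therefore $\C_p\otimes_{\Q_p}V\cong\C_p^r$ as a semilinear $G_{k'}$-representation, i.e.\ $\rho|_{G_{k'}}$ is $\C_p$-admissible. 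It remains to show that a $\C_p$-admissible representation becomes unramified over a further finite extension. I would argue, following Tate and Sen, by descending the $G_{k'}$-equivariant trivialization $\C_p^r\xrightarrow{\ \sim\ }\C_p\otimes_{\Q_p}V$ from $\C_p$ to the completion $\widehat{k'^{\ur}}$ of the maximal unramified extension: using $\mathcal{O}_{\C_p}^{I_{k'}}=\widehat{\mathcal{O}_{k'^{\ur}}}$ together with Tate's vanishing theorems for the continuous cohomology of $I_{k'}$ with coefficients in $\C_p$, one approximates this trivialization modulo $p^N$ by one defined over the valuation ring of $\widehat{k'^{\ur}}$; combined with continuity of $\rho\colon I_{k'}\to\GL_r(\Z_p)$, this forces $\rho(I_{k'})$, after a final finite extension, into the congruence subgroup of matrices $\equiv 1\pmod{p^N}$, which is torsion-free for $N$ large, so $\rho(I_{k'})$ is finite and hence so is $\rho(I_k)$. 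The hard part is precisely this last descent across the extension $\C_p/\widehat{k'^{\ur}}$, which is the technical core of Sen's argument in \emph{loc.\ cit.}
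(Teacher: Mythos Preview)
The paper does not supply a proof of this statement: it is quoted from \cite{Sen-cont} and invoked as a black box (in the proofs of the rank-two rigidity theorem and the single-weight monodromy theorem). There is therefore nothing in the paper to compare your argument against.

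On its own merits: your reformulation via $\phi_V=0$ and the easy direction are correct. In the hard direction you correctly reduce to showing that a $\C_p$-admissible representation is potentially unramified, and you rightly identify the descent from $\C_p$ to $\widehat{k'^{\,\ur}}$ as the crux. But your outline of that step is more a heuristic than a proof: the appeal to ``Tate's vanishing theorems for the continuous cohomology of $I_{k'}$ with coefficients in $\C_p$'' is imprecise, since Tate's results are formulated for $G_k$ rather than for the inertia subgroup, and the passage from an approximate descent modulo $p^N$ to a genuine bound on $\rho(I_{k'})$ is not actually carried out. Sen's own argument in \cite{Sen-cont} is organized differently: he proves that the Lie algebra of the $p$-adic Lie group $\rho(I_k)$ is the smallest $\Q_p$-subspace of $\End(V)$ whose $\C_p$-span contains $\phi_V$; when $\phi_V=0$ this Lie algebra vanishes, and a compact $p$-adic Lie group with trivial Lie algebra is finite. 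So your sketch points at the right target but defers the substantive work to the citation, as you yourself acknowledge in your last sentence.
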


Note that $\rho$ being a Hodge-Tate representations with all the Hodge-Tate weights zero is equivalent to the Sen endomorphism of $\rho$ being zero.
Since potentially unramified representations are de Rham and 
de Rham representations are stable under Tate twists,
Theorem~\ref{thm:Sen} implies that a Hodge-Tate representation with a single weight is necessarily de Rham.

\begin{proof}[Proof of Theorem~\ref{thm:principle B for rank 2}]
 We check condition (iii) in Theorem~\ref{thm:def of Hodge-Tate sheaves}.
By Theorem~\ref{thm:constancy of eigenvalues of arithmetic Sen operator} and assumption, 
all the eigenvalues of $\phi_{\bL}$ are integers.
So the statement is obvious either when $\rank \bL=1$ or when $\rank \bL=2$ and
two eigenvalues are distinct integers.

Assume that $\rank \bL=2$ and two eigenvalues are the same integer.
Then $\bL_{\overline{x}}$ is de Rham by Theorem~\ref{thm:Sen}, 
and thus $\bL$ is de Rham by ``Principle B'' for de Rham sheaves (\cite[Theorem 1.3]{LZ}). In particular, $\bL$ is a Hodge-Tate sheaf.
\end{proof}

\begin{rem}
The proof shows that Theorem~\ref{thm:principle B for rank 2} holds for $\bL$ 
of an arbitrary rank if one of the following conditions holds:
\begin{enumerate}
 \item $\bL_{\overline{x}}$ is a Hodge-Tate representation with a single weight at a classical point $x\in X$.
 \item $\bL_{\overline{x}}$ is a Hodge-Tate representation with $\rank \bL$ distinct weights at a classical point $x\in X$.
\end{enumerate}
\end{rem}

We end with another application of Sen's theorem in the relative setting.

\begin{thm}\label{thm:weight zero p-adic monodromy}
Let $k$ be a finite extension of $\Q_p$.
Let $X$ be a smooth rigid analytic variety over $k$
and let $\bL$ be a $\Z_p$-local system on $X_{\et}$.
Assume that $\bL$ is a Hodge-Tate sheaf with a single Hodge-Tate weight.
Then there exists a finite \'etale cover $f\colon Y\ra X$ such that
$(f^\ast \bL)_{\overline{y}}$ is semistable at every classical point 
$y$ of $Y$.
 \end{thm}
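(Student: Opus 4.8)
The plan is to reduce, by a Tate twist, to the case where the arithmetic Sen endomorphism vanishes, apply Sen's theorem (Theorem~\ref{thm:Sen}) at each classical point, and then convert the resulting pointwise statement into a uniform one by passing to a single congruence-level finite \'etale cover. First I would reduce to $X$ connected, since a $\Z_p$-local system is semistable at all classical points precisely when its restriction to each connected component is; fix a geometric point and let $\rho\colon\pi_1^{\et}(X)\to\GL_r(\Z_p)$ be the continuous monodromy representation attached to $\bL$, where $r=\rank\bL$. By hypothesis $\bL\otimes_{\Z_p}\Q_p$ is a Hodge-Tate sheaf with a single Hodge-Tate weight, so Theorem~\ref{thm:def of Hodge-Tate sheaves}(iii) provides an integer $j$ with $\phi_{\bL}=j\cdot\id$ on $\calH(\bL)$. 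Replacing $\bL$ by the Tate twist $\bL(j)$, I may assume $\phi_{\bL}=0$: this is harmless because Tate twisting commutes with pullback along $f$, because $\Z_p(-j)$ is crystalline, and because tensoring a crystalline representation with a semistable one stays semistable, so it suffices to build $f$ making $(f^{\ast}\bL)_{\overline y}$ semistable for the twisted local system.

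The key observation comes next. For any finite \'etale $f\colon Y\to X$ and any classical point $y$ of $Y$, pulling $\phi_{\bL}$ back along $Y\to X$ and then along the inclusion of the classical point $\Spa(k(y))\hookrightarrow Y$, and using the compatibility of the arithmetic Sen endomorphism with pullback together with the fact that at a point it coincides with the Sen endomorphism of the associated Galois representation, shows that the Sen endomorphism of $(f^{\ast}\bL)_{\overline y}$ is the fibre of $f^{\ast}\phi_{\bL}=0$, hence zero. Therefore, by Theorem~\ref{thm:Sen}, $(f^{\ast}\bL)_{\overline y}$ is potentially unramified, i.e.\ the image under $(f^{\ast}\bL)_{\overline y}$ of the inertia subgroup $I_{k(y)}$ is \emph{finite}, for every classical point $y$ of $Y$.

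Finally, I would take for $f\colon Y\to X$ the finite \'etale Galois cover corresponding to the open subgroup $\rho^{-1}(\Gamma)\subseteq\pi_1^{\et}(X)$, where $\Gamma=\ker\bigl(\GL_r(\Z_p)\to\GL_r(\Z/p^2\Z)\bigr)$ is a torsion-free (uniform pro-$p$) group; then the monodromy of $f^{\ast}\bL$ takes values in $\Gamma$. For a classical point $y$ of $Y$, the image of $I_{k(y)}$ under $(f^{\ast}\bL)_{\overline y}$ is simultaneously finite, by the previous paragraph, and contained in the torsion-free group $\Gamma$, hence trivial; that is, $(f^{\ast}\bL)_{\overline y}$ is unramified, in particular crystalline, in particular semistable. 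Undoing the Tate twist (which only tensors each fibre by the crystalline representation $\Z_p(-j)$) then yields the theorem.

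The technical content here is light, and the one genuine point is precisely the passage from the \emph{pointwise} output of Sen's theorem, namely finite inertia image at each classical point with the auxiliary finite extension a priori varying with the point, to a \emph{uniform} conclusion valid after one fixed cover; this is exactly what the torsion-freeness of $\Gamma$ buys. The remaining things to check carefully are the two pullback compatibilities of $\phi_{\bL}$ invoked above and the routine reduction to the connected case; neither should present difficulty.
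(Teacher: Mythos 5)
Your proposal is correct and follows essentially the same route as the paper: Tate twist to make the single weight zero, apply Sen's theorem at each classical point to get finite inertia image, and pass to the finite étale cover trivializing the mod $p^2$ representation (your $\rho^{-1}(\Gamma)$ is exactly this cover), using that $\ker\bigl(\GL_r(\Z_p)\ra\GL_r(\Z/p^2)\bigr)$ has no nontrivial torsion to conclude the inertia acts trivially. The only cosmetic difference is that you phrase the cover via the monodromy representation and spell out the pullback compatibilities of $\phi_{\bL}$, which the paper leaves implicit.
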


\begin{proof}
Since semistable representations are stable under Tate twists,
we may assume that $\bL$ is a Hodge-Tate sheaf with all the weights zero.
Let $\overline{\bL}$ denote the $\Z/p^2$-local system $\bL/p^2\bL$ on $X_{\et}$.
Then there exists a finite \'etale cover $f\colon Y\ra X$ such that
$f^\ast\overline{\bL}$ is trivial on $Y_{\et}$.
We will prove that this $Y$ works.

Let $y$ be a classical point of $Y$.
We denote by $k'$ the residue field of $y$.
Let $\rho\colon G_{k'}\ra \GL(V)$ be the Galois representation
of $k'$ corresponding to the stalk $V:=(f^\ast\overline{\bL})_{\overline{y}}$
at a geometric point $\overline{y}$ above $y$.
By assumption, $\rho$ is a Hodge-Tate representation with all the weights zero,
and thus it is potentially unramified by Theorem~\ref{thm:Sen}.
Hence if we denote the inertia group of $k'$ by $I_{k'}$,
$\rho(I_{k'})$ is finite.

By construction, the mod $p^2$ representation
\[
 G_{k'}\stackrel{\rho}{\lra}\GL(V)\lra \GL(V/p^2V)
\]
is trivial. On the other hand, $\Ker (\GL(V)\ra \GL(V/p^2V))$ does not contain elements of finite order except the identity.
Thus we see that $\rho(I_{k'})$ is trivial and hence $\rho$ is an unramified representation. 
In particular, $\rho$ is semistable.
\end{proof}

\begin{rem}
As mentioned in Introduction,
it is an interesting question whether one can extend Colmez's strategy
(\cite{Colmez}) to prove the relative $p$-adic monodromy conjecture
using Theorem~\ref{thm:weight zero p-adic monodromy}.
\end{rem}

\end{document}